\patchcommand\@starttoc{\begin{quote}}{\end{quote}}
\def\@tocline#1#2#3#4#5#6#7{\relax
  \ifnum #1>\c@tocdepth % then omit
  \else
    \par \addpenalty\@secpenalty\addvspace{#2}%
    \begingroup \hyphenpenalty\@M
    \@ifempty{#4}{%
      \@tempdima\csname r@tocindent\number#1\endcsname\relax
    }{%
      \@tempdima#4\relax
    }%
    \parindent\z@ \leftskip#3\relax \advance\leftskip\@tempdima\relax
    \rightskip\@pnumwidth plus4em \parfillskip-\@pnumwidth
    #5\leavevmode\hskip-\@tempdima
      \ifcase #1
       \or\or \hskip 1em \or \hskip 2em \else \hskip 3em \fi%
      #6\nobreak\relax
    \dotfill\hbox to\@pnumwidth{\@tocpagenum{#7}}\par
    \nobreak
    \endgroup
  \fi}
 \theoremstyle{plain}
 \newtheorem{thm}{Theorem}[section]
 \newtheorem{cor}[thm]{Corollary}
 \newtheorem{lem}[thm]{Lemma}
 \newtheorem{prop}[thm]{Proposition}
\theoremstyle{definition}
 \newtheorem{defn}[thm]{Definition}
\theoremstyle{remark}
 \newtheorem{rem}[thm]{Remark}
 \newtheorem{ques}[thm]{Question}
 \newtheorem{ter}[thm]{Terminology}
 \newtheorem{nota}[thm]{Notation}
 \newtheorem{conv}[thm]{Convention}
 \numberwithin{equation}{section}
\theoremstyle{plain}
\DeclareMathOperator{\VF}{VF}
\DeclareMathOperator{\RV}{RV}
\DeclareMathOperator{\DC}{DC}
\DeclareMathOperator{\MM}{\mathcal{M}}
\DeclareMathOperator{\OO}{\mathcal{O}}
 \DeclareMathOperator{\ran}{ran}
 \DeclareMathOperator{\dom}{dom}
 \DeclareMathOperator{\id}{id}
 \DeclareMathOperator{\lh}{lh}
 \DeclareMathOperator{\dcl}{dcl}
 \DeclareMathOperator{\pr}{pr}
 \DeclareMathOperator{\mgl}{GL}
\DeclareMathOperator{\jcb}{Jcb}
\DeclareMathOperator{\K}{\Bbbk}
\DeclareMathOperator{\res}{res}  % map into the residue field
\def\Xint#1{\mathchoice
{\XXint\displaystyle\textstyle{#1}}%
{\XXint\textstyle\scriptstyle{#1}}%
{\XXint\scriptstyle\scriptscriptstyle{#1}}%
{\XXint\scriptscriptstyle\scriptscriptstyle{#1}}%
\!\int}
\def\XXint#1#2#3{{\setbox0=\hbox{$#1{#2#3}{\int}$}
\vcenter{\hbox{$#2#3$}}\kern-.5\wd0}}
\newcommand{\Z}{\mathds{Z}}
\newcommand{\F}{\mathds{F}}
\newcommand{\KKK}{\mathds{K}}
\newcommand{\N}{\mathds{N}}
\newcommand{\R}{\mathds{R}}
\newcommand{\omin}{$o$\nobreakdash}
\newcommand{\T}{$T$\nobreakdash}
\newcommand{\ga}{\mathfrak{a}}
\newcommand{\gb}{\mathfrak{b}}
\newcommand{\gc}{\mathfrak{c}}
\newcommand{\gh}{\mathfrak{h}}
\newcommand{\go}{\mathfrak{o}}
\newcommand{\gp}{\mathfrak{p}}
\newcommand{\gq}{\mathfrak{q}}
\newcommand{\0}{\emptyset}
\DeclareMathAlphabet{\mathpzc}{OT1}{pzc}{m}{it}
 \DeclarePairedDelimiter\abs{\lvert}{\rvert}
 \newcommand{\set}[1]{\left\{#1\right\}}
 \newcommand{\usub}[2]{#1_{\textup{#2}}}
 \newcommand{\lan}[3]{\mathcal{L}_{#1 \textup{#2} #3}}
\newcommand{\mdl}[1]{\mathcal{#1}}  % model; e.g. M
\newcommand{\bb}[1]{\mathbb{#1}}
\newcommand{\limplies}{\rightarrow}
\newcommand{\ex}[1]{\exists #1 \;} % exists x ...
\newcommand{\rest}{\upharpoonright}
\newcommand{\fun}{\longrightarrow}
\newcommand{\efun}{\longmapsto}
\newcommand{\sub}{\subseteq}
\newcommand{\mi}{\smallsetminus}
\newcommand{\la}{\langle}
\newcommand{\ra}{\rangle}
\DeclareMathOperator{\mVF}{{\mu}{\VF}}
\DeclareMathOperator{\mgVF}{{\mu_{\Gamma}} {\VF}}
\DeclareMathOperator{\mRV}{{\mu}{\RV}}
\DeclareMathOperator{\mgRV}{{\mu_{\Gamma}}{\RV}}
\DeclareMathOperator{\mG}{{\mu}{\Gamma}}
\DeclareMathOperator{\mgRES}{{\mu_{\Gamma}}{\RES}}
\DeclareMathOperator{\mRES}{{\mu}{\RES}}
\DeclareMathOperator{\misp}{\mu I_{sp}}
\DeclareMathOperator{\mgisp}{{\mu_{\Gamma}}{I_{sp}}}
\DeclareMathOperator{\mispdb}{\mu D_{sp}}
\DeclareMathOperator{\mgispdb}{{\mu_{\Gamma}}{D_{sp}}}
\DeclareMathOperator{\mgD}{{\mu_{\Gamma}}{\bb D}}
\DeclareMathOperator{\mgE}{{\mu_{\Gamma}}{\bb E}}
\DeclareMathOperator{\mgL}{{\mu_{\Gamma}}{\bb L}}
\DeclareMathOperator{\mL}{{\mu}{\bb L}}
\DeclareMathOperator{\vgRES}{{\vol_{\Gamma}}{\RES}}
\DeclareMathOperator{\rv}{rv}
\DeclareMathOperator{\vv}{val}
\DeclareMathOperator{\RES}{RES}
\DeclareMathOperator{\gsk}{\mathbf{K}_+}
\DeclareMathOperator{\ggk}{\mathbf{K}}
\DeclareMathOperator{\fn}{FN}
\DeclareMathOperator{\mfn}{{\mu} FN}
\DeclareMathOperator{\mgfn}{{\mu_{\Gamma}} FN}
\DeclareMathOperator{\fib}{fib}
\DeclareMathOperator{\vol}{vol}
\DeclareMathOperator{\db}{d \! b}
\DeclareMathOperator{\rad}{rad}
\DeclareMathOperator{\vrv}{vrv}
\DeclareMathOperator{\RVH}{RVH}
\DeclareMathOperator{\can}{\mathbf{c}}
\DeclareMathOperator{\ito}{int}
\newbox\gnBoxA
\newdimen\gnCornerHgt
\newdimen\gnArgHgt
\def\code #1{%
        \setbox\gnBoxA=\hbox{$#1$}%
        \gnArgHgt=\ht\gnBoxA%
        \ifnum \gnArgHgt<\gnCornerHgt
                \gnArgHgt=0pt%
        \else
                \advance \gnArgHgt by -\gnCornerHgt%
        \fi
        \raise\gnArgHgt\hbox{$\ulcorner$} \box\gnBoxA %
                \raise\gnArgHgt\hbox{$\urcorner$}}
\DeclareMathOperator{\TCVF}{TCVF}
\newcommand{\dand}{\quad \text{and} \quad}
\newcommand{\LT}{$\lan{T}{}{}$\nobreakdash}
\newcommand{\pard}[1]{\tfrac{\partial}{\partial x_{#1}}}
\newcommand{\npard}[1]{\tfrac{\partial^*}{\partial x^*_{}}}
\newcommand{\ddx}{\tfrac{d}{d x}}
\DeclareMathOperator{\sgn}{sgn}
\DeclareMathOperator{\abval}{\abs{\vv}}
\DeclareMathOperator{\abvrv}{\abs{\vrv}}
\DeclareMathOperator{\RVV}{RV_0}
\DeclareMathOperator{\GAA}{\Gamma_0}
\DeclareMathOperator{\absG}{\abs{\Gamma}}
\DeclareMathOperator{\mmdl}{\mdl R_{\rv}}
\DeclareMathOperator{\dsh}{\sharp\sharp}
\begin{document}

\title[Integration in $\TCVF$]{Integration in power-bounded $T$-convex valued fields}

\author[Y. Yin]{Yimu Yin}
\address{Santa Monica, California}
\email{yimu.yin@hotmail.com}

\thanks{The research leading to the true claims in this paper has been partially supported by the ERC Advanced Grant NMNAG, the grant ANR-15-CE40-0008 (D\'efig\'eo), and the SYSU grant 11300-18821101}

\subjclass[2010]{03C64  (primary);  14E18 (secondary)}

\begin{abstract}
This is the second installment of a series of papers aimed at developing a theory of Hrushovski-Kazhdan style motivic integration for certain types of nonarchimedean \omin-minimal fields, namely power-bounded \T-convex valued fields, and closely related structures. The main result in the first installment is a canonical isomorphism between the Grothendieck rings of certain categories of definable sets, which is understood as  a universal additive invariant or a generalized Euler characteristic  because  the categories do not carry volume forms. Here we introduce two types of volume forms into each of the relevant categories, one takes values in the value group and the other in the finer $\RV$-sort. The resulting isomorphisms respect Jacobian transformations --- that is, the change of variables formula holds --- and hence are regarded as motivic integrals. As in the classical theory of integration, one is often led to consider locally constant functions with bounded support in various situations. For the space of such functions, the construction may be fine-tuned so as to become more amenable to applications. The modifications are nevertheless substantial and constitute the bulk of the technical work.
\end{abstract}

\maketitle

\tableofcontents

\section{Introduction}\label{section:intro}

This paper is a sequel to \cite{Yin:tcon:I}, whose notational conventions, particularly those in \cite[\S~2.1]{Yin:tcon:I}, will be used throughout; reminders will be provided along the way. To begin with, let $T$ stand for a complete power-bounded \omin-minimal \LT-theory extending the theory $\usub{\textup{RCF}}{}$ of real closed fields and $\mdl R \coloneqq (R, <, \ldots)$ a sufficiently saturated model of $T$. Let $\OO$ be a \emph{proper} \T-convex subring of $\mdl R$ in the sense of \cite{DriesLew95}, that is, $\OO$  is a convex subring of $\mdl R$ such that, for every definable (no parameters allowed) continuous function $f : R \fun R$, we have $f(\OO) \sub \OO$. According to \cite{DriesLew95}, the theory $T_{\textup{convex}}$ of the pair $(\mdl R, \OO)$, suitably axiomatized in the language $\lan{}{convex}{}$ that extends $\lan{T}{}{}$ with a new unary relation symbol, is complete. We assume that $T$ admits quantifier elimination and is universally axiomatizable, which can always be arranged through definitional extension. Then $T_{\textup{convex}}$ admits quantifier elimination too.

To construct Hrushovski-Kazhdan style integrals, however, we need to work with a different language. Let $\vv : R^{\times} \fun \Gamma$ be the valuation map induced by $\OO$, $\K$ the corresponding residue field, and $\res : \OO \fun \K$ the residue map. There is a canonical way of turning the ordered field $\K$ into a \T-model as well. Let $\MM$ be the maximal ideal of $\OO$, $\RV$ the set $R^{\times} / (1 + \MM)$ of equivalence classes, and $\rv : R^{\times} \fun \RV$ the quotient map. For each $a \in R$, the  map $\vv$ is constant on the set $a + a\MM$, and hence there is an induced map $\vrv : \RV \fun \Gamma$. All of this structure can be expressed in a  two-sorted first-order language $\lan{T}{RV}{}$, in which $R$ is referred to as the $\VF$-sort and $\RV$ is taken as a new sort. The resulting theory $\TCVF$ (see \cite[Definition~2.7]{Yin:tcon:I}) is complete and weakly \omin-minimal, and admits quantifier elimination. Informally and for all practical purposes, the language $\lan{T}{RV}{}$ may be viewed as an extension of the language $\lan{}{convex}{}$.

Henceforth we shall primarily work in a sufficiently saturated $\TCVF$-model $\mmdl$, which is, up to isomorphism, the unique $\lan{T}{RV}{}$-expansion of the $T_{\textup{convex}}$-model $(\mdl R, \OO)$, together with a fixed small substructure $\mdl S$.

The category $\VF_*$ essentially consists of the definable subsets of $\VF^n$, $n \geq 0$, as objects and the definable bijections between them as morphisms. The category $\RV[k]$ essentially consists of the definable subsets of $\RV^k$ as objects and the definable bijections between them as morphisms. The category  $\RV[*]$  is the coproduct of $\RV[k]$, $k \geq 0$, and hence is equipped with a gradation by ambient dimensions. The  main construction of \cite{Yin:tcon:I} is a canonical isomorphism between the two  Grothendieck rings:
\[
 \int :  \ggk \VF_* \to \ggk \RV[*] / (\bm P - 1),
\]
where the (nonhomogenous) principal ideal $(\bm P - 1)$ is generated by the difference of $1 \in \ggk \RV[0]$, short for the element $[\{1\}]$, and $\bm P \in \ggk \RV[1]$, short for the element $[\rv(1 + \MM)] - [\rv(\MM \mi 0)]$. This isomorphism is understood as a universal additive invariant, and is also referred to as a generalized Euler characteristic when the target ring $\ggk \RV[*] / (\bm P - 1)$ is recast via natural isomorphism (or homomorphism) into something that is structurally more explicit.

A classical integrand of the form $\int_A \omega$ requires the following data: a differentiable manifold $M$,  a measurable subset $A$ of $M$, and a volume form $\omega$ on $M$ (a nowhere vanishing top-dimensional differential form). In the setting of this paper, we can represent such an integrand directly as a pair $(A, \omega)$, where  $A \sub \VF^k$ is an object of $\VF_*$ and the $\RV$-volume form $\omega : A \fun \RV$ is an arbitrary definable function (so the ambient manifold is in effect always the affine space $\VF^k$). Taking values in the $\RV$-sort corresponds to the identification of a volume form $\omega$ with another one $f\omega$ when $f -1 \in \MM$. The motivic integral that shall be constructed is intended as an invariant associated with a category (actually a groupoid) and hence should  not depend on the representatives of an isomorphism class. This means that, on the one hand, a  morphism between two such pairs needs to bear the right relation between the volume forms and the Jacobian in question, and, on the other hand, should reflect the fact that removing a measurable subset of dimension less than $k$ does not  bring about a change of the integral. The category equipped with such morphisms is denoted by $\mVF[k]$ (see Definition~\ref{defn:VF:mu} for a precise formulation). The coproduct of $\mVF[k]$, $k \geq 0$, is denoted by $\mVF[*]$; observe that gradation by ambient dimensions is a necessity in the presence of volume forms. The category $\mRV[*]$ is obtained in a similar fashion.  As in \cite{Yin:tcon:I}, one of the main results of this paper is  a canonical isomorphism
\[
 \int : \ggk \mVF[*] \fun \ggk  \mRV[*] /  (\bm P),
\]
which respects the gradation since the principal ideal $(\bm P)$ is now homogenous.

We may coarsen the data and consider $\Gamma$-volume forms instead of $\RV$-volume forms, that is, definable functions into the value group $\Gamma$. Taking values in $\Gamma$ corresponds to the identification of a volume form $\omega$ with another one $f\omega$ when $\vv \circ f = 0$. Consequently, for instance, any definable bijection between two sets in the residue field, such as a dilation, is $\Gamma$-measure preserving. The resulting categories are denoted by $\mgVF[*]$, $\mgRV[*]$ and, in complete analogy with the integral above, there is a canonical isomorphism between the two Grothendieck rings:
\[
\int : \ggk \mgVF[*] \fun \ggk  \mgRV[*] /  (\bm P).
\]
In a sense, this integral with  $\Gamma$-volume forms is a halfway point between the integral with $\RV$-volume forms and the universal additive invariant above.

One advantage of $\Gamma$-volume forms, though, is that  the structure of the Grothendieck ring $\ggk \mgRV[*]$ can be significantly elucidated. To wit, it can be expressed as a tensor product of two other Grothendieck rings $\ggk \mgRES[*]$ and $\ggk \mG[*]$, that is, there is an isomorphism of graded rings
\[
\mgD: \ggk \mgRES[*] \otimes_{\ggk \mG^{c}[*]} \ggk \mG[*] \fun \ggk \mgRV[*],
\]
where $\mgRES[*]$ is essentially the category of definable sets in the residue field $\K$ (as a \T-model) with  $\Gamma$-volume forms and $\mG[*]$ is essentially the category of definable sets in the value group $\Gamma$ (as an \omin-minimal group) with volume forms (there is only one type of volume forms for such sets), both are graded by ambient dimensions, and $\mG^{c}[*]$ is the full subcategory of $\mG[*]$ of finite objects, whose Grothendieck ring admits a natural embedding into  $\ggk \mgRES[*]$ as well. This isomorphism, when combined with the integral $\int$ and the two Euler characteristics in \omin-minimal groups (one is a truncated version of the other), yields two homomorphisms of graded rings:
\[
\Xint{\textup{e}}^g, \Xint{\textup{e}}^b: \ggk \mgVF[*] \to^{\int} \ggk \mgRV[*] / (\bm P) \two^{\mgE_{g}}_{\mgE_{b}} \F_3\Gamma[X].
\]
Here $\F_3\Gamma$ is the group ring of $\Gamma(\mdl S)$ over the finite field $\F_3$.

Thus it appears that the construction of these two homomorphisms is much ado about nothing, as essentially only information about the ``sign'' survives. The problem is that the Grothendieck ring $\ggk \mgRES[*]$ is much simpler than its counterpart in, say, \cite[Theorem~10.11]{hrushovski:kazhdan:integration:vf} and cannot afford the further reduction demanded by the vanishing of $\bm P$. To remedy this, we can trim down the categories $\mgRV[*]$, $\mG[*]$ as follows.

A set is bounded if, after applying the maps $\vv$ and $\vrv$ in the corresponding coordinates, it is contained in a box of the form $[\gamma, \infty]^n$, and doubly bounded if the box is of the form $[- \gamma, \gamma]^n$. The full subcategories of $\mgRV[*]$, $\mG[*]$ of doubly bounded objects are denoted by $\mgRV^{\db}[*]$, $\mG^{\db}[*]$. The corresponding restriction of $\mgD$ is indeed an isomorphism
\[
\mgD^{\db}: \gsk \mgRES[*] \otimes_{\gsk \mG^{c}[*]} \gsk \mG^{\db}[*] \fun \gsk \mgRV^{\db}[*].
\]
A set $A \sub \VF^k$ is proper invariant if it is bounded and its characteristic function is locally constant. The subcategory $\mgVF^{\diamond}[k]$ of $\mgVF[k]$ consists of the proper invariant objects and certain morphisms between them; the defining conditions for these morphisms are too involved to afford a summary description here, see Definitions~\ref{defn:binv}, \ref{rela:unary}, and \ref{defn:diamond} for detail. Again, there is a canonical isomorphism $\int^\diamond$  and it, together with $\mgD^{\db}$, induces a graded ring homomorphism:
\[
 \Xint{\textup{e}}^\diamond : \ggk \mgVF^\diamond[*] \to^{\int^\diamond} \ggk \mgRV^{\db}[*] / (\bm P_\Gamma) \to^{\mgE^{\db}} \Z[X].
\]
There is only one such homomorphism because the two Euler characteristics in \omin-minimal groups agree on doubly bounded sets. The homogenous ideal $(\bm P_\Gamma)$ is not principal; it is generated by the elements $\bm P_\gamma \in \ggk \mgRV^{\db}[1]$, one for each $\gamma \in \Gamma^+(\mdl S)$, defined as follows. Let $\MM_\gamma$ be the open  disc around $0$ with radius $\gamma$, $t_\gamma \in \vrv^{-1}(\gamma)$ a definable element, and $\RV^{\circ \circ}_\gamma$ denote the set $\rv(\MM_{\gamma} \mi \{0\})$. Then $\bm P_\gamma$ is the element
\[
[\RV^{\circ \circ}_0 \mi \RV^{\circ \circ}_{\gamma}] + [\{t_\gamma\}] - [\{1\}]
\]
with the constant volume form $0$, and it does not depend on the choice of $t_\gamma$.

The reader should keep in mind that the construction of the isomorphism $\int^\diamond$ encompasses significant technical complications, which constitute the bulk of the work below, but it is still possible to run it with the other ones in parallel so that we do not have to do everything twice with slight variations. This is the reason why some of the definitions are not as concise as they could be and many statements and proofs carry two cases.

\section{Definability in $T$-convex valued fields}

\subsection{Preliminaries}
We begin by reviewing a few fundamental facts about $T$-convex valued fields. Some of the definitions and a portion of the notation  from \cite{Yin:tcon:I} that will be used throughout are also reiterated for the reader's convenience.

\begin{nota}[Coordinate projections]\label{indexing}
For each $n \in \N$, let $[n]$ abbreviate the set $\{1, \ldots, n\}$. For $E \sub [n]$, we write $\pr_E(A)$, or even $A_E$ when there is no danger of confusion, for the projection of $A$ into the coordinates contained in $E$. It is often more convenient to use simple standard descriptions as subscripts. For example, if $E$ is a singleton $\{i\}$ then we shall always write $E$ as $i$ and $\tilde E \coloneqq [n] \mi E$ as $\tilde i$; similarly, if $E = [i]$, $\{k:  i \leq k \leq j\}$, $\{k:  i < k < j\}$, $\{\text{all the coordinates in the sort $S$}\}$, etc., then we may write $\pr_{\leq i}$, $\pr_{[i, j]}$, $A_{(i, j)}$, $A_{S}$, etc.; in particular, we shall frequently write $A_{\VF}$ and $A_{\RV}$ for the projections of $A$ into the $\VF$-sort and $\RV$-sort coordinates.

Unless otherwise specified, by writing $a \in A$ we shall mean that $a$ is a finite tuple of elements (or ``points'') of $A$, whose length is not always explicitly indicated. If $a = (a_1, \ldots, a_n)$ then for all $1 \leq i < j \leq n$, following the notational scheme above, $a_i$, $a_{\tilde i}$, $a_{\leq i}$, $a_{[i, j]}$, $a_{\VF}$, etc., are shorthand for the corresponding subtuples of $a$.

The sets $\{t\} \times A$, $\{t\} \cup A$, $A \mi \{t\}$, etc., shall be simply written as $t \times A$, $t \cup A$, $A \mi t$, etc., when it is clearly understood that $t$ is an element and hence must be interpreted as a singleton in these expressions.

For $a \in A_{\tilde E}$, the fiber $\{b : ( b, a) \in A \} \sub A_E$ over $a$ is often denoted by $A_a$. Note that, in the discussion below, the distinction between the two sets $A_a$ and $A_a \times a$ is usually immaterial and hence they may and  shall be tacitly identified. In particular, given a function $f : A \fun B$ and $b \in B$, the pullback $f^{-1}(b)$ is sometimes written as $A_b$ as well. This is a special case since functions are routinely identified with their graphs. This notational scheme is especially useful when the function $f$ has been clearly understood in the context and hence there is no need to spell it out all the time.
\end{nota}

The definitions of the language $\lan{T}{RV}{}$ and the $\lan{T}{RV}{}$-theory $\TCVF$ will not be repeated here, see \cite[\S~2.2]{Yin:tcon:I} for detail.

\begin{rem}\label{signed:Gam}
Recall from \cite[Definition~2.7]{Yin:tcon:I} that, although the behavior of the valuation map $\abval$ in the traditional sense is coded in $\TCVF$, we use the \emph{signed valuation map}
\[
\vv : \VF \to^{\rv} \RVV \to^{\vrv} \GAA,
\]
since it is more natural in the present setting; here the subscripts indicate the inclusion of the ``middle element'' $0$. The \emph{signed value group} $\Gamma_0$, as a quotient of $\RV_0$, is multiplicatively written and the induced ordering $\leq$ on it no longer needs to be inverted. It is also tempting to use this ordering $\leq$ on the \emph{value group} $\absG_{\infty}$ instead of its inverse, but this makes citing results in the literature a bit awkward. We shall actually abuse the notation and denote the ordering $\leq^{-1}$ on $\abs{\Gamma}_{\infty}$ also by $\leq$; this should not cause confusion since the ordering on $\Gamma_0$ will rarely be used (we will indicate so explicitly when it is used).

The axioms in \cite[Definition~2.7]{Yin:tcon:I} guarantee that the ordered abelian group ${\GAA} /{\pm 1}$ (here $\vv(\pm 1)$ is just written as $\pm 1$) with the bottom element $0$ is isomorphic to $\abs{\Gamma}_{\infty}$ if either one of the orderings is inverted. So $\abval$ may be thought of as the composition $\vv/{\pm 1} : \VF \fun {\GAA} /{\pm 1}$.

The \emph{sign function} $\sgn : \Gamma^n \fun \pm$ is given by $\gamma \efun +$ if $\Pi \gamma \in \Gamma^+$ and $\gamma \efun -$ if $\Pi \gamma \in \Gamma^-$, where $\Pi (\gamma_1, \ldots, \gamma_n) = \gamma_1 \cdot  \ldots  \cdot \gamma_n$. We may identify $\pm$ with $\pm 1\in \Gamma$.
\end{rem}

\begin{thm}\label{theos:qe}
The theory $\TCVF$ admits quantifier elimination and is weakly \omin-minimal.
\end{thm}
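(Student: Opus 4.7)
The plan is to reduce both assertions to the corresponding facts for $T_{\textup{convex}}$ proved by van den Dries--Lewenberg, using the observation that the $\RV$-sort is, up to a natural interpretation, already present in the $\lan{}{convex}{}$-reduct. Concretely, the $\lan{}{convex}{}$-reduct of any $\TCVF$-model is a $T_{\textup{convex}}$-model, and van den Dries--Lewenberg give quantifier elimination for $T_{\textup{convex}}$ (since $T$ admits QE and is universally axiomatized) as well as weak \omin-minimality of its $\VF$-reduct. The maximal ideal $\MM$, the equivalence relation $x \sim y \liff x/y \in 1 + \MM$, the map $\vrv$, the induced ordering on $\Gamma_0$, the \T-model structure on $\K$ and the map $\rv$ itself are all quantifier-free $\lan{}{convex}{}$-definable from $\OO$; so $\lan{T}{RV}{}$ is, up to bi-interpretation, a definitional expansion of $\lan{}{convex}{}$ enriched with a new sort.

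My approach to the QE claim is the standard embedding test: let $\mmdl_1, \mmdl_2$ be sufficiently saturated $\TCVF$-models, $\mdl A \sub \mmdl_1$ a small $\lan{T}{RV}{}$-substructure, $f : \mdl A \to \mmdl_2$ an $\lan{T}{RV}{}$-embedding; we show $f$ extends to an elementary embedding by successively extending one element at a time. Two cases arise.

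\textbf{Extension by a $\VF$-element $a$.} The quantifier-free $\lan{T}{RV}{}$-type $\tp^{\text{qf}}(a/\mdl A)$ records, for every $\mdl A$-polynomial $p(x)$, both the value $\rv(p(a))$ and the $\lan{T}{}{}$-cut of $p(a)$ over $\VF(\mdl A)$. These two pieces of data together determine the quantifier-free $\lan{}{convex}{}$-type of $a$ over the $\lan{}{convex}{}$-substructure generated by $\VF(\mdl A)$ together with preimages in $\OO$ of the relevant residues --- one uses saturation in $\mmdl_2$ to realize the matching cut and adjust preimages modulo $1 + \MM$ so as to match all the prescribed $\rv$-values. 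Van den Dries--Lewenberg QE for $T_{\textup{convex}}$ then produces the required extension on the $\VF$-side; the $\RV$-side extension is forced since it is quantifier-free definable from the $\VF$-side.

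\textbf{Extension by an $\RV$-element $t$.} Lift $t$ to any $a \in \VF(\mmdl_1)$ with $\rv(a) = t$, and reduce to the previous case by adjoining $a$. Saturation supplies a matching lift on the $\mmdl_2$-side with the correct $\vrv$-value and, where relevant, the correct residue.

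For weak \omin-minimality, once QE is in place it suffices to analyze atomic $\lan{T}{RV}{}$-formulas $\varphi(x; \bar b, \bar s)$ with $x$ a single $\VF$-variable and parameters $\bar b \in \VF, \bar s \in \RV$. After choosing $\VF$-lifts $\bar a$ of $\bar s$ (using saturation), the set defined by $\varphi$ becomes $\lan{}{convex}{}$-definable with parameters $\bar b, \bar a$, and hence by the weak \omin-minimality of $T_{\textup{convex}}$ is a finite union of convex subsets of $\VF$; this property is then inherited by arbitrary $\lan{T}{RV}{}$-definable subsets of $\VF$.

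The main obstacle is the $\VF$-extension step, specifically verifying that the $\rv$-values $\rv(p(a))$ are already controlled by the $\lan{T}{}{}$-cut of $a$ over $\VF(\mdl A)$ together with $\tp^{\text{qf}}_{\RV}(a/\mdl A)$; this uses the power-boundedness of $T$ (to ensure that germs at cuts behave polynomially in a controlled way) and is the point where one must be most careful about the interaction between the two sorts.
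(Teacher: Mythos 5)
The paper's proof of this theorem is a pointer: it cites [Yin:tcon:I, Theorem~2.16, Corollary~2.18] and [DriesLew95, Corollary~3.14], so the argument lives in the first installment of the series, not here. Your embedding-test reduction to van den Dries--Lewenberg is the right general shape, and close in spirit to what the cited paper does, but the sketch has two gaps that are not just details to fill.

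First, you consistently write ``$\mdl A$-polynomial $p(x)$'' where you need arbitrary $\mdl A$-definable $\lan{T}{}{}$-terms. This is not cosmetic: the whole difficulty of the $T$-convex setting is showing that $\rv$ interacts tamely with \emph{all} $T$-definable one-variable functions, and the tool for this is the valuation property (Tyne, van den Dries--Speissegger), which is where power-boundedness enters --- near a cut a definable germ looks like $x \mapsto c\,x^{\lambda}(1+\epsilon(x))$ with $\lambda$ in the field of exponents and $\vv(\epsilon(x))>0$. That statement has no polynomial counterpart, and your final paragraph gestures at the issue without making it the substance of the proof, which is where it belongs. Second, your VF-extension step claims ``the $\RV$-side extension is forced since it is quantifier-free definable from the $\VF$-side,'' but this only holds when $\rv : \VF(\mdl A) \to \RV(\mdl A)$ is surjective. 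In general $\mdl A$ contains $\RV$-sort elements with no $\VF$-preimage in $\mdl A$, and the new element $a$ must satisfy constraints like $\rv(g(a,\bar b))\cdot t^{-1} = 1$ with $t \in \RV(\mdl A)\smallsetminus \rv(\VF(\mdl A))$; these cannot be expressed over $\VF(\mdl A)$ in $\lan{}{convex}{}$. Your proposed fix --- choose preimages in $\OO$ and adjust modulo $1+\MM$ --- turns the extension problem into an embedding problem over a \emph{larger} substructure whose solvability in $\mmdl_2$ is precisely what you are trying to establish, so as stated the step is circular. Making this noncircular (for instance by first reducing to $\VF$-generated substructures and showing the reduction preserves partial embeddings) is the genuine technical content of the QE proof, and the proposal leaves it open. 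The weak $o$-minimality paragraph, granted QE, is fine.
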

\begin{proof}
See \cite[Theorem~2.16, Corollary~2.18]{Yin:tcon:I} and \cite[Corollary~3.14]{DriesLew95}.
\end{proof}

Recall from \S~\ref{section:intro} that $\mmdl$ is a sufficiently saturated $\TCVF$-model. Occasionally, when we work in the \LT-reduct $\mdl R$ of $\mmdl$, or just wish to emphasize that a set is definable in $\mdl R$ instead of $\mmdl$, the symbol ``$\lan{T}{}{}$'' or ``$T$'' will be inserted into the corresponding places in the terminology.

\begin{conv}\label{topterm}
Since, apart from the binary relation $\leq$, the language $\lan{T}{}{}$ only has function symbols (see \cite[Remark~2.3]{Yin:tcon:I}), we may and shall assume that, in any $\lan{T}{RV}{}$-formula, every \LT-term occurs in the scope of an instance of the function symbol $\rv$. For example, if $f(x)$, $g(x)$ are \LT-terms then the formula $f(x) < g(x)$ is equivalent to $\rv(f(x) - g(x)) < 0$. The \LT-term $f(x)$ in $\rv(f(x))$ is referred to as a \emph{top \LT-term}.
\end{conv}

The default topology on  $\VF$ is of course the order topology and the default topology on $\VF^n$ is the corresponding product topology, similarly for $\RV$, $\Gamma$, etc.

\begin{rem}
We refer to \cite{mac:mar:ste:weako} for the general theory of weak \omin-minimality. Its dimension theory  gives rise to the notion of the \emph{$\VF$-dimension} of a definable set $A$,  denoted by $\dim_{\VF}(A)$, which is the largest natural number $k$ such that, possibly after re-indexing of the $\VF$-coordinates, $\pr_{\leq k}(A_t)$ has nonempty interior for some $t \in A_{\RV}$. There are alternative characterizations of the  operator $\dim_{\VF}$, see \cite[Lemma~2.27]{Yin:tcon:I}.

Weakly \omin-minimal dimension is also applicable in the $\RV$-sort; we call it  the \emph{$\RV$-dimension} and the corresponding operator is denoted by $\dim_{\RV}$, see \cite[Remark~2.28]{Yin:tcon:I}. Similarly, we use \omin-minimal dimension in the $\Gamma$-sort (the value group $\Gamma$ is thought of informally as a definable sort, similarly for the residue field $\K$, etc.) and call it the \emph{$\Gamma$-dimension}, the corresponding operator is denoted by $\dim_{\Gamma}$.
\end{rem}

\begin{lem}[{\cite[Lemma~2.29]{Yin:tcon:I}}]\label{dim:cut:gam}
Suppose that $U \sub \RV^n$ is a definable set with $\dim_{\RV}(U) = k$. Then $\dim_{\RV}(U_{\gamma}) = k$ for some $\gamma \in \vrv(U)$.
\end{lem}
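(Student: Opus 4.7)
My plan is to unpack the weakly \omin-minimal definition of $\RV$-dimension and then exploit the interplay between the residue field and the value group. Since $\dim_{\RV}(U) = k$, there exists $E \sub [n]$ with $|E| = k$ such that $\pr_E(U)$ has nonempty interior in $\RV^k$; after reindexing take $E = [k]$. Pick an open box $B = B_1 \times \cdots \times B_k \sub \pr_E(U)$.

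I would then refine $B$ in two stages. First, each nonzero fiber $\vrv^{-1}(\eta_i) \sub \RV$ is an open convex subset of $\RV$, and these fibers are stacked along the order on $\RV$, so every open interval of $\RV$ contains an open subinterval sitting inside a single fiber. Shrinking each $B_i$ accordingly produces an open box $B^{(1)} \sub B$ with $\vrv(B^{(1)}) = \{\eta\}$ for some $\eta \in \Gamma^k$ with all coordinates nonzero. Second, by weakly \omin-minimal cell decomposition applied to $U \cap (B^{(1)} \times \RV^{n-k})$, there is a cell of $\RV$-dimension $k$ on which the first $k$ coordinates range over an open subset $B' \sub B^{(1)}$ and the remaining coordinates are given by a definable function $g : B' \fun \RV^{n-k}$ with $(x, g(x)) \in U$ throughout $B'$.

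Since $B'$ lies inside the single $\vrv$-fiber $\vrv^{-1}(\eta)$, which is essentially an open subset of $\K^{\times k}$, the composite $\vrv \circ g : B' \fun \Gamma^{n-k}$ is a definable map from a $\K$-definable set into $\Gamma$; by orthogonality of $\K$ (as a pure \T-model) and $\Gamma$ (as a pure \omin-minimal ordered group) in $\TCVF$, this map is constant on some open subset $B'' \sub B'$, with value $\gamma'' \in \Gamma^{n-k}$, say. Setting $\gamma := (\eta, \gamma'') \in \Gamma^n$, the graph $\{(x, g(x)) : x \in B''\}$ lies in $U_\gamma$ and projects under $\pr_E$ onto the open box $B''$, yielding $\dim_{\RV}(U_\gamma) \geq k$; the reverse inequality is trivial from $U_\gamma \sub U$. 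The main obstacle will be the orthogonality step: justifying that a definable map from an open subset of $\K^{\times k}$ into $\Gamma^{n-k}$ is locally constant. This should follow from the stable embeddedness of both sorts and the absence of any nontrivial definable interaction between $\K$ and $\Gamma$ over $\mdl S$, itself a consequence of the quantifier elimination for $\TCVF$ recorded in Theorem~\ref{theos:qe}; but isolating this input cleanly in the present formalism is where the real technical care is required.
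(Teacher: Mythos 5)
Your overall strategy---pass to an open box sitting inside a single $\vrv$-fiber and then exploit the orthogonality of $\K$ and $\Gamma$---is the right one, and steps 1, 2, 4, 5 are essentially sound. The problem is step 3. You appeal to ``weakly \omin-minimal cell decomposition'' to produce an open $B' \sub B^{(1)}$ and a \emph{definable function} $g : B' \fun \RV^{n-k}$ whose graph lies in $U$. But weakly \omin-minimal theories do not admit cell decomposition into function-bounded cells in the naive sense: in the Macpherson--Marker--Steinhorn framework cells are bounded by definable \emph{cuts} rather than definable elements, and a cell of positive fiber dimension need not contain the graph of any definable function. What you actually need in step 3 is a definable Skolem function for the projection $U \cap (B^{(1)} \times \RV^{n-k}) \fun B^{(1)}$, and $\TCVF$ does not have definable choice in the $\RV$-sort over an arbitrary small substructure $\mdl S$ (the hypothesis of Remark~\ref{rem:hyp} is not in force at this point). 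The gap can be patched by enlarging $\mdl S$ to an elementary substructure, since the conclusion is purely existential and dimension is insensitive to parameters; but you do not say this, and it is in any case unnecessary.

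The cleaner fix, which also dissolves the worry you flag at the end about ``isolating the orthogonality input,'' is to dispense with the section and work with the definable set
\[
W \;=\; \bigl\{(x, \delta) \in B^{(1)} \times \Gamma^{n-k} \;:\; \exists\, y\ \bigl(\vrv(y) = \delta \ \wedge\ (x, y) \in U\bigr)\bigr\},
\]
which projects onto $B^{(1)}$. Since $\vrv(B^{(1)})$ is a singleton, this is exactly the situation of Lemma~\ref{resg:decom}: there is a finite partition $(B_i)_i$ of $B^{(1)}$ such that the fiber $W_x \sub \Gamma^{n-k}$ is constant over each $B_i$. Some $B_i$ has $\RV$-dimension $k$ (hence nonempty interior in $\RV^k$); its associated constant fiber is nonempty since $\pr_{[k]}(W) = B^{(1)}$. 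Choosing any $\delta$ in that fiber and setting $\gamma = (\vrv(B^{(1)}), \delta)$ gives $\pr_{[k]}(U_\gamma) \supseteq B_i$ and therefore $\dim_{\RV}(U_\gamma) = k$. In short, the real weak link is the definable section in step 3, not the orthogonality statement you were worried about: once you trade $g$ for $W$, the orthogonality you need is precisely the rectangular decomposition the paper already records as Lemma~\ref{resg:decom}.
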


\begin{ter}[Sets and subsets]\label{nota:sub}
By a definable set in $\VF$ we mean a definable subset in $\VF$, by which we just mean a subset of $\VF^n$ for some $n$, unless indicated otherwise; similarly for other sorts or structures  in place of $\VF$ that have been clearly understood in the context.
\end{ter}

\begin{rem}\label{rem:hyp}
By \cite[Lemma~3.19]{Yin:tcon:I}, the substructure $\mdl S$ is definably closed. If $\mdl S$ is $\VF$-generated --- that is, the map $\rv$ is surjective in $\mdl S$ --- and $\Gamma(\mdl S)$ is nontrivial then $\mdl S$ is indeed an elementary substructure and hence every definable set contains a definable point (see \cite[Hypothesis~5.11]{Yin:tcon:I}).
\end{rem}

Note that  $\mdl S$ is actually regarded as a part of the language now and hence, contrary to the usual convention in the model-theoretic literature, ``$\0$-definable'' or ``definable'' only means ``$\mdl S$-definable'' instead of ``parametrically definable'' if no other qualifications are given. To simplify the notation, we shall not mention $\mdl S$ and its extensions explicitly unless necessary. For example, the definable closure operator $\dcl_{\mdl S}$, etc., will simply be written as $\dcl$, etc.

\begin{rem}\label{pillars}
Two pillars at the very foundation of our construction are \cite[Theorems~A,~B]{Dries:tcon:97}. The former theorem says that the structure of definable sets in the $\K$-sort is precisely that given by the theory $T$. The latter theorem says that the structure of definable sets in the $\abs \Gamma$-sort is precisely that given by the \omin-minimal theory of nontrivially ordered vector spaces over the field of exponents of the theory $T$ (denoted by $\KKK$); thus, modulo the sign, this is also the structure of definable sets in the $\Gamma$-sort. In particular, every definable function in the $\Gamma$-sort is definably piecewise $\KKK$-linear modulo the sign (see Lemma~\ref{gam:pulback:mono} below for a more precise statement, and for simplicity we shall henceforth omit the modifier ``modulo the sign'').
\end{rem}

\begin{lem}[{\cite[Proposition~5.8]{Dries:tcon:97}}]\label{gk:ortho}
If $f : \Gamma \fun \K$ is a definable function then $f(\K)$ is finite. Similarly, if $g : \K \fun \Gamma$ is a definable function then $g(\Gamma)$ is finite.
\end{lem}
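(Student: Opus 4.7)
The lemma is, in essence, an orthogonality statement between the residue field $\K$ and the value group $\Gamma$, and my plan is to prove it by establishing the stronger rectangular decomposition: every definable subset $D \sub \Gamma^m \times \K^n$ is a finite union of sets of the form $A_i \times B_i$ with $A_i \sub \Gamma^m$ and $B_i \sub \K^n$ each definable. Granting this decomposition, the lemma follows quickly. Let $G = \{(\gamma, f(\gamma)) : \gamma \in \Gamma\} \sub \Gamma \times \K$ be the graph of $f$ and write $G = \bigcup_{i=1}^{N} A_i \times B_i$. Since $G$ is a functional graph, each fiber $G_\gamma$ is a singleton, so whenever $A_i \neq \0$ the corresponding $B_i$ must itself be a singleton. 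Hence $f(\Gamma) \sub \bigcup_i B_i$ is finite, and the assertion for $g : \K \fun \Gamma$ follows by symmetry.

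To prove the rectangular decomposition, I would combine the quantifier elimination of $\TCVF$ (Theorem~\ref{theos:qe}) with the pillar theorems (Remark~\ref{pillars}). Write a defining formula $\varphi(\bar\gamma, \bar c)$ for $D$ as a quantifier-free $\lan{T}{RV}{}$-formula with parameters from $\mdl S$. By Pillars, the induced structure on $\K$ is precisely that of $T$, and the induced structure on $\Gamma$ is precisely that of an ordered $\KKK$-vector space; crucially, neither the $\lan{T}{}{}$-signature on $\K$ nor the $\KKK$-vector space signature on $\Gamma$ contains a function symbol that takes an argument in one of the two sorts and returns a value in the other, and the cross-sort function symbols of $\lan{T}{RV}{}$ itself (namely $\rv$, $\vv$, $\res$, $\vrv$) all originate in the $\VF$- or $\RV$-sort. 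Hence, after using Convention~\ref{topterm} to unpack top $\lan{T}{}{}$-terms, every atomic constituent of $\varphi$ should involve variables from only one of the two sorts, and placing $\varphi$ in disjunctive normal form then yields the desired rectangular decomposition of $D$.

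The main obstacle lies in the $\mdl S$-parameters that appear in $\varphi$: they may be $\VF$- or $\RV$-elements, and in principle could combine with the $\Gamma$- and $\K$-variables to form composite terms such as $\vrv(t \cdot c)$, with $t \in \RV(\mdl S)$ and $c$ a $\K$-variable, that appear to entwine the two sorts. To handle this, I would case-split on the $\RV$-type of the relevant parameters and use quantifier elimination within the $\RV$-sort to reduce each atomic formula to one whose parameters separate into $\Gamma$- and $\K$-parts (for instance $\vrv(tc) = \vrv(t) + \vrv(c) = \vrv(t)$ when $c \in \K$, eliminating the apparent mixing). A more conceptual route, which sidesteps this syntactic bookkeeping altogether, is to invoke stable embeddedness of the $\K$- and $\Gamma$-sorts in $\TCVF$: each is a pure reduct of its own o-minimal structure, and the resulting pair of orthogonal o-minimal theories admits no definable relations across the sorts beyond the trivial rectangles, which is essentially the content of Dries' proof in the cited reference.
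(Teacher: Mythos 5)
The paper offers no proof for this lemma: it is quoted wholesale from \cite[Proposition~5.8]{Dries:tcon:97}. So the question is whether your sketch constitutes a genuine alternative argument, and I think it does not quite get there.

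Your reduction of the lemma to the rectangular decomposition is fine, and the rectangular decomposition (orthogonality of $\K$ and $\Gamma$) is indeed the right result to aim for; it is essentially the content of the cited van den Dries proposition. The problem is your proof of the decomposition. The ``no cross-sort function symbols'' inspection is not available here, because $\K$ and $\Gamma$ are not separate sorts of $\lan{T}{RV}{}$: there is a single $\RV$-sort, $\K$ sits inside it as a definable subset (namely $\vrv^{-1}(1) \cup \{0\}$), and $\Gamma$ is the quotient $\RV_0 / \K^{\times}$ via $\vrv$. After quantifier elimination a formula defining a subset of $\Gamma^m \times \K^n$ is a quantifier-free $\lan{T}{RV}{}$-formula in $\RV$-variables, and its atomic constituents are $\RV$-atomic formulas, which can and do entwine the two subquotients through the multiplication and the order on $\RV$. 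For instance, with $t$ a $\K$-variable and $u$ an $\RV$-variable representing a $\Gamma$-element, the atomic formula $t \cdot u < 1$ is genuinely mixed: when $\vrv(u) \neq 1$ its truth is governed by $\vrv(u)$, but when $\vrv(u) = 1$ it imposes a nontrivial condition on the $\K$-element $tu$. Disjunctive normal form therefore does not separate variables by sort the way your sketch assumes. Relatedly, Remark~\ref{pillars} tells you what the induced structure is on $\K$ alone and on $\Gamma$ alone; it does not tell you what the induced structure is on $\K \times \Gamma$ jointly, which is exactly the question the rectangular decomposition answers. Your final remark (invoke stable embeddedness / orthogonality, ``which is essentially the content of Dries' proof'') is correct but concedes the point: that orthogonality statement is precisely \cite[Proposition~5.8]{Dries:tcon:97}, so invoking it is no shortcut, it is the citation the paper itself makes.

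Incidentally, the statement as printed contains typos ($f(\K)$ should read $f(\Gamma)$ and $g(\Gamma)$ should read $g(\K)$); you silently read it correctly, but it is worth noting.
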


\begin{rem}[\omin-minimal sets in $\RV$]\label{omin:res}
The theory of \omin-minimality, in particular its terminologies and notions, may be applied to a set $U \sub \RV^n$ if $\vrv(U)$ is a singleton or, more generally, is finite. For example, we shall say that $U$ is a \emph{cell} if the multiplicative translation $U / u \sub (\K^+)^n$ of $U$ by some $u \in U$ is an \omin-minimal cell (see \cite[\S~3]{dries:1998}); this definition does not depend on the choice of $u$. Similarly, the \emph{\omin-minimal Euler characteristic} $\chi(U)$ of such a set $U$ is the \omin-minimal Euler characteristic of $U / u$ (see \cite[\S~4.2]{dries:1998}). This definition may be extended to disjoint unions of finitely many (not necessarily disjoint) sets $U_i \sub \RV^n \times \Gamma^m$ such that each $\vrv(U_i)$ is finite.
\end{rem}

Taking disjoint union of finitely many definable sets of course will introduce extra bookkeeping coordinates, but we shall suppress this in notation.

\begin{thm}[{\cite[\S~8.2.11]{dries:1998}}]\label{groth:omin}
Let $U$, $V$ be definable sets in $\RV$ such that $\vrv(U)$, $\vrv(V)$ are both finite. Then there is a definable bijection between $U$ and $V$ if and only if
$\dim_{\RV}(U) = \dim_{\RV}(V)$ and $\chi(U) = \chi(V)$.
\end{thm}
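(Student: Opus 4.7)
The plan is to reduce the statement to the classical o-minimal Grothendieck ring result for $T$ (the reference \cite[\S~8.2.11]{dries:1998}) by exploiting the multiplicative translation that identifies each fiber of $\vrv$ with a definable set in a power of $\K^+$, on which the induced structure is exactly that of $T$ by Dries's Theorem A (as recalled in Remark~\ref{pillars}).

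For the forward direction, suppose $f : U \fun V$ is a definable bijection. Preservation of $\RV$\nobreakdash-dimension follows from the fact that weak \omin-minimal dimension is invariant under definable bijection, together with the characterization of $\dim_{\RV}$ as the ordinary \omin-minimal dimension of the translated pieces (per Remark~\ref{omin:res}). For the Euler characteristic, I would write $U = \bigsqcup_{i \leq r} U_i$ with each $\vrv(U_i)$ a single point $\gamma_i \in \vrv(U)^n$, and similarly $V = \bigsqcup_{j \leq s} V_j$, pick (after suitable parameter extension which is harmless since the statement is about definability, or by refining within each fiber) representatives $u_i \in U_i$, $v_j \in V_j$, so that the translates $U_i/u_i$, $V_j/v_j$ sit inside powers of $\K^+$ (modulo bookkeeping for sign patterns). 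The definition of $\chi$ given in Remark~\ref{omin:res} then agrees with the \omin-minimal Euler characteristic of these translates, and the invariance of the latter under definable bijection (in the $T$-structure on $\K$) gives $\chi(U) = \chi(V)$.

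For the converse, assume $\dim_{\RV}(U) = \dim_{\RV}(V)$ and $\chi(U) = \chi(V)$. Perform the same decomposition and translation as above to produce definable sets $U^\flat = \bigsqcup_i (U_i/u_i)$ and $V^\flat = \bigsqcup_j (V_j/v_j)$ in powers of $\K$ (with extra coordinates tracking the fiber index $i$ or $j$ and the ambient sign pattern). By construction these have the same \omin-minimal dimension and the same \omin-minimal Euler characteristic in the $T$-structure on $\K$. By the cited classical result \cite[\S~8.2.11]{dries:1998} applied to $T$, there is a $T$\nobreakdash-definable bijection $g : U^\flat \fun V^\flat$. Since the multiplicative translations by $u_i$ and $v_j$ and the sign/bookkeeping data are definable in $\lan{T}{RV}{}$, composing $g$ with these translations yields a definable bijection $U \fun V$ in $\mmdl$.

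The only genuine obstacle is handling the sign data cleanly: a single fiber $\vrv^{-1}(\gamma)$ in $\RV^n$ splits according to the $2^n$ possible sign patterns in the coordinates, so the translation identification $U_i/u_i \hookrightarrow (\K^+)^n$ really produces a disjoint union indexed by sign patterns. This is exactly the kind of finite bookkeeping that the parenthetical remark in Remark~\ref{omin:res} accommodates (``disjoint unions of finitely many sets \ldots such that each $\vrv(U_i)$ is finite''), so once the decomposition is set up correctly, the reduction to the classical theorem in the $T$-structure on $\K$ is immediate. Everything else (additivity of $\chi$, matching up pieces across the two decompositions) is absorbed by invoking the classical theorem on the full disjoint union $U^\flat$ rather than piece by piece.
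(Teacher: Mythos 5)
Your reduction to the classical o-minimal Grothendieck ring result via multiplicative translation is exactly the intended reading of the citation, and Remark~\ref{omin:res} is set up precisely to support it. Two points deserve attention. First, the sign bookkeeping you anticipate is actually vacuous: in this paper $\Gamma$ is the \emph{signed} value group (Remark~\ref{signed:Gam}), so a single fiber $\gamma^\sharp = \vrv^{-1}(\gamma) \sub \RV^n$ already carries a fixed sign pattern, and for $u \in U_i$ the translate $U_i/u$ lands directly in $(\K^+)^n$, as Remark~\ref{omin:res} states verbatim --- there is no further $2^n$-fold splitting. Second, and more substantively, in the converse direction the bijection you obtain after translating by $u_i$, $v_j$ and invoking the cited result is a priori only definable over $\mdl S\langle u_i, v_j\rangle$, whereas ``definable'' in this paper means $\mdl S$-definable. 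Your parenthetical ``harmless since the statement is about definability'' is right for the forward implication, where only the numerical values of $\dim_{\RV}$ and $\chi$ are at stake, but it does not by itself license discarding the parameters $u_i$, $v_j$ in the converse. This is repaired by the standing hypothesis imposed at the start of Section~\ref{section:groth} (Remark~\ref{rem:hyp}: every definable set contains a definable point), which lets you choose the $u_i$, $v_j$ to be definable; since the theorem is only invoked after that hypothesis is in force, the gap is harmless in context, but you should say so explicitly rather than gesture at a ``suitable parameter extension.''
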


\begin{nota}\label{gamma:what}
We shall write $\gamma$ as $\gamma^\sharp$ when we want to emphasize that it is the set $\vrv^{-1}(\gamma)$ or $\abvrv^{-1}(\gamma)$ in $\mmdl$ that is being considered. More generally, if $I$ is a set in $\Gamma$ or $\absG$ then we write $I^\sharp = \bigcup\{\gamma^\sharp: \gamma \in I\}$. Similarly, if $U$ is a set in $\RV$ then $U^\sharp$ stands for the set $\bigcup\{\rv^{-1}(t): t \in U\}$, and hence if $I$ is a set in $\Gamma$ then $I^{\dsh}$ is a set in $\VF$.
\end{nota}

\begin{defn}[Valuative discs]\label{defn:disc}
A set $\gb \sub \VF$ is an \emph{open disc} if there is a $\gamma \in \absG_{\infty}$ and a $b \in \gb$ such that $a \in \gb$ if and
only if $\abval(a - b) > \gamma$; it is a \emph{closed disc} if $a \in \gb$ if and only if $\abval(a - b) \geq \gamma$. The point $b$ is a \emph{center} of $\gb$. The value $\gamma$ is the \emph{valuative radius} or simply the \emph{radius} of $\gb$, which is denoted by $\rad (\gb)$. A set of the form $t^\sharp$, where $t \in \RV$, is called an \emph{$\RV$-disc}.

A closed disc with a maximal open subdisc removed is called a \emph{thin annulus}.

A set $\gp \sub \VF^n \times \RV_0^m$ of the form $\prod_{i \leq n} \gb_i \times t$ is called an (\emph{open, closed, $\RV$-}) \emph{polydisc} if each $\gb_i$ is an (open, closed, $\RV$-) disc. The \emph{radius} $\rad(\gp)$ of $\gp$ is the tuple $(\rad(\gb_1), \ldots, \rad(\gb_n))$. The open and the closed polydiscs centered at a point $(a,t) \in \VF^n \times \RV_0^m$ with radius $\gamma \in \absG_\infty^n$ are denoted by $\go((a,t), \gamma)$, $\gc((a,t), \gamma)$, respectively. The subdiscs $\go(0, \gamma)$, $\gc(0, \gamma)$ of $\VF$ are more suggestively denoted by $\MM_\gamma$, $\OO_\gamma$, respectively.

%, whereas the \emph{radius} of $\gp$ is $\min \rad(\gp)$. If all the discs $\gb_i$ are of the same valuative radius then $\gp$ is referred to as a \emph{ball}.

The \emph{$\RV$-hull} of a set $A$, denoted by $\RVH(A)$, is the union of all the $\RV$-polydiscs whose intersections with $A$ are nonempty. If $A$ equals $\RVH(A)$ then $A$ is called an \emph{$\RV$-pullback}.
\end{defn}

\begin{defn}[$\vv$-intervals]
Let $\ga$, $\gb$ be discs, not necessarily disjoint. The subset $\{\ga < x < \gb\}$ of $\VF$, if it is not empty, is called an \emph{open $\vv$-interval} and is denoted by $(\ga, \gb)$, whereas the subset
\[
\{a \in \VF : \ex{x \in \ga, y \in \gb} ( x \leq a \leq y) \},
\]
if it is not empty, is called a \emph{closed $\vv$-interval} and is denoted by $[\ga, \gb]$. The other $\vv$-intervals $[\ga, \gb)$, $(-\infty, \gb]$, etc., are defined in the obvious way, where $(-\infty, \gb]$ is a closed (or half-closed) $\vv$-interval that is unbounded from below.

Let $A$ be such a $\vv$-interval. The discs $\ga$, $\gb$ are called the \emph{end-discs} of $A$. If $\ga$, $\gb$ are both points in $\VF$ then of course we just say that $A$ is an interval and if $\ga$, $\gb$ are both $\RV$-discs then we say that $A$ is an $\RV$-interval. If $A$ is of the form $(\ga, \gb]$ or $[\gb, \ga)$, where $\ga$ is an open disc and $\gb$ is the smallest closed disc containing $\ga$, then $A$ is called a \emph{half thin annulus} and the \emph{radius} of $A$ is $\rad(\gb)$.

Two $\vv$-intervals are \emph{disconnected} if their union is not a $\vv$-interval.
\end{defn}

\begin{rem}[Holly normal form]\label{rem:HNF}
By the valuation property (see \cite[Proposition~9.2]{DriesSpei:2000} and \cite{tyne}) and \cite[Proposition~7.6]{Dries:tcon:97}, we have an important tool called \emph{Holly normal form}, see \cite[Theorem~4.8]{holly:can:1995} (henceforth abbreviated as HNF); that is, every definable subset of $\VF$ is a unique union of finitely many definable pairwise disconnected $\vv$-intervals. This is a generalization of the \omin-minimal condition.
\end{rem}

\subsection{Further auxiliaries}
We gather here some technicalities delineating the landscape of definite sets in $\mmdl$  that will be directly cited below, most of which are  from \cite{Yin:tcon:I}.

A definable function $f$ is \emph{quasi-\LT-definable} if it is a restriction of an \LT-definable function (with parameters in $\VF(\mdl S)$, of course).

\begin{lem}[{\cite[Lemma~3.3]{Yin:tcon:I}}]\label{fun:suba:fun}
Every definable function $f : \VF^n \fun \VF$ is piecewise quasi-\LT-definable, that is, there are a definable finite partition $(A_i)_i$ of $\VF^n$ and \LT-definable functions $f_i: \VF^n \fun \VF$ such that $f \rest A_i = f_i \rest A_i$ for all $i$.
\end{lem}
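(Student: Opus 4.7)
The plan is to appeal to quantifier elimination for $\TCVF$ (Theorem~\ref{theos:qe}) together with Convention~\ref{topterm}: the graph of $f$ is cut out by a quantifier-free $\lan{T}{RV}{}$-formula $\phi(x,y)$ in which every $\lan{T}{}{}$-term occurs only inside the scope of $\rv$. Putting $\phi$ in disjunctive normal form and setting $A_i = \{x \in \VF^n : \phi_i(x, f(x))\}$ for the $i$-th disjunct, the (disjointified) $A_i$'s form a definable finite partition of $\VF^n$, so it suffices to handle one piece $A_i$ on which the graph is given by a single conjunction
\[
\phi_i(x,y)\;\equiv\;\bigwedge_{j=1}^{m}\lambda_j\bigl(\rv(t_j(x,y)),\bar u\bigr),
\]
where the $t_j$ are $\lan{T}{}{}$-terms, $\bar u$ is a tuple of $\RV$-parameters from $\mdl S$, and each $\lambda_j$ is an $\RV$-sort literal.

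Next, using weak \omin-minimality of the $\RV$-sort (Lemma~\ref{dim:cut:gam} and Theorem~\ref{groth:omin}), I would further refine $A_i$ into finitely many pieces on which the tuple $\bigl(\rv(t_j(x, f(x)))\bigr)_{j \leq m} \in \RV^m$ takes a constant value $(c_1,\ldots,c_m)$. On each such piece, the fiber $\{y : \phi_i(x,y)\} = \{f(x)\}$ is a singleton expressible as a Boolean combination of the preimages $t_j(x,\cdot)^{-1}(c_j^\sharp)$. By the valuation property for power-bounded $T$-convex structures (\cite[Proposition~9.2]{DriesSpei:2000}, \cite{tyne}, cf.\ Remark~\ref{rem:HNF}) applied fibrewise in the parameter $x$, each such preimage decomposes, outside a finite $\lan{T}{}{}$-definable exceptional set, as a union of $\rv$-discs when $c_j \neq 0$, and equals the $\lan{T}{}{}$-definable zero set of $t_j(x,\cdot)$ when $c_j = 0$. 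By Holly normal form (Remark~\ref{rem:HNF}), a Boolean combination of such sets is a finite union of $\vv$-intervals, and an isolated point can only arise from one of the $\lan{T}{}{}$-definable exceptional contributions. Hence $f(x)$ lies in a finite, $\lan{T}{}{}$-definable-in-$x$ set $E(x)$.

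Finally, partitioning $A_i$ further according to which element of $E(x)$ equals $f(x)$---a branching that is itself $\lan{T}{}{}$-definable by \omin-minimal cell decomposition applied to the graph of $E$---writes $f \rest A_i$ as a restriction of an $\lan{T}{}{}$-definable function $f_i$, completing the proof. The hardest step will be the third one: extracting uniform $\lan{T}{}{}$-definability from the pointwise structural analysis, so that the candidate set $E(x)$ and the branch selection vary $\lan{T}{}{}$-definably in $x$ across the successive refinements. This uniformity is what the valuation property, combined with \omin-minimal cell decomposition in the $\lan{T}{}{}$-reduct $T$, is designed to deliver; careful bookkeeping of the parameters and the exceptional sets as $x$ ranges over $\VF^n$ constitutes the main technical burden.
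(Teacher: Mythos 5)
There is a genuine gap at your third step. You propose to refine $A_i$ into finitely many pieces on which the tuple $\bigl(\rv(t_j(x, f(x)))\bigr)_{j \le m}$ takes a constant value, citing weak \omin-minimality of the $\RV$-sort (Lemma~\ref{dim:cut:gam}, Theorem~\ref{groth:omin}). But $x \efun \rv(t_j(x, f(x)))$ is a definable map $\VF^n \fun \RV$ whose image is in general infinite, so no finite partition of $\VF^n$ can render it piecewise constant; the lemmas you cite concern sets living entirely in the $\RV$-sort and say nothing about maps out of $\VF^n$. Concretely, take $n = 1$ and the disjunct $\phi_i(x,y) \equiv (\rv(x) \in \RV^{\circ\circ}) \wedge (\rv(y - x) = 0)$, which defines $f(x) = x$ on $\MM \mi \{0\}$: then $\rv(t_1(x, f(x))) = \rv(x)$ sweeps out the whole infinite set $\RV^{\circ\circ}$, and no finite refinement of the domain makes it constant.

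This error propagates through the remainder of the argument. You treat the $c_j$ as fixed constants when forming the preimages $t_j(x,\cdot)^{-1}(c_j^\sharp)$ and the candidate finite set $E(x)$, whereas they in fact vary definably with $x$ through the $\RV$-sort; so the asserted $\lan{T}{}{}$-definability-in-$x$ of $E(x)$ is exactly what remains to be proved, and the constancy step is not a legitimate shortcut to it. Your closing paragraph correctly flags uniformity in $x$ as the crux, but the route you lay out to achieve it rests on the false refinement. The usable kernel of the idea---that $f(x)$, being the unique (hence isolated) point of the HNF decomposition of the fibre $\{y : \phi_i(x,y)\}$, must be caught by a zero or boundary value of some $\lan{T}{}{}$-term $t_j(x,\cdot)$, which is $\lan{T}{}{}$-definable over $x$, after which one uniformizes by compactness---should be run without ever freezing the $\RV$-data: argue directly that the isolated fibre point lies in a finite $\lan{T}{}{}$-definable-over-$x$ correspondence, and only then compactify.
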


\begin{cor}[Monotonicity]\label{mono}
Let $A \sub \VF$ and $f : A \fun \VF$ be a definable function. Then there is a definable finite partition of $A$ into $\vv$-intervals $A_i$ such that every $f \rest A_i$ is quasi-\LT-definable, continuous, and monotone (constant or strictly increasing or strictly decreasing). Consequently, each $f(A_i)$ is a $\vv$-interval.
\end{cor}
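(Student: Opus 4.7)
The plan is to combine Lemma~\ref{fun:suba:fun} with Holly normal form (HNF, Remark~\ref{rem:HNF}) and the classical o-minimal monotonicity theorem applied inside the \LT-reduct $\mdl R$ of $\mmdl$.

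First, apply Lemma~\ref{fun:suba:fun} (with $n = 1$) to obtain a definable finite partition $(A'_j)_j$ of $A$ and \LT-definable functions $g_j : \VF \fun \VF$ such that $f \rest A'_j = g_j \rest A'_j$ for each $j$. Each $A'_j$ is a definable subset of $\VF$, so by HNF it is a finite disjoint union of pairwise disconnected definable $\vv$-intervals. After this refinement, every piece of the partition is a definable $\vv$-interval $B$ carrying a single \LT-definable function $g$ with $f \rest B = g \rest B$.

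Next, for each such pair $(B, g)$, I apply the standard o-minimal monotonicity theorem to $g$ inside the o-minimal structure $\mdl R$, which produces a finite \LT-definable set $C \sub \VF$ such that on each connected component of $\VF \mi C$ the function $g$ is continuous and either constant or strictly monotone. Discarding the finitely many points of $B \cap C$, and intersecting the remainder of $B$ with each component of $\VF \mi C$, yields finitely many definable convex subsets of $\VF$ on each of which $f = g$ is continuous and monotone. By the uniqueness clause of HNF, any definable convex subset of $\VF$ is automatically a single $\vv$-interval, since a union of pairwise disconnected $\vv$-intervals cannot be convex. This produces the desired partition of $A$ into $\vv$-intervals (absorbing the finitely many omitted points as degenerate singleton pieces if one insists on a genuine partition).

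Finally, the consequence that each $f(A_i)$ is a $\vv$-interval is immediate: a continuous monotone or constant map sends an order-convex set to an order-convex set, so $f(A_i)$ is a definable convex subset of $\VF$, and hence a $\vv$-interval by the same appeal to HNF. The only mildly subtle point, which I regard as the main conceptual obstacle, is the general principle that every definable convex subset of $\VF$ is a $\vv$-interval; once this is granted, the argument is just a two-step refinement: first by Lemma~\ref{fun:suba:fun} plus HNF to pass from $f$ to \LT-definable pieces on $\vv$-intervals, then by o-minimal monotonicity inside $\mdl R$ to achieve continuity and strict monotonicity on each piece.
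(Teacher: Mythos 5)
Your proposal is correct and follows the same three-ingredient recipe the paper uses (Lemma~\ref{fun:suba:fun}, HNF, and \omin-minimal monotonicity in the \LT-reduct); you have simply spelled out the order of refinements and the small observation that a definable convex subset of $\VF$ must be a single $\vv$-interval by the uniqueness clause of HNF.
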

\begin{proof}
This follows from Lemma~\ref{fun:suba:fun}, \omin-minimal monotonicity, and HNF.
\end{proof}

\begin{lem}[{\cite[Lemma~3.6]{Yin:tcon:I}}]\label{RV:no:point}
Given a tuple $t$ in $\RV$, if $a \in \VF$ is $t$-definable then $a$ is actually definable. Similarly, for any tuple $\gamma$ in $\Gamma$, if $t \in \RV$ is $\gamma$-definable then $t$ is definable.
\end{lem}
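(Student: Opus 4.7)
\emph{Reduction to finite image.} For the first assertion, fix an $\lan{T}{RV}{}$-formula $\varphi(x, y)$ over $\mdl S$ with $\varphi(\mmdl, t) = \{a\}$. The set $C = \{s \in \RV^n : |\varphi(\mmdl, s)| = 1\}$ is $\mdl S$-definable and contains $t$, and the function $h \colon C \to \VF$ sending $s$ to the unique element of $\varphi(\mmdl, s)$ is $\mdl S$-definable with $h(t) = a$. Since any finite $\mdl S$-definable subset of $\VF$ is enumerable in the definable order, each of its elements lies in $\dcl(\mdl S)$; hence it suffices to prove that $h(C)$ is finite.

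\emph{Factoring through $F$, and finiteness of $E$.} By QE (Theorem~\ref{theos:qe}) together with Convention~\ref{topterm}, I may take $\varphi$ to be a Boolean combination of atomic formulas of the form $\rv(f_j(x)) \, R_j \, \tau_j(y)$, with $f_j(x)$ an \LT-term over $\mdl S$ and $\tau_j(y)$ an $\RV$-term. Setting
\[
 F \colon \VF \to \RV^m, \qquad F(x) = (\rv f_1(x), \ldots, \rv f_m(x)),
\]
the formula factors as $\varphi(x, s) \equiv \psi(F(x), s)$ for an auxiliary $\psi$; hence $\varphi(\mmdl, s) = \{x_0\}$ forces $F^{-1}(F(x_0)) = \{x_0\}$ and $h(C) \subseteq E \coloneqq \{x \in \VF : F^{-1}(F(x)) = \{x\}\}$. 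By Corollary~\ref{mono}, after a finite $\mdl S$-definable partition each $f_j$ is continuous and monotone (or constant) on $v$-intervals. On the interior of an interval $I$ on which all $f_j$ are continuous and none vanishes at $x_0$, continuity at $x_0$ provides an open disc $\gb \ni x_0$ with $f_j(x)/f_j(x_0) \in 1 + \MM$ for every $x \in \gb$ and every $j$; thus $\gb \subseteq F^{-1}(F(x_0))$ and $x_0 \notin E$. The remaining contributions to $E$ are finitely many partition-boundary points, isolated zeros of the $f_j$'s, and intervals on which some $f_j$ vanishes identically (on such an interval drop $f_j$ from $F$ and induct on $m$). Hence $E$, and therefore $h(C)$, is finite.

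\emph{The $\RV/\Gamma$ case.} The argument is entirely parallel, reducing to the claim that any $\mdl S$-definable $g \colon D \subseteq \Gamma^n \to \RV$ has finite image. Post-compose with $\vrv$ to obtain a definable $\vrv \circ g \colon D \to \Gamma$; the analog of the local-disc argument, now with continuity of \LT-terms replaced by piecewise $\KKK$-linearity of $\Gamma$-sort functions (Remark~\ref{pillars}) and open discs replaced by open $\Gamma$-cones, forces $\vrv \circ g(D)$ to be finite. On each preimage of a value $\gamma_i \in \vrv(g(D))$, pick a $\gamma_i$-definable representative $t_{\gamma_i} \in \vrv^{-1}(\gamma_i)$ and post-compose with $s \mapsto \res(s/t_{\gamma_i})$ to get an $\mdl S$-definable function into $\K$; Lemma~\ref{gk:ortho} then forces its image to be finite, and gluing the finitely many pieces yields $g(D)$ finite.

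\emph{Main obstacle.} The delicate step is the local analysis underpinning the finiteness of $E$: extracting an open disc on which $F$ is constant requires the valuation property of \cite{DriesSpei:2000}, which in the power-bounded setting is packaged into HNF (Remark~\ref{rem:HNF}). The induction on $m$ also needs careful bookkeeping of intervals on which some $f_j$ vanishes identically, together with isolated critical behaviour of the $f_j$'s; handling these uniformly is what makes the proof less cosmetic than it initially appears.
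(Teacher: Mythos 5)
Your first part is sound: the QE reduction $\varphi(x,y) \equiv \psi(F(x),y)$ via Convention~\ref{topterm}, the containment $h(C) \subseteq E = \{x : F^{-1}(F(x)) = \{x\}\}$, and the monotonicity/continuity analysis showing $E$ finite all hold together, modulo some elided bookkeeping (treatment of the $\vv$-interval end-discs from Corollary~\ref{mono}, and the nested induction over pieces where some $f_j$ vanishes identically). The present paper only cites \cite[Lemma~3.6]{Yin:tcon:I} without reproducing its proof, so I cannot compare routes, but the argument you give for the $\VF/\RV$ half is a legitimate one.

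The second part has a genuine gap at the pivotal claim that $\vrv \circ g(D)$ is finite. The justification you offer --- an ``analog of the local-disc argument'' with ``open discs replaced by open $\Gamma$-cones'' --- does not transfer. The local constancy of $\rv \circ f_j$ in the first part is forced by the valuation topology on $\VF$: the map $\rv$ forgets an open neighborhood of each nonzero point. There is no comparable ultrametric-like coarsening on $\Gamma$; all Remark~\ref{pillars} gives you is that $\vrv \circ g$ is piecewise $\KKK$-linear, and a $\KKK$-linear map with a nonzero coefficient is never locally constant on any neighborhood, cone-shaped or not. What actually kills non-constancy is a different, global mechanism rooted in $\K$--$\Gamma$ orthogonality. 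For instance: if $\vrv \circ g$ were injective on an infinite definable subset $I$, then $g(I)$ would be an infinite definable subset of $\RV$ meeting each $\vrv$-fiber in at most one point, which is impossible by weak o-minimality of the $\RV$-sort, since an infinite convex definable piece either lies inside a single $\vrv$-fiber or contains a whole fiber. One can also skip the intermediate claim entirely and argue directly that $g(D)$ is finite via Lemma~\ref{dim:cut:gam}: if $\dim_{\RV}(g(D)) = 1$ then some fiber $g(D) \cap \alpha^\sharp$ is infinite, and after dividing by any $s \in \alpha^\sharp$ this fiber is the image of an $(\alpha,s)$-definable function $\Gamma^n \to \K$, contradicting Lemma~\ref{gk:ortho} (which also needs the routine extension from $\Gamma$ to $\Gamma^n$ by induction that you do not mention). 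In either form, the real engine is orthogonality plus the dimension calculus, not local constancy; your ``Main obstacle'' paragraph correctly identifies the delicacies of the first half but misattributes the difficulty of the second.
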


\begin{lem}[{\cite[Lemma~3.20]{Yin:tcon:I}}]\label{clo:disc:bary}
Every definable closed disc $\gb$ contains a definable point.
\end{lem}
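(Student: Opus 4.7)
I would split the proof into two cases depending on whether $0 \in \gb$.

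If $0 \in \gb$, then $\gb = \OO_{\rad(\gb)}$ and $0 \in \VF(\dcl(\mdl S))$ is a definable point of $\gb$, and we are done.

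Suppose $0 \notin \gb$. Setting $\gamma = \rad(\gb) \in \Gamma(\dcl(\mdl S))$, an application of the ultrametric inequality to any pair of elements of $\gb$ forces $\vv$ to take a constant value $\delta$ on $\gb$, with $|\delta| < |\gamma|$, and forces $\rv$ to take a constant value $t \in \RV$; both $\delta$ and $t$ are definable from $\gb$. In particular $\gb$ is contained in the single $\RV$-disc $t^{\sharp}$. The crucial step is now to produce a definable element of $\gb$. My plan is to use quantifier elimination (Theorem \ref{theos:qe}) together with Convention \ref{topterm}: the $\mdl S$-definable set $\gb$ admits a quantifier-free defining formula in $\lan{T}{RV}{}$ in which every top $\lan{T}{}{}$-term is wrapped inside $\rv$. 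Because $\gb$ is a coset $c + \OO_{\gamma}$ of the additive subgroup $\OO_{\gamma}$, the valuation-theoretic content of that formula can be normalized, via HNF (Remark \ref{rem:HNF}) for handling the Boolean combinations cut out by slices in $\RV$, into the shift form
\[
\gb = \{x \in \VF : \rv(x - c(\bar a)) \in V\}
\]
for some $\lan{T}{}{}$-term $c(\bar a)$ evaluated on $\bar a \in \mdl S$ and some $\RV(\mdl S)$-definable $V \subseteq \RVV$. Evaluating the $\lan{T}{}{}$-term produces $c \in \VF(\dcl(\mdl S))$; since every representative of the coset satisfies the formula and can be taken as the shift basepoint, $c$ itself belongs to $\gb$, which is the desired definable point.

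The main obstacle is the normalization step. One has to verify that a general quantifier-free formula describing a closed disc can be reduced to the shift form $\rv(x - c) \in V$ with $c$ a $\VF(\mdl S)$-term rather than being hidden inside $\RV$-parameters. Here Lemma \ref{RV:no:point} does the decisive work: any $\VF$-element that one would be tempted to pull out using a pure $\RV$-definition is already forced to be $\mdl S$-definable over $\VF$, so the candidate center extracted from the $\RV$-factor of the defining formula genuinely lives in $\VF(\dcl(\mdl S))$. An alternative, perhaps cleaner route is to phrase this as an elimination-of-imaginaries statement for the interpretable sort $\VF / \OO_{\gamma}$: the $\mdl S$-definable element $\gb \in \VF / \OO_{\gamma}$ lifts to $\VF(\mdl S)$ by virtue of this sort being a quotient of $\VF$ and of QE in $\TCVF$, and the lift is automatically in $\gb$.
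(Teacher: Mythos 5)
There is a genuine gap at the normalization step, which is the entire content of the lemma. You correctly dispatch the case $0 \in \gb$ and correctly note that $\rv$ is constant on $\gb$ when $0 \notin \gb$, but the claim that the quantifier-free defining formula can be ``normalized\ldots into the shift form $\gb = \{x \in \VF : \rv(x - c(\bar a)) \in V\}$ for some $\lan{T}{}{}$-term $c(\bar a)$'' does not follow from the tools you cite. After quantifier elimination and Convention~\ref{topterm}, the formula is a Boolean combination of conditions on $\rv(f_i(x,\bar a))$ where the $f_i$ are arbitrary $\lan{T}{}{}$-terms (not affine in $x$); HNF tells you $\gb$ is a single $\vv$-interval, which you already know since $\gb$ is a disc, but says nothing about putting the terms $f_i$ into shift form. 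Extracting an $\lan{T}{}{}$-definable ``center'' $\theta(\bar a)$ from a general term $f(x,\bar a)$ is exactly what a Lion--Rolin--style preparation theorem for power-bounded $T$-convex structures accomplishes, and that is a substantial result, not a formal consequence of QE. Your appeal to Lemma~\ref{RV:no:point} is likewise misdirected: it would let you promote an $\RV$-definable candidate center to an $\mdl S$-definable one \emph{once you have a candidate}, but it does not produce one.

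The proposed ``alternative route'' is circular. Closed discs of radius $\gamma$ are by definition the cosets constituting $\VF/\OO_\gamma$, so the assertion that ``the $\mdl S$-definable element $\gb \in \VF/\OO_\gamma$ lifts to $\VF(\mdl S)$'' is precisely the statement of the lemma; invoking QE does not produce the lift, since QE here is for the two-sorted language $\lan{T}{RV}{}$ and does not say anything about the imaginary sort $\VF/\OO_\gamma$. A correct proof has to do real work either via a preparation theorem for the power-bounded theory $T$ (to control the top terms $f_i$), or via the o-minimality of $T$ together with a monotonicity-and-cut analysis of how $\gb$ sits relative to the $\lan{T}{}{}$-model $\VF(\mdl S)$; it cannot be conjured by rewriting the formula.
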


\begin{lem}[{\cite[Corollary~3.21]{Yin:tcon:I}}]\label{open:disc:def:point}
Let $\ga \sub \VF$ be a disc and $A$ a definable subset of $\VF$. If $\ga \cap A$ is a nonempty proper subset of $\ga$ then $\ga$ contains a definable point.
\end{lem}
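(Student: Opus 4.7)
The plan is to invoke Holly normal form to decompose $A$, locate a definable end-disc of one of its pieces that sits strictly inside $\ga$, and then apply Lemma~\ref{clo:disc:bary}.

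First, by HNF (Remark~\ref{rem:HNF}), I would write $A$ as a disjoint union $A_1 \sqcup \cdots \sqcup A_n$ of finitely many definable pairwise disconnected $\vv$-intervals $A_i = (\gp_i, \gq_i)$, where the end-discs $\gp_i, \gq_i$ are definable from $A_i$ (and either may be replaced by $\pm \infty$ in the unbounded cases). Since $\ga \cap A$ is nonempty, fix $y \in \ga \cap A_i$ for some $i$; since $\ga \not\sub A$, fix $x \in \ga \mi A$. Because $x \notin A_i$, we must have $x \leq \gp_i$ or $x \geq \gq_i$; the two cases being symmetric (and the unbounded shapes of $A_i$ forcing one of them), assume $x \leq \gp_i$.

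Next, I would show that $\gp_i$ is a proper definable subdisc of $\ga$. Since $\ga$ is order-convex and contains both $x \leq \gp_i$ and $y > \gp_i$, the closed order-interval $[x,y]$ lies in $\ga$ and meets $\gp_i$, so $\gp_i \cap \ga \neq \emptyset$. By the nesting property of valuative discs, either $\gp_i \sub \ga$ or $\ga \sub \gp_i$; the latter would force $y \in \gp_i$, contradicting $\gp_i \cap A_i = \emptyset$. Hence $\gp_i \subsetneq \ga$.

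Finally, I would extract a definable point of $\gp_i$, which must then lie in $\ga$. If $\gp_i$ is a closed disc, Lemma~\ref{clo:disc:bary} applies directly. If $\gp_i$ is an open disc (including the $\RV$-disc case), let $\gp_i'$ denote its closed hull; it is definable from $\gp_i$ alone because $\rad(\gp_i') = \rad(\gp_i)$ is an invariant of $\gp_i$. Since no valuative disc lies strictly between $\gp_i$ and $\gp_i'$, the strict containment $\gp_i \subsetneq \ga$ forces $\gp_i' \sub \ga$, and Lemma~\ref{clo:disc:bary} applied to $\gp_i'$ delivers the required definable point. The main obstacle, modest as it is, lies in the case split on the shape of $\gp_i$ and verifying that the closed hull of a definable open disc is itself definable; the unbounded cases for $A_i$ reduce to the above by the natural symmetry between $\gp_i$ and $\gq_i$.
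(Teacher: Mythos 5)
Your argument is correct and, as far as one can tell without the companion paper in hand, is the natural proof that the cited corollary must have: apply Holly normal form to $A$, locate a definable end-disc of one of the pieces strictly inside $\ga$, pass to its closed hull if needed, and invoke Lemma~\ref{clo:disc:bary}. One small imprecision: the contradiction you state for ruling out $\ga \sub \gp_i$ (namely ``$y \in \gp_i$ would contradict $\gp_i \cap A_i = \emptyset$'') is valid only when $A_i$ is open at the end-disc $\gp_i$; when $A_i$ is closed there (so $\gp_i \sub A_i$), the contradiction instead comes from $x \in \ga \sub \gp_i \sub A_i \sub A$, against $x \notin A$. Either way the inclusion $\gp_i \subsetneq \ga$ holds, so the proof goes through; you should just make the case split explicit.
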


\begin{ter}\label{rvfiber}
A \emph{$\VF$-fiber} of a set $A$ is a set of the form $A_t$, where $t \in A_{\RV}$ (recall Notation~\ref{indexing}); in particular,  a $\VF$-fiber of a function $f : A \fun B$ is a set of the form $f_t$ for some  $t \in f_{\RV}$ (here $f$ also stands for its own graph) which is indeed (the graph of) a function between two sets in $\VF$.  We say that $A$ is open if every one of its $\VF$-fibers is, $f$ is continuous if every one of its $\VF$-fibers is, and so on.
\end{ter}

\begin{defn}[Disc-to-disc]\label{defn:dtdp}
Let $f : A \fun B$ be a bijection between two sets $A$ and $B$, each with exactly one $\VF$-coordinate. We say that $f$ is \emph{concentric} if, for every $\VF$-fiber $f_{t}$ of $f$ and all open polydiscs $\ga \sub \dom(f_{t})$, $f_{t}(\ga)$ is also an open polydisc; if both $f$ and $f^{-1}$ are concentric then $f$ has the \emph{disc-to-disc property} (henceforth abbreviated as ``dtdp'').
\end{defn}

\begin{lem}[{\cite[Lemma~3.24]{Yin:tcon:I}}]\label{open:pro}
Let $f : A \fun B$ be a definable bijection between two sets $A$ and $B$, each with exactly one $\VF$-coordinate. Then there is a definable finite partition $(A_i)_i$ of $A$ such that every restriction $f \rest A_i$ has dtdp.
\end{lem}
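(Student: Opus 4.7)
The plan is to reduce fiberwise over the $\RV$-coordinates to an \LT-definable continuous strictly monotone bijection between $\vv$-intervals in $\VF$, and then to invoke the valuation property for such functions in the power-bounded $T$-convex setting to conclude dtdp.

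First, I would arrange that $f$ respects the $\RV$-fibration. The map $(a,t) \mapsto \pr_{\RV}(f(a,t))$ is a definable function from $A$ to $\RV^n$; applying HNF (Remark~\ref{rem:HNF}) uniformly in $t \in A_{\RV}$, each $A_t$ admits a finite definable partition into $\vv$-intervals on which this map is constant in the $\VF$-coordinate. Refining $A$ by these pieces, each resulting piece has the property that its $\VF$-fibers are mapped by $f$ into single $\VF$-fibers of $B$. Uniformity in $t$ ensures that finitely many pieces suffice globally, so the resulting partition is indeed finite in $\mmdl$.

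Next, apply Corollary~\ref{mono} uniformly to each $\VF$-fiber $f_{(t,s)}$: there is a definable finite partition of $\dom(f_{(t,s)})$ into $\vv$-intervals on which $f_{(t,s)}$ is quasi-\LT-definable, continuous, and monotone. By injectivity of $f_{(t,s)}$, any constant piece must be a singleton, and singletons may be collected into a finite definable exceptional set. We are thereby reduced to the situation where, on each remaining piece, every $f_{(t,s)}$ is the restriction of an \LT-definable continuous strictly monotone bijection $h: I \to J$ between $\vv$-intervals in $\VF$.

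Finally, I would verify dtdp for $h$. The decisive input is the valuation property for \LT-definable functions in the power-bounded $T$-convex setting (\cite[Proposition~9.2]{DriesSpei:2000}), which yields a mean-value type identity $\vv(h(x) - h(y)) = \vv(h'(c)) + \vv(x - y)$ for some $c$ between $x$ and $y$, together with the fact that $\vv \circ h'$ is, after a further finite definable partition of $I$, constant on each open subdisc. From this it follows directly that $h$ sends each open subdisc of $I$ to an open disc of appropriately shifted radius; the same argument applied to $h^{-1}$, which is again \LT-definable continuous and strictly monotone, yields the reverse inclusion. The main obstacle is this last step: invoking the correct form of the valuation property and controlling $\vv \circ h'$ by further refinements of the partition, uniformly in $(t,s)$, so that all the pieces assemble into a single definable finite partition of $A$. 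Once this uniformity is arranged, the lemma follows.
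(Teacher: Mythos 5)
Your overall plan — reduce to the $\VF$-fibers of $f$, use monotonicity to make each fiber quasi-\LT-definable, continuous and strictly monotone, then extract dtdp from the local power-function behaviour — is the right shape. But there are two problems, one a genuine error and one a gap where the real work is being waved away.

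Step 1 is both false and unnecessary. You claim that for each $t$, the map $a \mapsto \pr_{\RV}(f(a,t))$ is constant on a finite partition of $A_t$ into $\vv$-intervals. This fails: take $A = \OO \times \{1\}$, $B = \{(a,\rv(a)) : a \in \OO\}$ and $f(a,1) = (a,\rv(a))$; then $a \mapsto \rv(a)$ has infinite image and is not constant on any $\vv$-interval with more than one point. HNF does not help here, because the fibers of this map are $\RV$-discs, of which there are infinitely many. Fortunately the step is also superfluous. By Terminology~\ref{rvfiber} and Definition~\ref{defn:dtdp}, dtdp is stated fiberwise: the $\VF$-fibers of $f$ are $f_{(t,s)}$ for $(t,s) \in f_{\RV}$, and each of these is automatically the graph of a function between two definable subsets of $\VF$, precisely because $f$ is a function. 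There is nothing to "arrange"; you should simply pass to the family $\{f_{(t,s)}\}_{(t,s)\in f_{\RV}}$ and let compactness control the uniformity of the subsequent partitions.

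Step 3 contains the actual mathematical content, and it is left unjustified. The identity $\vv(h(x)-h(y)) = \vv(h'(c)) + \vv(x-y)$ is not "the valuation property"; it is the ordinary mean value theorem, valid over any \omin-minimal real closed field. The decisive and nontrivial claim is the one you state in passing — that after a further \emph{finite} definable partition of $I$ into $\vv$-intervals, $\vv \circ h'$ is constant on every open subdisc of each piece. That is exactly where power-boundedness and the valuation property of \cite{DriesSpei:2000} have to be invoked, to show that an \LT-definable $h'$ is asymptotically a $\KKK$-power in each of finitely many regimes, so that $\vv \circ h'$ factors (piecewise) through $\vv(x - a_i)$ for finitely many definable $a_i$, and hence is constant on open discs once those $a_i$ are removed. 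Notice also that the paper isolates essentially this hypothesis in Lemma~\ref{diff:dtdp} and still obtains only an $\go$-partition from it, not a finite one; the finite conclusion of Lemma~\ref{open:pro} is strictly stronger and needs the full strength of the power-bounded structure theory, not just the MVT. Until that step is actually carried out, the proof has a gap exactly at its heart.
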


\begin{defn}
Let $A$ be a subset of $\VF^n$. The \emph{$\RV$-boundary} of $A$, denoted by $\partial_{\RV}A$, is the definable subset of $\rv(A)$ such that $t \in \partial_{\RV} A$ if and only if $t^\sharp \cap A$ is a proper nonempty subset of $t^\sharp$. The definable set $\rv(A) \mi \partial_{\RV}A$, denoted by $\ito_{\RV}(A)$, is called the \emph{$\RV$-interior} of $A$.
\end{defn}

Obviously, $A \sub \VF^n$ is an $\RV$-pullback if and only if $\partial_{\RV} A$ is empty. Note that $\partial_{\RV}A$ is in general different from the topological boundary $\partial(\rv(A))$ of $\rv(A)$ in $\RV^n$ and neither one of them includes the other.

\begin{lem}[{\cite[Lemma~3.26]{Yin:tcon:I}}]\label{RV:bou:dim}
If $A$ is a definable subset of $\VF^n$ then $\dim_{\RV}(\partial_{\RV} A) < n$.
\end{lem}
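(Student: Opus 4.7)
We argue by induction on $n$.

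For the base case $n = 1$, invoke HNF (Remark~\ref{rem:HNF}) to write $A$ as a finite union of pairwise disconnected definable $\vv$-intervals. The collection of their end-discs is finite, and an $\RV$-disc $t^\sharp \sub \VF$ can meet $A$ in a proper nonempty subset only if it meets one of these end-discs or contains an endpoint of some interval; there are only finitely many such $t$. Hence $\partial_{\RV}A$ is finite and $\dim_{\RV}(\partial_{\RV}A) = 0 < 1$.

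For the inductive step, write elements of $\VF^n$ as $(a,b)$ with $a \in \VF^{n-1}$ and $b \in \VF$, and set $A' = \pr_{[n-1]}(A)$. Partition $\partial_{\RV}A \sub \RV^{n-1} \times \RV$ as $P_1 \sqcup P_2$, where $P_1 = \{(s,t) \in \partial_{\RV}A : s \in \partial_{\RV}A'\}$. By the inductive hypothesis applied to $A'$, $\dim_{\RV}(\partial_{\RV}A') \le n-2$, and since the extra $t$-coordinate can add at most one to the $\RV$-dimension, $\dim_{\RV}(P_1) \le n - 1$. For $(s,t) \in P_2$ we have $s \in \rv(A') \mi \partial_{\RV}A'$, so $s^\sharp \sub A'$ and each fiber $A_a$ over $a \in s^\sharp$ is nonempty. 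Split $P_2$ into two sub-cases: (a) some $a \in s^\sharp$ satisfies $t \in \partial_{\RV}A_a$, or (b) no such $a$ exists, so $t^\sharp \cap A_a$ is either empty or all of $t^\sharp$ for every $a \in s^\sharp$. In case (b), the condition $(s,t) \in \partial_{\RV}A$ is equivalent to the definable set $H_t = \{a \in \VF^{n-1} : t^\sharp \sub A_a\}$ meeting $s^\sharp$ in a proper nonempty subset, i.e., $s \in \partial_{\RV}H_t$. The inductive hypothesis applied to $H_t \sub \VF^{n-1}$ gives $\dim_{\RV}(\partial_{\RV}H_t) \le n-2$ for every $t$, so the (b)-portion has $\RV$-dimension at most $n-1$.

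Case (a) is where the main work lies. Consider the definable set
\[
F = \{(a,t) \in \VF^{n-1} \times \RV : t \in \partial_{\RV}A_a\}.
\]
By the base case applied to each fiber $A_a$, the fiber $F_a$ is finite. Uniform definable finiteness then produces a definable finite partition of $\VF^{n-1}$ into pieces $D_i$ on which $F$ is a disjoint union of graphs of definable functions $g_{i,j} : D_i \fun \RV$, so in particular $\dim_{\VF}(F) \le n-1$. The (a)-portion of $P_2$ is exactly the image of $F$ under $(a,t) \mapsto (\rv(a), t) \in \RV^{n-1} \times \RV$, and the task is to bound the $\RV$-dimension of this image by $\dim_{\VF}(F)$.

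The main obstacle is this last dimension-preservation step: the map $(a,t) \mapsto (\rv(a),t)$ is far from injective and mixed $\VF$/$\RV$-sets can behave subtly under $\rv$. I plan to handle it by invoking the equivalent characterizations of $\dim_{\VF}$ from \cite[Lemma~2.27]{Yin:tcon:I} and the fact that each $g_{i,j}$ may be refined to be locally constant on sufficiently small open polydiscs contained in $D_i$, so that passing from $a$ to $\rv(a)$ cannot introduce $\RV$-dimension beyond $\dim_{\VF}(D_i) \le n-1$. Combining the $P_1$-bound with the (a)- and (b)-bounds for $P_2$ then yields $\dim_{\RV}(\partial_{\RV}A) \le n-1$, completing the induction.
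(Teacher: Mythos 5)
Your overall plan — induction on $n$, HNF for the base case, splitting $\partial_{\RV}A$ by whether the first $n-1$ coordinates land in $\partial_{\RV}A'$, and further splitting $P_2$ into sub-cases (a) and (b) — is sensible, and the bounds for $P_1$ and for sub-case (b) are correct. Two points need attention.

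First, a wording issue in the base case: the set of $\RV$-discs $t^\sharp$ that merely \emph{meet} an end-disc of $A$ is infinite (every $\RV$-disc contained in that end-disc meets it). What is true is that $t^\sharp$ lies in $\partial_{\RV}A$ only if it \emph{properly contains} some end-disc of a piece of $A$ (or contains a point end-disc), and since two intersecting discs are nested, each end-disc determines at most one such $t$. The conclusion (finiteness of $\partial_{\RV}A$) is right, but the stated reason is not.

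Second, and more seriously, the gap you flag yourself in case (a) is a genuine one, and the proposed fix is the hard part of the lemma, not a routine appeal. You need to show that for $F\sub\VF^{n-1}\times\RV$ with all $\VF$-fibers finite, the image of $F$ under $(a,t)\mapsto(\rv(a),t)$ has $\RV$-dimension at most $n-1$; equivalently, that a definable function $g:\VF^{n-1}\fun\RV$ has $g\rest(s^\sharp\cap\dom g)$ constant for all $s$ outside a set of $\RV$-dimension at most $n-2$. That is precisely the content of results like Lemma~\ref{fn:alm:cont} (Lemma~3.28 of \cite{Yin:tcon:I}) and Lemma~\ref{special:bi:term:constant} (Corollary~5.6 of \cite{Yin:tcon:I}), both of which appear \emph{after} Lemma~3.26 in the source and whose proofs may well rely on it; so invoking a ``locally constant refinement'' of the $g_{i,j}$ risks circularity. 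Nor is topological local constancy (which you do get away from finitely many points by monotonicity and continuity of $\rv$ on $\VF^{\times}$) enough: being constant on $\go(a,\delta_a)$ for some $\delta_a$ does not give constancy on the $\RV$-disc $\rv(a)^\sharp$ unless $\delta_a\le\abval(a)$, and controlling that uniformly over $a$ is again the substance of Lemma~\ref{fn:alm:cont}. Until you supply an independent argument for that almost-everywhere $\rv$-contractibility — one that does not circle back through Lemma~3.26 — case (a) is unproved and the induction does not close.
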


\begin{defn}[Contractions]\label{defn:corr:cont}
A function $f : A \fun B$ is \emph{$\rv$-contractible} if there is a (necessarily unique) function $f_{\downarrow} : \rv(A) \fun \rv(B)$, called the \emph{$\rv$-contraction} of $f$, such that
\[
(\rv \rest B) \circ f = f_{\downarrow} \circ (\rv \rest A).
\]
Similarly, it is \emph{$\res$-contractible} (respectively, \emph{$\vv$-contractible}) if the same holds in terms of $\res$ (respectively, $\vv$ or $\vrv$, depending on the coordinates) instead of $\rv$.
\end{defn}

 More general forms of contractions will be introduced in \S~\ref{defn:binv}.

\begin{lem}[{\cite[Lemma~3.28]{Yin:tcon:I}}]\label{fn:alm:cont}
For every definable function $f : \VF^n \fun \VF$ there is a definable set $U \sub \RV^n$ with $\dim_{\RV}(U) < n$ such that $f \rest (\VF^n \mi  U^\sharp)$ is $\rv$-contractible.
\end{lem}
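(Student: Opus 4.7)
The plan is to induct on $n$, handling the base case $n=1$ with the standard $o$-minimal toolkit and extending to higher $n$ by applying the one-variable case coordinate-by-coordinate. For $n=1$, first apply Corollary~\ref{mono} to partition $\VF$ into finitely many $\vv$-intervals on which $f$ is continuous, monotone, and quasi-$\LT$-definable. The end-discs of these intervals contribute only finitely many $\RV$-classes (using Lemmas~\ref{clo:disc:bary} and~\ref{open:disc:def:point} to locate definable points). Next apply Lemma~\ref{open:pro} within each open piece so that $f$ acquires dtdp there; this introduces finitely many additional boundary $\RV$-classes. Let $U_0 \sub \RV$ be the resulting finite set, so $\dim_{\RV}(U_0) = 0 < 1$. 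For $t \in \RV \mi U_0$, the $\RV$-disc $t^\sharp$ lies in the interior of a single piece where $f$ is continuous, monotone, and has dtdp; hence $f(t^\sharp)$ is an open disc contained in the image $\vv$-interval, and in particular is contained in a single $\RV$-disc, yielding $\rv$-contractibility there.

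For the inductive step, first invoke Lemma~\ref{fun:suba:fun} to assume $f$ is quasi-$\LT$-definable on each piece of a definable partition of its domain; the boundaries between these pieces have $\RV$-dimension strictly less than $n$ by Lemma~\ref{RV:bou:dim} and can be absorbed into the bad set. Then for each $i \in [n]$ and each parameter $\bar b = (b_j)_{j \neq i} \in \VF^{n-1}$, the one-variable slice $g_{\bar b, i}(x) = f(b_1, \ldots, b_{i-1}, x, b_{i+1}, \ldots, b_n)$ is quasi-$\LT$-definable, and the $n=1$ case produces a finite bad set $V^i_{\bar b} \sub \RV$. The valuation property of $T$-convex valued fields (cited in Remark~\ref{rem:HNF}) implies that $V^i_{\bar b}$ depends only on $\rv(\bar b)$, so
\[
B_i = \{ (s_1, \ldots, s_n) \in \RV^n : s_i \in V^i_{\bar b} \text{ for some (any) } \bar b \text{ with } \rv(\bar b) = (s_j)_{j \neq i} \}
\]
is a definable subset of $\RV^n$ whose projection to the $\tilde i$ coordinates has $\RV$-dimension at most $n - 1$ and whose fibers over that projection are finite, so $\dim_{\RV}(B_i) < n$. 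Taking $U$ to be the union of $\bigcup_{i \in [n]} B_i$ with the absorbed boundaries from the quasi-$\LT$-definable reduction, for $(s_1, \ldots, s_n) \notin U$ one may pass from any point of $(s_1, \ldots, s_n)^\sharp$ to any other one coordinate at a time while preserving $\rv(f)$, which gives $\rv$-contractibility on $(s_1, \ldots, s_n)^\sharp$.

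The chief obstacle is the uniform $\RV$-dimension bound in the inductive step: one must ensure that the fiberwise bad sets $V^i_{\bar b}$ descend to a definable family over $\RV^{n-1}$ rather than merely over $\VF^{n-1}$. Without the valuation property, the $\bar b$-union of these finite sets over a fixed $\RV$-polydisc could in principle be infinite and inflate $\dim_{\RV}(B_i)$ to $n$. The power-bounded, $T$-convex setting precisely rules this out, and quantifier elimination (Theorem~\ref{theos:qe}) secures definability of $B_i$. The remaining details — verifying that dtdp suffices for $\rv$-contractibility in the one-variable case, bookkeeping the finitely many boundary contributions, and checking definability of $U$ — are routine from the preliminaries already assembled.
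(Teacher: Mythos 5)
The inductive step has a genuine gap, located exactly where you flag ``the chief obstacle.'' You assert that the valuation property (via Remark~\ref{rem:HNF}) yields that the fiberwise bad set $V^i_{\bar b}$ depends only on $\rv(\bar b)$. Neither the valuation property nor Holly normal form says this, and the claim is not true without further work: the family $\bar b \mapsto V^i_{\bar b}$ is $\bar b$-definable, and a $\bar b$-definable finite subset of $\RV$ need not be $\rv(\bar b)$-definable. Definable maps from a single $\RV$-polydisc $s^\sharp$ to $\RV$ with infinite image exist (e.g.\ $b \mapsto \rv(b - b_0)$ for a definable $b_0 \in s^\sharp$), so $\bigcup_{\bar b \in s^\sharp} V^i_{\bar b}$ can a priori be infinite, in which case $\dim_{\RV}(B_i) = n$ and the argument collapses. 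Concretely, take $f(x,y) = x + c(y)$ for a definable $c : \VF \fun \VF$: the one-variable argument applied to the slice at $y = b$ produces $V^1_b = \{-\rv(c(b))\}$, and $\rv(c(b))$ is a function of $\rv(b)$ precisely on those $\RV$-discs where $c$ itself is already $\rv$-contractible --- which is what the lemma is supposed to deliver, not what you may assume. To make your slicing strategy work you would have to apply the inductive hypothesis not to the slices $g_{\bar b, i}$ of $f$ but to the $(n-1)$-variable definable functions recording their decomposition data (the end-discs of the monotonicity partition, the $\RV$-disc whose image hits $0$, the dtdp slopes, etc.), so as to make those data $\rv$-contractible in $\bar b$ outside a set of $\RV$-dimension less than $n-1$; that extra layer of bookkeeping is missing.

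There is also a smaller slip in the base case $n = 1$: an open disc contained in the image $\vv$-interval need not lie in a single $\RV$-disc --- it fails to exactly when it contains $0$. For instance $f(x) = x + c$ has dtdp everywhere and no nontrivial piece boundaries, yet $f(t^\sharp) = t^\sharp + c$ contains $0$ when $t = -\rv(c)$ and then meets infinitely many $\RV$-discs, so $f$ is not $\rv$-contractible on $(-\rv(c))^\sharp$ even though your $U_0$ would be empty. The fix is cheap (on each monotone piece $f$ is injective, so at most one $t$ per piece has $0 \in f(t^\sharp)$, and these finitely many $t$ must be added to $U_0$), but as written the one-variable argument omits precisely the case that makes the bad set nonempty.
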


\begin{lem}[{\cite[Lemma~4.12]{Yin:tcon:I}}]\label{gam:pulback:mono}
Let $I, J \sub \Gamma^k$ be definable sets. Then every definable bijection $g : I \fun J$ is  a piecewise $\mgl_k(\KKK) \times \Z_2$-transformation (with definable constant terms). Consequently, $g$ is a $\vrv$-contraction.
\end{lem}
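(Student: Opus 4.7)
The plan is to reduce to the known structure of the $\Gamma$-sort. By Remark~\ref{pillars}, the $\absG$-sort (and hence the $\Gamma$-sort modulo sign) carries precisely the structure of a nontrivially ordered $\KKK$-vector space, where $\KKK$ is the field of exponents of $T$. In particular, the induced theory on $\Gamma^k$ is that of an \omin-minimal ordered $\KKK$-vector space, and the classical piecewise linearity theorem for such structures (van den Dries, \emph{Tame topology}, Chapter~1, \S7) states that every definable function from an \omin-minimal ordered $\KKK$-vector space to itself is piecewise affine $\KKK$-linear.

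The first step is then to apply this piecewise linearity coordinate by coordinate to $g$. This yields a definable finite partition $I = \bigsqcup_i I_i$ together with, on each piece, an expression of $g \rest I_i$ of the form $\gamma \mapsto M_i \gamma + c_i$, where $M_i$ is a $k \times k$ matrix over $\KKK$ and $c_i \in \Gamma^k(\mdl S)$. Because $g$ is injective, its restriction to each piece $I_i$ must be injective too, so each $M_i$ has trivial kernel and is therefore an element of $\mgl_k(\KKK)$. The extra factor $\Z_2$ simply keeps track of the sign component that was suppressed when passing from $\Gamma$ to $\absG$, as specified in Remark~\ref{signed:Gam}; the map $\sgn$ defined there assigns a sign to each piece and this sign is definably constant on $I_i$ after refining the partition once more if needed. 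This establishes the first assertion.

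For the ``consequently'' clause, it suffices to exhibit, on each piece, a definable map $\tilde g_i : \vrv^{-1}(I_i) \fun \RV^k$ satisfying $\vrv \circ \tilde g_i = (g \rest I_i) \circ \vrv$. Writing the affine $\KKK$-linear form $M_i \gamma + c_i$ multiplicatively in the signed value group, each component of $g \rest I_i$ is a finite product $\prod_j \gamma_j^{\alpha_{ij}} \cdot d_{ij}$ with $\alpha_{ij} \in \KKK$ and $d_{ij} \in \Gamma(\mdl S)$. Since $T$ is power-bounded with field of exponents $\KKK$, each exponent $\alpha \in \KKK$ gives rise to a definable power function on $R^+$ and hence, via the quotient by $1 + \MM$, to a definable power map $t \mapsto t^\alpha$ on $\RV^+$ (extended to all of $\RV$ using the sign coordinate). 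These power maps are $\vrv$-contractible by definition, and multiplication on $\RV$ is $\vrv$-contractible to addition on $\Gamma$. Composing these, and multiplying by any definable lift of $d_{ij}$, produces the desired $\tilde g_i$, which completes the proof.

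The main obstacle is the second step: verifying that the affine $\KKK$-linear maps on $\Gamma^k$ actually lift to $\RV^k$, and this is precisely where the \emph{power-boundedness} hypothesis on $T$ is crucial, for without it the exponents in $\KKK$ would not correspond to definable operations on $\RV$. The first assertion, by contrast, is a transfer from a well-established \omin-minimal fact once Remark~\ref{pillars} is granted.
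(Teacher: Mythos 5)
Your proof is correct and uses what is almost certainly the intended argument (the paper merely cites the prequel here and supplies no proof of its own): piecewise affine $\KKK$-linearity from van den Dries' Theorem~B (Remark~\ref{pillars}), followed by lifting each affine piece to $\RV^k$ via the definable power maps $t \mapsto t^\alpha$, $\alpha \in \KKK$, whose very existence is what power-boundedness buys. One small slip worth flagging: $\Gamma$ is written multiplicatively in this paper (Remark~\ref{signed:Gam}), so $\vrv$ contracts multiplication on $\RV$ to multiplication on $\Gamma$, not to addition; relatedly, $t^\alpha$ is only meaningful on $\RV^+$ for irrational $\alpha$, so the sign of each coordinate must be stripped off piece by piece before applying the power map, which is exactly the role of the $\Z_2$-factor in the statement.
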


\begin{ter}\label{bounded}
We say that a set $I \sub \absG_{\infty}^n$ is \emph{$\gamma$-bounded}, where $\gamma \in \absG$, if it is contained in the box $[\gamma, \infty]^n$, and is \emph{doubly $\gamma$-bounded} if it is contained in the box $[-\gamma, \gamma]^n$. More generally, let $A$ be a subset of $\VF^n \times \RV_0^m \times \Gamma_0^l$ and
\[
\abs A_{\Gamma} = \{(\abval(a), \abvrv(t), \abs \gamma) : (a, t, \gamma) \in A\} \sub \abs\Gamma_{\infty}^{n+m+l};
\]
then we say that $A$ is \emph{$\gamma$-bounded} if $\abs A_{\Gamma}$ is, and so on.
\end{ter}

\begin{lem}\label{db:to:db}
Let $U \sub \RV^k$ be a doubly bounded set and $f: U \fun \RV$ a definable function. Then $f(U)$ is also doubly bounded. The same holds if the codomain of $f$ is $\Gamma$.
\end{lem}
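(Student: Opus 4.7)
The plan is to reduce to the case where the codomain is $\Gamma$ and then analyze the induced definable graph in $\Gamma^{k+1}$ using orthogonality between $\K$ and $\Gamma$ together with the piecewise\nobreakdash-linear structure of definable functions in the $\Gamma$\nobreakdash-sort. By definition, $f(U) \sub \RV$ is doubly bounded iff $\vrv(f(U))$ is, so I may replace $f$ by $h \coloneqq \vrv \circ f : U \fun \Gamma$. Set $B \coloneqq \vrv(U) \sub \Gamma^k$, which is doubly bounded by hypothesis, and consider the definable set
\[
W \coloneqq \set{(\alpha, \delta) \in B \times \Gamma : \ex{u \in U} \vrv(u) = \alpha \wedge h(u) = \delta}.
\]
Since $\pr_{k+1}(W) = h(U)$, it suffices to show that $W \sub \Gamma^{k+1}$ is doubly bounded.

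The crucial step is to verify that each fiber $W_\alpha$ is finite. Fixing $\alpha \in B$, coordinate\nobreakdash-wise division by any fixed tuple in $\vrv^{-1}(\alpha)$ (adjoined as a parameter if needed) produces a definable bijection between $U \cap \vrv^{-1}(\alpha)$ and a definable subset of $(\K^\times)^k$, under which $h$ transforms into a definable function from a subset of $\K^k$ into $\Gamma$. By the $\K$\nobreakdash-$\Gamma$ orthogonality --- namely Lemma~\ref{gk:ortho} together with its straightforward multivariable extension, obtained by iterating along the $\K$\nobreakdash-coordinates and invoking the stable embeddedness of $\K$ and $\Gamma$ furnished by \cite[Theorems~A, B]{Dries:tcon:97} --- any such function has finite image, so $W_\alpha$ is finite.

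Thus $W$ has uniformly finite vertical fibers over $B$ (the uniform bound being automatic from cell decomposition, since no cell can have one\nobreakdash-dimensional $\Gamma$\nobreakdash-fibers). Cell decomposition in the \omin-minimal sort $\Gamma$ therefore presents $W$ as a finite disjoint union of graph cells $\set{(\alpha, \phi_i(\alpha)) : \alpha \in C_i}$ with $C_i \sub B$ definable and $\phi_i : C_i \fun \Gamma$ continuous. By Remark~\ref{pillars}, each $\phi_i$ is piecewise $\KKK$\nobreakdash-linear, so after a further partition I may assume each piece has the form $\phi_i(\alpha) = \sum_{j} q_{ij} \alpha_j + c_i$ with $q_{ij} \in \KKK$ and $c_i \in \Gamma(\mdl S)$. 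Because $C_i \sub B$ is doubly bounded and any such $\KKK$\nobreakdash-affine map carries a doubly bounded set to a doubly bounded one, each $\phi_i(C_i)$ is doubly bounded, and hence
\[
h(U) = \pr_{k+1}(W) = \bigcup_i \phi_i(C_i)
\]
is a finite union of doubly bounded sets, and thus itself doubly bounded.

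The principal obstacle is cleanly establishing the multivariable orthogonality statement needed to bound $W_\alpha$; once that is in hand, the piecewise\nobreakdash-linear structure on $\Gamma$ and the fact that the ambient $\Gamma$\nobreakdash-projection $B$ is already doubly bounded close out the argument.
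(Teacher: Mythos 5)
Your proof is correct in substance but takes a different route from the paper's. The paper inducts on $k$ at the outset, reducing everything to the case $k=1$ with $\Gamma$-codomain, where single-variable orthogonality (Lemma~\ref{gk:ortho}) combined with weak \omin-minimality lets it replace $g$ by a $\vrv$-contracted function on $\vrv(U) \sub \Gamma$, and piecewise $\KKK$-linearity then finishes. You instead work globally with the graph $W \sub \Gamma^{k+1}$, show its fibers are finite via a multivariable extension of Lemma~\ref{gk:ortho}, and conclude from cell decomposition plus piecewise linearity. That multivariable orthogonality statement is true, but --- as you flag yourself --- it is not stated in the paper, and its proof is itself an induction along coordinates carrying essentially the same uniformity bookkeeping as the paper's top-level induction on $k$; so your route relocates the inductive work rather than avoiding it. In return, once $W$ is known to have finite fibers your cell-decomposition step is cleanly stated, whereas the paper's ``we may assume $g$ $\vrv$-contracts'' is terse about the fact that the fiberwise image is finite but possibly multi-element and must be split into sorted branches. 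One small remark: the aside about the uniform fiber-cardinality bound being ``automatic from cell decomposition'' is redundant --- once every fiber is known to be finite, cell decomposition already forces every cell of $W$ to be a graph cell, so no separate uniform bound needs to be invoked.
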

\begin{proof}
By induction on $k$, both claims are immediately reduced to showing that if $k=1$  and $g : U \fun \Gamma$ is a definable function then $g(U)$ is doubly bounded. Then, by weak \omin-minimality and Lemma~\ref{gk:ortho}, we may assume that $g$  $\vrv$-contracts to a function $g_{\downarrow} : \vrv(U) \fun \Gamma$. By Remark~\ref{pillars}, $g_{\downarrow}$ is piecewise $\KKK$-linear  and hence its range must be doubly bounded.
\end{proof}

As usual in model theory, some properties concerning definable sets can be more conveniently formulated in the expansion $\mdl R_{\rv}^{\textup{eq}}$ of $\mmdl$ by all definable sorts. For our purpose here, a much simpler expansion $\mdl R_{\rv}^{\bullet}$ suffices, see \cite[Notation~3.24]{Yin:tcon:I} for details.  Several lemmas in  \cite{Yin:tcon:I} are stated for definable sets in $\mdl R_{\rv}^{\bullet}$ instead of $\mmdl$, among which only the following one, a more general version of Lemma~\ref{RV:no:point}, is directly cited:

\begin{lem}[{\cite[Lemma~3.8]{Yin:tcon:I}}]\label{ima:par:red}
For $\code \ga  \in \DC$, if $a \in \VF$ is $\code \ga$-definable then $a$ is definable.
\end{lem}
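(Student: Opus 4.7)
The plan is to bootstrap from Lemma~\ref{RV:no:point} — which covers the case where the parameter is a tuple from $\RV$ or $\Gamma$ — to arbitrary codes $\code\ga\in\DC$. In a Hrushovski--Kazhdan style setup the expansion $\xmdl$ is obtained by adjoining codes for a restricted collection of definable objects (notably discs, thin annuli, and finite configurations thereof), so the first step is to argue that up to $\mdl S$-interdefinability any $\code\ga$ decomposes into data of three kinds: a $\VF$-tuple consisting of definable points extracted from $\ga$, an $\RV$-tuple, and a $\Gamma$-tuple. Once this is done, the conclusion will follow from Lemma~\ref{RV:no:point} applied over the already $\mdl S$-definable $\VF$-part.

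The extraction of the $\VF$-part is the heart of the reduction and is where the earlier disc lemmas enter. For a closed-disc component of $\ga$, Lemma~\ref{clo:disc:bary} directly supplies a definable point, and the disc is then interdefinable over $\mdl S$ with that point together with its radius (in $\Gamma$). For an open-disc component, I invoke Lemma~\ref{open:disc:def:point}: either $\ga$ contains a definable point, reducing to the closed case with an additional radius parameter, or else $\ga$ is indistinguishable from its ambient $\RV$-polydisc, in which case $\code\ga$ is interdefinable with $\rv$-data alone. Thin annuli and half thin annuli are each interdefinable with a pair consisting of a closed disc and its removed open subdisc, so they reduce to the previous two cases. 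Iterating this analysis through the finitely many components of $\code\ga$ produces the desired decomposition.

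With this in hand, suppose $a\in\VF$ is $\code\ga$-definable and write $\code\ga$ as interdefinable with $(b,t,\gamma)$, where $b$ is a $\VF$-tuple in $\dcl(\mdl S)$, $t$ is from $\RV$ and $\gamma$ from $\Gamma$. Then $a$ is $(t,\gamma)$-definable over $\mdl S$, and Lemma~\ref{RV:no:point} (applied once for the $\Gamma$-parameter, landing in $\RV$, and then for the resulting $\RV$-parameter) gives $a\in\dcl(\mdl S)$.

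The main obstacle is precisely the second case of an open-disc component containing no definable point: there the literal interdefinability of $\code\ga$ with an $\RV$/$\Gamma$-tuple fails, and one must instead argue via a saturation and automorphism argument. Any automorphism of $\mmdl$ over $\mdl S\cup\{\code\ga\}$ permutes the points of $\ga$ transitively — this uses sufficient saturation together with quantifier elimination (Theorem~\ref{theos:qe}) and HNF (Remark~\ref{rem:HNF}) to show that the type of any point of $\ga$ over $\mdl S\cup\{\code\ga\}$ is determined by membership in $\ga$. Consequently, a $\code\ga$-definable $a\in\VF$ cannot lie in $\ga$ unless $\ga$ is a singleton, so $a$ is determined by the $\RV$/$\Gamma$-coordinates of $\ga$ together with its position relative to $\ga$, which are again handled by Lemma~\ref{RV:no:point}.
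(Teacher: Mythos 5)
Your strategy of reducing to Lemma~\ref{RV:no:point} is the right one, but the reduction has genuine gaps, and the decomposition step on which the bulk of the write-up rests does not actually go through.

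The central error is the use of Lemma~\ref{clo:disc:bary}. That lemma applies only to \emph{definable} closed discs, that is, to discs definable over $\mdl S$. Here $\ga$ is an arbitrary disc whose code is handed to us as a parameter; it need not contain any $\mdl S$-definable point, and Lemma~\ref{clo:disc:bary} gives no purchase on it. At best, applying it over the extended base $\mdl S\la\code\ga\ra$ produces a $\code\ga$-definable point $b_0\in\ga$, but then showing $b_0\in\dcl(\mdl S)$ is precisely an instance of the lemma you are trying to prove, so nothing has been gained. The dichotomy for open discs is likewise incorrect: an open disc that contains no $\mdl S$-definable point is by no means forced to coincide with its ambient $\RV$-polydisc --- Lemma~\ref{open:disc:def:point} only says such a disc is not \emph{split} by $\mdl S$-definable sets, not that it equals any $\mdl S$-definable set. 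So the claimed decomposition of $\code\ga$ into a definable $\VF$-tuple, an $\RV$-tuple, and a $\Gamma$-tuple over $\mdl S$ simply fails in the main case, and the reduction collapses.

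The saturation/automorphism paragraph is closer to a workable argument, but as written it is both incomplete and overstated. The claim that the type of a point of $\ga$ over $\mdl S\cup\{\code\ga\}$ is determined by membership in $\ga$ requires knowing that $\ga$ contains no point definable over that base; but whether a $\code\ga$-definable $\VF$-point in $\ga$ must already be $\mdl S$-definable is exactly what is at issue, so this cannot be assumed. And the final sentence --- that $a$ is ``determined by the $\RV$/$\Gamma$-coordinates of $\ga$ together with its position relative to $\ga$'' --- is too vague to close the argument. What is missing is the actual mechanism: set $\gamma=\rad(\ga)\in\dcl(\mdl S,\code\ga)$; note $\dcl(\mdl S,\code\ga)\sub\dcl(\mdl S,b,\gamma)$ for every $b\in\ga$, so there is an $\mdl S$-definable function $h(y,z)$ with $h(b,\gamma)=a$ for all $b\in\ga$; either $\ga$ has an $\mdl S\la\gamma\ra$-definable point, whence $a\in\dcl(\mdl S,\gamma)$ outright, or it has none, in which case Lemma~\ref{open:disc:def:point} (over the base $\mdl S\la\gamma\ra$) forces $\ga$ into a single piece of the $\gamma$-definable monotonicity partition (Corollary~\ref{mono}) of $h(\cdot,\gamma)$, and since $h(\cdot,\gamma)$ is constant on $\ga$ it is constant on that whole $\vv$-interval, which places $a$ among finitely many $\gamma$-definable values, giving $a\in\acl(\mdl S,\gamma)=\dcl(\mdl S,\gamma)$ by the ordering. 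Only then does one reduce to Lemma~\ref{RV:no:point} via $\gamma=\vrv(t)$ for any $t\in\gamma^\sharp$. None of this chain appears in your write-up, so the conclusion does not follow from what is on the page.
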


Here $\DC$ is the definable sort of  discs and, for a disc $\ga \sub \VF$, the corresponding imaginary element in $\DC$ is denoted by $\code{\ga}$, which may be heuristically thought of as the ``name'' of $\ga$.

\begin{lem}\label{invar:db}
Let $A \sub \VF^n \times \RV^m$ be a definable set such that $A_{\VF}$ is bounded and $A_{\RV}$ is doubly bounded. Let $f : A \fun \absG$ be a definable function  that is constant on $A \cap \ga$ for all open polydiscs $\ga$ of radius $\gamma \in \absG$. Then $f(A)$ is doubly bounded.
\end{lem}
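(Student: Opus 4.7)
The plan is to show that the invariance hypothesis, when applied at a scale coarse enough to swallow the bounded set $A_{\VF}$ whole, forces $f$ to depend only on its $\RV$-coordinates; once this is established, the conclusion follows at once from Lemma~\ref{db:to:db}.

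Concretely, the boundedness of $A_{\VF}$ gives some $\gamma_{0} \in \absG$ with $\abval(a_{i}) \geq \gamma_{0}$ for every $a = (a_{1},\ldots,a_{n}) \in A_{\VF}$ and every coordinate $i$. Since $\absG$ has no bottom element, we may choose $\gamma \in \absG$ with $\gamma < \gamma_{0}$ and set $\ga = \MM_{\gamma}^{n} \sub \VF^{n}$; then $\ga$ is an open polydisc of radius $\gamma \in \absG$ with $A_{\VF} \sub \ga$. For each $t \in A_{\RV}$ the product $\ga \times \{t\}$ is an open polydisc in $\VF^{n} \times \RV_{0}^{m}$ of radius $\gamma \in \absG$, and the containment $A_{t} \sub A_{\VF} \sub \ga$ forces $A \cap (\ga \times \{t\}) = A_{t} \times \{t\}$. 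The hypothesis therefore makes $f$ constant on this set, and letting $t$ range over $A_{\RV}$ yields a definable function $h : A_{\RV} \fun \absG$ with $f(a, t) = h(t)$ for every $(a, t) \in A$.

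Finally, $A_{\RV} \sub \RV^{m}$ is doubly bounded, so by Lemma~\ref{db:to:db} (reading $h$ as a map into $\Gamma$ via a definable section $\absG \hookrightarrow \Gamma$ such as the positive-sign one, which plainly does not affect double boundedness) the image $h(A_{\RV})$ is doubly bounded. Since $f(A) = h(A_{\RV})$, we are done. I do not foresee any serious technical obstacle here; the only point to verify is that the radius $\gamma$ can be taken in $\absG$ rather than in $\absG_{\infty}$, which is immediate from the fact that $\absG$ is unbounded below.
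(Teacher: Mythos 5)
You have misread the invariance hypothesis. In the lemma, $\gamma$ is a \emph{single, fixed} element of $\absG$: the hypothesis is that $f$ is constant on $A \cap \ga$ for every open polydisc $\ga$ of that particular radius $\gamma$ (i.e., $f$ is $\gamma$-invariant for some given $\gamma$), not for every $\gamma \in \absG$. Your argument hinges on the freedom to choose $\gamma < \gamma_{0}$ so that a single polydisc $\MM_{\gamma}^{n} \times t$ swallows all of $A_{t}$; once $\gamma$ is imposed from outside, this move is unavailable, and $A_{\VF}$ will typically meet many pairwise-disjoint polydiscs of radius $\gamma$ on which $f$ can genuinely vary. The statement you have proved (where the invariance holds at every scale, forcing $f$ to factor through $A_{\RV}$) is correct but trivial, and it is too weak for the intended applications --- e.g.\ in Terminology~\ref{prop:pseu} and in the proof of Proposition~\ref{simplex:with:hole:rvproduct}, the lemma is fed a function that is constant only at a single scale $\gamma$, produced by an $\go$-partition whose range has just been shown doubly bounded via Lemma~\ref{vol:par:bounded}.

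Under the intended reading the lemma requires a genuinely different argument, and this is what the paper does: it first reduces (via Lemma~\ref{db:to:db} and $o$-minimality in $\Gamma$) to the case $A \sub \VF$, normalizes $A$ to a doubly bounded union of open discs of radius $\gamma$, and supposes for contradiction that $f(A)$ contains a ray $(\beta, \infty)$. Using Holly normal form it then builds a definable function on that ray picking out, for each $\alpha$, either an end-disc $\gb_{\alpha}$ of the level set $f^{-1}(\alpha)$ or a point of $\Gamma$ whose fiber lies inside $f^{-1}(\alpha)$, and rules out every branch by appealing to Lemmas~\ref{clo:disc:bary}, \ref{RV:no:point}, and~\ref{gk:ortho}. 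None of this structural analysis of level sets is reachable from the ``shrink $\gamma$'' route, so there is a real gap to fill rather than a shortcut to take.
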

\begin{proof}
By Lemma~\ref{db:to:db} and \omin-minimality in the $\Gamma$-sort, it is enough to consider the case $A \sub \VF^n$. It is easy to see that, by induction on $n$, this may be further reduced to the case $n=1$. By the assumption, we may assume that, for every open disc $\ga$ of radius $\gamma$, either $\ga \cap A = \0$ or $\ga \sub A$, and moreover $A \cap \MM_\gamma = \0$, that is, $A$ is doubly bounded.

Suppose for contradiction that $f(A)$ contains a definable interval $I$ of the form $(\beta, \infty)$. By HNF, for  each $\alpha \in I$, if $\delta^{\dsh} \cap f^{-1}(\alpha)$ is a nonempty proper subset of $\delta^{\dsh}$ for some  $\delta \in \Gamma$ then at least one of the end-discs of $f^{-1}(\alpha)$, say $\gb_\alpha$, is contained in $\delta^{\dsh}$. Therefore, shrinking $I$ if necessary, we can construct a definable function $g$ on $I$ such that
\begin{itemize}
  \item either $g(\alpha)  = \code{\gb_\alpha}$ for all $\alpha \in I$ or
  \item $g(\alpha) \in \Gamma$ and $g(\alpha)^{\dsh} \sub f^{-1}(\alpha)$ for all $\alpha \in I$.
\end{itemize}
In the latter case, we may assume that $g$ is $\KKK$-linear, but then its image cannot be doubly bounded since $g$ is injective, contradicting the choice of $I$. The former case requires further argument.

Consider the function $\bar g$ on $I$ given by $\alpha \efun \rad(\gb_\alpha)$. Since $\rad(\gb_\alpha) \leq \gamma$ for all $\alpha \in I$ and $A$ is bounded,  $\bar g(I)$ must be doubly bounded. By \omin-minimality in the $\Gamma$-sort, shrinking $I$ if necessary, we may assume that $\bar g$ is constant, say $\bar g(I) = \delta$, and either every $\gb_\alpha$ is a closed disc or every $\gb_\alpha$ is an open disc. In the former case, by Lemmas~\ref{clo:disc:bary} and \ref{RV:no:point}, $g(I)$ must be finite; a moment of reflection shows that this is impossible since either eventually every $\gb_\alpha$ is an end-disc at a closed end or eventually every $\gb_\alpha$ is an end-disc at an open end. In the latter case, for the same reason, we may assume that $g(I)$ is infinite. But then there would be a restriction of $g$ whose image form a closed disc $\gb$ of radius $\delta$, which is easily seen to contradict Lemma~\ref{gk:ortho}.
\end{proof}

\begin{defn}
Let $p : A \fun \absG$ be a definable function. We say that $p$ is an \emph{$\go$-partition} of $A$ if, for every $a \in A$, the function $p$ is constant on $\go(a, p(a)) \cap A$.
\end{defn}

\begin{lem}\label{vol:par:bounded}
Let $p$ be an $\go$-partition of $A$. Suppose that $A_t$ is closed and bounded for every $t \in A_{\RV}$ and $A_{\RV}$ is doubly bounded. Then $p(A)$ is doubly bounded.
\end{lem}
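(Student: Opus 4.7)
The argument parallels the proof of Lemma~\ref{invar:db}, now using the closedness hypothesis on the $\VF$-fibers to accommodate the variable radius $p(a)$ inherent in the $\go$-partition condition. First I reduce to the case $A \sub \VF^n$: for each $t \in A_{\RV}$, the restriction $p \rest A_t$ is an $\go$-partition of the closed and bounded set $A_t$, and assuming the result in this case, a definable endpoint of $p(A_t)$ gives a function from the doubly bounded $A_{\RV}$ into $\absG$ whose image is doubly bounded by Lemma~\ref{db:to:db}. A standard induction on $n$ using Lemma~\ref{fun:suba:fun} and HNF then reduces further to the base case $n = 1$.

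For $A \sub \VF$ closed and bounded, HNF writes $A$ as a finite disjoint union of closed bounded $\vv$-intervals, and a $\Gamma$-valued analog of Corollary~\ref{mono} (via $\vrv$-contraction and Remark~\ref{pillars}) further partitions each into pieces on which $p$ is constant or strictly monotone and continuous. It is enough to treat a strictly monotone piece $I \sub A$. The easy direction is that $p$ cannot take arbitrarily small values (in the natural valuation ordering): otherwise some $\go(a_0, p(a_0))$ would contain all of the bounded $A$, forcing $p$ to be globally constant and contradicting strict monotonicity.

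To show $p$ cannot take arbitrarily large values either, suppose for contradiction $p(I) \supseteq (\beta, \infty)$. Following Lemma~\ref{invar:db}, for each $\alpha \in (\beta, \infty)$ use HNF to select an end-disc $\gb_\alpha$ of a $\vv$-interval component of $p^{-1}(\alpha) \sub A$, then shrink the parameter range so that $\rad(\gb_\alpha)$ is constant and the $\gb_\alpha$ are uniformly all open or all closed. The new input from the closedness of $A$ is that the topological closure in $\VF$ of $\bigcup_\alpha \gb_\alpha$ lies inside $\ol{A} = A$; this closure contains a closed disc $\gc^*$ to which Lemma~\ref{clo:disc:bary} applies, yielding a definable point $b^* \in \gc^* \sub A$. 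The $\go$-partition condition at $b^*$ makes $p$ constant equal to $p(b^*) \in \absG$ on $\go(b^*, p(b^*)) \cap A$. For a section $a_\alpha \in \gb_\alpha$ definable in $\alpha$, the order topology on $\VF$ being strictly finer than the $\vv$-topology lets us conclude $a_\alpha \in \go(b^*, p(b^*))$ for all sufficiently large $\alpha$, whence $\alpha = p(a_\alpha) = p(b^*)$ is a fixed element of $\absG$, contradicting $\alpha \to \infty$.

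The main obstacle is the selection of the definable point $b^*$ so that the convergence $a_\alpha \to b^*$ actually holds in the $\vv$-topology; this requires a careful HNF case analysis on whether the $\gb_\alpha$ are open or closed, closely paralleling the end-disc analysis in the proof of Lemma~\ref{invar:db}, together with essential use of the closedness of $A$ to guarantee that the relevant closed disc $\gc^*$ sits inside $A$ so that Lemma~\ref{clo:disc:bary} is applicable.
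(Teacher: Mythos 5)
Your proposal takes a genuinely different route from the paper's, and the route contains a gap at the reduction to $n = 1$.

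The paper's proof needs no induction on $n$. After the reduction to $A \sub \VF^n$, it assumes for contradiction that $p(A)$ is not doubly bounded, observes that the sets $A_\gamma = \{a \in A : p(a) > \gamma\}$ are all nonempty, and uses the fact that for $c \in \MM \mi \{0\}$ the $T$-submodel $\mdl S\la c\ra_T$ expands to an elementary $\TCVF$-submodel; compactness then supplies a \emph{definable} $f : \MM \mi \{0\} \fun A$ with $p(f(c)) > \abval(c)$. Coordinate-wise monotonicity (via Lemma~\ref{fun:suba:fun}) gives the one-sided limits $a^{\pm} = \lim_{x \to 0^{\pm}} f(x)$, closedness of $A$ puts them in $A$, and for $c$ close enough to $0^{+}$ one has simultaneously $f(c) \in \go(a^+, p(a^+))$ and $\abval(c) > p(a^+)$, so the $\go$-partition condition forces $p(f(c)) = p(a^+) < \abval(c)$, a contradiction.

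Your inductive reduction to $n = 1$ is unjustified. Fibering over $\pr_1(A)$ and applying the inductive hypothesis yields a definable endpoint function $q : \pr_1(A) \fun \absG$, $q(a_1) = \max p(A_{a_1})$, but there is no reason for $q$ to be an $\go$-partition of $\pr_1(A)$, nor for its image to be doubly bounded by any of the ingredients you list. Closedness of $A$ controls the fibers $A_{a_1}$ only ``from above'' as $a_1$ varies, not ``from below'': for example $A = \{(0,0)\} \cup ([0,1] \times \{1\}) \sub \VF^2$ is closed and bounded, $p$ can be taken to vanish on the strip and to take an arbitrarily large value at the isolated point $(0,0)$ while remaining an $\go$-partition, and then $q$ jumps at $0$ and is not an $\go$-partition. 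Here $q$ has finite image so the conclusion survives, but the point is that the mechanism you invoke for the inductive step does not exist, and Lemma~\ref{fun:suba:fun} and HNF do not supply one. There is also a smaller internal inconsistency in your $n=1$ argument: you reduce to pieces where $p$ is ``strictly monotone,'' and then analyze $p^{-1}(\alpha)$ as a union of $\vv$-intervals with end-discs $\gb_\alpha$ — but strict monotonicity forces each level set to be a singleton, so these two framings cannot coexist. The underlying idea of your endgame (use closedness plus Lemma~\ref{clo:disc:bary} to land a definable point in $A$ and contradict the $\go$-partition there) is sound and closely mirrors the role of $a^+$ in the paper; the cleanest fix is to drop the $n = 1$ reduction and the monotone-piece decomposition entirely and instead produce the definable section $f$ as the paper does.
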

\begin{proof}
We first handle the case that $A$ has no $\RV$-coordinates. To begin with, observe that parameters have no bearing on whether  $p(A)$ is doubly bounded or not, and hence, without loss of generality, we may assume that the substructure $\mdl S$ is $\VF$-generated. Since $A$ is bounded, $p(A)$ must be bounded too. Suppose for contradiction that $p(A)$ is not doubly bounded. For $\gamma \in \absG$, let $A_{\gamma} = \{a \in A : p(a) > \gamma \}$. For all $c \in \MM \mi \{0\}$, since the \T-model $\mdl S \la c \ra_T$ generated by $c$ may be expanded to a (unique) elementary $\TCVF$-submodel, we see that $A_{\abval(c)}$ contains a point in $\mdl S \la c \ra_T$. So, by compactness, there is a definable function $f : \MM \mi \{0\} \fun A$ such that $p(f(c)) > \abval(c)$ for every $c \in \MM \mi \{0\}$. By Lemma~\ref{fun:suba:fun} and \omin-minimal monotonicity,
\[
 a^+ \coloneqq \lim_{x \rightarrow 0^+} f(x) \dand a^- \coloneqq \lim_{x \rightarrow 0^-} f(x)
\]
both exist and, since $A$ is closed, they are contained in $A$. Thus, there is a $c \in \MM^+ \mi \{0\}$ such that $\abval(c) > p(a^+)$ and $f(c) \in \go(a^+, p(a^+))$. Since $p$ is an $\go$-partition, this implies $p(f(c)) = p(a^+)$, which is a contradiction.

The general case follows from the case above and Lemma~\ref{db:to:db}.
\end{proof}
%Suppose that $n > 1$. For each $a \in \pr_{<n}(A)$ let $p_a : A_a \fun \absG$ be the $\go$-partition induced by $p$ and $\beta_a \in \absG$ an $a$-definable upper bound of $p_a(A_a)$. Observe that, for all $a' \in \go(a, \beta_a) \cap \pr_{<n}(A)$, $p_a(A_a) = p_{a'}(A_{a'})$ and hence, by \omin-minimality in the $\Gamma$-sort, we may assume that the function $p' : \pr_{<n}(A) \fun \absG$ given by $a \efun \beta_a$ is an $\go$-partition. Now the assertion follows from a routine induction on $n$.We first consider the case $n=1$.

This lemma, in its various formulations, is crucial for the good behavior of motivic Fourier transform (see \cite[\S~11]{hrushovski:kazhdan:integration:vf} and \cite{yin:hk:part:3}). For essentially the same reason, the specialized version of the main construction for proper invariant sets below depends heavily on  it.

\subsection{Of continuity and differentiability}

For the next two lemmas, as in the proof of Lemma~\ref{vol:par:bounded}, since parameters do not affect whether the conclusions hold or not, we may assume that  every definable set contains a definable point (see Remark~\ref{rem:hyp}). In fact, for convenience, we shall enlarge  $\mdl S$ so that all the relevant parametrically definable sets become definable.

\begin{lem}\label{cb:to:cb}
Let $f : A \fun \VF^m$ be a definable continuous function. Suppose that $A \sub \VF^n$ is closed and bounded. Then $f(A)$ is closed and bounded.
\end{lem}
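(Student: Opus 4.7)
Our plan is to argue by contradiction, in close analogy with the proof of Lemma~\ref{vol:par:bounded}. In each of the two cases (unboundedness of $f(A)$ or non-closedness of $f(A)$) we will construct a definable one-variable curve into $A$ via a compactness argument, extract a one-sided limit $a^{+} \in A$ by combining Lemma~\ref{fun:suba:fun} with \omin-minimal monotonicity and the closedness of $A$, and use the continuity of $f$ at $a^{+}$ to derive a contradiction. As in Lemma~\ref{vol:par:bounded}, parameters have no bearing on whether $f(A)$ is closed and bounded, so we may enlarge $\mdl S$ so that it is $\VF$-generated, every nonempty definable set contains a definable point, and for each $c \in \VF$ the \T-model $\mdl S \la c \ra_T$ expands uniquely to an elementary $\TCVF$-submodel of $\mmdl$.

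For boundedness, projecting onto a coordinate reduces to the case $m = 1$, so assume $f : A \fun \VF$ has unbounded image. Then for every $c \in \MM^{+} \mi \{0\}$ the set $\{a \in A : \abs{f(a)} > 1/\abs{c}\}$ is nonempty and definable from $\mdl S \cup \{c\}$, hence by elementarity contains a point of $\mdl S \la c \ra_T$. The compactness argument of Lemma~\ref{vol:par:bounded} then produces a definable function $\alpha : \MM^{+} \mi \{0\} \fun A$ with $\abs{f(\alpha(c))} > 1/\abs{c}$ for every $c$. Applying Lemma~\ref{fun:suba:fun} coordinatewise and using \omin-minimal monotonicity, we may shrink the domain to some $(0, \varepsilon)$ on which every coordinate of $\alpha$ is continuous and monotone; since $A$ is bounded, the limit $a^{+} \coloneqq \lim_{x \to 0^{+}} \alpha(x)$ exists in $\VF^{n}$, and $a^{+} \in A$ because $A$ is closed. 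Continuity of $f$ at $a^{+}$ forces $f(\alpha(c)) \to f(a^{+}) \in \VF$, contradicting $\abs{f(\alpha(c))} > 1/\abs{c} \to \infty$.

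For closedness, suppose $b \in \overline{f(A)} \mi f(A)$. For each $c \in \MM^{+} \mi \{0\}$ the set $\{a \in A : \abs{f(a) - b} < \abs{c}\}$ is nonempty (because $b$ is a limit point of $f(A)$) and definable from $\mdl S \cup \{c\}$, so the same compactness argument yields a definable function $\beta : \MM^{+} \mi \{0\} \fun A$ with $\abs{f(\beta(c)) - b} < \abs{c}$ for every $c$. Running the coordinatewise monotonicity and boundedness argument once more produces a limit $a^{+} \coloneqq \lim_{x \to 0^{+}} \beta(x) \in A$, and continuity of $f$ at $a^{+}$ gives $f(a^{+}) = b \in f(A)$, the desired contradiction.

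The main technical hurdle is the compactness step producing the definable curves $\alpha$ and $\beta$: for each parameter $c$ one has to know that the relevant fiber is populated inside the elementary submodel $\mdl S \la c \ra_T$, and then amalgamate these pointwise choices into a single $\mdl S$-definable function, exactly as in the proof of Lemma~\ref{vol:par:bounded}. Once that is in place, the existence of one-sided coordinate limits from Lemma~\ref{fun:suba:fun} and \omin-minimal monotonicity, the membership of the limit in $A$ from closedness, and the final contradiction from continuity of $f$ all go through as in Lemma~\ref{vol:par:bounded}.
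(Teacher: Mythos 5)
Your proof is correct and follows essentially the same approach as the paper's: argue by contradiction, produce a definable curve into $A$ via a compactness/definable-choice argument (justified by enlarging $\mdl S$), extract a one-sided limit via monotonicity together with closedness and boundedness of $A$, and invoke continuity of $f$ to contradict the defining property of the curve. The only cosmetic difference is the parametrization of the curve (you use $c \in \MM^{+} \mi \{0\}$ with $c \to 0^{+}$, whereas the paper uses $x \in \VF^{+}$ with $x \to \infty$), and the paper packages the monotonicity step through Corollary~\ref{mono} rather than citing Lemma~\ref{fun:suba:fun} and \omin-minimal monotonicity directly.
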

\begin{proof}
Suppose for contradiction that $f(A)$ is not bounded. Then, by compactness, there is a definable function $g : \VF^+ \fun A$ such that $\abs{f(g(x))} > x$ for every $x \in \VF^+$. Since $A$ is closed and bounded, applying monotonicity (Corollary~\ref{mono}) to each coordinate of $g$, we see that $\lim_{x \limplies \infty} g(x)$ exists and belongs to $A$. But this implies that $\lim_{x \limplies \infty} f(g(x))$ also exists and belongs to $f(A)$, which is impossible.

The argument for closedness is similar: Otherwise, we would have a point $b \in \VF^m \mi f(A)$ and a definable function $g : (0,1) \fun A$ such that $\lim_{x \limplies 0} f(g(x)) = b$, which is impossible since $\lim_{x \limplies 0} g(x)$ exists and belongs to $A$.
\end{proof}

\begin{cor}\label{conti:homeo}
If the function $f$ in the above lemma is injective then  it is a homeomorphism from $A$ onto $f(A)$.
\end{cor}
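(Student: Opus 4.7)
The plan is to run the standard ``continuous bijection from a compact space to a Hausdorff space is a homeomorphism'' argument, using Lemma~\ref{cb:to:cb} as the substitute for compactness. Since $f$ is already a continuous definable bijection onto $f(A)$, the only thing to check is that the inverse $f^{-1} : f(A) \fun A$ is continuous, or equivalently that $f$ is a closed map from $A$ onto $f(A)$ (both with their subspace topologies).

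So I would take an arbitrary relatively closed subset $C \sub A$ and show that $f(C)$ is relatively closed in $f(A)$. Since $A$ is closed in $\VF^n$, a relatively closed subset $C \sub A$ is of the form $C = A \cap D$ for some $D \sub \VF^n$ closed, hence $C$ itself is closed in $\VF^n$. Moreover $C$ is bounded because $A$ is. Now $f \rest C$ is a definable continuous function on a closed bounded set, so Lemma~\ref{cb:to:cb} applied to $f \rest C$ yields that $f(C)$ is closed (and bounded) in $\VF^m$. In particular $f(C) = f(A) \cap f(C)$ is closed in $f(A)$. This shows $f^{-1}$ is continuous, so $f$ is a homeomorphism.

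The argument is very short and there is no real obstacle: everything is supplied by Lemma~\ref{cb:to:cb}. The only mild subtlety worth flagging is the passage between ``closed in $A$'' and ``closed in $\VF^n$'', which is immediate here only because $A$ itself was assumed closed in $\VF^n$; without that, one would have to work harder (e.g.\ extend $f$ or argue via nets/sequences as in the proof of the lemma).
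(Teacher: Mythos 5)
Your argument is correct and is exactly the standard one: use Lemma~\ref{cb:to:cb} as a stand-in for compactness to show $f$ is a closed map, hence $f^{-1}$ is continuous. The paper leaves the corollary unproved (it is ``just a more general version of \cite[Corollary~6.1.12]{dries:1998}''), and this is clearly the intended reasoning; your flag about needing $A$ closed in $\VF^n$ to pass from ``relatively closed'' to ``closed'' is a good observation and is indeed why the lemma's hypothesis ``closed and bounded'' is exactly what is needed.
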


This corollary is just a more general version of \cite[Corollary~6.1.12]{dries:1998}. The following lemma is analogous to \cite[Corollary~6.2.4]{dries:1998}.

\begin{lem}\label{fiber:conti}
Let $A \sub \VF^n$ and $f : A \fun \VF$ be a definable function. Suppose that, for every $a \in A_1 \coloneqq \pr_1(A)$, $A_a \sub \VF^{n-1}$ is open and the induced function $f_a : A_a \fun \VF$ is continuous. Then there is a definable finite set $B \sub A_1$ such that $f$ is continuous away from $\bigcup_{a \in B} a \times A_a$.
\end{lem}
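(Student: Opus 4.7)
The plan is to argue by contradiction, extracting a definable continuous curve of discontinuities of $f$ and then playing the fibrewise continuity hypothesis against the continuity along that curve by means of the ultrametric inequality.

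Let $D \sub A$ be the definable set at which $f$ fails to be continuous and put $B \coloneqq \pr_1(D)$; the statement is equivalent to asserting that $B$ is finite. Assume for contradiction that $B$ is infinite. By HNF (Remark~\ref{rem:HNF}), $B$ contains an open $\vv$-interval $I$. Iterating HNF on successive $\VF$-coordinates and selecting a definable point from a distinguished end-disc of each fibre via Lemmas~\ref{clo:disc:bary} and~\ref{open:disc:def:point}, I would construct, after shrinking $I$, a definable $c : I \fun \VF^{n-1}$ with $(a, c(a)) \in D$ for every $a \in I$. Applying Corollary~\ref{mono} coordinatewise to $c$, and then to the trace $g(a) \coloneqq f(a, c(a))$ via Lemma~\ref{fun:suba:fun}, I would shrink $I$ further so that both $c$ and $g$ are continuous on $I$.

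Next, pick any $a_0 \in I$ and let $\gamma_0 \in \absG$ be a threshold witnessing the discontinuity of $f$ at $(a_0, c(a_0))$: for every $\eta \in \absG$ there exist $(a, b) \in A$ with $\vv(a - a_0) \geq \eta$, $\vv(b - c(a_0)) \geq \eta$, yet $\vv(f(a, b) - g(a_0)) < \gamma_0$. For this fixed $\gamma_0$, the fibrewise continuity of $f_a$ at $c(a)$ yields a definable function $\rho : I \fun \absG$ with
\[
\{ b \in A_a : \vv(b - c(a)) \geq \rho(a) \} \sub \{ b \in A_a : \vv(f(a, b) - g(a)) \geq \gamma_0 \}
\]
for every $a \in I$. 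One further application of Corollary~\ref{mono} would allow me to shrink $I$ about $a_0$ so that $\rho$ is continuous, and in particular locally bounded above, at $a_0$.

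The contradiction would then come out of the ultrametric inequality. By the continuity of $c$, $\rho$, and $g$ at $a_0$, I can choose a threshold $\mu \in \absG$ slightly exceeding $\rho(a_0)$ and a radius $\eta \in \absG$ such that, for every $a \in I$ with $\vv(a - a_0) \geq \eta$, one has $\vv(c(a) - c(a_0)) \geq \mu$, $\rho(a) \leq \mu$, and $\vv(g(a) - g(a_0)) \geq \gamma_0$. Any $(a, b) \in A$ with $\vv(a - a_0) \geq \eta$ and $\vv(b - c(a_0)) \geq \mu$ then satisfies $\vv(b - c(a)) \geq \mu \geq \rho(a)$ by the ultrametric inequality, hence $\vv(f(a, b) - g(a)) \geq \gamma_0$, and therefore $\vv(f(a, b) - g(a_0)) \geq \gamma_0$, contradicting the choice of $\gamma_0$. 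The hardest part will be the uniform definable selection producing the continuous curve $c$ as a section of $D \fun I$, together with the bookkeeping to keep all of the successive shrinkings of $I$ compatible; the overall strategy is a valued-field analogue of the \omin-minimal argument \cite[Corollary~6.2.4]{dries:1998} mentioned in the statement.
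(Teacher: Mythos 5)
Your route is genuinely different from the paper's. The paper selects, for each curve point $a$, a path $h_a$ into the continuity locus $A''$ along which $f$ has a wrong limit $b_a$, invokes \cite[Lemma~1.5]{DriesLew95} to make the uniformized $h$ jointly continuous, and finally produces vertical paths $h'_a \subset \ran(h)$ whose double limit contradicts the fibrewise continuity of $f_{a_1}$. You instead try to prove joint continuity at $(a_0, c(a_0))$ directly from the ultrametric inequality and a fibrewise modulus $\rho$. But the sentence ``[one] further application of Corollary~\ref{mono} would allow me to shrink $I$ about $a_0$ so that $\rho$ is continuous, and in particular locally bounded above, at $a_0$'' has a genuine gap. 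Corollary~\ref{mono} concerns $\VF$-valued functions, not $\absG$-valued ones; and, more fundamentally, local boundedness of $\rho$ at $a_0$ is a property of the germ of $\rho$ at $a_0$, which shrinking $I$ around a \emph{fixed} $a_0$ cannot alter. What you actually need is to choose $a_0$ to avoid the points of $I$ at which $\rho(\cdot,\gamma_0)$ blows up; yet $\gamma_0$, and hence $\rho$, are chosen only after $a_0$. This circularity is not incidental: if $(a_0, c(a_0))$ really is a discontinuity of $f$ at threshold $\gamma_0$ and $c$, $g$ are continuous at $a_0$, then your own ultrametric computation, run in reverse, shows that $\rho(\cdot,\gamma_0)$ \emph{must} fail to be locally bounded at $a_0$. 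So for an arbitrary $a_0$ the shrinking step is simply false, and this failure is the actual content of the lemma, not bookkeeping.

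The missing ingredient is orthogonality of $\VF$ and $\Gamma$. Promote $\rho$ to a definable function on $I \times \absG$. For each fixed $\gamma$, the blow-up locus $E_\gamma \subset I$ of $\rho(\cdot,\gamma)$ is finite: were it infinite it would contain a $\vv$-interval $J$, forcing every sublevel set $\{a \in J : \rho(a,\gamma) \le \mu\}$ to have empty interior, hence to be a finite set of $\mu$-definable points, hence (by Lemma~\ref{RV:no:point}, passing through any $t \in \gamma^\sharp$) a finite set of definable points; so $J$ would lie inside $\VF(\mdl S)$, contradicting saturation. Likewise, $E \coloneqq \{(a, \gamma) : a \in E_\gamma\}$ is a finite union of graphs of definable partial maps $\absG \to \VF$, each with finite image by Lemma~\ref{RV:no:point} and saturation, so $\pr_1(E) \subset I$ is finite. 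Choosing $a_0 \in I \setminus \pr_1(E)$ \emph{before} extracting $\gamma_0$ breaks the circularity: $\rho(\cdot,\gamma_0)$ is then locally bounded at $a_0$ for every $\gamma_0$ simultaneously, and your ultrametric argument closes. This uniformization plays the role that \cite[Lemma~1.5]{DriesLew95} plays in the paper's proof; with it supplied your approach works and does sidestep the curve-selection machinery, but as written the proposal does not establish the lemma.
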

\begin{proof}
Note that the case $n=1$ is meaningful and it follows from monotonicity. So assume $n > 1$. Let $A' \sub A$ be the definable set of points at which  $f$ fails to be continuous. Since $\dim_{\VF}(A') < n$, we may assume that $A'' \coloneqq A \mi A'$ is open.  Suppose for contradiction that $\pr_1(A')$ contains an open interval $I$. By monotonicity, shrinking $I$ if necessary,  there is a continuous  \LT-definable function $g: I \fun \VF^{n-1}$ whose graph, also denoted by $g$, is contained in $A'$ and the obvious function on $I$ induced by $f$, $g$ is continuous  as well. For every $a \in g$ there is a continuous \LT-definable function $h_a : (0, 1) \fun A''$ with  $h_a \limplies a$ but $f \circ h_a \limplies b_a \neq f(a)$; here we use the notation in \cite[\S~6.4]{dries:1998} and the argument for the claim is exactly as in the proof of \cite[Lemma~6.4.2]{dries:1998}. By compactness, we may assume that the functions $h_a$ are given uniformly by an \LT-definable function $h : I \times (0, 1) \fun A''$ and there is an $\epsilon \in \VF^+$ such that $\abs{f(a) - b_a} > \epsilon$ for all $a \in g$. By \cite[Lemma~1.5]{DriesLew95}, we may further assume that $h$ is continuous. Then it is not hard to see that, for all $a \in g$, there is a continuous \LT-definable function $h'_a : (0, 1) \fun \ran(h)$ with $h'_a \limplies a$ and $\pr_1(\ran(h'_a)) = a_1 \coloneqq \pr_1(a)$. Since both $f_{a_1}$ and $f \circ h$ are continuous, we have $f_{a_1} \circ h'_a \limplies f(a)$ and $f_{a_1} \circ h'_a \limplies b_a$, which is a contradiction.
\end{proof}

Given a definable set $A$, we say that a property holds \emph{almost everywhere} in $A$ or \emph{for almost every point} in $A$ if it holds away from a definable subset of $A$ of a smaller $\VF$-dimension. This terminology will also be used with respect to other notions of dimension.

By Lemma~\ref{fun:suba:fun} and \omin-minimal differentiability, every definable function $f : \VF^n \fun \VF^m$ is $C^p$ almost everywhere (with respect to the operator $\dim_{\VF}$) for all $p$ (see \cite[\S~7.3]{dries:1998}).
The Jacobian (determinant) of $f$ at $a \in \VF^n$, if it exists, is denoted by $\jcb_{\VF} f (a)$.

There is also an alternative definition of differentiability:

\begin{defn}\label{defn:diff}
For any $a \in \VF^n$, we say that $f$ is \emph{differentiable at $a$} if there is a linear map $\lambda : \VF^n \fun \VF^m$ (of $\VF$-vector spaces) such that, for any $\epsilon \in \absG$, if $b \in \VF^n$ and $\abval(b)$ is sufficiently large then
\[
\abval(f(a + b) - f(a) - \lambda(b)) - \abval(b) > \epsilon.
\]
\end{defn}

It is straightforward to check that if such a linear function $\lambda$ (represented by a matrix with entries in $\VF$) exists then it is unique. Thus this notion of differentiability agrees with the one defined via \omin-minimality.

\begin{lem}
Let $f : A \fun \VF$ be a definable function, where $A \sub \VF^n$ is open. Then $f$ is differentiable if and only if it is $C^1$.
\end{lem}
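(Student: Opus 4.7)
The plan is to prove the two implications in turn.

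For ($\Leftarrow$), assume $f$ is $C^1$. For $a \in A$ and small $h = (h_1, \ldots, h_n) \in \VF^n$, write $f(a+h) - f(a)$ as the telescoping sum $\sum_{i=1}^n \Delta_i$, where $\Delta_i$ is the increment obtained by changing only the $i$-th coordinate from $a_i$ to $a_i + h_i$. Each $\Delta_i$ lives on a line segment on which $f$ is quasi-$\lan{T}{}{}$-definable by Lemma~\ref{fun:suba:fun} (shrink $h$ if necessary), so the one-variable mean value theorem yields $\Delta_i = h_i \cdot \partial_i f(\xi_i)$ with $\xi_i$ on that segment. Continuity of each $\partial_i f$ at $a$ then replaces $\partial_i f(\xi_i)$ by $\partial_i f(a) + \eta_i$ with $\abval(\eta_i)$ arbitrarily large once $\abval(h)$ is, delivering the linear approximation $\lambda(h) = \sum_i \partial_i f(a)\, h_i$ demanded by Definition~\ref{defn:diff}.

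For ($\Rightarrow$) I induct on $n$. In the base case $n = 1$, Corollary~\ref{mono} makes $f'$ a piecewise continuous definable function with two-sided limits at each point. If $f'$ were discontinuous at some $a \in A$, say $f'(a^+) =: L \ne f'(a)$, then choosing $\delta > 0$ so that $f$ is quasi-$\lan{T}{}{}$-definable on $(a, a+\delta)$ (Lemma~\ref{fun:suba:fun}) and invoking the classical mean value theorem produces, for each $b \in (0, \delta)$, a point $c_b \in (a, a+b)$ with $(f(a+b) - f(a))/b = f'(c_b)$. The right side tends to $L$ as $b \to 0^+$, while the differentiability of $f$ at $a$ forces the left side to tend to $f'(a) \ne L$, a contradiction.

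For the inductive step, fix $j \in [n]$. For each $k \in [n] \mi \{j\}$, every fiber of $A$ over the $k$-th coordinate is an open subset of $\VF^{n-1}$ (since $A$ is open), and the restriction of $f$ to it is differentiable, hence $C^1$ by the inductive hypothesis; in particular, $\partial_j f$ is continuous along every such fiber. Applying Lemma~\ref{fiber:conti} (after permuting the first and $k$-th coordinates) confines the discontinuity locus of $\partial_j f$ to a finite union of hyperplanes $\{x_k = \textup{const}\}$. Intersecting these constraints as $k$ ranges over $[n] \mi \{j\}$ forces the discontinuity locus into a finite union of lines parallel to the $e_j$-axis; along each such line, the base case applied to the one-variable restriction of $f$ supplies the continuity of $\partial_j f$.

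The final and most delicate step is to rule out a point $a$ of discontinuity of $\partial_j f$ sitting on one of these exceptional $e_j$-parallel lines $L$. The plan is to exploit the definable quotient $H_s(x) \coloneqq (f(x + s e_j) - f(x))/s$ on a punctured neighborhood of $a$: for $x$ off $L$, the one-variable mean value theorem rewrites $H_s(x) = \partial_j f(\xi_{x,s})$ for some $\xi_{x,s}$ on $[x, x + se_j]$ (which stays off $L$, as $L$ is $e_j$-parallel), so $H_s$ inherits the off-$L$ continuity of $\partial_j f$. On the other hand, $H_s(a) \to \partial_j f(a)$ as $s \to 0$ by definition of the partial, and continuity of $H_s$ in $x$ gives $\lim_{x \to a} H_s(x) = H_s(a)$. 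Reconciling these via the differentiability of $f$ at $a$ and the continuity of $\partial_j f$ along $L$ should force the off-$L$ limit of $\partial_j f$ at $a$ to equal $\partial_j f(a)$, closing the argument. This reconciliation is the main obstacle: in the classical real-analytic setting $\partial_j f$ can oscillate as one approaches $L$, and excluding this requires leveraging the tameness of definable sets in $\mmdl$ --- specifically, the fact that the discontinuity locus has been squeezed into a codimension-$(n-1)$ definable set.
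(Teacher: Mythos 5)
The paper's proof is a citation: the statement is \cite[\S~7.2.6]{dries:1998} for $\lan{T}{}{}$-definable $f$, and the argument there --- which, as the paper notes, depends only on the mean value theorem --- carries over to arbitrary definable $f$ via Lemma~\ref{fun:suba:fun}. Your proposal instead reconstructs the result from scratch by induction on $n$; the $C^1 \Rightarrow$ differentiable direction and the one-variable base case are correct in substance.

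The gap you flag at the end is genuine, and as written the hard direction is not proved. After applying Lemma~\ref{fiber:conti} in every coordinate direction $k \neq j$, you know only that $\partial_j f$ is \emph{separately} continuous, with possible joint discontinuities confined to finitely many $e_j$-parallel lines; separate continuity of a definable function does not yield joint continuity, so something more is required. The sketch with $H_s(x) = (f(x + se_j) - f(x))/s$ is an iterated-limit exchange that is left unresolved: $\lim_{s\to 0}\lim_{x\to a} H_s(x) = \partial_j f(a)$ is immediate from continuity of $f$, but the conclusion you want is $\lim_{x\to a}\lim_{s\to 0}H_s(x) = \partial_j f(a)$, and the two orders of limit do not commute without further control. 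Closing this requires a quantitative mean-value estimate --- e.g.\ take a definable curve $\gamma(t)\to a$ off the exceptional lines with $\partial_j f(\gamma(t))\to M\ne\partial_j f(a)$, apply the mean value theorem along $e_j$-segments of length comparable to $\abval(\gamma(t)-a)$, and contradict the $o$-estimate in Definition~\ref{defn:diff} --- which is precisely the classical direct proof the paper cites, not an induction on $n$. As it stands, your inductive step proves strictly less than what is claimed.
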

\begin{proof}
This is  \cite[\S~7.2.6]{dries:1998} if $f$ is \LT-definable, but  the classical proof alluded there, which only depends on the mean value theorem, goes through even if $f$ is not \LT-definable.
\end{proof}

\begin{lem}\label{diff:dtdp}
Let $f : A \fun B$ be a continuous definable bijection between two open subsets of $\VF$. Suppose that every $a \in A$ is contained in an open interval $B_a \sub A$ such that $\vv \circ \ddx f$ is constant on $B_a \mi a$. Then $A$ admits an $\go$-partition such that $f \rest \ga$ has dtdp for every open disc $\ga \sub A$ in question.
\end{lem}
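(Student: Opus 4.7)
The plan is to assign to each $a \in A$ a definable radius $p(a) \in \absG$ so that the open disc $\go(a, p(a))$ lies inside the open interval $B_a$ furnished by the hypothesis, and then to check both the $\go$-partition condition and dtdp on each disc $\go(a, p(a))$ contained in $A$. For definability I would take $B_a$ canonically as the maximal open interval around $a$ in $A$ on which $\vv \circ \ddx f$ assumes a constant value off $a$; write $\gamma(a) \in \Gamma$ for that value, and set $p(a)$ to be the radius of the largest open disc around $a$ contained in $B_a$.

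First I would verify that $p$ is an $\go$-partition. Given $x \in \go(a, p(a)) \cap A$, open discs are self-centering so $\go(x, p(a)) = \go(a, p(a))$, and on this common disc $\vv \circ \ddx f$ is constant equal to $\gamma(a)$ away from the two exceptional points $a$ and $x$. Maximality of $B_x$ then forces $B_x$ and $B_a$ to coincide modulo an exceptional point, whence $\gamma(x) = \gamma(a)$ and $p(x) = p(a)$.

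Next I would establish the sharp valuative identity
\[
\abval(f(x) - f(y)) = \abs{\gamma(a)} + \abval(x - y)
\]
for all distinct $x, y$ in a given disc $\ga = \go(a, p(a)) \sub A$. If $a \notin [x, y]$, the \omin-minimal mean value theorem applied on $[x, y] \sub \ga \mi a$ (where $f$ is $C^1$ with $\vv \circ \ddx f \equiv \gamma(a)$) yields it at once. If $a \in [x, y]$, I would split at $a$, apply MVT to each half to obtain $\abval(f(z) - f(a)) = \abs{\gamma(a)} + \abval(z - a)$ for $z \in \{x, y\}$, and then combine using the ultrametric: when the two half-estimates carry different valuations ultrametric equality is automatic, and when they coincide one exploits monotonicity of $f$ on $\ga$ (from Corollary~\ref{mono}) to conclude that the two positive summands contributing to $\abval(f(x) - f(y))$ do not cancel in leading order. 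This shifted-isometry behaviour implies that any open subdisc $\gb \sub \ga$ of radius $\delta$ maps into an open disc of radius $\abs{\gamma(a)} + \delta$; running the symmetric argument with $f^{-1}$, whose derivative has constant valuation $-\gamma(a)$ by the chain rule, pins the image down to the full open disc and simultaneously gives dtdp for $f^{-1}$.

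The main obstacle is the subcase where the potentially non-differentiable point $a$ sits inside the subdisc $\gb$ and the two half-estimates $\abval(f(x) - f(a))$ and $\abval(f(a) - f(y))$ have the same valuation; there the sharp identity cannot be read off the ultrametric alone, and one must rule out cancellation by invoking monotonicity of $f$ together with the constancy of the sign of $\ddx f$ on each connected component of $\ga \mi a$.
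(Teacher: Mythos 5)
Your core computation is sound and is essentially the paper's argument. The paper obtains the shifted-isometry identity by sandwiching $\ddx f (B_a \mi a)$ between two positive elements $b, b'$ of the common valuation $\alpha$ and estimating $b(x-y) < f(x) - f(y) < b'(x-y)$ for all $x, y \in \ga$; this one-line positivity bound absorbs your case split on whether $a \in [x,y]$ together with the ultrametric analysis of the equal-valuation subcase. In both versions the only ingredients are Corollary~\ref{mono}, the mean value theorem, and the constancy of $\vv \circ \ddx f$ off $a$. Your coda about $f^{-1}$ and the chain rule is superfluous: once $\abval(f(x) - f(y)) = \abs{\gamma(a)} + \abval(x - y)$ holds for all $x, y \in \ga$, the image of any open subdisc of $\ga$ of radius $\delta$ is exactly the open disc of radius $\delta + \abs{\gamma(a)}$ around the image of its center, which yields concentricity of $f$ and $f^{-1}$ simultaneously.

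There is, however, a genuine gap in your $\go$-partition verification. You define $B_a$ as the maximal open interval around $a$ in $A$ on which $\vv \circ \ddx f$ is constant \emph{off $a$}, and $p(a)$ as the radius of the largest open disc around $a$ inside $B_a$. Now take $x \in \go(a, p(a)) \mi \set{a}$. The maximal interval $B_x$ is required to carry a constant value of $\vv \circ \ddx f$ off \emph{$x$}, and there is no reason for it to reach across $a$: if $a$ is a point where $\ddx f$ fails to exist, or exists with a different valuation, then $B_x$ is truncated at $a$ and may be far smaller than $B_a$ (for instance when $a$ sits in the interior of $B_a$), so $p(x) \neq p(a)$. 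The phrase ``coincide modulo an exceptional point'' papers over precisely this: $B_x$ and $B_a$ need not coincide up to a point; $B_x$ can be a proper one-sided piece of $B_a$. The easiest repair is to make the exceptional set independent of the base point: define $B_a$ as the maximal open interval around $a$ inside $A$ on which $\vv \circ \ddx f$ is constant off the finite definable set $E$ of non-$C^1$ points of $f$; then for $x \in \go(a, p(a))$ one really does get $B_x = B_a$ and hence $p(x) = p(a)$. Alternatively, do as the paper does: prove only the local statement that each $a$ lies in an open disc $\ga \sub A$ with the fixed shift $\abs{\gamma(a)}$, observe that the shift function $a \efun \abs{\gamma(a)}$ is definable and locally constant, and extract the $\go$-partition from it by the usual maximality argument.
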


Note that, compared with Lemma~\ref{open:pro}, the assumption of this lemma is stronger and its conclusion weaker, but the point is that we do not need a partition to achieve it. Of course $f$ is $C^1$ almost everywhere, so the assumption is not vacuous only for finitely many points in $A$.

\begin{proof}
It is enough to show that  each $a \in A$ is contained in an open disc $\ga \sub A$ such that $f \rest \ga$ has dtdp. By monotonicity (Corollary~\ref{mono}), we may assume that $B_a$ is such that $f_a \coloneqq f \rest B_a $ is a parametrically \LT-definable monotone function (it is monotone because $f$ is continuous and bijective). Write $\alpha = \vv( \ddx f(B_a \mi a))$. Since $b < \ddx f(B_a \mi a) < b'$ for some $b, b' \in \alpha^{\sharp\sharp}$, a simple estimate argument shows that, for any open disc $\ga \sub B_a$, $f(\ga)$ is an open disc of radius $\rad(\ga) + \abs \alpha$.
\end{proof}

\begin{lem}\label{adjust:C1}
Let $f : A \fun B$ be as in Lemma~\ref{diff:dtdp}. Then there are finitely many definable open discs $\ga_i \sub A$ and a definable $C^1$ bijection $f^* : A \fun B$ with $f = f^*$ on $A \mi \bigcup_i \ga_i$ such that $\vv \circ \ddx f^*(a) = \vv \circ \ddx f(a)$ whenever the righthand side exists and, on each $\ga_i$, $\ddx f^*$ is constant.
\end{lem}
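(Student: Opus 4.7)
The plan is to reduce to the finitely many points at which $f$ fails to be $C^1$ and, near each such point, replace $f$ by a genuine $\VF$-affine function on a small open disc. By Lemma~\ref{fun:suba:fun} and \omin-minimal differentiability (applied to the quasi-\LT-definable pieces of $f$), there is a definable finite set $\{a_1, \dots, a_n\} \sub A$ off which $f$ is $C^1$. For each $i$ the hypothesis supplies a definable open interval $B_{a_i} \sub A$ containing $a_i$ on which $\vv \circ \ddx f$ takes a constant value $\alpha_i \in \Gamma_0$ off $a_i$. Shrinking if necessary, we pick pairwise disjoint definable open discs $\ga_i$ with $a_i \in \ga_i \sub B_{a_i}$; the estimate at the end of the proof of Lemma~\ref{diff:dtdp} then forces $f(\ga_i)$ to be an open disc of radius $\rad(\ga_i) + \abs{\alpha_i}$, necessarily centred at $f(a_i)$.

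Next, fix definable $\lambda_i \in \VF$ with $\vv(\lambda_i) = \alpha_i$ (after harmlessly enlarging $\mdl S$, such an element is provided by Remark~\ref{rem:hyp}; alternatively $\lambda_i$ may be taken as a definable difference quotient $(f(c) - f(c'))/(c - c')$ for $c, c' \in B_{a_i}$ on the same side of $a_i$, whose valuation is $\alpha_i$ by a mean value theorem argument on a quasi-\LT-definable piece of $f$), and set
\[
f^*(x) \coloneqq \begin{cases} \lambda_i(x - a_i) + f(a_i), & x \in \ga_i, \\ f(x), & x \in A \mi \bigcup_i \ga_i. \end{cases}
\]
A direct valuation computation shows that $f^* \rest \ga_i$ is an affine bijection of $\ga_i$ onto the open disc of radius $\rad(\ga_i) + \abs{\alpha_i}$ around $f(a_i)$, which is exactly $f(\ga_i)$; outside $\bigcup_i \ga_i$ one has $f^* = f$. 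Thus $f^*$ is a definable bijection $A \fun B$, the derivative $\ddx f^*$ is constantly $\lambda_i$ on each $\ga_i$, and $\vv \circ \ddx f^* = \alpha_i = \vv \circ \ddx f$ on $\ga_i \mi a_i$ while $\ddx f^* = \ddx f$ on $A \mi \bigcup_i \ga_i$. A short ultrametric argument handles the point $a_i$ in the event that $\ddx f(a_i)$ happens to exist: it is then a limit of difference quotients each of valuation $\alpha_i$, so has valuation $\alpha_i$ itself.

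The one genuine subtlety is checking that $f^*$ is globally $C^1$ rather than merely piecewise $C^1$. This rests on the standard non-Archimedean observation that each open disc $\ga_i$ is in fact clopen in the order topology of $\VF$: by HNF (Remark~\ref{rem:HNF}) its complement decomposes as the disjoint union of the $\vv$-intervals $(-\infty, \ga_i)$ and $(\ga_i, \infty)$, and a quick ultrametric calculation excludes a maximum of $(-\infty, \ga_i)$ or a minimum of $(\ga_i, \infty)$ in $\VF$. Consequently $A \mi \bigcup_i \ga_i$ is open in $A$; since $f^*$ is $C^\infty$ on each $\ga_i$ and $C^1$ on the open complement (all non-$C^1$ points of $f$ having been absorbed into $\bigcup_i \ga_i$), it is $C^1$ throughout $A$. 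This clopenness point is expected to be the principal obstacle; once it is in hand the remaining verification is a routine piecewise check.
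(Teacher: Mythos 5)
Your proof is correct and follows essentially the same approach as the paper's: locate the finitely many points $a_i$ where $f$ fails to be $C^1$, take small open discs $\ga_i$ around them, and replace $f$ on each $\ga_i$ by an affine bijection through $(a_i, f(a_i))$ with a slope of the correct valuation $\alpha_i$. The paper's proof is terse (it takes $d_i = \lim_{x \to a_i^+} \ddx f(x)$ as the slope and says ``the claim follows''), whereas you spell out why $f^*$ is globally $C^1$ by invoking the clopenness of open discs in the order topology of $\VF$; the paper leaves this implicit, so your added detail is welcome. One minor caveat: your first suggested way of producing $\lambda_i$, namely ``after harmlessly enlarging $\mdl S$, \dots by Remark~\ref{rem:hyp}'', is not quite legitimate here, because this lemma sits in Section~2, before the standing hypotheses that make $\mdl S$ an elementary substructure are imposed, and because the conclusion asserts that $f^*$ itself is definable over the ambient $\mdl S$ --- enlarging $\mdl S$ changes what ``definable'' means, so an $\mdl S'$-definable $\lambda_i$ need not give an $\mdl S$-definable $f^*$. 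Your alternative (taking $\lambda_i$ to be a definable difference quotient on one side of $a_i$, with valuation $\alpha_i$ by the mean value theorem on a quasi-\LT-definable piece) is the right fix and is in the same spirit as the paper's choice of $d_i$, which is itself $\mdl S$-definable as a one-sided limit of a definable function at a definable point.
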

\begin{proof}
There are finitely many definable open discs $\ga_i \sub A$ and a definable point $a_i \in \ga_i$ for each $i$ such that $f$ is $C^1$ on $A \mi \bigcup_i \ga_i$, $\vv \circ \ddx f$ is constant on each $\ga_i \mi a_i$, and $\gb_i = f(\ga_i)$ is an open disc; this last clause is by Lemma~\ref{diff:dtdp}. Let $d_i = \lim_{x \rightarrow a_i^+} \ddx f(x)$. Then there is a linear bijection $f_i : \ga_i \fun \gb_i$ with slope $d_i$ and $f_i(a_i) = f(a_i)$. The claim follows.
\end{proof}

In classical analysis, a function $f : A \fun \R$ is called a Darboux function if it has the intermediate value property, that is, for any $a, b \in \dom(f)$ and any $x \in [f(a),  f(b)]$, there is a $c \in [a, b]$ with $f(c) = x$. By the intermediate value theorem, every continuous function on a real interval is a Darboux function. There are discontinuous Darboux functions, but every discontinuity of such a function is essential, that is, at least one of the one-sided limits does not exist (in $\R \cup \pm \infty$). Darboux's theorem states that if $f$ is a real-valued differentiable function on an open interval then $\ddx f$ is a  Darboux function. This theorem is an easy consequence of the extreme value theorem and hence also holds in any \omin-minimal field.

The following easy lemma must have been observed before, but we are unable to find a reference in the literature on \omin-minimality.

%such that $\ran(\ddx f)$ is bounded

\begin{lem}\label{darb}
Let $(R, <, \ldots)$ be an \omin-minimal field. Let $f : A \fun R$ be a definable Darboux function, where $A\sub R$ is open. Then $f$ is continuous. In particular, if $f$ is differentiable then $\ddx f$ is continuous.
\end{lem}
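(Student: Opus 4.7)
The plan is a proof by contradiction, built on top of the o-minimal monotonicity theorem. First I would reduce to the case that $A$ is a single open interval and apply o-minimal monotonicity to $f$: this yields a finite partition of $A$ into open subintervals and finitely many points such that on each open subinterval $f$ is either constant or strictly monotone and continuous. Away from the exceptional points, $f$ is therefore continuous automatically, so the only thing to rule out is a discontinuity at one of these finitely many points. Fix such a putative discontinuity $a\in A$. Because $f$ is monotone (or constant) on a punctured neighbourhood of $a$ on either side, the one-sided limits
\[
L^{+} \;=\; \lim_{x\to a^{+}} f(x), \qquad L^{-} \;=\; \lim_{x\to a^{-}} f(x)
\]
exist in $R\cup\{\pm\infty\}$ by o-minimal monotonicity. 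Discontinuity at $a$ means that at least one of $L^{\pm}$ differs from $f(a)$; by symmetry I can assume $L^{+}\ne f(a)$.

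Next I would derive a contradiction from the Darboux property. Suppose first that $L^{+}\in R$, and without loss of generality $L^{+}>f(a)$. Set $y=(f(a)+L^{+})/2$, and choose $\epsilon>0$ so small that $f$ is continuous and monotone on $(a,a+\epsilon)$ and $f(x)>y$ for every $x\in(a,a+\epsilon)$ (possible since $f(x)\to L^{+}$). Pick any $b\in(a,a+\epsilon)$; then $y$ lies strictly between $f(a)$ and $f(b)$, so by the Darboux property there is some $c\in[a,b]$ with $f(c)=y$. But $f(a)<y$ and $f(x)>y$ for every $x\in(a,b]$, a contradiction. Suppose instead that $L^{+}=+\infty$; then $f$ must be decreasing on some $(a,a+\epsilon)$, so for $b\in(a,a+\epsilon)$ close to $a$ I can arrange $f(b)>f(a)+1$ and $f(x)\ge f(b)>f(a)+1$ for all $x\in(a,b]$. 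The Darboux property gives $c\in[a,b]$ with $f(c)=f(a)+1$, contradicting $c\in(a,b]\Rightarrow f(c)>f(a)+1$ together with $c=a\Rightarrow f(c)=f(a)$. The case $L^{+}=-\infty$ is symmetric, as is the case where it is $L^{-}$ that differs from $f(a)$. This rules out the discontinuity and proves continuity of $f$.

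For the final sentence, if $f$ is differentiable on an open set then $\ddx f$ is definable and, by Darboux's theorem quoted in the excerpt (valid in any o-minimal field as an easy consequence of the extreme value theorem), is a Darboux function; applying the first part to $\ddx f$ yields its continuity.

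The main obstacle in this plan is really just the careful bookkeeping in the case analysis at $a$: one has to handle the infinite one-sided limits as well as the finite ones and make sure the interval witness chosen for the Darboux property sits entirely on the side of $a$ where $f$ misses the prescribed intermediate value. Everything else is routine once o-minimal monotonicity provides the one-sided limits.
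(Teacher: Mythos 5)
Your proof is correct and follows essentially the same route as the paper's: both rest on the o-minimality observation that one-sided limits of a definable function always exist in $R\cup\{\pm\infty\}$, and then invoke the classical fact that any discontinuity of a Darboux function must be an essential one (i.e.\ at least one one-sided limit fails to exist), so that no discontinuity can occur. The paper cites this classical fact as already ``remarked above'' and stops there, whereas you spell out the case analysis ($L^{+}$ finite vs.\ $L^{+}=\pm\infty$, plus the symmetric $L^{-}$ case) that proves it. Your bookkeeping in those cases is sound — in particular, in the finite case monotonicity of $f$ near $a$ does give $f>y$ on the whole one-sided interval after shrinking $\epsilon$, and the infinite case correctly forces $f$ to be decreasing there. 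The appeal to o-minimal monotonicity to reduce to finitely many candidate discontinuities is not strictly needed (existence of one-sided limits at every point already suffices), but it is harmless and matches the spirit of the paper. The treatment of the second claim via Darboux's theorem is identical.
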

\begin{proof}
By \omin-minimality, one-sided limits always exist at any $a \in A$. So the first claim holds as remarked above. The second claim follows from Darboux's theorem.
\end{proof}

This lemma will not be used later. It is stated here merely to showcase a natural scenario in which the conclusion of Lemma~\ref{diff:dtdp} holds: If  $f : A \fun B$ is a differentiable definable bijection between two open subsets of $\VF$ then, by Lemma~\ref{darb}, $\ddx f$ is continuous and hence every $a \in A$ is contained in an open interval $B_a \sub A$ such that $\vv \circ \ddx f$ is constant on $B_a$.

Next, we would like to differentiate functions between arbitrary definable sets. The simplest way to do this is to ``forget'' the $\RV$-coordinates. More precisely, let
$f : \VF^n \times \RV^m \fun \VF^{n'} \times \RV^{m'}$
be a definable function. By compactness, for every $t \in \RV^m$ there is an $s \in \RV^{m'}$ such that $\dim_{\VF}(\dom(f_{(t, s)})) = n$ and hence $\dom(f_{(t, s)})$ has an open subset. For such an $s \in \RV^{m'}$ and each $a$ contained in an open subset of $\dom(f_{(t, s)})$, we define the directional derivatives of $f$ at $(a, t)$ to be those of $f_{(t, s)}$ at $a$. Then every partial derivative of $f$ is defined almost everywhere.

Let $f : U \fun V$ be a definable function between two definable sets in $\K$. Suppose that $U$ contains an open set. Then, by Remark~\ref{pillars} and \omin-minimal differentiability, $f$ is $C^p$ almost everywhere  for all $p$. (Derivatives can actually be defined for definable functions between $\K$-torsors, see \cite[Definition~2.37]{Yin:tcon:I}.) More generally, if $U$, $V$ are sets in $\RV$ then the \emph{normalized partial derivative} $\npard j f_i(u)$ of $f$ at $u \in U$ is defined as $\pard j f^*_i(1)$, if it exists, where $f^*_i$ is the $i$th component of the induced function $f^*$ between the multiplicative translations $U / u$ and $V / f(u)$; the partial derivative $\pard j f_i(u)$ of $f$ is then  given by $u_j^{-1} \cdot f_i(u)  \cdot \npard j f_i(u)$. It follows from \cite[Lemma~2.29]{Yin:tcon:I} and compactness that every (normalized) partial derivative of $f$ is defined almost everywhere. The normalized partial derivatives of $f$ at $u \in U$ give rise to the \emph{normalized Jacobian} $\jcb_{\K} f (u)$; the partial derivatives of $f$ give rise to the \emph{$\RV$-Jacobian} $\jcb_{\RV} f (u)$. We have
\[
\jcb_{\RV} f (u) = (\Pi u)^{-1} \cdot \Pi f(u) \cdot \jcb_{\K} f (u),
\]
where $\Pi (u_1, \ldots, u_n) = u_1 \cdot \ldots \cdot u_n$.

Even more generally, let $F$ be a subset of $\RV^n \times \RV^{n'}$. We say that $F$ is a \emph{local  function at $(u,v) \in F$} if  there is a function $F_{(u,v)} : U \fun V$ such that $U \sub \RV^n$, $V \sub \RV^{n'}$ are open and $F \cap (U \times V) = F_{(u,v)}$. In that case, we define the (normalized) partial derivatives of $F$ at $(u,v)$ to be those of $F_{(u,v)}$ at $(u,v)$ and also write $\jcb_{\K} F (u, v)$, $\jcb_{\RV} F (u, v)$ for the Jacobians, if they exist.
%We will primarily work with normalized Jacobians below.

We may further coarsen the data and define the \emph{$\Gamma$-Jacobian} of a (not necessarily definable) correspondence. Let $U \sub \RV^n \times \Gamma^m$, $V \sub \RV^{n'} \times \Gamma^{m'}$, and $C \sub U \times V$. The \emph{$\Gamma$-Jacobian} of $C$ at $((u, \alpha), (v, \beta)) \in C$, written as $\jcb_{\Gamma} C((u, \alpha), (v, \beta))$, is the element
\[
(\Pi (\vrv(u), \alpha))^{-1} \Pi (\vrv(v), \beta)  \in \Gamma.
\]
 Also set
\[
\jcb_{\absG} C((u, \alpha), (v, \beta)) =  - \Sigma (\abvrv(u), \abs \alpha) + \Sigma (\abvrv(v), \abs \beta),
\]
which equals $\abs{\jcb_{\Gamma} C((u, \alpha), (v, \beta))} \in \absG$; here $\Sigma (\gamma_1, \ldots, \gamma_n) = \gamma_1 + \ldots + \gamma_n$. If $C$ is the graph of a function then we just write $C(u, \alpha)$ instead of $C((u, \alpha), (v, \beta))$.

\begin{lem}[{\cite[Corollary~3.32]{Yin:tcon:I}}]\label{rv:op:comm}
Let $U^\sharp \sub (\OO^\times)^n$  be a definable set with $\dim_{\RV}(U) = n$. Let $P(x_1, \ldots, x_m)$ be a partial differential operator with definable $\res$-contractible coefficients $a_i : U^\sharp \fun \OO$ and $P_{\downarrow_{\res}}(x_1, \ldots, x_m)$ the corresponding operator with $\res$-contracted coefficients $a_{i\downarrow_{\res}} : \res(U^\sharp) \fun \K$. Let
\[
f = (f_1, \ldots, f_m) : U^\sharp \fun \OO
\]
be a sequence of definable $\res$-contractible functions. Then, for almost all $t \in U$ and all $a \in t^\sharp$,
\[
\res(P(f)(a)) = P_{\downarrow_{\res}}(f_{\downarrow_{\res}})(\res(a)).
\]
\end{lem}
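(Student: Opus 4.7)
The plan is to reduce the identity to an atomic compatibility claim for a single partial derivative of a single $\res$-contractible function, and then to propagate it through the operator $P$ by the standard ring-theoretic rules of differentiation.

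First, I would note that the $\res$-contractible functions $U^\sharp \fun \OO$ form a subring of the ring of all functions $U^\sharp \fun \OO$, and that $\res$-contraction is a ring homomorphism into the corresponding ring of $\K$-valued functions on $\res(U^\sharp)$. Hence if the atomic claim holds---namely, that for a single definable $\res$-contractible $g : U^\sharp \fun \OO$, the partial derivative $\pard j g$ is $\res$-contractible away from a set of $\RV$-dimension $< n$, with contraction equal to $\pard j g_{\downarrow_\res}$---then the full statement follows by induction on the order and the number of monomials in $P$, using the Leibniz rule together with the fact that the coefficients $a_i$ are themselves $\res$-contractible with contractions $a_{i\downarrow_\res}$.

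For the atomic claim, I would apply Lemma~\ref{fun:suba:fun} to partition $U^\sharp$ into finitely many definable pieces on each of which $g$ agrees with a quasi-\LT-definable function $\tilde g$. Using Lemma~\ref{RV:bou:dim}, the $\RV$-boundaries of these pieces have $\RV$-dimension $< n$ and may be discarded. On an $\RV$-pullback where $g = \tilde g \rest U^\sharp$ for some \LT-definable $\tilde g$ on an open \LT-definable neighborhood of $U^\sharp$ inside $(\OO^\times)^n$, the function $\tilde g$ is $C^1$ almost everywhere by \omin-minimal differentiability in the \LT-reduct, so $\pard j \tilde g$ exists and is itself \LT-definable on a cofinite (in the $\dim_\VF$ sense) open subset. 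By Remark~\ref{pillars} (Dries's Theorem~A), the structure induced on $\K$ by $\res$ is exactly that of the $T$-model on $\K$, so $\res \circ \tilde g$ restricts to a $T$-definable function on $\res$ of our set, and this restriction agrees with $g_{\downarrow_\res}$.

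The hard part will be the pointwise identification $\res(\pard j \tilde g(a)) = \pard j(\res \circ \tilde g)(\res a)$ at almost all $a$, but this reduces to a difference-quotient computation: since $\tilde g$ is $C^1$ at $a$, one has $\tilde g(a + h e_j) - \tilde g(a) = h \cdot \pard j \tilde g(a) + o(h)$ uniformly for $h \in \VF$ of sufficiently large valuation, so after dividing by $h \in \OO^\times \cap \MM \setminus \{0\}$ and applying $\res$ we recover precisely the difference quotient defining $\pard j(\res \circ \tilde g)$ at $\res a$ in the $T$-model $\K$. Excluding the definable set of $a$ where $\tilde g$ fails to be $C^1$, where $\pard j \tilde g(a) \notin \OO$, or where $a$ lies over the discarded low-dimensional $\RV$-piece, gives the required ``almost all $t \in U$'' statement via Lemma~\ref{fn:alm:cont} applied to $\pard j \tilde g$. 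Assembling the atomic identity via Leibniz across the terms of $P$ completes the proof.
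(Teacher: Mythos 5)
Your reduction to an atomic single-derivative compatibility, propagated through $P$ by the Leibniz rule and the ring-homomorphism property of $\res$-contraction, is the right overall shape. But the atomic step as you have formulated it contains a gap that cannot be patched in place. First, the set $\OO^\times \cap \MM \mi \{0\}$ you propose to divide by is empty, since $\OO^\times = \OO \mi \MM$. More substantively, the two filters you are trying to relate are incompatible. The $C^1$ estimate $\tilde g(a + h e_j) - \tilde g(a) = h \cdot \pard{j}\tilde g(a) + o(h)$ controls the error only for $h$ of large valuation, i.e.\ $h \in \MM$; but for such $h$ one has $\res(h) = 0$, so after applying $\res$ you do not obtain a nontrivial difference quotient in $\K$ at all --- the denominator in the residue field vanishes. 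Conversely, to compute $\pard{j}(\res \circ \tilde g)$ at $\res(a)$ you must use increments $\bar h = \res(h) \in \K^\times$ that are small but nonzero, which forces $h \in \OO^\times$ of valuation $0$, and for those $h$ the $o(h)$ estimate says nothing. What your Taylor expansion actually yields is that $\res\bigl(\tfrac{\tilde g(a+he_j)-\tilde g(a)}{h}\bigr) \to \res(\pard{j}\tilde g(a))$ as $\vv(h) \to \infty$, but that is merely continuity of $\res$ applied to a limit in $\VF$, not the derivative of the contraction in the $\K$-sort.

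To bridge the two filters you need a further device. A natural one is the mean value theorem in the o-minimal \LT-reduct: for $h \in \OO^\times$ with $a + h e_j$ in the domain, write $\tilde g(a + h e_j) - \tilde g(a) = h\,\pard{j}\tilde g(c)$ for some $c$ strictly between $a$ and $a + h e_j$; then $\res(c)$ lies between $\res(a)$ and $\res(a) + \res(h)$, and the residue of the difference quotient equals $\res(\pard{j}\tilde g(c))$. Since $\pard{j}\tilde g$ is itself $\res$-contractible with contraction continuous off a small set (by Lemma~\ref{fn:alm:cont} together with $C^1$ cell decomposition in the $\K$-sort), this tends to $\res(\pard{j}\tilde g(a))$ as $\res(h) \to 0$ in $\K$, which yields the desired identity. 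Without some such transfer your computation does not establish that $\res \circ \pard{j}\tilde g$ and $\pard{j}(\res \circ \tilde g) \circ \res$ agree, and the rest of the argument has no foundation to propagate.
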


\section{Grothendieck semirings}\label{section:groth}

From now on we require that the substructure $\mdl S$ be $\VF$-generated and $\Gamma(\mdl S)$ be nontrivial (see Remark~\ref{rem:hyp}). These two conditions are imposed at later stages in \cite{Yin:tcon:I} for the sake of greater generality, which is not a concern here.

\subsection{The categories of definable sets with volume forms}

\begin{defn}
An \emph{$\RV$-fiber} of a definable set $A$ is a set of the form $A_a$, where $a \in A_{\VF}$. The \emph{$\RV$-fiber dimension} of $A$ is the maximum of the $\RV$-dimensions of its $\RV$-fibers and is denoted by $\dim^{\fib}_{\RV}(A)$.
\end{defn}

\begin{lem}[{\cite[Lemma~4.2]{Yin:tcon:I}}]\label{RV:fiber:dim:same}
Let $f : A \fun A'$ be a definable bijection. Then $\dim^{\fib}_{\RV}(A) = \dim^{\fib}_{\RV} (A')$.
\end{lem}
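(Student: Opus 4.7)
The plan is to prove $\dim^{\fib}_{\RV}(A) \leq \dim^{\fib}_{\RV}(A')$; applying the same argument to $f^{-1}$ then yields equality. Set $k \coloneqq \dim^{\fib}_{\RV}(A)$ and fix $a \in A_{\VF}$ with $\dim_{\RV}(A_a) = k$. Write $B \coloneqq f(\{a\} \times A_a) \sub A'$, and let $g : A_a \fun \VF^{n'}$ be the composition of $s \mapsto f(a, s)$ with the projection to the $\VF$-coordinates.

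The crux of the proof will be to show that the image $g(A_a)$ is \emph{finite}. For each $s \in A_a$, every coordinate of $g(s)$ is definable over $\mdl S \cup \{a, s\}$. The enlarged base $\mdl S \cup \{a\}$ is still $\VF$-generated with nontrivial value group, so Lemma~\ref{RV:no:point} applies over it and forces each such coordinate to lie in $\dcl(\mdl S \cup \{a\})$, independently of $s$. Thus $g(A_a) \sub \dcl(\mdl S \cup \{a\})^{n'}$; since $g(A_a)$ is itself a definable set over the same base, the saturation of $\mmdl$ forces it to be finite.

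Writing $g(A_a) = \{c_1, \ldots, c_N\}$ and $B_i \coloneqq \{t \in \RV^{m'} : (c_i, t) \in B\}$, the restriction of $f$ to $\{a\} \times A_a$ becomes a definable bijection between $A_a$ and $\bigsqcup_i B_i$. Weakly \omin-minimal dimension theory in the $\RV$-sort tells us that $\dim_{\RV}$ is invariant under definable bijections and that $\dim_{\RV}(\bigsqcup_i B_i) = \max_i \dim_{\RV}(B_i)$; hence $\max_i \dim_{\RV}(B_i) = k$, and for any $i$ realizing this maximum we have $c_i \in A'_{\VF}$ and $A'_{c_i} \supseteq B_i$, giving $\dim^{\fib}_{\RV}(A') \geq k$.

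The main obstacle is the finiteness of $g(A_a)$; it is precisely the orthogonality between the $\VF$- and $\RV$-sorts recorded in Lemma~\ref{RV:no:point} that prevents $f$ from dispersing $A_a$ across a continuum of $\VF$-points in $A'$. Once this is in place the rest is a routine dimension count in a weakly \omin-minimal sort.
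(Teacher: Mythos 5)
Your proof is correct and follows the same strategy as \cite[Lemma~4.2]{Yin:tcon:I}: the crux is that the $\VF$-projection $g(A_a)$ of the image of a single $\RV$-fiber is finite, which is precisely the $\VF$/$\RV$ orthogonality recorded in Lemma~\ref{RV:no:point} applied over the base enlarged by $a$ (an enlargement that remains $\VF$-generated with nontrivial $\Gamma$, so the lemma transfers), followed by a routine count using the bijection-invariance of $\dim_{\RV}$ in the weakly \omin-minimal $\RV$-sort.
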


\begin{defn}[$\VF$-categories]\label{defn:VF:cat}
The objects of the category $\VF[k]$ are the definable sets of $\VF$-dimension less than or equal to $k$ and $\RV$-fiber dimension $0$ (that is, all the $\RV$-fibers are finite). Any definable bijection between two such objects is a morphism of $\VF[k]$. Set $\VF_* = \bigcup_k \VF[k]$.
\end{defn}

As soon as one considers adding volume forms to definable sets in $\VF$, the question of ambient dimension arises and, consequently, one has to take ``essential bijections'' as morphisms.

\begin{defn}[$\VF$-categories with volume forms]\label{defn:VF:mu}
An object of the category $\mVF[k]$ is a definable pair $(A, \omega)$, where $A \in \VF[k]$, $A_{\VF} \sub \VF^k$, and $\omega : A \fun \RV$ is a function, which is understood as a \emph{definable $\RV$-volume form} on $A$. A \emph{morphism} between two such objects $(A, \omega)$, $(B, \sigma)$ is a definable \emph{essential bijection} $F : A \fun B$, that is, a bijection that is defined outside definable subsets of $A$, $B$ of $\VF$-dimension less than $k$, such that, for every $x \in \dom(F)$,
\[
\omega(x) = \sigma(F(x)) \rv(\jcb_{\VF} F(x)).
\]
We say that such an $F$ is \emph{$\RV$-measure-preserving}, or simply measure-preserving.

The category $\mgVF[k]$ is similar, except that the \emph{definable $\Gamma$-volume form} $\omega$ is of the form $A \fun \Gamma$ and the \emph{$\Gamma$-measure-preserving} essential bijection $F$ needs to satisfy a weaker condition
\[
\omega(x) = \sigma(F(x)) \vv(\jcb_{\VF} F(x)).
\]
\end{defn}

In the definition above and other similar ones below, for the cases $k =0$, the reader should interpret things such as $\VF^0$ and how they interact with other things in a natural way. For instance, $\VF^0$ may be treated as the empty tuple, the only definable set of $\VF$-dimension less than $0$ is the empty set, and $\jcb_{\VF}$ is always $1$ on sets that have no $\VF$-coordinates. Thus $(A, \omega) \in \mVF[0]$ if and only if $A$ is a finite definable subset of $\RV_0^n$ for some $n$.

Set $\mVF[{\leq} k] = \bigoplus_{i \leq k} \mVF[i]$ and $\mVF[*] = \bigoplus_{k} \mVF[k]$; similarly for the other categories below (with or without volume forms).

\begin{rem}\label{mor:equi}
Let $F : (A, \omega) \fun (B, \sigma)$ be a $\mVF[k]$-morphism. Our intension is that such an $F$ should identify the two objects. However, if $F$ is not defined everywhere in $A$ then obviously it does not admit an inverse. We remedy this by introducing the following obvious congruence relation $\sim$ on $\mVF[k]$. Let $G : (A, \omega) \fun (B, \sigma)$ be another $\mVF[k]$-morphism. Then $F \sim G$ if $F(a) = G(a)$ for all $a \in A$  outside a definable subset  of $\VF$-dimension less than $k$. The morphisms of the quotient category $\mVF[k] /{\sim}$ have the form $[F]$, where $F$ is a $\mVF[k]$-morphism. Clearly every $(\mVF[k] / {\sim})$-morphism is an isomorphism and hence $\mVF[k] / {\sim}$ is a groupoid. In fact, all the categories of definable sets we shall work with should be and are groupoids.

It is certainly more convenient to work with representatives than equivalence classes. In the discussion below, this quotient category $\mVF[k] /{\sim}$ will almost never be needed except when it comes to forming the Grothendieck semigroup or, by abuse of terminology, when we speak of two objects of $\mVF[k]$ being isomorphic.
\end{rem}

%\begin{defn}[$\VF$-categories with volumes]\label{defn:VF:1}
%The category ${\vol}{\VF[k]}$ is the full subcategory of $\mVF[k]$ such that $(A, \omega) \in {\vol}{\VF[k]}$ if and only if $\omega = 1$.
%
%The category ${\vol_{\Gamma}}{\VF[k]}$ is the full subcategory of $\mVF[k]$ such that $(A, \omega) \in {\vol}{\VF[k]}$ if and only if $(\Pi x_{\VF}) \omega(x) = 1$ for all $x \in A$, where $x_{\VF} = \pvf(x)$.
%\end{defn}

\begin{defn}[$\RV$-categories]\label{defn:c:RV:cat}
The objects of the category $\RV[k]$ are the pairs $(U, f)$ with $U$ a set in $\RVV$ and $f : U \fun \RV^k$ a definable finite-to-one function. Given two such objects $(U, f)$, $(V, g)$, any definable bijection $F : U \fun V$ is a \emph{morphism} of $\RV[k]$.
\end{defn}

Note that the categories $\VF[0]$, $\RV[0]$ are equivalent, similarly for other such categories.

\begin{nota}\label{0coor}
We emphasize that if $(U, f)$ is an object of $\RV[k]$ then $f(U)$ is a subset of $\RV^k$ instead of $\RV_0^k$, while $0$ can occur in any coordinate of $U$. An object of  $\RV[*]$ of the form $(U, \id)$ is often just written as $U$.

More generally, if $f : U \fun \RV_0^k$ is a definable finite-to-one function then $(U, f)$ denotes the obvious object of $\RV[{\leq} k]$. Often $f$ will just be a coordinate projection (every object in $\RV[*]$ is isomorphic to an object of this form). In that case, the object $(U, \pr_{\leq k})$ is simply denoted by $U_{\leq k}$ and its class in $\gsk \RV[{\leq} k]$ by $[U]_{\leq k}$, etc.
\end{nota}

\begin{defn}[$\RES$-categories]\label{defn:RES:cat}
The category $\RES[k]$ is the full subcategory of $\RV[k]$ such that $(U, f) \in \RES[k]$ if and only if $\vrv(U)$ is finite.
\end{defn}

\begin{rem}\label{fin:dec}
Every $\RV[k]$-morphism $F: (U, f) \fun (V, g)$  induces a definable finite-to-finite correspondence $F^\dag \sub f(U) \times g(V)$. Since $F^\dag$ can be decomposed into finitely many definable bijections $(F^\dag_i)_i$ such that each $F^\dag_i$ is differentiable everywhere if $\dom(F^\dag_i)$ is of $\RV$-dimension $k$,  we see that $F^\dag$ is a local bijection outside a definable subset of $\RV$-dimension less than $k$.
\end{rem}

\begin{defn}[$\RV$- and $\RES$-categories with volume forms]\label{defn:RV:mu}
An object of the category $\mRV[k]$ is a definable triple $(U, f, \omega)$, where $(U, f)$ is an object of $\RV[k]$ and $\omega : U \fun \RV$ is a function, which is understood as a \emph{definable $\RV$-volume form} on $(U, f)$. A \emph{morphism} between two such objects $(U,f, \omega)$, $(V, g, \sigma)$ is an $\RV[k]$-morphism $F: (U, f) \fun (V, g)$ such that
\begin{itemize}
  \item $\omega(u) = \sigma(F(u)) \cdot \jcb_{\RV} F^\dag(f(u), g \circ F(u))$ for every $u \in U$ outside a definable subset  of $\RV$-dimension less than $k$,
  \item $\vrv(\omega(u)) = (\vrv \circ \sigma \circ F)(u) \cdot \jcb_{\Gamma}  F^\dag(f(u), g \circ F(u))$ for every $ u \in U$.
\end{itemize}

The category $\mgRV[k]$ is similar, except that the \emph{definable $\Gamma$-volume form} $\omega$ is of the form $U \fun \Gamma$ and a morphism only needs to satisfy the second condition.

The categories $\mRES[k]$, $\mgRES[k]$ are the obvious full subcategories of $\mRV[k]$, $\mgRV[k]$.
\end{defn}

\begin{defn}[$\RV$- and $\RES$-categories with volumes]\label{defn:RV:cat:1}
The category ${\vol}{\RV[k]}$ is the full subcategory of $\mRV[k]$ such that $(U, f, \omega) \in {\vol}{\RV[k]}$ if and only if $\omega = 1$.

The categories ${\vol_{\Gamma}}{\RV[k]}$, ${\vol}{\RES[k]}$, and $\vgRES[k]$ are similar, but with $\mRV[k]$ replaced by $\mgRV[k]$, $\mRES[k]$, and $\mgRES[k]$.
\end{defn}

We may view ${\vol}{\RV[k]}$ as a subcategory of $\RV[k]$ and hence will use the same notation for their objects, similarly for the other pairs.

\begin{rem}\label{G:vol}
Let $k > 0$ and $(U, \omega)$ be an object of $\mgRES[k]$ with $\vrv(U) = \gamma \in \Gamma^k$ and $\omega(U) = \alpha \in \Gamma$. Let $t \in \alpha^\sharp$ be a definable point and $tU$ the multiplicative translation of $U$ in the first coordinate by $t$. Then $(U, \omega)$ is isomorphic to $(tU, 1)$. It follows that
\[
\gsk \mgRES[k] \cong \gsk \vgRES[k].
\]
This does not hold for the other pairs, though.
\end{rem}

\begin{defn}[$\Gamma$-categories]\label{def:Ga:cat}
The objects of the category $\Gamma[k]$ are the finite disjoint unions of definable subsets of $\Gamma^k$. Any definable bijection between two such objects is a \emph{morphism} of $\Gamma[k]$.

The category $\Gamma^{c}[k]$ is the full subcategory of $\Gamma[k]$ such that $I \in \Gamma^{c}[k]$ if and only if $I$ is finite.
\end{defn}

\begin{defn}[$\Gamma$-categories with volume forms]\label{def:Ga:cat:mu}
An object of the category $\mG[k]$ is a definable pair $(I, \omega)$, where $I \in \Gamma[k]$ and $\omega : I \fun \Gamma$ is a function. A $\mG[k]$-morphism between two objects $(I, \omega)$, $(J, \sigma)$ is a definable bijection $F : I \fun J$ such that, for all $\alpha  \in I$,
\[
\omega(\alpha) = \sigma(F(\alpha)) \jcb_{\Gamma} F(\alpha).
\]

The category $\mG^{c}[k]$ is the obvious full subcategory of $\mG[k]$.
\end{defn}

\begin{nota}\label{nota:RV:short}
We introduce the following shorthand for distinguished elements in the various Grothendieck semigroups and their groupifications (and closely related constructions):
\begin{gather*}
\bm 1_{\K} = [\{1\}] \in \gsk \RES[0], \quad [1] = [\{1\}] \in \gsk \RES[1],\\
[\bm T] = [\K^+] \in \gsk \RES[1], \quad [\bm A] = [\bm T] + [- \bm T] + [1] \in \gsk \RES[1],\\
\bm 1_{\Gamma} = [\Gamma^0] \in \gsk \Gamma[0], \quad [e] = [\{1\}] \in \gsk \Gamma[1], \quad [\bm H] = [(0, 1)] \in \gsk \Gamma[1],\\
\bm P = [1] - [\RV^{\circ \circ}] \in \ggk \RV[1].
\end{gather*}
Here $\RV^{\circ\circ} = \rv(\MM \mi 0)$  and $- \bm T$ is the object $\K^-$. Also note that the interval $\bm H$ is formed in the signed value group $\Gamma$, whose ordering is inverse to that of the value group $\abs \Gamma_\infty$ (recall Remark~\ref{signed:Gam}). The interval $(1,\infty) \sub  \Gamma$ is denoted by $\bm H^{-1}$. These symbols also stand for the corresponding elements with the constant volume form $1$.

As in~\cite{hrushovski:kazhdan:integration:vf}, the element $\bm P$, viewed as an element in $\ggk \mRV[*]$ or $\ggk \mgRV[*]$,  plays a  special role in the main construction (see Propositions~\ref{kernel:L} and the remark thereafter).
\end{nota}

%\begin{defn}[$\Gamma$-categories with constant volume forms]\label{def:Ga:cat:vol}
%The category ${\vol}{\Gamma}[k]$ is the full subcategory of $\mG[k]$ such that $(I, \omega) \in {\vol}{\Gamma[k]}$ if and only if $\omega = 1$. The category ${\vol}{\Gamma^{c}}[k]$ is the obvious full subcategory of ${\vol}{\Gamma}[k]$.
%\end{defn}

\begin{lem}[{\cite[Lemma~4.17]{Yin:tcon:I}}]\label{resg:decom}
Let $A \sub \RV^k \times \Gamma^l$ be an $\alpha$-definable set, where $\alpha \in \Gamma$. Set $A_{\RV} = U$ and assume $U \in \RES[*]$. Then there is an $\alpha$-definable finite partition $(U_i)_i$ of $U$ such that, for each $i$ and all $t, t' \in U_i$, we have $A_t = A_{t'}$.
\end{lem}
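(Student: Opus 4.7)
The plan is to reduce to a single $\vrv$-fiber of $U$ and then exploit the orthogonality between the residue field $\K$ and the value group $\Gamma$.

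First, since $U \in \RES[*]$, by Definition~\ref{defn:RES:cat} the set $\vrv(U) \sub \Gamma^k$ is finite, and it is $\alpha$-definable because $U$ is. This yields an $\alpha$-definable finite partition $U = \bigsqcup_{\gamma \in \vrv(U)} U_\gamma$ with $U_\gamma \coloneqq U \cap \vrv^{-1}(\gamma)$, so it suffices to find the desired refinement on each $U_\gamma$ separately. Fix $\gamma \in \vrv(U)$. The fiber $\vrv^{-1}(\gamma)^k$ is a torsor under $(\K^{\times})^k$, so after choice of a representative we identify $U_\gamma$ with an $\alpha$-definable subset $V_\gamma \sub (\K^\times)^k$ and, correspondingly, $A \cap (U_\gamma \times \Gamma^l)$ with an $\alpha$-definable subset $B_\gamma \sub (\K^\times)^k \times \Gamma^l$. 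Under this identification the claim becomes: $V_\gamma$ admits a finite $\alpha$-definable partition on which the $\Gamma$-fiber of $B_\gamma$ is constant.

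Next, I would invoke the orthogonality of $\K$ and $\Gamma$ provided by Remark~\ref{pillars} and Lemma~\ref{gk:ortho}. The two pillars say that $\K$ is a pure $T$-model and $\Gamma$ is a pure o-minimal ordered $\KKK$-vector space, and Lemma~\ref{gk:ortho} says there is no nontrivial definable map between them. From these facts one obtains, by the standard type-splitting argument in a sufficiently saturated model, that every $\alpha$-definable subset $B \sub \K^k \times \Gamma^l$ is a finite union of rectangles $V_i \times C_i$ with $V_i \sub \K^k$ and $C_i \sub \Gamma^l$ each $\alpha$-definable: the $(\K,\Gamma)$-type of a point $(u,\beta)$ over the $\alpha$-algebraic closure is determined by the $\K$-type of $u$ and the $\Gamma$-type of $\beta$ separately, so by compactness $B$ is a finite Boolean combination of rectangles, and Boolean combinations of rectangles are finite unions of rectangles. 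Disjointifying and transporting back via the torsor identification produces the sought-after partition of $U_\gamma$ on which $A_t$ is constant.

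The main obstacle is the orthogonality step, which I have invoked informally. If a more concrete reference is desired, one can instead argue by induction on $l$, using weak o-minimality in the last $\Gamma$-coordinate: each end-point of each fiber is a definable $\Gamma$-valued function of $t \in V_\gamma$, and by Lemma~\ref{gk:ortho} (applied after further partitioning along the finite $\vrv$-image of $V_\gamma$, which is a point) each such function has finite image, so finitely many cells suffice. Collecting the partitions obtained for each $\gamma \in \vrv(U)$ then yields the required finite $\alpha$-definable partition of $U$.
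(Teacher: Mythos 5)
This lemma is cited from \cite[Lemma~4.17]{Yin:tcon:I} and is not proved in the present paper, so there is no in-paper argument to compare against. Your proof stands on its own and is correct in outline, with your second argument (induction on $l$, reducing via uniform $o$-minimal cell decomposition in $\Gamma^l$ to finitely many $\Gamma$-valued definable functions of $t$, then invoking Lemma~\ref{gk:ortho} to conclude these have finite image) being the cleaner and demonstrably non-circular route; it is the one I would rely on. The first argument --- type-splitting over $\acl(\alpha)$ to obtain a rectangle decomposition of $\K^k \times \Gamma^l$ --- is also correct in spirit but risks circularity: the rectangle decomposition is essentially equivalent to the lemma itself, and "the standard type-splitting argument" is precisely what needs justifying from the two pillars and Lemma~\ref{gk:ortho}; it therefore adds little over the direct argument. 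Two small points worth tightening: (1) the torsor identification $U_\gamma / u_0 \sub (\K^\times)^k$ introduces the extra parameter $u_0$, so the resulting partition is a priori only $(\alpha, u_0)$-definable; to recover $\alpha$-definability, observe that the equivalence relation $t \sim t'$ iff $A_t = A_{t'}$ on $U_\gamma$ is $\alpha$-definable and you have just shown it has finitely many classes, so the partition into classes is the required $\alpha$-definable one. (2) The parenthetical "further partitioning along the finite $\vrv$-image of $V_\gamma$" is out of place --- $V_\gamma$ is already in $(\K^\times)^k$ so its $\vrv$-image is trivial; you presumably meant $U_\gamma$, but at that point you have already fixed $\gamma$, so the remark is vacuous and can be dropped.
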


%\begin{lem}[{\cite[Lemma~4.18]{Yin:tcon:I}}]\label{gam:tup:red}
%Let $\beta$, $\gamma = (\gamma_1, \ldots, \gamma_m)$ be finite tuples in $\Gamma$. If there is a $\beta$-definable nonempty proper subset of $\gamma^\sharp$ then, for some $\gamma_i$ and every $t \in \gamma^\sharp_{\tilde i}$, $\gamma^\sharp_i$ contains a $t$-definable point. Consequently, if $U$ is a $\beta$-definable subset of $\gamma^\sharp$ then either $U$ contains a definable point or there exists a subtuple  $\gamma_* \sub  \gamma$ such that $\pr_{\gamma_*}(U) = \gamma^\sharp_*$, where $\pr_{\gamma_*}$ denotes the obvious coordinate projection, and there is a $\beta$-definable function from $\gamma^\sharp_*$ into $(\gamma \mi \gamma_*)^\sharp$.
%\end{lem}

If $A$ is the graph of a function $\RV^k \fun \Gamma^l$ then this is just one half of Lemma~\ref{gk:ortho}.

We need a more general version of \cite[Lemma~4.19]{Yin:tcon:I}:

\begin{lem}\label{RV:decom:RES:G}
Let $A \sub \RV^k \times \Gamma^l$ be a definable set and $A_{\RV} = U$. Then there are finitely many definable injections $\rho_i : V_i \fun U$  with $V_i \sub \RV^k$ such that
\begin{itemize}
  \item the sets $\rho_i(V_i)$ form a partition of $U$,
  \item each set $\bigcup_{t \in V_i} t \times A_{\rho_i(t)}^\sharp$ is of the form $U_i \times I_i^\sharp$, where
      \[
      U_i \sub (\K^+)^{k_i}, \quad I_i \sub \Gamma^{l_i}, \quad k_i + l_i = k + l, \quad k_i \leq k.
      \]
\end{itemize}
\end{lem}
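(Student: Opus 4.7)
Plan: The proof combines three ingredients: (i) o-minimal cell decomposition of $\vrv(U) \sub \Gamma^k$, available by Remark~\ref{pillars}; (ii) Lemma~\ref{resg:decom} applied parametrically along each cell; and (iii) definable sections of $\vrv : \RV \fun \Gamma$ used to reorganize the $\RV$-coordinates into a product of a $(\K^+)$-part and a $\Gamma$-pullback part.

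First I would partition $\vrv(U) \sub \Gamma^k$ into finitely many definable cells $\Lambda_j$. After permuting the $\RV$-coordinates of $\RV^k$, I may arrange that each $\Lambda_j$ is the graph, over an open cell $\Lambda_j^* \sub \Gamma^{d_j}$, of a piecewise $\KKK$-linear function $\phi_j$ (using Lemma~\ref{gam:pulback:mono}). Let $U^{(j)} = U \cap \vrv^{-1}(\Lambda_j)$; this gives a finite partition of $U$. Since $\mdl S$ is $\VF$-generated, $\rv$ is surjective over $\mdl S$ (Remark~\ref{rem:hyp}), so I can pick a definable section $\Lambda_j \fun \RV^k$. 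Using this section together with coordinatewise multiplicative translation (and a further refinement by sign), I would parametrize $U^{(j)}$ by a definable injection $\rho_j : V^{(j)} \fun U^{(j)}$ with $V^{(j)} = W^{(j)} \times (\Lambda_j^*)^\sharp$ (after reordering coordinates), where $W^{(j)} \sub (\K^+)^{k - d_j}$ records the ``residue part'' of $U^{(j)}$ relative to the section.

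Finally I would apply Lemma~\ref{resg:decom} parametrically: for each fixed $\gamma \in \Lambda_j$, the fiber over points with $\vrv = \gamma$ is $\gamma$-definable with $\RV$-part in $\RES$, so the lemma yields a $\gamma$-definable partition on which $A_t \sub \Gamma^l$ is constant. By compactness this partition can be chosen uniformly in $\gamma$, refining each $V^{(j)}$ into pieces $V_i$ while preserving the product structure. On each resulting $V_i$, $A_{\rho_i(t)} = I_i' \sub \Gamma^l$ is constant, and therefore
\[
\bigcup_{t \in V_i} t \times A_{\rho_i(t)}^\sharp = V_i \times (I_i')^\sharp = U_i \times J_i^\sharp \times (I_i')^\sharp = U_i \times I_i^\sharp,
\]
with $U_i \sub (\K^+)^{k_i}$, $J_i \sub \Gamma^{k - k_i}$ the $\Gamma$-cell part of $V_i$, and $I_i = J_i \sqcup I_i' \sub \Gamma^{l_i}$ where $l_i = (k - k_i) + l$; hence $k_i + l_i = k + l$ and $k_i \leq k$. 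The main obstacle is the interplay between the $\Gamma$-cell decomposition and the $\RV$-reparametrization: the non-free coordinates of each $\Lambda_j$ are graphs of piecewise $\KKK$-linear functions $\phi_j$, so the section must respect $\phi_j$, possibly forcing a further refinement of $\Lambda_j$; once that is secured, uniformizing the partition from Lemma~\ref{resg:decom} in $\gamma$ is a routine compactness argument in the weakly \omin-minimal setting.
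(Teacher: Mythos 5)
Your approach — cell-decompose $\vrv(U)$, then reparametrize each piece via a definable section of $\vrv$, then bring in Lemma~\ref{resg:decom} — is genuinely different from the paper's, which instead proceeds by induction on $k$, peeling off the $k$th coordinate by locating finitely many ``exceptional'' $\alpha_t$'s depending on the other coordinates, translating into $\K^+$, and recursing. Both take a section of $\vrv$ and both use Lemma~\ref{resg:decom}, but the organizing principle is different.

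The trouble is that the central step of your plan is asserted rather than proved. You claim $U^{(j)}$ is parametrized by $W^{(j)} \times (\Lambda_j^*)^\sharp$ with a \emph{fixed} residue part $W^{(j)}$, but this is essentially a local version of the lemma itself. Two separate things go wrong. First, the cell decomposition is of $\vrv(U)$, not of $U$: a cell $\Lambda_j$ can contain, in its interior, finitely many $\gamma$ for which $U\cap\gamma^\sharp$ is a \emph{proper} nonempty subset of $\gamma^\sharp$ (or on which the residue fibre differs from neighbouring ones). Decomposing $\vrv(U)$ cannot detect these; they are detected by the weak $o$-minimality argument that the paper runs \emph{after} restricting to a coordinate, precisely to cut them out. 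Second, even away from such $\gamma$, the post-section residue fibre $U^{(j)}_\gamma/\sigma(\gamma) \subseteq (\K^\times)^{k-d_j}$ is a definable family in $\K$ indexed by $\gamma$; that it is piecewise constant in $\gamma$ is a nontrivial orthogonality statement (a parametric strengthening of Lemma~\ref{gk:ortho}), and making it constant forces a further refinement of $\Lambda_j$, which in turn can re-introduce lower-dimensional cells and hence a recursion that your non-inductive setup never closes. Your final ``uniformizing the partition from Lemma~\ref{resg:decom} in $\gamma$ is a routine compactness argument'' compounds this: compactness gives a uniformly definable partition of $U^{(j)}$, but it does not turn the pieces into products of a $\K$-set and a $\Gamma$-pullback. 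That extra structure is exactly what has to be earned, and the paper earns it by the induction on $k$ (the pass to $\bigcup_t t\times\vrv(A^*_t)\subseteq\RV^{k-1}\times\Gamma^{l+1}$ in the inductive step is where the ``remainder'' of each coordinate gets fed back into the machine). You correctly sense an obstacle around the section and $\phi_j$, but you have misidentified its location: the real issue is the unwarranted product form of $U^{(j)}$ and the absence of a mechanism (inductive or otherwise) to propagate the decomposition through the residue dependence. As a minor point, $I_i = J_i\sqcup I_i'$ in your final display should be $I_i = J_i\times I_i'$.
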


\begin{proof}
We proceed by induction on $k$. For the base case $k =1$,  by Lemma~\ref{resg:decom}, we see that the claim holds for every $A^{\alpha} \coloneqq (\alpha^\sharp \times \Gamma^l) \cap A$. By weak \omin-minimality, there are only finitely many $\alpha \in \vrv(U)$ for which the partition of $\pr_1(A^{\alpha})$ in question does not just contain one piece $\alpha^\sharp$. This readily implies the base case (the desired injections $\rho_i$ are just identity functions).

For the inductive step $k > 1$,  by the argument above and compactness, there are an $n \in \N$ and a definable function $U' \coloneqq \pr_{<k}(U) \fun \Gamma^n$ given by $t \efun (\alpha_{ it})_i$ such that every set $A_{ t}^* \coloneqq A_{ t} \mi \bigcup_i (\alpha_{ it}^\sharp \times \Gamma^l)$
is a union of sets of the form $\alpha^\sharp \times D_\alpha$, where $\alpha \times D_\alpha \sub \Gamma \times \Gamma^l$. Since $\bigcup_{t \in U'} t \times \vrv(A_{t}^*)$ is a subset of $\RV^{k-1} \times \Gamma^{l+1}$, by the inductive hypothesis, we may assume that it is empty. Now, without loss of generality, we may further assume $n =1$ and thereby simply write $t \efun \alpha_{t}$. There is a definable function $f$ on $U'$ such that $f( t) \in \alpha_{t}^\sharp$. This means that, after translating each  $U_{ t}$ by $1 /f( t)$, we may take $\pr_k(U) \sub \K^+$ for all $ t \in U'$. For each $t \in \pr_k(U)$, by the inductive hypothesis again, there are $t$-definable sets $U_{it} \times I_{it}^\sharp$ as desired for $A_t$. So, by compactness, we are reduced to Lemma~\ref{resg:decom}, and this concludes the proof.
\end{proof}

\begin{rem}
The assignment $(I, \omega) \efun (I^\sharp, \id, \omega^\sharp)$, where $\omega^\sharp = \omega \circ \vrv$, determines a map $\mG[*] \fun \mgRV[*]$. By Lemma~\ref{gam:pulback:mono}, this map induces a graded semiring homomorphism
$\gsk \mG[*] \fun \gsk \mgRV[*]$. By Remark~\ref{pillars} and Theorem~\ref{groth:omin}, this homomorphism restricts to an injective homomorphism $\gsk \mG^{c}[*] \fun \gsk \mgRES[*]$ of graded semirings. The map
\[
\gsk \mgRES[*] \times \gsk \mG[*] \fun \gsk \mgRV[*]
\]
 determined by the assignment
\[
([(U, f, \omega)], [(I, \sigma)]) \efun [(U \times I^\sharp, f \times \id, \omega \sigma^\sharp)],
\]
where $\omega \sigma^\sharp$ is the $\Gamma$-volume form on $U \times I^\sharp$ given by $(u, v) \efun \omega(u)\sigma^\sharp(v)$, is well-defined and is clearly $\gsk \mG^{c}[*]$-bilinear. Hence it induces a $\gsk \mG^{c}[*]$-linear map
\begin{equation}\label{def:mgd}
\mgD: \gsk \mgRES[*] \otimes_{\gsk \mG^{c}[*]} \gsk \mG[*] \fun \gsk \mgRV[*],
\end{equation}
which is a homomorphism of graded semirings. We will abbreviate ``$\otimes_{\gsk \mG^{c}[*]}$'' as ``$\otimes$'' below.
\end{rem}

Recall that there is a similar $\gsk \Gamma^{c}[*]$-linear map $\bb D$ without volume forms and  \cite[Proposition~4.21]{Yin:tcon:I} states that it is indeed an isomorphism. This also holds for $\mgD$.

\begin{prop}\label{mgD:iso}
The map $\mgD$ is an isomorphism of graded semirings.
\end{prop}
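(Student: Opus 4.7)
The plan is to parallel the proof of the non-volume analogue \cite[Proposition~4.21]{Yin:tcon:I}, where surjectivity and injectivity of $\bb D$ are established via Lemma~\ref{RV:decom:RES:G}, and to upgrade each step by tracking the $\Gamma$-volume form. That $\mgD$ is a graded semiring homomorphism is already built into its definition, so only bijectivity on isomorphism classes needs checking.

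For surjectivity, take an arbitrary object $(W,g,\tau) \in \mgRV[k]$ with $W \sub \RV^m$. Applying Lemma~\ref{RV:decom:RES:G} to $W$ (with appropriate bookkeeping so that $g$ is respected on each piece) partitions $W$ into finitely many pieces definably isomorphic to products $U_i \times I_i^\sharp$ with $U_i \sub (\K^+)^{k_i}$ and $I_i \sub \Gamma^{l_i}$, $k_i+l_i = k$. The new ingredient is that we must further refine this partition so that $\tau$ splits as a product $\omega_i(u)\cdot\sigma_i^\sharp(v)$ with $\omega_i : U_i \fun \Gamma$ and $\sigma_i : I_i \fun \Gamma$. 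This splitting proceeds in two stages, both leaning on $\K$-$\Gamma$ orthogonality (Remark~\ref{pillars}, Lemma~\ref{gk:ortho}):
\begin{enumerate}
\item[(i)] For each fixed $u \in U_i$ and each $\beta \in I_i$, the restriction $\tau(u,\cdot)\rest \beta^\sharp$ is a definable map from a $\K^{l_i}$-torsor into $\Gamma$, hence has finite image by (iterated application of) Lemma~\ref{gk:ortho}. A uniform-in-$(u,\beta)$ application of Lemma~\ref{RV:decom:RES:G}, together with weak \omin-minimality, produces a finite definable refinement on which $\tau$ is $\vrv$-invariant in the second argument, yielding $\tilde\tau : U_i \times I_i \fun \Gamma$.
\item[(ii)] The function $\tilde\tau$ on $U_i \times I_i$ factors, after a further finite partition of $U_i$, as a product $\omega_i(u)\sigma_i(\beta)$: by Lemma~\ref{gk:ortho} applied fiberwise, $\tilde\tau(\cdot,\beta)$ has finite image for each $\beta$, and by compactness this refinement can be taken uniformly in $\beta$; on each resulting piece the residual dependence is only through $\beta$.
\end{enumerate}
Applying the definable bijections of Lemma~\ref{RV:decom:RES:G} turn each piece of $W$, equipped with the split volume form, into the image under $\mgD$ of a pure tensor $[(U_i,f_i,\omega_i)]\otimes[(I_i,\sigma_i)]$.

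For injectivity, I would construct an inverse semiring homomorphism
\[
\Psi : \gsk \mgRV[*] \fun \gsk \mgRES[*] \otimes_{\gsk \mG^c[*]} \gsk \mG[*]
\]
by sending $[(W,g,\tau)]$ to the sum of the pure tensors produced by the above decomposition, and then check that $\Psi$ is well-defined, i.e., independent of the chosen decomposition and invariant under $\mgRV[k]$-isomorphism. The well-definedness reduces to the following claim: if two decompositions $W = \bigsqcup U_i \times I_i^\sharp = \bigsqcup U'_j \times (I'_j)^\sharp$ of the same set carry the same $\Gamma$-volume form in split form, then the identity bijection refines on a common partition into maps that are $\mgRES[*]$-isomorphisms on the $\K$-factors times $\mG[*]$-isomorphisms on the $\Gamma$-factors (modulo finite $\mG^c[*]$-pieces shuffled through the tensor product). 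This is exactly the content of the injectivity proof for $\bb D$ in \cite[Proposition~4.21]{Yin:tcon:I}, combined with the observation that, by the same two-stage orthogonality argument as above, any $\mgRV[k]$-isomorphism between two product objects $(U\times I^\sharp, f\times\id, \omega\sigma^\sharp)$ and $(U'\times (I')^\sharp, f'\times\id, \omega'(\sigma')^\sharp)$ can itself be decomposed after finite partitioning into a $\mgRES[*]$-part and a $\mG[*]$-part. Matching Jacobians on the two factors separately then recovers the tensor equality.

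The main obstacle is step (ii) of the surjectivity argument, namely the uniform splitting of the volume form $\tilde\tau$ on $U_i \times I_i$ into a product; naive fiberwise splitting picks up partitions that depend on $\beta$ in a way that is not \emph{a priori} controlled by a single definable finite partition of $U_i$. Handling this cleanly requires invoking the full strength of $\K$-$\Gamma$ orthogonality together with definable compactness/weak \omin-minimality to obtain a uniform partition, and the same technical difficulty reappears when decomposing isomorphisms during the injectivity half.
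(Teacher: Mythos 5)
Your overall strategy --- use Lemma~\ref{RV:decom:RES:G} for surjectivity and then port the injectivity argument from the non-volume case \cite[Proposition~4.21]{Yin:tcon:I} --- is the same as the paper's, but the paper has a one-step trick that sidesteps both difficulties you flag. Rather than applying Lemma~\ref{RV:decom:RES:G} to $U$ itself and then, in two further stages, forcing the volume form to split, the paper applies the lemma to the \emph{graph} of the normalized function $\omega_f : U \fun \Gamma$, $u \efun \omega(u)\cdot\Pi\vrv(f(u))$, viewed as a subset of $\RV^m\times\Gamma$. The lemma then produces, in a single application, a partition of $U$ into pieces $\rho_i(U_i\times I_i^\sharp)$ on which $\omega_f$ is automatically constant on every slice $\rho_i(U_i\times\gamma^\sharp)$, so one can just read off $\omega_i$ and $\sigma_i(\gamma)=\omega_f(\rho_i(U_i\times\gamma^\sharp))/\Pi\gamma$. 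Your stages (i) and (ii) collapse into nothing.

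There are also two concrete gaps in the argument as written. In step (ii), compactness alone does \emph{not} give a partition of $U_i$ independent of $\beta$: it gives a definable family of finite partitions indexed by $\beta$, and these may genuinely vary with $\beta$. What is needed is Lemma~\ref{resg:decom} applied to the graph of $\tilde\tau$ (which is legitimate since $U_i\in\RES[*]$), yielding a single finite partition of $U_i$ on each piece of which $\tilde\tau(u,\cdot)$ is independent of $u$; one then takes $\omega_i=1$ and $\sigma_i(\beta)=\tilde\tau(\cdot,\beta)$. Second, you track the raw volume form $\tau$, but the invariant of $\mgRV[k]$-morphisms is the Jacobian-normalized $\omega_f(u)=\omega(u)\Pi\vrv(f(u))$, since the morphism condition is precisely $\omega(u)=\sigma(F(u))\cdot\jcb_\Gamma F^\dag(f(u),g\circ F(u))$. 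The injections $\rho_i$ from Lemma~\ref{RV:decom:RES:G} involve multiplicative translations with nontrivial $\Gamma$-Jacobians, so a splitting of $\tau$ on $U_i\times I_i^\sharp$ will not, in general, give a pure tensor whose image under $\mgD$ equals $[(U,f,\omega)]$. Both issues are repairable, but the paper's choice of $\omega_f$ as the object to decompose makes them disappear at the outset.
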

\begin{proof}
We first show that the map $\mgD$ is surjective. Let $(U, f, \omega) \in \mgRV[n]$ and $\omega_f : U \fun \Gamma$ be the function given by $u \efun \omega(u) \Pi \vrv(f(u))$. Applying Lemma~\ref{RV:decom:RES:G} to (the graph of) $\omega_f$, we see that there are finitely many definable injections $\rho_i : U_i \times I_i^\sharp \fun U$, where $U_i$ is a set in $\K^+$ and $I_i$ a set in $\Gamma$, such that the sets $\rho_i(U_i \times I_i^\sharp)$ form a partition of $U$ and $\omega_f \rest \rho_i(U_i \times \gamma^\sharp)$ is constant for every $\gamma \in I_i$. Let $\omega_i = \omega_f \circ \rho_i$ if $I_i$ is empty and $\omega_i = 1$ otherwise. Let $\sigma_i : I_i \fun \Gamma$ be the function given by
\[
\gamma \efun \omega_f (\rho_i(U_i \times \gamma^\sharp)) /\Pi \gamma.
\]
Then the sum $\sum_i [(U_i, \omega_i)] \otimes [(I_i, \sigma_i)]$ is mapped to $[(U, f, \omega)]$ by $\mgD$ and hence $\mgD$ is surjective.

Injectivity  can be established as in the proof of \cite[Proposition~4.21]{Yin:tcon:I}, with only slight modifications to accommodate the presence of volume forms.
\end{proof}

\subsection{Retractions}\label{subsection:retract}

\begin{nota}\label{euler}
We view $\Gamma$ as a double cover of $\abs \Gamma$ via the identification $\Gamma / {\pm 1} = \abs \Gamma$. Consequently there are two Euler characteristics $\chi_{\Gamma,g}$, $\chi_{\Gamma, b}$ in the $\Gamma$-sort, induced by those on $|\Gamma|$ (see \cite{kage:fujita:2006} and also~\cite[\S~9]{hrushovski:kazhdan:integration:vf}). They are distinguished by
\[
\chi_{\Gamma, g}(\bm H) = \chi_{\Gamma, g}(\bm H^{-1}) = -1 \quad \text{and} \quad \chi_{\Gamma, b}(\bm H) = \chi_{\Gamma, b}(\bm H^{-1}) = 0.
\]
Similarly, there is an Euler characteristic $\chi_{\K}$ in the $\K$-sort (there is only one). Also see Remark~\ref{omin:res}.  All of these Euler characteristics shall be simply denoted by $\chi$ if no confusion can arise.
\end{nota}

Combining the semiring homomorphisms induced by the forgetful functors $\mG[*] \fun \Gamma[*]$, $\mgRES[*] \fun \RES[*]$ with the ones in \cite[Lemma~4.22]{Yin:tcon:I}, we obtain graded ring homomorphisms $\ggk \mG[*] \fun \Z[X]$, $\ggk \mgRES[*] \fun \Z[X]$.
However, to construct more meaningful retractions $\ggk \mgRV[*] \fun \ggk \mgRES[*]$ as in \cite[Proposition~4.24]{Yin:tcon:I}, we need a graded ring richer than $\Z[X]$.

Let $\fn(\Gamma, \Z)$ be the set of functions $f: \Gamma \fun \Z$ such that  $\ran(f)$ is finite and $f^{-1}(m)$ is definable for every $m \in \Z$. Addition in $\fn(\Gamma, \Z)$ is defined in the obvious way. Multiplication in $\fn(\Gamma, \Z)$ is given by convolution product as follows. For $f, g \in \fn(\Gamma, \Z)$ and $\gamma \in \Gamma$, let $h_\gamma$ be the function $\Gamma \fun \Z$ given by $\alpha \efun f(\alpha)g(\gamma/\alpha)$. The range of $h_\gamma$ is obviously finite. For each $m \in \Z$, $h_\gamma^{-1}(m)$ is a finite disjoint union of sets of the form $f^{-1}(m') \cap \gamma / g^{-1}(m'')$, where $m' m'' = m$, and hence is $\gamma$-definable. Let
\[
 h_\gamma^* = \sum_{m} m \chi(h_\gamma^{-1}(m)) \in \Z
\]
and $f * g : \Gamma \fun \Z$ be the function  given by $\gamma \efun h_\gamma^*$. It is not hard to see that, by \omin-minimality in the $\Gamma$-sort, $f * g = g * f$ and it belongs to $\fn(\Gamma, \Z)$. Thus $\fn(\Gamma, \Z)$ is a commutative ring.

\begin{rem}
Let $r_\pm$ be the characteristic functions of the intervals $(0, \pm \infty)$. For each definable element $\gamma \in \Gamma$, let $p_{\gamma}$,  $q_{\gamma}$ be the characteristic functions of $\gamma$, $(0, \gamma)$, respectively. By \omin-minimality  again, as a $\Z$-module, $\fn(\Gamma, \Z)$ is generated by elements of the forms $r_\gamma$, $p_\gamma$, $q_\gamma$. We have the following equalities (recall the function $\sgn$ from Remark~\ref{signed:Gam}):
\[
r_\alpha p_\beta   = r_{\sgn(\alpha \beta)}, \quad p_\alpha p_\beta = p_{\alpha \beta}, \quad p_\alpha q_\beta  = q_{\alpha \beta}, \quad q_\alpha q_\beta = - q_{\alpha \beta}.
\]
In addition,
\begin{itemize}
  \item if $\chi = \chi_{\Gamma, g}$ then $r_\alpha r_\beta = - r_{\sgn(\alpha \beta)}$ and $r_\alpha q_\beta = - r_{\sgn(\alpha \beta)}$,
  \item if $\chi = \chi_{\Gamma, b}$ then $r_\alpha r_\beta =  r_{\sgn(\alpha \beta)}$ and $r_\alpha q_\beta = 0$.
\end{itemize}
\end{rem}

For each $\bm I = (I, \mu) \in \mG[k]$, let $\lambda_{\bm I} : \Gamma \fun \Z$ be the function given by $\gamma \efun \chi(\mu^{-1}_I(\gamma))$, where $\mu_I : I \fun \Gamma$ is the function given by $\gamma \efun \mu(\gamma)\Pi\gamma$. The range of $\lambda_{\bm I}$ is finite and, for every $m \in \Z$, $\lambda_{\bm I}^{-1}(m)$ is definable. So $\lambda_{\bm I} \in \fn(\Gamma, \Z)$. If $F :(I, \mu) \fun (J, \sigma)$ is a $\mG[k]$-morphism then $F$ restricts to a bijection from $\mu^{-1}_I(\gamma)$ to $\sigma_J^{-1}(\gamma)$ for every $\gamma \in \Gamma$ and hence $\lambda_{\bm I} = \lambda_{\bm J}$ and we may write $\lambda_{\bm I}$ as $\lambda_{[\bm I]}$. Let $\bm J = (J, \sigma) \in \mG[l]$. If $k=l$ then clearly $\lambda_{[\bm I]} + \lambda_{[\bm J]} = \lambda_{[\bm I]+[\bm J]}$. An easy computation shows that, for every $\gamma \in \Gamma$,
\[
(\lambda_{[\bm I]} * \lambda_{[\bm J]})(\gamma) = \chi( (\mu \cdot \sigma)^{-1}_{I \times J}(\gamma))
\]
and hence $\lambda_{[\bm I]} * \lambda_{[\bm J]} = \lambda_{[\bm I][\bm J]}$. Let $\Z\Gamma$ be the group ring of $\Gamma(\mdl S)$ over $\Z$, which is viewed as the subring of $\fn(\Gamma, \Z)$ generated, as a $\Z$-module, by $p_{\pm \gamma}$, $\gamma \in \Gamma(\mdl S)$. We have $\ggk \mG^c[*] \cong \Z\Gamma[X]$. It follows that the map
\[
\lambda : \ggk \mG[*] \fun \Z\Gamma \oplus X \fn(\Gamma, \Z)[X]
\]
determined by the assignment  $[\bm I] \efun \lambda_{[\bm I]}X^k$, $\bm I \in \mG[k]$, is indeed a graded ring homomorphism. Of course there are two such homomorphisms $\lambda_g$ and $\lambda_b$, corresponding to the two cases $\chi = \chi_{\Gamma, g}$ and $\chi = \chi_{\Gamma, b}$.

Denote the element $[(\bm H, (\cdot)^{-1})] \in \ggk \mG[1]$ by $[\bm H_*]$. For later use, we compute a few examples:
\begin{equation}\label{half:compu}
  \begin{aligned}
    \lambda([\bm H]) & = q_1 X,  & \lambda([-\bm H])  &= q_{-1} X, \\
 \lambda_g([\pm \bm H_*]) & = -p_{\pm 1} X, & \lambda_b([\pm \bm H_*]) & = 0.
  \end{aligned}
\end{equation}

\begin{nota}\label{vres:to:zg}
Recall from Remark~\ref{G:vol} that, for $k > 0$,  $\ggk \mgRES[k]$ may be identified with $\ggk \vgRES[k]$. On the other hand, $\ggk \mgRES[0] \cong \Z\Gamma$ and $\ggk \vgRES[0] \cong \Z$. As far as the Grothendieck ring is concerned, we prefer to work with objects in the subcategories $\vgRES[k]$, $k > 0$, because the notation will be much simpler.

For $U \in \mgRES[k]$ with $\vrv(U) = \gamma$, let $\phi_U = \chi(U) \Pi\gamma\in \Z\Gamma$. By summation over disjoint union, this assignment is extended to all objects of $\mgRES[k]$. Clearly if $[U] = [V]$ then $\phi_{U} = \phi_{V}$ and hence we may write $\phi_{ U}$ as $\phi_{[U]}$. It is routine to check that the map
\[
\phi: \ggk \mgRES[*] \fun \Z\Gamma[X]
\]
determined by the assignment $[U] \efun \phi_{[U]}X^k$, $U \in \mgRES[k]$, is a graded ring isomorphism.
\end{nota}

Let $I!$ be the ideal of $\ggk \mgRES[*]$ generated by the element $[\bm T] + [-\bm T]- [1]$, which may also be written as $2[\bm T] + [-\bm T]$. We have
\[
\phi^{-1}(p_{-1})([\bm T] + [-\bm T]- [1]) = [- \bm T] + [\bm T]- [-1].
\]
Thus, modulo $I!$,
\[
[1] - [-1] = [-\bm T]-[\bm T] = [\bm T] + [-\bm T]- [1] = - 3[1] = 0.
\]
Choose a definable $t_\gamma \in \gamma^\sharp$ for each definable $\gamma \in \Gamma$. Then the same relations hold for $[\gamma^\sharp]$, $[- \gamma^\sharp]$, $[\{t_\gamma\}]$, $[\{-t_\gamma\}]$ modulo $I!$. The image of $I!$ under $\phi$ is still denoted by $I!$. Since $\phi([\bm T]) = -X$ and $\phi([1]) = X$,  we see that there is an isomorphism
\[
\Z\Gamma[X] / I! \cong \Z\Gamma \oplus X (\F_3\abs\Gamma[X]),
\]
where $\F_3\abs\Gamma$ is the group ring of $\abs\Gamma(\mdl S)$ over the finite field $\F_3$. It and $\phi$ induce a homomorphism $\ggk \mgRES[*]/ I! \fun \F_3\abs\Gamma[X]$, which is still denoted by $\phi$.

We claim that the assignments
\[
p_\gamma X^k \efun  [\bm T]^{k-1}[\gamma^\sharp], \quad q_\gamma X^k \efun  [\bm T]^{k-1}[\{t_\gamma\}], \quad r_\pm X^k \efun 0
\]
induce a graded ring homomorphism
\[
\psi : \Z\Gamma \oplus X\fn(\Gamma, \Z)[X] \fun \ggk \mgRES[*].
\]
It suffices to check that $\psi$ obeys the equational constraints for the generators above, which are all straightforward (the equality $\psi(q_\alpha) \psi( q_\beta) = \psi(- q_{\alpha \beta})$ holds since $[\bm T] = -[1]$).  We also note that, for the case $\chi = \chi_{\Gamma, g}$, there seems to be the possibility of $r_\pm X^k \efun - [\bm T]^k$ or $r_\pm X^k \efun \mp [\bm T]^k$,
but this does not yield a homomorphism, at least not without drastic collapse in the target ring (see Remark~\ref{psi:constraint} below for further explanation).

%\begin{rem}
% We have a commutative diagram
%\[
%\bfig
% \hSquares(0,0)/<-`->`->`->`->`<-`->/[{\ggk \mgRES[*]}`{\ggk \mG^{c}[*]}`{\ggk \mG[*]}`{\ggk \vgRES[*]}`{\ggk \Gamma^{c}[*]}`{\Z[X]}; ``\cong``\mdl E_{\mu\Gamma}`\tau`\sim]
%\efig
%\]
%\end{rem}

Let $\gh$ denote the element $[(\RV^{\circ \circ},  1/{\vrv})] \in \ggk \mgRV[1]$.

\begin{prop}\label{prop:eu:retr:k}
There are two graded ring homomorphisms
\[
  \mgE_{g}, \mgE_{b}: \ggk \mgRV[*] \fun  \ggk \mgRES[*] / I! \to^\phi \F_3\abs\Gamma[X]
\]
such that
\begin{itemize}
  \item $\bm P \in \ggk \mgRV[1]$ vanishes under both of them,
  \item for all $x \in \ggk \mgRES[k]$, $\mgE_{g}(x) = \mgE_{b}(x) = \phi(x / I!)$,
  \item they are distinguished by $\mgE_{g}(\gh) = \phi( - [1]/I!)$ and $\mgE_{b}(\gh) = 0$.
\end{itemize}
\end{prop}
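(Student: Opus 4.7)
The plan is to define $\mgE_g$ and $\mgE_b$ uniformly as the composition
\begin{align*}
\mgE_* \colon \ggk \mgRV[*] & \overset{\mgD^{-1}}{\longrightarrow} \ggk \mgRES[*] \otimes_{\ggk \mG^c[*]} \ggk \mG[*] \\
& \overset{\id \otimes (\psi \circ \lambda_*)}{\longrightarrow} \ggk \mgRES[*] \otimes_{\ggk \mG^c[*]} \ggk \mgRES[*] \\
& \overset{\mathrm{mult}}{\longrightarrow} \ggk \mgRES[*] \longrightarrow \ggk \mgRES[*]/I! \overset{\phi}{\longrightarrow} \F_3\absG[X],
\end{align*}
with $\lambda_*$ standing for either $\lambda_g$ or $\lambda_b$; the leftmost isomorphism is Proposition~\ref{mgD:iso}, and the other ingredients have all been built up in the preceding discussion.

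The crux of the argument---and the only genuinely delicate step---is showing that $\id \otimes (\psi \circ \lambda_*)$ descends across the tensor product over $\ggk \mG^c[*]$. By the universal property, this amounts to verifying that, for every $[M] \in \ggk \mG^c[*]$, the class $\psi(\lambda_*([M]))$ coincides with the natural image of $[M]$ under the inclusion $\ggk \mG^c[*] \hookrightarrow \ggk \mgRES[*]$. By additivity this reduces to the generating classes $[(\{\gamma\},\alpha)]$ with $\gamma = (\gamma_1,\ldots,\gamma_k) \in \Gamma^k(\mdl S)$ and $\alpha \in \Gamma(\mdl S)$; for such a class one has $\lambda_*([(\{\gamma\},\alpha)]) = p_{\alpha\Pi\gamma}X^k$ (independently of the choice of $g$ or $b$, since finite sets have the same Euler characteristic under both conventions), so $\psi$ produces $[\bm T]^{k-1}[(\alpha\Pi\gamma)^\sharp]$, while the inclusion sends it, via Remark~\ref{G:vol}, to $[(\alpha\gamma_1)^\sharp]\prod_{i=2}^k[\gamma_i^\sharp]$. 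Applying the isomorphism $\phi$ sends both of these to $(-1)^k\alpha\Pi\gamma\cdot X^k \in \Z\Gamma[X]$, confirming the required coincidence.

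With $\mgE_*$ well-defined as a graded ring homomorphism, the three bulleted properties follow by unwinding the composite. For $x \in \ggk \mgRES[k]$ one has $\mgD^{-1}(x) = x \otimes 1$, and the remainder collapses to $\phi(x/I!)$, yielding the second bullet together with the independence on $g/b$ for residue classes. For the vanishing of $\bm P = [\{1\}] - [\RV^{\circ\circ}]$, we use $[\RV^{\circ\circ}] = \mgD(1 \otimes [\bm H])$ combined with $\lambda([\bm H]) = q_1X$ from~\eqref{half:compu} and $\psi(q_1X) = [\{t_1\}]$ to get $\mgE_*([\RV^{\circ\circ}]) = \phi([\{t_1\}]/I!) = X = \phi([\{1\}]/I!) = \mgE_*([\{1\}])$, hence $\mgE_*(\bm P) = 0$. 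Finally, $\gh = \mgD(1 \otimes [\bm H_*])$, so substituting $\lambda_g([\bm H_*]) = -p_1X$ and $\lambda_b([\bm H_*]) = 0$ from~\eqref{half:compu} into the composite yields the advertised values for $\mgE_g(\gh)$ and $\mgE_b(\gh)$.
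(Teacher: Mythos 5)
Your composite $\mgE_*$ is built from the \emph{untwisted} tensor decomposition $\mgD^{-1}$ together with the bilinear map $\id \times (\psi\circ\lambda_*)$. Your verification that this descends across $\otimes_{\gsk\mG^c[*]}$ is correct (indeed $\psi\circ\lambda_*$ agrees with the embedding $\gsk\mG^c[*]\hookrightarrow\gsk\mgRES[*]$ on generators, as you compute), so you do get a graded ring homomorphism. The problem is that this homomorphism does \emph{not} annihilate $\bm P$, and your check that it does rests on a mistaken identity. The set $\RV^{\circ\circ}=\rv(\MM\mi 0)$ contains the $\rv$-classes of \emph{both} positive and negative infinitesimals, so $\mgD^{-1}([\RV^{\circ\circ}])=\bm 1_{\K}\otimes([\bm H]+[-\bm H])$, not $1\otimes[\bm H]$; the $[-\bm H]$ summand is omitted in your proposal (and likewise $[-\bm H_*]$ for $\gh$). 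Once this is restored, your map sends $\bm P=[1]-[\RV^{\circ\circ}]$ to $[1]-[1]-[-1]=-[-1]$, and $\phi(-[-1]/I!)=-p_{-1}X$, which in $\F_3\absG[X]$ is $2X\neq 0$. So $\mgE_*(\bm P)\neq 0$.

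The paper sidesteps exactly this obstruction by working with the involuted tensor product $\otimes^\iota$ (built from the composite $\gsk\mG^c[*]\to\gsk\mgRES[*]\xrightarrow{\iota}\ggk\mgRES[*]$) and the bilinear map $\iota\times(\psi\circ\lambda)$ in place of $\id\times(\psi\circ\lambda)$; this inserts a degree sign on the $\mgRES$ slot, which flips the contribution of the degree-$1$ term $[1]\otimes\bm 1_\Gamma$ but leaves the $\bm 1_{\K}\otimes[\pm\bm H]$ terms alone (their $\mgRES$ component $\bm 1_{\K}$ has degree $0$). The net image of $\bm P$ becomes $\pm(2[1]+[-1])$, and $\phi(2[1]+[-1])=(2+p_{-1})X$ does lie in $\phi(I!)$. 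Remark~\ref{psi:constraint} points out explicitly that dropping the involution, as you do, ``would force $[1]=0$''---that is precisely the failure above, since making $-p_{-1}X$ vanish would force $X=0$ in the quotient. So the missing idea is the involution $\iota$; without it the construction simply does not kill $\bm P$.
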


\begin{proof}
Let $\iota$  be the obvious graded ring involution on $\ggk \mgRES[*]$, that is, $\iota(x) = (-1)^k x$ for $x \in \ggk \mgRES[k]$. Its composition with the injective homomorphism $\gsk \mG^{c}[*] \fun \gsk \mgRES[*]$  gives rise to a tensor product $\gsk \mgRES[*] \otimes^\iota \gsk \mG[*]$ over $\gsk \mG^{c}[*]$. There is an isomorphism between this tensor product and the other tensor product in (\ref{def:mgd}), determined by the assignments $x \otimes^\iota y \efun x \otimes y$.  Proposition~\ref{mgD:iso} then yields an isomorphism
\[
\mgD^\iota : \gsk \mgRES[*] \otimes^\iota \gsk \mG[*] \fun \ggk \mgRV[*].
\]
Now, the map
\[
\iota \times (\psi \circ \lambda) : \ggk \mgRES[*] \times  \ggk \mG[*] \fun   \ggk \mgRES[*]
\]
is $\ggk \mG^c[*]$-bilinear. Since
\[
(\mgD^\iota)^{-1}(\bm P) = [\bm H] + [-\bm H] - [1] \dand (\mgD^\iota)^{-1}(\gh) = [\bm H_*] + [-\bm H_*],
\]
the existence of the desired homomorphisms follow from (\ref{half:compu}).
\end{proof}

\begin{rem}\label{psi:constraint}
We would not have $\mgE(\bm P) = 0$ without forcing $3[1] = 0$, which is the main reason why the ideal $I!$ appears in the expression. The goal of obtaining such homomorphisms dictates almost entirely how the graded ring homomorphism $\psi$ must be constructed. The image of $p_\gamma X^k$ is determined by the tensor $\otimes_{\gsk \mG^{c}[*]}$. The image of $q_\gamma X^k$ is determined by the equalities $p_\alpha q_\beta = q_{\alpha \beta}$, $q_\alpha q_\beta = - q_{\alpha \beta}$ and the vanishing of $\bm P$, which also bring about the formation of the ideal $I!$. The involuted tensor $\otimes^\iota$ is needed since, otherwise, the vanishing of $\bm P$ would force $[1] = 0$, which is obviously undesirable.

Now, given these constraints, suppose that we insist on having, say, $\psi(r_\pm X^k)= -[\bm T]^k$. Then the conditions
\[
\psi(r_\alpha) \psi(p_\beta) = \psi( r_{\sgn(\alpha \beta)}), \quad \psi(r_\alpha) \psi(q_\beta) = \psi( - r_{\sgn(\alpha \beta)})
\]
imply, for all definable $\beta \in \Gamma$,
\[
[\bm T]([\bm T] - [\beta^\sharp]) = 0, \quad [\bm T]([\bm T] + [\{t_\beta\}]) = 0.
\]
Adding the elements $[\bm T] - [\beta^\sharp]$, $[\bm T] + [\{t_\beta\}]$ to the ideal $I!$ causes the codomain of $\mgE$ to collapse to the ring  $\F_3[X]$ (Proposition~\ref{prop:eu:retr:k} still goes through). The collapse caused by $\psi(r_\pm X^k)= \mp[\bm T]^k$ is even more drastic (in effect forcing $[1] = 0$ again).
\end{rem}

\begin{rem}\label{involution}
There is an alternative construction, retracting $\ggk \mgRV[*]$ to the other factor $\ggk \mG[*]$. Let $\lambda^\iota$ be the involuted version of $\lambda$, that is, $\lambda^\iota([\bm I]) = (-1)^k \lambda([\bm I])$, $\bm I \in \mG[k]$.
Let $!I$ be the ideal of the ring
$\Z\Gamma \oplus X \fn(\Gamma, \Z)[X]$
generated by the element $(q_1 + q_{-1} - p_1)X$. Regarding $\Z\Gamma[X]$ as its subring, we see that $\phi \times \lambda^\iota$ is again $\ggk \mG^c[*]$-bilinear (we can also take the involuted version of $\phi$ instead of $\lambda$). This yields two  graded ring homomorphisms
\[
\ggk \mgRV[*] \fun \Z\Gamma \oplus X \fn(\Gamma, \Z)[X] / !I
\]
as in Proposition~\ref{prop:eu:retr:k}. In light of the equalities
\begin{align*}
(p_{1} + q_1)(q_1 + q_{-1} - p_1)  &= - q_{1} - p_{1}, \\
(p_{1} + q_{-1})(q_1 + q_{-1} - p_1) &= - q_{-1} - p_{1},
\end{align*}
we see that $3p_1X = 0$ modulo $!I$ and hence the two homomorphisms are really just those obtained in Proposition~\ref{prop:eu:retr:k}.
\end{rem}
%\[
%\Z\Gamma \oplus X \fn(\Gamma, \Z)[X] / !I \fun \F_3\abs\Gamma[X].
%\]
%
%$p_{\pm 1} X^2 = q_{\pm 1}X^2 = 0$, and if $\chi = \chi_{\Gamma, g}$ then $r_{\pm}X^2 = 0$ as well.

%we have, If $\chi = \chi_{\Gamma, b}$ then $r_{\pm}$ are zero divisors.

%\[
%: \ggk \vgRES[*] \times  \ggk \mG[*] \fun  \Z\Gamma \oplus X \fn(\Gamma, \Z)[X] / !I
%\]
%
%\[
%  \mgE'_{g}, \mgE'_{b}: \ggk \mgRV[*] \fun  \Z\Gamma \oplus X \fn(\Gamma, \Z)[X] / !I
%\]
%with $\mgE'(\bm P) = 0$. Also, $\mgE'_{g}(\gh) = -2p_1/!I$ and $\mgE_{b}(\gh) = 0$, hence the two homomorphisms are distinct.

Thus it appears that the foregoing  discussion is much ado about almost nothing, as essentially only information about the ``sign'' survives  the homomorphisms $\mgE_{g}$, $\mgE_{b}$. The problem is that the ring $\ggk \mgRES[*]$ is much simpler than its counterpart over algebraically closed valued fields (see \cite[Theorem~10.11]{hrushovski:kazhdan:integration:vf}) and cannot afford the further reduction demanded by the vanishing of the class $\bm P$. To remedy this, we can trim down the categories $\mgRV[*]$, $\mG[*]$ as follows.

\begin{defn}
We say that an object $(U, f) \in \RV[k]$ is \emph{doubly bounded} if $f(U)$ is, and so on. The full subcategory of $\RV[k]$ of doubly bounded objects is denoted by $\RV^{\db}[k]$; similarly for the categories $\mRV^{\db}[k]$, $\mgRV^{\db}[k]$, $\mG^{\db}[k]$.
\end{defn}

\begin{rem}\label{tensor:bd}
By inspection of the proofs of Proposition~\ref{mgD:iso} and Lemma~\ref{RV:decom:RES:G}, we see that $\mgD$ restricts to an isomorphism
\[
\mgD^{\db}: \gsk \mgRES[*] \otimes_{\gsk \mG^{c}[*]} \gsk \mG^{\db}[*] \fun \gsk \mgRV^{\db}[*].
\]
\end{rem}

Let $\fn(\abs\Gamma, \Z)$ be the obvious quotient of $\fn(\Gamma, \Z)$ and $\fn^{\db}(\abs\Gamma, \Z)$ its subring of functions with doubly bounded domains. Recall that the (unsigned) value group $\abs\Gamma$ is additively written. For each definable element $\gamma \in \abs\Gamma \mi 0$, let $o_{\gamma}$ be the characteristic function of $(0, \gamma)$; also set $o_0 = -p_0$. As a $\Z$-module, $\fn^{\db}(\abs\Gamma, \Z)$ is generated by elements of the forms $p_\gamma$, $o_\gamma$. An easy  computation shows that
\begin{itemize}
  \item $p_\alpha o_\beta$ equals $o_{\alpha + \beta} - o_\alpha - p_\alpha$ or $- o_{\alpha + \beta} + o_\alpha - p_{\alpha + \beta}$ or $ o_{\alpha + \beta} + o_\alpha + p_{0}$,
  \item $o_\alpha o_\beta$ equals $- o_{\alpha + \beta}$ or $- o_{\alpha} - o_\beta - p_{0}$.
\end{itemize}

\begin{nota}
Let $!P$ denote the ideal of the ring
$
\Z\abs\Gamma \oplus X \fn^{\db}(\abs\Gamma, \Z)[X]
$
generated by the elements $p_\gamma - p_0$, $(o_\gamma + p_0) X$. By the computation above, the quotient by $!P$ may be naturally identified with $\Z [X]$.
\end{nota}

\begin{nota}\label{db:G:int}
The homomorphism
\begin{equation*}
\lambda : \ggk \mG^{\db}[*] \fun \Z\abs\Gamma \oplus X \fn^{\db}(\abs\Gamma, \Z)[X]
\end{equation*}
is constructed as before (there is only one such homomorphism now since  $\chi_{\Gamma,g}$, $\chi_{\Gamma, b}$ agree on doubly bounded sets). Recall the homomorphism $\phi$ from Notation~\ref{vres:to:zg}. The homomorphism $\ggk \mgRES[*] \fun \Z\abs\Gamma[X]$ induced by $\phi$ is still denoted by $\phi$.
\end{nota}

\begin{nota}\label{pgamma}
For each $\gamma \in \absG_\infty$, denote the set $\rv(\MM_{\gamma} \mi 0)$ by $\RV^{\circ \circ}_\gamma$.  For each definable $\gamma \in \Gamma$ with $\abs \gamma > 0$, let
\[
\bm P_\gamma = [\RV^{\circ \circ} \mi \RV^{\circ \circ}_{\abs\gamma}] + [\{t_\gamma\}] - [1] \in \ggk \RV^{\db}[1],
\]
where $t_\gamma \in \gamma^\sharp$ is any definable point. As in Notation~\ref{nota:RV:short}, it also stands for the corresponding element  in $\ggk \mRV^{\db}[1]$ or $\ggk \mgRV^{\db}[1]$ with the constant volume form $1$. Of course $[\{t_\gamma\}] = [1]$ in $\ggk \RV^{\db}[1]$, but $[\{t_\gamma\}] \neq [1]$ in $\ggk \mRV^{\db}[1]$ or $\ggk \mgRV^{\db}[1]$ unless $\gamma = 1$.
\end{nota}

\begin{prop}\label{prop:eu:retr:k:db}
There is a graded ring homomorphism
\[
  \mgE^{\db}: \ggk \mgRV^{\db}[*] \fun \Z\abs\Gamma \oplus X \fn^{\db}(\abs\Gamma, \Z)[X] / !P \to^{\sim} \Z[X]
\]
such that every $\bm P_\gamma$ vanishes and, for all $x \in \ggk \mgRES[k]$, $\mgE^{\db}(x) = \phi(x) / !P$.
\end{prop}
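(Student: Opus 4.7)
The plan is to mirror the proof of Proposition~\ref{prop:eu:retr:k}, using the doubly bounded tensor isomorphism $\mgD^{\db}$ from Remark~\ref{tensor:bd} and the ring homomorphism $\lambda$ from Notation~\ref{db:G:int}. Let $\iota$ denote the graded ring involution on $\ggk \mgRES[*]$ with $\iota(x) = (-1)^k x$ on $\mgRES[k]$, and form the involuted tensor product, yielding an isomorphism
\[
\mgD^{\db, \iota} : \gsk \mgRES[*] \otimes^\iota \gsk \mG^{\db}[*] \fun \ggk \mgRV^{\db}[*].
\]
I would then consider the map
\[
B : \ggk \mgRES[*] \times \ggk \mG^{\db}[*] \fun \Z\abs\Gamma \oplus X\fn^{\db}(\abs\Gamma, \Z)[X], \qquad (x, y) \efun \phi(x) \cdot \lambda(y).
\]
The verification that $B$ is $\ggk \mG^c[*]$-bilinear with respect to the involuted action reduces to the identity $\phi(\iota(z^\sharp)) = \lambda(z)$ for $z \in \ggk \mG^c[k]$; for $z = [\{\gamma\}]$ this follows from $\chi(\gamma^\sharp) = (-1)^k$ (each fiber $\gamma_i^\sharp$ is a single-sign $\K$-torsor with $\chi = -1$), giving $\phi(z^\sharp) = (-1)^k p_{\Sigma\abs\gamma}X^k$ and $\lambda(z) = p_{\Sigma\abs\gamma}X^k$. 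Composing $B$ (viewed on the involuted tensor) with $(\mgD^{\db,\iota})^{-1}$ yields a graded ring homomorphism $\mgE^{\db}_0 : \ggk \mgRV^{\db}[*] \fun \Z\abs\Gamma \oplus X\fn^{\db}(\abs\Gamma, \Z)[X]$, and $\mgE^{\db}$ is defined as the further composition with the quotient by $!P$, identified with $\Z[X]$.

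The crux is to check that each $\bm P_\gamma$ maps into $!P$. The singleton classes $[\{t_\gamma\}], [1] \in \mgRES[1]$ have preimages $[\{t_\gamma\}] \otimes^\iota [\Gamma^0]$, $[1] \otimes^\iota [\Gamma^0]$ in the involuted tensor, and direct computation gives
\[
\mgE^{\db}_0([\{t_\gamma\}] - [1]) = \phi([\{t_\gamma\}] - [1]) = (p_{\abs\gamma} - p_0)X,
\]
a generator of $!P$. For the annular piece $[\RV^{\circ\circ} \mi \RV^{\circ\circ}_{\abs\gamma}]$, an application of Lemma~\ref{RV:decom:RES:G} (exactly as in the proof of Proposition~\ref{mgD:iso}) expresses it as $[J_\gamma^\sharp] + [(-J_\gamma)^\sharp]$, where $J_\gamma \sub \bm H$ is the signed interval whose absolute value is $(0, \abs\gamma]$; each $[J_\gamma^\sharp]$ pulls back to $\bm 1_\K \otimes^\iota [J_\gamma]$ with constant volume form $1$. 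Applying $B$ yields $2\lambda([J_\gamma]) = 2(o_{\abs\gamma} + p_{\abs\gamma})X$. Summing the three contributions and reducing modulo $!P$ via $p_{\abs\gamma} \equiv p_0$ gives a scalar multiple of $(o_{\abs\gamma} + p_0)X$, the other generator of $!P$; hence $\mgE^{\db}_0(\bm P_\gamma) \in !P$.

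The main obstacle is the bookkeeping required to correctly identify $J_\gamma$ as a doubly bounded signed interval (the signed-versus-absolute translation across $\bm H$ and its absolute image $(0, \abs\gamma]$) and to carry out the accompanying Euler characteristic computations for the $\K$-torsors $\gamma^\sharp$; with those in place, both the bilinearity compatibility $\phi(\iota(z^\sharp)) = \lambda(z)$ and the vanishing of $\bm P_\gamma$ follow directly. Finally, the residual assertion $\mgE^{\db}(x) = \phi(x)/!P$ for $x \in \ggk \mgRES[k]$ is immediate from the construction, since such an $x$ pulls back to $x \otimes^\iota [\Gamma^0]$ and $B(x, [\Gamma^0]) = \phi(x) \cdot \lambda([\Gamma^0]) = \phi(x)$.
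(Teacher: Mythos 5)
Your strategy is substantively the same as the paper's: reduce $\bm P_\gamma$ through the $\mgD^{\db}$ decomposition, apply the retractions $\phi$ and $\lambda$, and check vanishing modulo $!P$. Your Euler-characteristic bookkeeping ($\chi(\gamma^\sharp)=(-1)^k$), the decomposition $[\RV^{\circ\circ}\mi\RV^{\circ\circ}_{\abs\gamma}] = [J_\gamma^\sharp]+[(-J_\gamma)^\sharp]$ with $\lambda([J_\gamma]) = (o_{\abs\gamma}+p_{\abs\gamma})X$, and the reduction via $p_{\abs\gamma}\equiv p_0$ are all correct; both your total $(2o_{\abs\gamma}+3p_{\abs\gamma}-p_0)X$ and the paper's $(-2o_\gamma-p_\gamma-p_0)X$ lie in $!P$.

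The one place to be more careful is the sign convention. The paper's own proof of this proposition is shorter than yours: it bypasses the involuted tensor entirely and applies $\phi\times\lambda^\iota$ (from Remark~\ref{involution}) directly to $(\mgD^{\db})^{-1}(\bm P_\gamma)$. Re-importing $\mgD^{\db,\iota}$ from Proposition~\ref{prop:eu:retr:k} and pairing with $\phi\times\lambda$ is the ``involute $\phi$ instead of $\lambda$'' alternative the paper mentions, but it shifts the degree sign from the $\mG$-factor to the $\mgRES$-factor, and the normalization $\mgE^{\db}(x)=\phi(x)/!P$ (rather than $\phi(\iota(x))/!P$) only comes out right if $\mgD^{\db,\iota}$ absorbs the involution on the $\mG$-side, i.e.\ $\mgD^{\db,\iota}(x\otimes^\iota y)=\mgD^{\db}(x\otimes\iota'(y))$; under that convention $[J_\gamma^\sharp]$ pulls back to $\bm 1_\K\otimes^\iota(-[J_\gamma])$, with a sign you dropped. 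This sign is harmless for the vanishing of $\bm P_\gamma$ but matters for the stated compatibility with $\phi$ on $\ggk\mgRES[k]$, so it is cleaner (and what the paper does here) to keep the ordinary tensor and put the involution on $\lambda$.
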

\begin{proof}
The homomorphism $\phi \times \lambda^\iota$ is still $\ggk \mG^c[*]$-bilinear. We have
\[
((\phi \otimes \lambda^\iota) \circ (\mgD^{\db})^{-1})(\bm P_\gamma) = (- 2o_\gamma - 2p_\gamma + p_\gamma - p_0)X,
\]
which vanishes modulo $!P$.
\end{proof}

\section{Special covariant bijections}

Below we shall primarily concentrate on the finer categories with $\RV$-volume forms. It is a routine matter to translate the results and the arguments for the categories with $\Gamma$-volume forms, often simply by forgetting the conditions that involve the $\RV$-Jacobian operator $\jcb_{\RV}$. Nevertheless, there are shortcuts through forgetful functors, and we will supply more details after the key propositions in Remarks~\ref{gL:sur:c} and \ref{kernel:gL}.

Our  task in this section is to connect $\gsk \mVF[*]$ with $\gsk \mRV[*]$, or more precisely, to establish a surjective homomorphism $\gsk \mRV[*] \fun \gsk \mVF[*]$ and a specialized version of it. Notice the direction of the arrow. The main instrument in this endeavor  is special covariant transformations.

%\begin{defn}
%Let $\rvp_{\gamma} : \VF \fun \RV_0$ be the function given by $a \efun \rv(a)$ if $a \notin \MM_\gamma$ and $a \efun 0$ otherwise., similarly for $\rvp_{\gamma}$

% For tuples $t$, $s$ in $\RV_0$, the set $\rvp^{-1}_{\gamma}(t) \times s$ is referred to as an \emph{$\rvp_{\gamma}$-polydisc}. The \emph{$\rvp_{\gamma}$-hull} of a set $A$, denoted by $\rvp_{\gamma}^H(A)$, is the union of all the $\rvp_{\gamma}$-polydiscs whose intersections with $A$ are nonempty. If $A$ equals $\rvp_{\gamma}^H(A)$ then $A$ is called an \emph{$\rvp_{\gamma}$-pullback}.
%\end{defn}
%
%%Here we take $\OO_{\infty} = \MM_{\infty} = 0$.
%
%Of course a doubly bounded $\rvp_{\gamma}$-polydisc is just an $\RV$-polydisc. $\pi_\gamma: \VF \fun \VF / \OO_{\gamma}$ and Here we take $\OO_{\infty} = \MM_{\infty} = 0$.

\subsection{Covariance and invariance}

\begin{defn}\label{defn:binv}
For each element $\gamma \in \absG_\infty$, let
$\pi_\gamma: \VF \fun \VF / \MM_{\gamma}$
be the natural  map. If $\gamma = (\gamma_1, \ldots, \gamma_n) \in \absG_\infty^n$ then $\pi_{\gamma}$ denotes the product of the maps $\pi_{\gamma_i}$. Let $\alpha \in \absG_\infty^n$ and $\beta \in \absG_\infty^m$. We say that a function $f :A \fun B$ with $A \sub \VF^n$ and $B \sub \VF^m$  is \emph{$(\alpha, \beta)$-covariant} if it $(\pi_{\alpha}, \pi_{\beta})$-contracts to a function $f_\downarrow : \pi_{\alpha}(A) \fun \pi_{\beta}(B)$, that is, $\pi_{\beta} \circ f = f_\downarrow \circ \pi_{\alpha}$. For simplicity, we shall often suppress mention of parameters and refer to $(\alpha, \beta)$-covariant functions as $(\alpha, -)$-covariant or $(-,\beta)$-covariant or just  covariant functions. A set $A \sub \VF^n$ is \emph{$\alpha$-invariant} if its characteristic function is $(\alpha, 0)$-covariant.

More generally, for sets $A$, $B$ with $\RV$-coordinates, the function $f : A \fun B$ is \emph{covariant} if every one of its $\VF$-fibers $f_{t}$   is $(\alpha_{t}, \beta_{t})$-covariant  for some $(\alpha_{t}, \beta_{t}) \in \absG_\infty$  (this is in line with Terminology~\ref{rvfiber}). Accordingly, a set is \emph{invariant} if (every $\VF$-fiber of) its characteristic function is $(-, 0)$-covariant.
\end{defn}

\begin{ter}\label{prop:pseu}
Of course  $(\infty, \infty)$-covariance is a vacuous requirement on a definable function, as $\infty$-invariance on a definable set; we shall call them \emph{pseudo} covariance and \emph{pseudo} invariance. On the other hand, we say that a definable function $f$ on $A$ is \emph{proper covariant} if
\begin{itemize}
  \item the sets $A_{\VF}$, $f(A)_{\VF}$ are bounded and the sets $A_{\RV}$, $f(A)_{\RV}$ are  doubly bounded,
  \item for each $\VF$-fiber $f_{t}$ of $f$ there is a $t$-definable tuple $(\alpha_t, \beta_t) \in \absG$ such that $f_t$ is $(\alpha_{t}, \beta_{t})$-covariant, $\dom(f_{t})$ is $\alpha_t$-invariant, and $\ran(f_{t})$ is $\beta_t$-invariant.
\end{itemize}

Actually, if $A_t$ is closed for every $t \in A_{\RV}$ then  there is no need to demand that $f(A)_{\RV}$ be doubly bounded, since it is implied by the other conditions: There is an $\go$-partition $p: A \fun \absG$ such that, for each open polydisc $\gb$ in question, $f(\gb)$ lies in the range of some $\VF$-fiber of $f$. This, by Lemma~\ref{vol:par:bounded}, yields a definable surjection $A \fun \abvrv(f(A)_{\RV})$ that satisfies the assumption of Lemma~\ref{invar:db}, and hence the set $f(A)_{\RV}$ is doubly bounded.

Observe also that if $f$ is proper covariant then, since $(A \times f(A))_{\RV}$ is  doubly bounded, by Lemma~\ref{db:to:db}, there is a definable element $\gamma \in \absG$ such that every $\VF$-fiber $f_{t}$ of $f$ is $(\gamma, \beta_{t})$-covariant (but not necessarily $(\gamma, \gamma)$-covariant) and $\dom(f_{t})$,  $\ran(f_{t})$ are both $\gamma$-invariant.

We are only interested in proper and pseudo covariant functions, and no covariant functions other than these shall be considered. Accordingly, an invariant set $A$ is either pseudo or proper, where $A$ is \emph{proper} if its projection into the $\RV$-coordinates is proper covariant, in particular, $A_{\VF}$ is bounded and $A_{\RV}$ is doubly bounded (there are various ways to interpret properness when the range of a function has no $\VF$-coordinates, and the task of choosing one is left to the reader). It follows from the discussion above that if $A$ is proper invariant then $A$ is $\gamma$-invariant for all sufficiently large definable elements $\gamma \in \absG$.
\end{ter}

\begin{lem}\label{inv:crit}
Let $A$ be a definable set such that $A_{\VF}$ is bounded and $A_{\RV}$ is doubly bounded. Then $A$ is proper invariant if and only of $A$ is clopen (recall Terminology~\ref{rvfiber}).
\end{lem}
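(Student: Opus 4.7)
Both conditions pass through $\VF$-fibers: clopenness is defined fiberwise in Terminology \ref{rvfiber}, and proper invariance unpacks (under the given bounds on $A_{\VF}$ and $A_{\RV}$) to each $\VF$-fiber $A_t \sub \VF^n$ being $\alpha_t$-invariant for some $t$-definable $\alpha_t \in \absG$, with a uniform $\gamma$ then supplied by Lemma \ref{db:to:db} as in Terminology \ref{prop:pseu}. Fixing $t$ and writing $B \coloneqq A_t$, the lemma reduces to the assertion that a $t$-definable bounded subset $B \sub \VF^n$ is clopen if and only if it is $\alpha$-invariant for some $t$-definable $\alpha \in \absG$ (used as a uniform scalar radius in all $n$ coordinates). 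The forward direction is immediate: $\alpha$-invariance means that the characteristic function of $B$ factors through $\pi_\alpha$, so both $B$ and $\VF^n \mi B$ are unions of open polydiscs of radius $\alpha$, hence both open.

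For the converse, assume $B$ is clopen and bounded. Openness of $B$ supplies, for each $a \in B$, some cubical open polydisc $\go(a, \gamma) \sub B$; the cubical open polydiscs centered at $a$ are totally ordered by inclusion, and boundedness of $B$ rules out arbitrarily large ones, so there is a unique largest such polydisc contained in $B$, whose valuative radius I denote by $r(a) \in \absG$. Using the ultrametric fact that two overlapping open balls of the same radius coincide, I check that $r$ is constant on $\go(a, r(a))$ for every $a \in B$, which says precisely that $r$ is an $\go$-partition of $B$. Since $B$ is closed and bounded and has no $\RV$-coordinates, Lemma \ref{vol:par:bounded} yields that $r(B)$ is doubly bounded. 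Choosing a $t$-definable upper bound $\alpha \in \absG$ for $r(B)$, the containment $\go(a, \alpha) \sub \go(a, r(a)) \sub B$ holds for every $a \in B$, so $B$ is $\alpha$-invariant, completing the proof.

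The only non-routine step is the verification that $r$ is an $\go$-partition, which is a short ultrametric exercise based on the nesting of non-archimedean open balls. The decisive ingredient is then the compactness-style Lemma \ref{vol:par:bounded}, which upgrades the pointwise openness of $B$ to a uniform radius $\alpha$; without it one would only get a fiberwise definable $r(a)$ with no a priori upper bound.
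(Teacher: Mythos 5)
Your proof is correct and follows essentially the same route as the paper's: both directions hinge on the observation that openness produces the $\go$-partition sending each point to the radius of the largest open polydisc around it contained in the fiber, while closedness plus Lemma~\ref{vol:par:bounded} converts the pointwise bound into a doubly bounded image and hence a uniform ($t$-definable) invariance radius. The paper simply asserts the existence of the $\go$-partition and the consequences in one line, whereas you make the maximal-radius construction explicit and verify the ultrametric consistency; this is a welcome expansion of detail, not a different argument.
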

\begin{proof}
The ``only if'' direction is clear. For the ``if'' direction, since every $\VF$-fiber of $A$ is open, there is an  $\go$-partition $p: A \fun \absG$. Since every $\VF$-fiber of $A$ is also closed, by Lemma~\ref{vol:par:bounded}, $p(A)$ is doubly bounded. The claim follows.
\end{proof}

\begin{lem}\label{int:inv}
Let $A$, $B$ be proper invariant subsets of $\VF^n \times \RV^m$. Then $A \cap B$, $\RVH(A) \mi A$, and $B \mi A$ are also proper invariant.
\end{lem}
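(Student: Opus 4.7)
The plan is to apply Lemma~\ref{inv:crit}, which characterizes proper invariant subsets of $\VF^n \times \RV^m$ as those whose $\VF$-projection is bounded, $\RV$-projection is doubly bounded, and each $\VF$-fiber is clopen in $\VF^n$. For $A \cap B$ and $B \mi A$ the verification will be routine: the required bounds are inherited from $A$ or $B$, and clopenness of $\VF$-fibers is preserved under intersection and under complementation (hence under set difference, since $B \mi A = B \cap A^c$).

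For $\RVH(A) \mi A$, the strategy will be to first show that $\RVH(A)$ is itself proper invariant, after which the preceding $B \mi A$ argument applies with $B = \RVH(A)$. Boundedness is easy: $\RVH(A)_{\RV} = A_{\RV}$ is doubly bounded since $\RV$-polydiscs have a fixed $\RV$-part, while $\RVH(A)_{\VF}$ inherits its bound from $A_{\VF}$ because each $\RV$-polydisc meeting $A$ has its $\VF$-coordinates confined to $\rv$-classes whose elements already appear in $A_{\VF}$, and $\abval$ is constant on such classes. The substantive point will be clopenness of the $\VF$-fibers of $\RVH(A)$, which I approach via the structural invariance of $A$: by Terminology~\ref{prop:pseu} and Definition~\ref{defn:binv}, $A$ is $\alpha$-invariant for some $\alpha \in \absG$, and it will suffice to show $\RVH(A)$ is also $\alpha$-invariant, since any such set is a union of cosets of $\MM_\alpha^n$ and hence manifestly clopen.

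Given $x \in \RVH(A)$ with witness $z \in \gp_x \cap A$ (where $\gp_x$ denotes the $\RV$-polydisc of $x$, understood with a degenerate $\{0\}$ factor in each coordinate where $x_i = 0$) and any $m \in \MM_\alpha^n$, the plan is to produce $w \in \gp_{x+m} \cap A$ by coordinatewise shifts of $z$ all lying in $\MM_\alpha$. Setting $w_i = (x+m)_i$ when $(x+m)_i \neq 0$ and $w_i = 0$ otherwise automatically places $w$ in $\gp_{x+m}$, and the $\alpha$-invariance of $A$ will then yield $w \in A$ once we know each $w_i - z_i$ lies in $\MM_\alpha$. A short coordinatewise case analysis verifies this: the trivial sub-cases are $x_i = 0$; the two nontrivial ones are $x_i \neq 0$ with $(x+m)_i = 0$, where the condition $m_i = -x_i \in \MM_\alpha$ itself forces $\abval(x_i)$ to be sufficiently large, and $x_i \neq 0$ with $(x+m)_i \neq 0$, where an ultrametric computation combined with $m \in \MM_\alpha^n$ controls $\abval(w_i - z_i)$ whether or not the $\rv$-classes of $x_i$ and $(x+m)_i$ coincide.

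The hard part will be the bookkeeping in this last construction: namely, reconciling the several possible patterns of $\gp_x$ and $\gp_{x+m}$ (proper $\rv$-class factors versus degenerate $\{0\}$ factors) against the uniform requirement that every shift lie in $\MM_\alpha$. The key observation closing the case analysis is that whenever a structural change between $\gp_x$ and $\gp_{x+m}$ is forced at a coordinate $i$, the very condition forcing the change supplies the valuation bound needed for the associated shift to lie in $\MM_\alpha$.
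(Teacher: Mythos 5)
Your overall strategy is the same as the paper's: reduce everything to Lemma~\ref{inv:crit} and check boundedness and fiberwise clopenness. The paper simply asserts the clopenness as ``clear'' for all three sets, whereas you, sensibly, single out $\RVH(A)$ for a detailed verification via $\alpha$-invariance. The boundedness checks are fine, and $\alpha$-invariant implies clopen is also fine. The gap is in the witness construction. You set $w_i = (x+m)_i$ and claim $w_i - z_i \in \MM_\alpha$. Write $w_i - z_i = m_i - (z_i - x_i)$. You control $\abval(m_i) > \alpha$, but the only thing you know about $z_i - x_i$ is that $\abval(z_i - x_i) > \abval(x_i)$ (because $\rv(z_i)=\rv(x_i)$). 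When $\abval(x_i) < \alpha$, this is useless: $\abval(z_i - x_i)$ may lie in $(\abval(x_i), \alpha]$, and then $\abval(w_i - z_i) \leq \alpha$, so $w \notin A$. A concrete counterexample: take $A = 1 + \MM_1 \subset \VF$, which is proper $1$-invariant, so $\RVH(A) = 1 + \MM$. Take $x = 1 + \epsilon$ with $\abval(\epsilon) = 1/2$, $z = 1 + \delta$ with $\abval(\delta) > 1$, and any $m$ with $\abval(m) > 1$. Then $w = x + m$ satisfies $w - z = \epsilon + m - \delta$, of value $1/2$, so $w \notin A$ even though $x + m \in \RVH(A)$. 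This is exactly the subcase where $\rv(x_i)$ and $\rv((x+m)_i)$ coincide, which your ``key observation'' treats as benign but is actually where the argument breaks.

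The fix is to abandon the uniform shift of $z$. Either take $w_i = z_i$ whenever $\rv((x+m)_i) = \rv(x_i)$ (which holds automatically when $\abval(x_i) \leq \alpha$ and $m_i \neq -x_i$), since then $z_i$ already lies in the $i$th factor of $\gp_{x+m}$; or, more cleanly, argue structurally: for each coset $C = a + \MM_\alpha^n \subset A_t$ one has $\RVH(C) = \prod_i U_i$ where $U_i = \rv(a_i)^\sharp$ if $\abval(a_i) \leq \alpha$ and $U_i = \MM_\alpha$ otherwise. Each $U_i$ is a union of cosets of $\MM_\alpha$, so $\RVH(C)$ is $\alpha$-invariant, and since $x \in \RVH(C)$ implies coordinatewise $x_i \in U_i$, the ultrametric inequality gives $x_i + m_i \in U_i$ directly (no witness needed). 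Then $\RVH(A)_t = \bigcup_{C} \RVH(C)$ is $\alpha_t$-invariant, hence clopen, and the rest of your argument goes through.
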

\begin{proof}
Clearly $(A \cap B)_{\VF}$ is bounded, $(A \cap B)_{\RV}$ is doubly bounded, and every $\VF$-fiber of $A \cap B$ is clopen; similarly for the other cases. So this follows from Lemma~\ref{inv:crit}.
\end{proof}

Note that, however, a restriction of a proper covariant function is not necessarily a proper covariant function, even if the restricted domain and range are both proper invariant. In a sense, this behavior requires continuity.

\begin{lem}\label{conti:covar}
Let $f : A \fun B$ be a definable continuous surjection between two proper invariant sets. Suppose that  the range of every $\VF$-fiber of $f$ is clopen.  Then $f$ is proper covariant. If $f$ is bijective then the same holds under the weaker assumption that  the range of every $\VF$-fiber of $f$ is open.
\end{lem}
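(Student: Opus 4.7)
The plan is to verify the fiberwise condition for proper covariance on each $\VF$-fiber $f_t$, since the gross boundedness conditions on $A_{\VF}, A_{\RV}, f(A)_{\VF}, f(A)_{\RV}$ follow at once from the proper invariance of $A, B$ together with the surjectivity of $f$ onto $B$. So fix a $\VF$-fiber $f_t$ with $\dom(f_t) = D_t \sub \VF^n$ and $\ran(f_t) = R_t \sub \VF^{n'}$.

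First I would extract $t$-definable invariance moduli. Since $D_t$ is a $\VF$-fiber of the proper invariant set $A$, it is clopen and bounded, so Lemma~\ref{inv:crit} produces a $t$-definable $\alpha^0_t \in \absG$ with $D_t$ being $\alpha^0_t$-invariant. Under the main hypothesis $R_t$ is clopen and bounded (as a subset of $B_{t_B}$); since it has no $\RV$-coordinates, Lemma~\ref{inv:crit} likewise yields a $t$-definable $\beta^0_t \in \absG$ with $R_t$ being $\beta^0_t$-invariant.

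The heart of the proof is the construction of the covariance modulus via an $\go$-partition. Define $p : D_t \fun \absG$ by
\[
p(a) = \min\{\alpha \in \absG : f_t(\go(a, \alpha) \cap D_t) \sub \go(f_t(a), \beta^0_t)\},
\]
where the minimum exists by continuity of $f_t$ together with the density of $\absG$, which gives $\go(a,p(a)) \cap D_t = \bigcup_{\alpha > p(a)} (\go(a, \alpha) \cap D_t)$. For $a' \in \go(a, p(a)) \cap D_t$, ultrametric coincidence gives $\go(a', p(a)) = \go(a, p(a))$ and $f_t(a') \in \go(f_t(a), \beta^0_t)$, whence $\go(f_t(a'), \beta^0_t) = \go(f_t(a), \beta^0_t)$; this yields $p(a') \leq p(a)$, and the reverse inequality follows by swapping $a$ with $a'$ (since $p(a') \leq p(a)$ implies $a \in \go(a', p(a')) \cap D_t$). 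So $p$ is indeed an $\go$-partition. Since $D_t$ is closed, bounded, and without $\RV$-coordinates, Lemma~\ref{vol:par:bounded} implies that $p(D_t)$ is doubly bounded. Pick any $t$-definable $\alpha_t \in \absG$ that upper-bounds both $p(D_t)$ and $\alpha^0_t$, and set $\beta_t = \beta^0_t$. Then $D_t$ is $\alpha_t$-invariant, $R_t$ is $\beta_t$-invariant, and for any $a' \in \go(a, \alpha_t) \cap D_t$ the inclusion $\go(a, \alpha_t) \sub \go(a, p(a))$ gives $f_t(a') \in \go(f_t(a), \beta_t)$, which is the required $(\alpha_t, \beta_t)$-covariance.

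For the second statement, $f$ is bijective, so $f_t : D_t \fun R_t$ is a continuous bijection. As $D_t$ is closed and bounded, Lemma~\ref{cb:to:cb} forces $R_t$ to be closed and bounded; thus if $R_t$ is merely assumed open it is automatically clopen, and the previous argument applies verbatim. The main technical obstacle throughout is precisely that the modulus-of-continuity function $p$ be both an $\go$-partition and have doubly bounded image --- the first is ultrametric bookkeeping, while the second is the content of Lemma~\ref{vol:par:bounded} applied to the closed bounded set $D_t$ and is what turns pointwise continuity into the uniform control needed for proper covariance.
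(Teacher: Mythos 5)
The overall strategy---build a modulus-of-continuity function $p$, show it is an $\go$-partition via an ultrametric coincidence argument, and invoke Lemma~\ref{vol:par:bounded} to obtain a doubly bounded covariance modulus---is exactly the paper's, and your verification that $p$ is an $\go$-partition is correct. There is, however, a genuine gap at the first step. You assert that ``$D_t$ is a $\VF$-fiber of the proper invariant set $A$,'' hence clopen and bounded. It is not: for $t = (t_A, t_B) \in f_{\RV}$, the domain $D_t = \dom(f_{(t_A, t_B)})$ is only the subset of the $\VF$-fiber $A_{t_A}$ that $f$ sends into the codomain $\RV$-polydisc indexed by $t_B$, and an arbitrary definable subset of a clopen set need not be clopen (or even closed). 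This unproven clopenness is load-bearing three times over: it is needed to produce $\alpha^0_t$ via Lemma~\ref{inv:crit}, it is the hypothesis ``closed and bounded'' of Lemma~\ref{vol:par:bounded} applied to $p : D_t \fun \absG$, and in the bijective case it is the hypothesis of Lemma~\ref{cb:to:cb} used to conclude that $R_t$ is closed. Note also that the required $\alpha_t$-invariance of $\dom(f_t)$ is part of the \emph{conclusion} of proper covariance, so one cannot take it for granted without argument.

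The paper avoids this by refusing to work on $D_t$ directly: it first constructs the $\go$-partition $q$ on the set $B$ (whose $\VF$-fibers \emph{are} known to be clopen), getting for each $(b,s)$ a modulus that works simultaneously for every $t$ by Lemma~\ref{db:to:db}, and then extracts a global bound $\beta$ via Lemma~\ref{vol:par:bounded}. Only afterwards does it build the modulus-of-continuity $\go$-partition $p$, and crucially it builds it on all of $A$. It is then an easy by-product of this $p$ that each piece $\go(a, p(a)) \cap A$ maps into a single $\beta$-polydisc with a fixed $\RV$-coordinate, so that each $D_{(t_A, t_B)}$ is a union of pieces of the $\go$-partition; since for fixed $t_A$ the sets $D_{(t_A, t_B)}$ partition the clopen set $A_{t_A}$, each of them is clopen. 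In other words, the clopenness of $\dom(f_t)$ is an \emph{output} of the construction, not an input. Your fiberwise variant is salvageable, but you would first need to establish the openness of $D_t$ along these lines (or switch the order of operations as the paper does) before the appeals to Lemmas~\ref{inv:crit}, \ref{vol:par:bounded}, and \ref{cb:to:cb} become legitimate.
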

\begin{proof}
For each $(b, s) \in B_{\VF} \times B_{\RV}$ and every $\VF$-fiber $f_{(t,s)}$ of $f$, let $\beta_t \in \absG$ be a $(b,s,t)$-definable element such that
\begin{equation*}
\begin{cases}
  \go((b,s), \beta_t) \sub \ran(f_{(t,s)}), & \text{if } (b, s) \in \ran(f_{(t,s)}),\\
  \go((b,s), \beta_t) \cap \ran(f_{(t,s)}) = \0, & \text{if } (b, s) \notin \ran(f_{(t,s)}).
\end{cases}
\end{equation*}
Since $A_{\RV}$ is doubly bounded, by Lemma~\ref{db:to:db}, there is a $(b,s)$-definable $\beta \in \absG$ which satisfies this condition for every $t \in A_{\RV}$; let $\beta_{(b,s)} \in \absG$ be the smallest of such elements, which exists since the $\Gamma$-sort is \omin-minimal. Let $q: B \fun \absG$ be the definable function given by $(b,s) \efun \beta_{(b,s)} $, which is clearly an $\go$-partition. Since $B$ is proper invariant, by Lemmas~\ref{vol:par:bounded} and \ref{inv:crit}, we see that $q(B)$ is doubly bounded by, say, $\beta \in \absG^+$. Then, by continuity, there is another $\go$-partition $p: A \fun \absG$ such that, for each open polydisc $\gb$ in question, $f(\gb)$ lies in an open polydisc of radius $\beta$, and $p(A)$ is doubly bounded as well. It follows that $f$ is  proper covariant.

For the second claim, since $f$ is bijective, there is only one $\VF$-fiber $f_{(t,s)}$ of $f$ with $(b, s) \in \ran(f_{(t,s)})$. If we let $\beta_{(b,s)} \in \absG$ be the smallest element such that $\go((b,s), \beta_{(b,s)}) \sub \ran(f_{(t,s)})$ then the resulting function $q$ is still an $\go$-partition. The rest of the argument is the same.
\end{proof}

\begin{cor}\label{cov:rest}
Let $f : A \fun B$ be a continuous proper covariant bijection. If $A'$ is a proper invariant subset of $A$ then $f \rest A'$ is proper covariant.
\end{cor}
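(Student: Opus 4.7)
The plan is to verify the conditions of Terminology~\ref{prop:pseu} for $F := f \rest A'$ directly, reducing the fiber-wise covariance to an application of Lemma~\ref{conti:covar} on each individual VF-fiber. The boundedness conditions are automatic: $A'_{\VF}$ is bounded and $A'_{\RV}$ is doubly bounded because $A'$ is proper invariant, while $f(A') \sub f(A)$ lets $f(A')_{\VF}$ and $f(A')_{\RV}$ inherit their bounds from the proper covariance of $f$. So it remains, for each $t = (s, s') \in F_{\RV}$, to produce a $t$-definable pair $(\alpha_t, \beta_t) \in \absG$ for which $F_t$ is $(\alpha_t, \beta_t)$-covariant, $\dom(F_t)$ is $\alpha_t$-invariant, and $\ran(F_t)$ is $\beta_t$-invariant.

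Fix such $t$ and write $F_t = f_t \rest (A'_s \cap \dom(f_t))$. Since $f$ is proper covariant, $\dom(f_t)$ is $\alpha$-invariant for some finite $\alpha \in \absG$, hence clopen in $\VF^n$; it is also bounded, as a subset of $A_{\VF}$. By Corollary~\ref{conti:homeo}, the continuous bijection $f_t$ is therefore a homeomorphism from $\dom(f_t)$ onto $\ran(f_t)$. Since $A'$ is proper invariant, $A'_s$ is clopen and bounded, so $\dom(F_t) = A'_s \cap \dom(f_t)$ is clopen and bounded, and under the homeomorphism its image $\ran(F_t)$ is clopen in $\ran(f_t)$. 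Because $\ran(f_t)$ is itself clopen in $\VF^{n'}$ (by the $\beta$-invariance given by proper covariance of $f$), $\ran(F_t)$ is clopen and bounded in $\VF^{n'}$.

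Hence $F_t$ is a continuous bijection between two clopen bounded subsets of the pure $\VF$-sort, which are proper invariant by Lemma~\ref{inv:crit}, and the range of its unique VF-fiber is clopen. The bijective form of Lemma~\ref{conti:covar} then yields a $t$-definable pair $(\alpha_t, \beta_t) \in \absG$ with the desired covariance and invariance properties, completing the fiber-wise check and hence the proof. The point to watch is the matching of the $\beta$-parameter of the covariance with that of the invariance of $\ran(F_t)$: the invariance one extracts from the homeomorphism argument alone need not line up with the covariance inherited from $f_t$, and delegating that matching to Lemma~\ref{conti:covar} is what spares an explicit uniform-continuity argument.
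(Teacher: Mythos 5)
Your proof is correct and rests on the same ingredients as the paper's: Corollary~\ref{conti:homeo} propagates clopenness along the fibers, Lemma~\ref{inv:crit} converts ``clopen and bounded'' into ``proper invariant,'' and Lemma~\ref{conti:covar} supplies the matched covariance/invariance parameters. The organizational difference is worth noting: the paper first establishes that $f(A')$ is proper invariant (using Lemma~\ref{db:to:db} to uniformize the fiber-wise invariance radii of $\ran(f_t)\cap f(A')$ over $t$) and then invokes Lemma~\ref{conti:covar} once, globally, for $F\colon A'\fun f(A')$; you instead unwind the definition of proper covariance and invoke Lemma~\ref{conti:covar} on each pure-$\VF$ fiber $F_t$ separately, which is a degenerate (no $\RV$-coordinates) instance of that lemma. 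Since the definition only asks for $t$-definable pairs $(\alpha_t,\beta_t)$ fiber by fiber, your route sidesteps the uniformization via Lemma~\ref{db:to:db} entirely, at the cost of having to say explicitly --- as you correctly flag at the end --- why the invariance radius extracted from the homeomorphism need not be reconciled by hand with the covariance radius inherited from $f_t$; delegating that matching to Lemma~\ref{conti:covar} is the right move.
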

\begin{proof}
Since the domain and the range of every $\VF$-fiber $f_t$ of $f$ are  clopen, by Lemmas~\ref{inv:crit} and~\ref{conti:covar}, we only need to show that $f(A')$ is proper invariant, or equivalently, $f(A')$ is clopen. Since $\dom(f_t) \cap A'$ is clopen and $f$ is continuous, $\ran(f_t) \cap f(A')$ is clopen as well and hence, by Lemma~\ref{inv:crit} again, is proper $\alpha_t$-invariant for some $t$-definable $\alpha_t \in \absG$. By Lemma~\ref{db:to:db}, we may choose the same definable $\alpha$ for every $t$. The claim follows.
\end{proof}

\begin{conv}\label{conv:can}
We reiterate (a variation of) \cite[Convention~5.1]{Yin:tcon:I} here,  since this trivial-looking convention is actually quite crucial for understanding the discussion below, especially the parts that involve special covariant bijections. For each pseudo invariant or doubly bounded proper invariant set $A$, let
\[
\can(A) = \{(a, \rv(a), t) : (a, t) \in A \text{ and } a \in A_{\VF}\}.
\]
The natural bijection $\can : A \fun \can(A)$ is called the \emph{regularization} of $A$. We shall tacitly substitute $\can(A)$ for $A$  if it is necessary or is just more convenient. Whether this substitution has been performed or not should be clear in context (or rather, it is always performed).
\end{conv}

%Let $\gamma \in \absG_\infty$ be a definable element with $\gamma \geq \delta$ and $\gamma \geq (\alpha_{\gp}, \beta_{\gp})$ for all $\gp$. Thus $\lambda_{\gp}$ is  $(\gamma, \beta_{\gp})$-covariant (but not necessarily $(\gamma, \gamma)$-covariant) for all $\gp$. Note that if $\delta \neq \infty$, that is, if $A$ is proper invariant, then, since $\rv(C)$ is doubly bounded, by Lemma~\ref{db:to:db}, we can and do take $\gamma$ to be in $\absG$ as well.

\begin{defn}[Special covariant transformations]\label{defn:special:bijection}
Let $A$ be an invariant set  or, more generally, a finite disjoint union of such sets, all doubly bounded if proper. Suppose that the first coordinate of $A$ is a $\VF$-coordinate (of course nothing is special about the first $\VF$-coordinate, we choose it simply for ease of notation).

Let $C \sub \RVH(A)$ be an $\RV$-pullback. Let
$\lambda: \pr_{>1}(C \cap A) \fun \VF$
be a definable function whose graph is contained in $C$, that is, for each $\RV$-polydisc $\gp \sub C$, $\lambda$ restricts to a function
\[
\lambda_{\gp} : \pr_{>1}(\gp \cap A) \fun \pr_{1}(\gp).
\]
Suppose that there is an $\rv(\gp)$-definable tuple $(\alpha_{\gp}, \beta_{\gp}) \in \absG_\infty$ such that
\begin{itemize}
  \item $\gp \cap A$ is $(\alpha_{\gp}, \beta_{\gp})$-invariant and $\lambda_{\gp}$ is  $(\alpha_{\gp}, \beta_{\gp})$-covariant,
  \item $(\alpha_{\gp}, \beta_{\gp}) = \infty$ for all $\gp$ if $A$ is pseudo and $(\alpha_{\gp}, \beta_{\gp}) \in \absG$ for all $\gp$ if $A$ is proper.
\end{itemize}

Let $\gamma \in \absG_\infty$ be a definable nonnegative element such that $\gamma = \infty$ if and only if $\beta_{\gp} = \infty$ for all $\gp$. We also assume that, for all $\gp$,
\[
\gamma_{\gp} \coloneqq \rad(\pr_{1}(\gp)) + \gamma \geq \beta_{\gp};
\]
the existence of such a $\gamma$ is guaranteed by Lemma~\ref{db:to:db}. Choose a definable point $t \in \gamma^\sharp$ and, for each $\gp$,  set
\[
t_{\gp} = t \cdot \rv(\pr_{1}(\gp)) \in \RV_0.
\]
Then the \emph{centripetal transformation $\eta$ on $A$ with respect to $\lambda$} is given by
\begin{equation*}\tag{CT}\label{centri}
\begin{cases}
  \eta (a, x) = (a - \lambda(x), x), & \text{if } (a, x) \in \gp \cap A \text{ and } \pi_{\gamma_{\gp}}(a) \neq \pi_{\gamma_{\gp}}(\lambda(x)),\\
  \eta (a, x) = (t_\gp, x), & \text{if } (a, x) \in \gp \cap A \text{ and } \pi_{\gamma_{\gp}}(a) = \pi_{\gamma_{\gp}}(\lambda(x)),\\
  \eta = \id, & \text{on } A \mi C.
\end{cases}
\end{equation*}
%Note that here $\pi^+_\gamma$ is used, instead of $\pi_\gamma$.Let
%\[
% C^{\sharp} = \bigcup_{x \in \pr_{>1} (C)} (\MM_{\abvrv(\pr_1(x_{\RV}))} \mi \MM_\gamma) \times x \dand \RVH(A)^{\sharp} = C^{\sharp} \uplus (\RVH(A) \mi C),
%\]
%where $x_{\RV} = \pr_{\RV}(x)$.
%Note that the resulting sets from the first two clauses may overlap, but we take their disjoint union and thereby always assume that $\eta$ is injective.
The function $\lambda$ is referred to as the \emph{focus} of $\eta$, the $\RV$-pullback $C$ as the \emph{locus} of $\lambda$ (or $\eta$), and the pair $(\gamma, t)$ as  the \emph{aperture} of $\lambda$ (or $\eta$).

Note that if $(\gamma, t)$ is the aperture of $\lambda$ then every other pair $(\gamma', t')$
of this form with $\gamma' \geq \gamma$ could be an aperture of $\lambda$ as well, so the aperture of $\lambda$ must be given as a part of $\lambda$ itself. Actually, all the data above should be regarded as part of $\lambda$, including the tuples $(\alpha_{\gp}, \beta_{\gp})$.

A \emph{special covariant transformation} $T$ on $A$ is an alternating composition of centripetal transformations and regularizations such that either all of the apertures are $(\infty, 0)$, in which case $T$ is called \emph{pseudo}, or none of the apertures is $(\infty, 0)$, in which case $T$ is called \emph{proper} (regularizations are usually not shown). The \emph{length} of such a special covariant transformation $T$, denoted by $\lh(T)$, is the number of centripetal transformations in it.

%The image of $T$ is often denoted by $A^{\natural}$.

%By Lemma~\ref{RV:no:point}, every $\lambda(x)$ is $x_{\VF}$-definable. It follows that, since $A$ is of $\RV$-fiber dimension $0$, if $(\lambda(x), x) \in \gp \cap A$ then $t_\gp$ is also $x_{\VF}$-definable and hence there is an $x$-definable point

Choose a definable point $c \in t^\sharp$. If $(a, x) \in \gp \cap A$ and $\pi_{\gamma_{\gp}}(a) = \pi_{\gamma_{\gp}}(\lambda(x))$ for some $a \in \VF$ then $(\lambda(x), x) \in \gp \cap A$. Thus  the second clause of (\ref{centri}) may be ``lifted'' to
\[
\eta (a, x) = (a - \lambda(x)(1 - c), x).
\]
The images from the first two clauses of (\ref{centri}) may now overlap, but we take their disjoint union and thereby always assume that the resulting function $\eta^\flat$ is injective. In so doing, every special covariant transformation may be \emph{lifted} to a \emph{special covariant bijection} $T^\flat$ on $A$. This of course depends on the choice of the point $c$.
The image of $T^\flat$ is often denoted by $A^{\flat}$.
\end{defn}

This definition of a special covariant bijection is somewhat more complicated than  that of a special bijection in \cite[Definition~5.2]{Yin:tcon:I}. Clearly a special covariant bijection is a special bijection if all of the apertures are $(\infty, 0)$ or the second clause of (\ref{centri}) does not occur. The extra generality is needed to achieve better control of proper invariant sets.

\begin{rem}\label{spec:addendum}
Suppose that $A$ is proper $\delta$-invariant but is not doubly bounded. Then, by definition, there  are no special covariant transformations on $A$. But we can prepare $A$ as follows. Choose a definable point $t_\delta \in \delta^\sharp$. For each open polydisc $\gb_1 \times \ldots \times \gb_n \times t \sub A$ of radius $\delta$, replace each $\gb_i = \MM_\delta$ with $t_\delta$. The resulting definable set, or rather, the resulting disjoint union of definable sets, is doubly bounded and remains $\delta$-invariant. We may and do regard this operation as a centripetal transformation with respect to the constant focus map $0$.
\end{rem}

\begin{rem}\label{bigger:aper}
Let $\eta$ be a proper centripetal transformation on $A$ of aperture $(\gamma, t)$. Then the set $\eta(A)$ is indeed proper invariant and doubly bounded. If we change the aperture of $\eta$ to $(\gamma', t')$ with $\gamma' > \gamma$ and write the resulting  centripetal transformation as $\eta'$ then, in the notation of Definition~\ref{defn:special:bijection}, every open polydisc $\gb \sub \eta(A) \mi \eta'(A)$ of radius $\alpha_\gp$, where $\gp$ is the $\RV$-polydisc that contains $\eta^{-1}(\gb)$, has an extra $\RV$-coordinate $t_\gp$ that is contracted from open discs of radius $\gamma_\gp$ in the same $\VF$-coordinate of $A$. Each of these open polydiscs has a counterpart in $\eta'(A) \mi \eta(A)$, in which $t_\gp$ is replaced by  $(\MM_{\gamma_\gp} \mi \MM_{\gamma'_\gp}) \cup t'_{\gp}$. On the other hand, $\eta(A) = \eta'(A)$ if and only if  the second clause of (\ref{centri}) does not occur.
\end{rem}

Observe that if $(A,\omega)$, $(B, \sigma)$ are objects of $\mVF[k]$ and $F : A \fun B$ is a special covariant bijection then $\jcb_{\VF} F(x) = 1$ for all $x \in A$ outside a definable subset of $\VF$-dimension less than $k$, and hence $F$ represents an isomorphism if $\omega(x) = \sigma(F(x))$ for all $x \in A$ outside a definable subset of $\VF$-dimension less than $k$ (recall Remark~\ref{mor:equi}).

\begin{lem}[{\cite[Corollary~5.6]{Yin:tcon:I}}]\label{special:bi:term:constant}
Let $A \sub \VF^n$ be a definable set and $f : A \fun \RV^m$ a definable function. Then there is a special bijection $T$ on $A$ such that $A^\flat$ is an $\RV$-pullback and the function $f \circ T^{-1}$ is $\rv$-contractible.
\end{lem}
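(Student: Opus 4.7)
The plan is to proceed by induction on $n$, with an outer reduction to $m = 1$. For the outer reduction, given the result for $m = 1$, apply it first to the coordinate $f_1$ of $f$ to obtain a special bijection $T_1$ making $f_1 \circ T_1^{-1}$ $\rv$-contractible; then regard $(f_2, \ldots, f_m) \circ T_1^{-1}$ as a definable function on $A^\flat$ and iterate. This uses the mild variant of the lemma where $A$ is permitted to carry $\RV$-coordinates as parameters, which is immediate from the $\VF$-only case by parametrization.

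For the base case $n = 1$, $m = 1$, I would apply HNF (Remark~\ref{rem:HNF}) and Lemma~\ref{fun:suba:fun} to partition $A$ into finitely many $\vv$-intervals on which $f$ is determined by a quasi-\LT-definable function. By weak \omin-minimality of $\TCVF$ (Theorem~\ref{theos:qe}), the set $U \sub \RV$ of those $t$ for which $f$ is non-constant on $t^\sharp \cap A$ is finite. For each such $t$, I would select a definable focus $\lambda_t \in t^\sharp$: by Lemma~\ref{open:disc:def:point} when $t^\sharp \cap A$ is a proper nonempty subset of $t^\sharp$, and by Lemma~\ref{clo:disc:bary} applied to a suitably chosen closed subdisc otherwise. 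Packaging these $\lambda_t$ into a single definable focus $\lambda$ on $U^\sharp$ and performing a pseudo centripetal transformation of aperture $(\infty, 0)$ replaces each bad $\RV$-disc by a disjoint union of strictly smaller $\RV$-discs together with new $\RV$-coordinates recording the previous location. A finiteness argument based on weak \omin-minimality (the dimension of the bad locus strictly decreases under iteration) ensures that only finitely many such steps are needed.

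For the inductive step $n > 1$, $m = 1$, I would apply the $n = 1$ case fiberwise in the last $\VF$-coordinate: for each $a \in \pr_{<n}(A)$, obtain a special bijection of the fiber $A_a$ handling $f(a, \cdot)$. Compactness delivers a uniform definable family, and hence a single special bijection $T_1$ on $A$ after which $f \circ T_1^{-1}$ is $\rv$-contractible in the last $\VF$-coordinate, i.e., descends to a function of the first $n - 1$ $\VF$-coordinates (together with the now-enlarged collection of $\RV$-coordinates). Applying the inductive hypothesis to this descended function, parametrized by $\RV$-coordinates, yields a special bijection $T_2$, and $T = T_2 \circ T_1$ is the desired special bijection.

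The main obstacle will be arranging the selection of foci uniformly across the $\RV$-parameters; this depends crucially on Lemmas~\ref{clo:disc:bary}, \ref{open:disc:def:point}, and \ref{ima:par:red}, together with compactness and the standing assumption (Remark~\ref{rem:hyp}) that $\mdl S$ is $\VF$-generated with nontrivial $\Gamma(\mdl S)$ so that the relevant definable discs actually contain definable points. A secondary technical point is verifying that the iteration in the $n = 1$ step terminates, which one handles via an appropriate $\RV$-dimensional complexity measure justified by weak \omin-minimality.
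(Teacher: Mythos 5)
Note first that the paper does not actually prove this lemma: it is imported wholesale from the prequel (\cite[Corollary~5.6]{Yin:tcon:I}), so there is no in-paper argument to compare against. Your outer architecture is nonetheless the right one and matches the style of the prequel's treatment (and Remark~\ref{special:pseudo} alludes to it): reduce to $m=1$ by iterating coordinatewise (which is sound, since a special bijection refines $\RV$-polydiscs, so $\rv$-contractibility of the earlier coordinates is preserved), and for the inductive step apply the one-variable case fiberwise over $\pr_{<n}(A)$, invoke compactness, and then feed the descended function on $\VF^{n-1} \times \RV^?$ back into the inductive hypothesis. That skeleton is fine.

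The genuine gap is in the $n=1$, $m=1$ base case, where all the substance lies. Two issues. First, the key tool is absent: you never invoke the disc-to-disc property (Lemma~\ref{open:pro}), which is what actually makes a well-chosen focus work. After partitioning so that $f = \rv\circ g$ with $g$ quasi-\LT{}-definable, monotone, and having dtdp on each piece, the focus within a piece must be taken at something like $g^{-1}(0)$ (or at the end-disc closest to $0$ if $0 \notin g(A_i)$); for a dtdp bijection $h$ with $h(0)=0$ one checks that $h$ carries $\RV$-discs to $\RV$-discs, which is exactly what delivers $\rv$-contractibility. Your ``suitably chosen closed subdisc'' gestures at this but never pins it down, and an arbitrary definable focus will not do. Second, your termination claim --- ``the dimension of the bad locus strictly decreases under iteration'' --- cannot be the right argument for $n=1$: by Lemma~\ref{fn:alm:cont} and Lemma~\ref{RV:bou:dim}, the bad locus is already $0$-dimensional (finite) before any transformation is applied, so there is no further dimensional descent available. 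The real bookkeeping is combinatorial, tracking the number of end-discs from the HNF partition and the finitely many dtdp pieces, with each centripetal transformation (at the correct focus) strictly reducing that count. As written, the base case is a sketch whose stated termination reason is false, and whose focus-selection step omits the one lemma that makes it go.
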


\begin{rem}\label{special:pseudo}
The proof of Lemma~\ref{special:bi:term:constant} in \cite{Yin:tcon:I} actually shows that for every definable set $A$, there is a special bijection $T$ on  $\RVH(A)$  such that $A^{\flat}$ is an $\RV$-pullback. In the present context, for reasons that will become clear, we would like to extend this result, using proper special covariant bijections  on proper invariant sets. This is not guaranteed by Lemma~\ref{special:bi:term:constant} since the focus maps in $T$ are not required to be (suitably) covariant within each $\RV$-polydisc, except when $A_{\VF} \sub \VF$, in which case  the covariance requirement is half vacuous and it is easy to see how to turn $T$ into a proper special covariant bijection whose components all have the same aperture (for more details see the FMT procedure in Terminology~\ref{FMT} below).
\end{rem}

%first consider the case that $H \sub \VF$ is  a single $\RV$-disc. By  and $A$ is a $\vv$-interval with open end-discs $\ga$, $\gb$ of radii $\leq \delta$. This may be further divided into several cases according to whether $\ga$, $\gb$ are open or closed discs and whether the ends of $A$ are open or closed. In each of these cases, Lemma~\ref{clo:disc:bary} is applied in much the same way as its counterpart is applied in the proof of \cite[Lemma~4.26]{Yin:QE:ACVF:min}. It is a tedious exercise and is left to the reader.

%by Lemmas~\ref{RV:no:point} and \ref{open:disc:def:point}, there are only finitely many $\RV$-polydisc $\gp \sub H$ with $\gp \cap A \neq \gp$. So, by HNF,  it is enough to consider
%In the resulting $\RV$-pullback $H^{\natural}$, $H^{\natural}_{\VF}$ is bounded because $A_{\VF}$ is. On the other hand, by the definition of a centripetal transformation, $H^{\natural}_{\VF}$ cannot contain points in $\MM_\delta$ and hence $H^{\natural}_{\VF}$ is doubly bounded. and $(H^{\flat}_a)_{\VF}$ is doubly bounded

\begin{prop}\label{simplex:with:hole:rvproduct}
Let $H$ be a proper invariant $\RV$-pullback and $(A_i)_i$ a definable finite partition of $H$ such that each $A_i$ is proper invariant. Then there is a proper special covariant bijection on $H$ such that
\begin{itemize}
  \item its focus maps and hence its components  are all continuous,
  \item its restriction to each $A_i$ is indeed a proper special covariant bijection,
  \item every $A_i^{\flat} \sub H^{\flat}$  is a doubly bounded $\RV$-pullback.
\end{itemize}
\end{prop}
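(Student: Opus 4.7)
The plan is to induct on $n$, the number of $\VF$-coordinates of $H$. Before starting the induction, if $H$ is not doubly bounded I apply the preparatory step of Remark~\ref{spec:addendum}, which is itself a proper centripetal transformation with constant focus $0$ that preserves the partition; so from now on I may assume $H$ is doubly bounded, and by Lemma~\ref{db:to:db} and Lemma~\ref{inv:crit} there is a single definable $\gamma \in \absG$ such that every $A_i$ is proper $\gamma$-invariant.

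The base case $n=0$ is immediate, since then $H$ has no $\VF$-coordinates, is already a doubly bounded $\RV$-pullback, and the identity is a (length $0$) proper special covariant transformation whose restriction to each $A_i$ is of the same form. For the inductive step I single out the first $\VF$-coordinate. For each $\RV$-polydisc $\gp \sub H$ and each tuple $a$ in the remaining $\VF$-coordinates of $\gp$, I apply HNF to the induced partition $\{(A_i)_{\gp,a}\}_i$ of $\pr_1(\gp)$: each piece is a finite disjoint union of $\vv$-intervals whose end-discs, forced by $\gamma$-invariance to be clopen, must be closed discs of radius at most $\gamma$. By weak \omin-minimality and compactness the total number of end-discs across all $\gp$ and $a$ is uniformly bounded by some $N \in \N$.

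The main step is to iterate $N$ centripetal transformations, each picking off one end-disc. At iteration $j \leq N$, for each $(\gp,a)$ I select one remaining end-disc $\gb_j(\gp,a)$ in a fixed definable way (for instance, the outermost one under a definable ordering) and take a definable center inside it via Lemma~\ref{clo:disc:bary}; these centers assemble into a focus map $\lambda_j$. Corollary~\ref{mono} and Lemma~\ref{fiber:conti} force $\lambda_j$ to be continuous away from a definable subset of $H$ of $\VF$-dimension strictly less than $n$, which I fold into the partition before proceeding. Setting the aperture $(\gamma,t)$ with the $\gamma$ above yields a proper continuous centripetal transformation $\eta_j$; by Lemma~\ref{conti:covar} and Corollary~\ref{cov:rest} the restriction $\eta_j \rest A_i$ is a proper covariant bijection, and by Lemma~\ref{int:inv} the new partition $(\eta_j(A_i))_i$ remains a partition by proper invariant sets. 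After $N$ iterations the first $\VF$-coordinate of every $A_i^\flat$ has been replaced by $\RV$-data, and I apply the inductive hypothesis to the remaining $n-1$ $\VF$-coordinates.

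The hard part will be gluing the fiberwise choices of end-discs and their centers into a single \emph{continuous} focus map $\lambda_j$ on a genuine $\RV$-pullback locus, with one aperture that works uniformly for every $\RV$-polydisc at once. Lemma~\ref{db:to:db} gives a single aperture $\gamma$ dominating all end-disc radii, while Lemma~\ref{fiber:conti} confines the failure of continuity to a lower-dimensional subset that can be absorbed into the partition without losing properness. Double boundedness of each resulting $A_i^\flat$ then follows from Lemma~\ref{vol:par:bounded}, since every aperture is doubly bounded and the input data are doubly bounded after the preparatory step.
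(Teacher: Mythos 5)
Your proposal runs the induction in the opposite direction from the paper: you peel off the first $\VF$-coordinate first (via HNF end-disc selection) and then recurse on the remaining coordinates, whereas the paper fixes the first coordinate $a$, applies the inductive hypothesis to each fiber $H_a$ to get transformations $T_a$, and then fuses these $T_a$ across $a$. Either ordering confronts the same central obstacle, and your proposal does not clear it.

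The obstacle is establishing the $(\alpha_{\gp},\beta_{\gp})$-covariance of the glued focus map $\lambda_j$ as a function of the remaining $\VF$-coordinates, which Definition~\ref{defn:special:bijection} requires of every centripetal transformation. You appeal to Lemma~\ref{fiber:conti} and Lemma~\ref{db:to:db}, but the former gives continuity and the latter double boundedness of a range in $\Gamma$; neither yields covariance, which is a quotient-contraction property, not a topological one. It is true that $\gamma$-invariance of $A_i$ forces the family of end-discs $\gb_j(\gp,a)$ to be constant on open polydiscs of radius $\gamma$ in $a$, but a definable center selected by compactness inside a fixed disc can still jump around within that disc as $a$ varies, so the map $a\mapsto\lambda_j(\gp,a)$ need not contract. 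This is exactly the problem the paper isolates in the passage beginning ``The remaining problem is that\ldots the images of the same open polydisc of radius $\alpha_{1\gq}$ may lie in two distinct open discs of radius $\beta_{1\gq}$,'' and resolves with a substantive argument combining monotonicity, dtdp (Lemma~\ref{open:pro}), Lemma~\ref{ima:par:red} to upgrade $\code\gp$-definability to definability, and FMT. Your proposal replaces this entire step with the remark that a single aperture $\gamma$ exists and continuity fails only on a lower-dimensional set.

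Two further issues compound this. First, ``folding'' the lower-dimensional exceptional set from Lemma~\ref{fiber:conti} into the partition does not preserve the hypothesis that every piece be proper invariant: a nonempty proper invariant subset of $\VF^n$ is clopen and hence of full $\VF$-dimension, so a lower-dimensional piece cannot be added. The paper instead inflates the (finitely many) exceptional points into definable open discs of radius $\delta$ and applies FMT, which replaces the construction on each such disc by a single choice anchored at a definable point; your exceptional set is not in that form. Second, the step ``by Lemma~\ref{int:inv} the new partition $(\eta_j(A_i))_i$ remains a partition by proper invariant sets'' cites the wrong lemma: Lemma~\ref{int:inv} is about Boolean operations, not about images under a map; what is needed is the argument in Corollary~\ref{cov:rest}, which itself requires $\eta_j$ already known to be continuous and proper covariant --- precisely what has not yet been established.
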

\begin{proof}
To begin with, by Remark~\ref{spec:addendum}, we may assume that $H$ is doubly bounded. It is equivalent and less cumbersome to construct a proper special covariant transformation such that its lift is as required (see the last paragraph of this proof). To that end, we proceed by induction on $n$, where $H_{\VF} \sub \VF^n$. The base case $n=1$ follows from the discussion in Remark~\ref{special:pseudo}, and continuity is obvious.

For the inductive step, let $A_{i1} = \pr_1(A_i)$ and $H_1 = \pr_1(H)$. By Lemma~\ref{int:inv}, we may assume that the sets $A_{i1}$ form a partition of $H_1$. For each $a \in H_1$, the inductive hypothesis gives an $a$-definable proper special covariant transformation $T_a$ on $H_a$ with continuous focus maps such that each $T_a \rest A_{ia}$ is a proper special covariant transformation and each $T_a(A_{ia}) \sub T_a(H_a)$ is an $\RV$-pullback. Our goal then is to fuse together these transformations $T_a$ so to obtain one proper special covariant transformation on $H$ as desired. This is in general not possible without first modifying $H_1$ in a suitable way, which constitutes the bulk of the work below.

There is an element $\delta \in \absG$ such that every $A_i$ is proper $\delta$-invariant. Let $U_{ak}$ enumerate the loci of the components of $T_{a}$, $\lambda_{ak}$ the corresponding continuous focus maps, and $(\gamma_{ak}, t_{ak})$ their apertures; for each $\RV$-polydisc $\gq \sub U_{ak}$, the map $\lambda_{ak\gq}$ is $(\alpha_{ak\gq}, \beta_{ak\gq})$-covariant. By compactness, there is a definable set $V \sub \VF \times \RV^l$ such that $\pr_1(V) = H_1$ and, for each $a \in H_1$, the set $V_a$ contains the following $\RV$-data of $T_a$:
\begin{itemize}
  \item $\rv(T_{a} \rest A_{ia})$, $\rv(T_{a})$, and the sequence $\rv(U_{ak})$,
  \item the $\VF$-coordinates targeted by the focus maps $\lambda_{ak}$,
  \item the $a$-definable apertures $(\gamma_{ak}, t_{ak})$,
   \item the $(a, \rv(\gq))$-definable tuples $(\alpha_{ak\gq}, \beta_{ak\gq})$;
\end{itemize}
the set  $\rv(T_{a})$ is determined by other data in this list and hence is redundant, but we add it in anyway for clarity. Note that $V$ is not necessarily proper invariant.

Let $\phi(x, y)$ be a quantifier-free formula that defines $V$ and $G_{i}(x)$ enumerate its top $\lan{T}{}{}$-terms (recall Convention~\ref{topterm}). By Lemma~\ref{special:bi:term:constant}, there is a special bijection $R : H_1 \fun H_1^\flat$ such that each $A_{i1}^\flat \sub H_1^\flat$ is an $\RV$-pullback and every $G_{i} \circ R^{-1}$ is $\rv$-contractible. This implies that,  for every $\RV$-polydisc $\gp \sub H_1^\flat$, the $\RV$-data $V_a$ is constant over $a \in R^{-1}(\gp)$. Observe that, since $H_1$ is an $\RV$-pullback of $\RV$-fiber dimension $0$, by weak \omin-minimality and Lemma~\ref{RV:fiber:dim:same}, each focus map in $R$ consists of only finitely many points. Then there are finitely many definable open discs $\ga_j \sub H_1$ of radius $\delta$ such that the restriction of $R$ to $H_1 \mi \bigcup_j \ga_j$ is actually a proper special covariant bijection --- the reason simply being that, after deleting all the discs $\ga_j$, each focus map in $R$ lies outside the set in question. Each $\ga_j$ contains a definable point $a_j$ and, for all $a, a' \in \ga_j$ and every $A_i$, we have $A_{ia} = A_{ia'}$ (because  $A_i$ is $\delta$-invariant). It follows that, over each $\ga_j$, we can use the same  transformation $T_{a_j}$ to achieve the desired effect, and then adjust $R$ so that every $\ga_j$ is mapped to the same $\RV$-disc $t^\sharp_{\delta}$, where $t_{\delta} \in \delta^\sharp$ is definable.

\begin{ter}\label{FMT}
This operation just performed --- modifying a construction over finitely many definable open discs with a chosen definable point in each of them --- will be applied many more times below (even after this proof) and, to reduce verbosity, we shall refer to it by the acronym FMT. Which construction it is being applied to should be clear in context.
\end{ter}

Therefore, we may assume that $R$ is a (continuous) proper special covariant bijection whose components all have the same  aperture $(\delta, t_{\delta})$.

By compactness, there is  a definable finite partition of $H_1$ such that, over each piece, the  focus maps $\lambda_{ak}$ are uniformly defined by formulas $\lambda_k(a, y, z)$. By HNF, there are only finitely many open discs $\ga_i \sub H_1$ of radius $\delta$ that are split by this partition. Thus, by FMT, we may assume that the partition is indeed trivial. Since $R$ induces a continuous proper special covariant bijection on $H$, we may actually assume that $R$ is trivial as well.

%Since $\rv(H_1)$ is doubly bounded, by Lemma~\ref{db:to:db}, the image of the function on $\rv(H_1)$  given by $t \efun (\gamma_{tk})_k$ is also doubly bounded. It follows that there are $\gamma_1 \leq \gamma_2 \leq \ldots$ in $\absG$ such that $\gamma_k \geq \gamma_{tk}$ for  all $t$.

Over each $t^\sharp \sub H_1$, we can now write $U_{ak}$ as $U_{tk}$, $\alpha_{ak\gq}$ as $\alpha_{k\gq}$ (the first $\RV$-coordinate of $\gq$ is actually $t$), and so on. We are almost ready to fuse together the transformations $T_a$ over $a \in t^\sharp$. The remaining problem  is that, for any $a, a' \in t^\sharp$, although the two focus maps $\lambda_{a1\gq}$, $\lambda_{a'1\gq}$ are both $(\alpha_{1\gq}, \beta_{1\gq})$-covariant,  the images of the same open polydisc of radius $\alpha_{1\gq}$ may lie in two distinct open discs of radius $\beta_{1\gq}$. To solve this problem, consider an open polydisc $\gp$ of radius $\alpha_{1\gq}$ that is contained in $\dom(\lambda_{a1\gq})$ for some (hence all) $a \in t^\sharp$. For each $b \in \gp$, let $\lambda_{1b} : t^\sharp \fun \VF$ be the function defined by $\lambda_1(x, b, z)$. By monotonicity and Lemma~\ref{open:pro}, there are a $b$-definable finite set $C_b \sub t^\sharp$ and, for any $a \in t^\sharp \mi C_b$, an open disc $\ga_a \sub t^\sharp \mi C_b$ around $a$ such that $\lambda_{1b}(\ga_a)$ lies in an open disc of radius $\beta_{1\gq}$. Since for any other $a' \in \ga_a$,  $\lambda_{a'1\gq}(\gp)$ also lies in an open disc of radius $\beta_{1\gq}$, we see that $\lambda_1(\ga_a \times \gp)$ lies in an open disc of radius $\beta_{1\gq}$, where $\lambda_1$ stands for the function defined by $\lambda_1(x, y, z)$. Therefore, we may assume that the finite set $C_b$ is actually $\code \gp$-definable. But then, by Lemma~\ref{ima:par:red}, it is even definable. By compactness and FMT, we may assume that there is an $\go$-partition $p: \dom(\lambda_1) \fun \absG$ such that, for each open polydisc $\gb$ in question,  $\lambda_1(\gb)$ lies in a disc of radius $\beta_{1\gq}$, where $\gq$ is related to $\gb$ in the obvious way. By Lemma~\ref{vol:par:bounded}, the image of $p$ is doubly bounded and, by Lemma~\ref{db:to:db}, there is a definable $\gamma_1 \in \absG$ with $\gamma_1 \geq \gamma_{t1}$ for all $t \in \rv(H_1)$, which means that $\lambda_1$ can serve as the focus map of a centripetal transformation $T_1$ on $H$ of aperture $(\gamma_{1}, t_{1})$ for some definable $t_1 \in \gamma^\sharp_1$. By Lemma~\ref{fiber:conti} and FMT, we may assume that $\lambda_1$ is continuous.

At this point the proof would be complete if we could repeat the procedure above for $\lambda_2$, and so on. We still have a small issue, namely some part of the locus $U_{a2}$ may have disappeared because the aperture of  $\lambda_{a1}$ is bumped up to $(\gamma_{1}, t_{1})$; see Remark~\ref{bigger:aper}. It is not hard to see that the inductive hypothesis may be applied to the $\RV$-pullback contained in $T_1(H)$ that corresponds to the missing locus, since it has one less $\VF$-coordinate and its intersection with each $T_1(A_i)$ is proper invariant.
\end{proof}

%The reasons why the definition of a proper special covariant bijection is so convoluted, why focus maps need to be uniformly defined within each $\RV$-polydisc, and why the proper special covariant bijection in Proposition~\ref{simplex:with:hole:rvproduct} is constructed on an $\RV$-pullback with respect to a proper invariant partition --- rather than simply on a proper invariant set, as is done in \cite[Corollary~5.8]{Yin:tcon:I} --- will all become clear in \S~\ref{section:int} below, especially in Lemma~\ref{special:to:blowup:coa} and the proof of the ultimate technical result Proposition~\ref{kernel:L}.

\begin{ques}\label{ques:prop}
Does Lemma~\ref{fiber:conti} still hold in some analogous form if we replace ``continuous'' with ``differentiable'' therein? If so, the same upgrade also applies to Proposition~\ref{simplex:with:hole:rvproduct}.
\end{ques}

\subsection{Lifting from $\RV$ to $\VF$}

\begin{defn}[Lifting maps]\label{def:L}
Let $U$ be a set in $\RV$ and $f : U \fun \RV^k$ a function. Set $U_f = \bigcup \{f(u)^\sharp \times u: u \in U\}$. The \emph{$k$th lifting map} $\mathbb{L}_k: \RV[k] \fun \VF[k]$ is given by $(U,f) \efun U_f$.
The lifting map $\mathbb{L}_{\leq k}: \RV[{\leq} k] \fun \VF[k]$ is given by $\bigoplus_{i} \bm U_i \efun \biguplus_{i} \bb L_i \bm U_i$.
Set $\mathbb{L} = \bigcup_k \mathbb{L}_{\leq k}$.

The \emph{$k$th lifting map} ${\mu}\mathbb{L}_k: \mRV[k] \fun \mVF[k]$ is given by $(\bm U, \omega) \efun (\bb L \bm U, \bb L \omega)$, where $\bb L \omega$ is the obvious function on $\bb L \bm U$ induced by $\omega$. Set ${\mu}\bb{L} = \bigoplus_k \mu\mathbb{L}_{k}$, and similarly for ${\mu_\Gamma}\bb{L}$.
\end{defn}

Note that if $\bm U \in \RV^{\db}[k]$ then $\bb L \bm U$ is a doubly bounded proper invariant set.

\begin{rem}\label{nota:form:down}
Let $A$ be a definable set and $\omega : A \fun \RV$ a definable function. Denote the set $\set{( a, \omega( a)) :  a \in A)}$ by $A_{\omega}$. For ease of notation, the function $A_{\omega} \fun \RV$ induced by $\omega$ is still denoted by $\omega$. The definable pair $(A, \omega)$ is called \emph{proper invariant} if $A$ is proper invariant and $\omega$ is proper covariant. In that case, by Lemmas~\ref{vol:par:bounded} and \ref{invar:db} (applied as in the middle of Terminology~\ref{prop:pseu}), the set $\omega(A)$ is doubly bounded and hence $(A_{\omega}, \omega)$ is proper invariant as well. If $(A, \omega) \in \mVF[k]$  then clearly the two objects $(A, \omega)$, $(A_{\omega}, \omega)$ are isomorphic and if, in addition, $(A, \omega)$ is proper invariant then the obvious isomorphism in question and its inverse are proper covariant.
\end{rem}

\begin{lem}\label{L:measure:surjective}
Let $A$ be an $\RV$-pullback and $(A, \omega) \in \mVF[k]$. Let $(A_i)_i$ be a definable finite partition of $A$ and $\bm A_i = (A_i, \omega \rest A_i) \in \mVF[k]$. Then there is a special covariant bijection $T$ on $A_{\omega}$ that induces morphisms of the form $\bm A_i \fun \mL(\bm U_i, \pi_i)$. Moreover, if every $\bm A_i$ is proper invariant then $T$ is as given in Proposition~\ref{simplex:with:hole:rvproduct} and hence $(\bm U_i, \pi_i) \in \mRV^{\db}[k]$.
\end{lem}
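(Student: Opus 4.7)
The plan is to first use Remark~\ref{nota:form:down} to substitute the isomorphic pair $(A_\omega,\omega)$ for $(A,\omega)$, so $\omega$ becomes literally the projection to the last $\RV$-coordinate and the given partition transports to $((A_i)_\omega)_i$. For the general part of the statement, I would then apply Lemma~\ref{special:bi:term:constant} to $A$ (already an $\RV$-pullback) with the function $f$ recording both $\omega$ and the indicator of the partition $(A_i)_i$. This produces a special bijection $T_0$ on $A$ such that $A^\flat$ is an $\RV$-pullback, $\omega \circ T_0^{-1}$ is $\rv$-contractible, and each $A_i^\flat$ is an $\RV$-pullback. Lift $T_0$ to a special bijection $T$ on $A_\omega$ by carrying the $\omega$-coordinate unchanged; the $\rv$-contractibility of $\omega \circ T_0^{-1}$ forces each $(A_i)_\omega^\flat$ to be an $\RV$-pullback in the extended ambient space.

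For the moreover case I would instead invoke Proposition~\ref{simplex:with:hole:rvproduct}. By Remark~\ref{nota:form:down} each $(A_i)_\omega$ is proper invariant, and one checks directly that $H \coloneqq \RVH(A_\omega)$ is a proper invariant $\RV$-pullback: boundedness and double boundedness are inherited by $\RVH$, and every $\VF$-fiber of a pullback is a union of $\RV$-polydiscs and hence clopen, so Lemma~\ref{inv:crit} applies. By Lemma~\ref{int:inv} the leftover $H \mi A_\omega$ is also proper invariant. Apply Proposition~\ref{simplex:with:hole:rvproduct} to $H$ with the finite proper invariant partition $\{(A_i)_\omega\}_i \cup \{H \mi A_\omega\}$; the output is a proper special covariant bijection on $H$ under which each $(A_i)_\omega^\flat$ is a doubly bounded $\RV$-pullback, and restricting to $A_\omega$ gives the desired $T$.

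Either construction leaves each $(A_i)_\omega^\flat$ in the shape $\bb L(\bm U_i,\pi_i)$, with $\bm U_i = ((A_i)_\omega^\flat)_\RV$ and $\pi_i$ sending $u \in \bm U_i$ to the $\rv$-image of the unique $\VF$-polydisc of $(A_i)_\omega^\flat$ lying over $u$; finiteness of the $\pi_i$-fibers follows from $(A_i)_\omega^\flat$ inheriting $\RV$-fiber dimension $0$ from $\bm A_i$ via Lemma~\ref{RV:fiber:dim:same}. The projection of $\bm U_i$ onto the preserved $\omega$-coordinate supplies $\omega_\downarrow$, so $(\bm U_i,\pi_i,\omega_\downarrow) \in \mRV[k]$ (respectively $\mRV^{\db}[k]$ in the moreover case). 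Because each centripetal transformation translates the targeted $\VF$-coordinate by a function depending only on the remaining coordinates, $\jcb_{\VF} T$ equals $1$ wherever defined; combined with the preservation of the $\omega$-coordinate by $T$, this yields the required identity $\omega(x) = (\bb L\omega_\downarrow)(T(x)) \cdot \rv(\jcb_{\VF} T(x))$, verifying that $T \rest (A_i)_\omega$ is a morphism $\bm A_i \fun \mL(\bm U_i,\pi_i,\omega_\downarrow)$.

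The hard part is the moreover case: Proposition~\ref{simplex:with:hole:rvproduct} demands an $\RV$-pullback input, while $A_\omega$ itself is one precisely when $\omega$ is $\rv$-contractible, which it need not be. The workaround of enlarging to the hull $\RVH(A_\omega)$ and picking up $\RVH(A_\omega) \mi A_\omega$ as an additional proper invariant partition piece, licensed by Lemma~\ref{int:inv}, is what makes the proposition directly applicable and converts its output into a proper special covariant bijection on $A_\omega$ after restriction.
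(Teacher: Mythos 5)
Your proof is correct and follows essentially the same route as the paper's, which cites Remark~\ref{special:pseudo} for the pseudo case, Proposition~\ref{simplex:with:hole:rvproduct} for the proper case, and Lemma~\ref{RV:fiber:dim:same} for the $\RV$-fiber dimension, and then reads off the objects $(\bm U_i, \pi_i)$ from $A_{i\omega}^\flat$ exactly as you do in your last paragraph. The one place where you add genuine content is the ``moreover'' case: you correctly identify that $A_\omega$ need not be an $\RV$-pullback (since $\omega$ need not be $\rv$-contractible), that Proposition~\ref{simplex:with:hole:rvproduct} therefore cannot be applied to $A_\omega$ directly, and that the fix is to pass to $\RVH(A_\omega)$, verify it is a proper invariant $\RV$-pullback (using that $\RV$-pullbacks are clopen and that $\RVH$ preserves the relevant boundedness, then Lemma~\ref{inv:crit}), adjoin the leftover piece $\RVH(A_\omega) \mi A_\omega$ to the partition via Lemma~\ref{int:inv}, and restrict. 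The paper compresses this into the citation to Remark~\ref{special:pseudo} and the remark appearing immediately after the lemma; your unpacking is faithful to that intent. Two cosmetic points: your invocation of Lemma~\ref{special:bi:term:constant} for the pseudo case is stated for $A \sub \VF^n$, so in the presence of $\RV$-coordinates on $A$ one should apply it fiberwise and invoke compactness (which is exactly what Remark~\ref{special:pseudo} packages); and your notation $(\bm U_i, \pi_i, \omega_\downarrow)$ does not match the statement's $(\bm U_i, \pi_i)$, where $\bm U_i$ already denotes a pair $(U_i, f_i) \in \RV[k]$ and $\pi_i$ is the volume form — your $\pi_i$ is the paper's structure map $f_i$ and your $\omega_\downarrow$ is the paper's $\pi_i$.
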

\begin{proof}
By Remark~\ref{special:pseudo}, Proposition~\ref{simplex:with:hole:rvproduct}, and Lemma~\ref{RV:fiber:dim:same}, there is a special covariant bijection $T$ on $A_{\omega}$ such that for every $i$, the restriction $T \rest A_{i \omega}$ is a special covariant bijection and $B_i \coloneqq A_{i\omega}^\flat$ is an $\RV$-pullback with $\rv(B_i)_{\leq k} \in \RV[{\leq}k]$ (recall Notation~\ref{0coor}). Moreover, if every $\bm A_i$ is proper invariant then $T$ is as given in Proposition~\ref{simplex:with:hole:rvproduct}, which implies that $\rv(B_i)_{\leq k} \in \RV^{\db}[k]$. Let $\bm U_i \in \RV[k]$ be the $k$th component of $\rv(B_i)_{\leq k}$ and $\sigma_i = (\omega \circ T^{-1}) \rest \mathbb{L} \bm U_i$. Then $\sigma_i$ is constant on every $\RV$-polydisc in $\mathbb{L} \bm U_i$ and hence induces a function $\pi_i : \bm U_i \fun \RV$. Since $\bm A_i$ is isomorphic to $(\mathbb{L} \bm U_i, \sigma_i)$, we see that $(\bm U_i, \pi_i)$ is as required.
\end{proof}

This lemma may be applied to any proper invariant object $(A, \omega) \in \mVF[k]$ without the partition of $\RVH(A)$ being explicitly given since, by  Lemma~\ref{int:inv},   $A' = \RVH(A) \mi A$ is proper invariant too, which means that  the two objects $(A, \omega)$,  $(A', 1)$ are indeed as assumed.

\begin{lem}\label{RVlift}
Let $f : U \epi V$ be a definable surjection between two sets in $\RV$. Then there is a definable differentiable function $f^\sharp : U^\sharp \fun V^\sharp$ that $\rv$-contracts to $f$. Furthermore, if $U$, $V$ are both subsets of $\RV^k$ and $f$ is a bijection then $f^\sharp$ is bijective as well.
\end{lem}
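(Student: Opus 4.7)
The plan is to build $f^\sharp$ so that both the $\rv$-contraction property and differentiability are automatic, reducing everything to the following \emph{Section Claim}: there is a definable $h : U \fun \VF^m$ with $\rv \circ h = f$. Given this, set $f^\sharp(a) := h(\rv(a))$ for $a \in U^\sharp$; then $\rv \circ f^\sharp = f \circ \rv$ by construction, and $f^\sharp$ is locally constant on each $\RV$-polydisc of $U^\sharp$, hence differentiable in the sense of Definition~\ref{defn:diff} with vanishing partial derivatives. To establish the Section Claim I would induct on $\dim_{\RV}(U)$, using Lemmas~\ref{resg:decom} and \ref{RV:decom:RES:G} to decompose the graph of $f$ inside $\RV^{n+m}$ into finitely many pieces of product form $(\K^+)^{k'} \times I^\sharp$ with $I \sub \Gamma^{l'}$. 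On such a piece the $\K$-component of $f$ takes values in the residue field, an \omin-minimal $T$-model by Remark~\ref{pillars} and hence admitting definable Skolem functions, while the $\Gamma$-component takes values in $\Gamma$, where pointwise definable lifts into $\VF$ are available because $\mdl S$ is $\VF$-generated and $\Gamma(\mdl S)$ is nontrivial (Remark~\ref{rem:hyp}).

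For the bijective case with $U, V \sub \RV^k$, the locally constant $f^\sharp$ above collapses each $\RV$-polydisc to a single point and so cannot be bijective. I would instead apply the Section Claim coordinate-wise to the inclusions $U, V \hookrightarrow \RV^k$, obtaining definable maps $\sigma_U : U \fun \VF^k$ and $\sigma_V : V \fun \VF^k$ with $\rv \circ \sigma_U = \id_U$ and $\rv \circ \sigma_V = \id_V$; then for $a \in U^\sharp$ with $t := \rv(a)$ and $s := f(t)$ I define
\[
f^\sharp(a)_i := \sigma_V(s)_i \, a_i / \sigma_U(t)_i, \qquad i = 1, \dots, k.
\]
Since $a_i / \sigma_U(t)_i \in 1 + \MM$, one has $\rv(f^\sharp(a)_i) = s_i$, so $\rv \circ f^\sharp = f \circ \rv$. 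On each $\RV$-polydisc $t^\sharp \sub U^\sharp$, the map $f^\sharp$ restricts to the coordinate-wise multiplicative translation by $\sigma_V(s)/\sigma_U(t)$, a linear---hence differentiable---bijection onto $s^\sharp$. Since $f$ itself is a bijection $U \fun V$, these fibrewise bijections assemble into a global bijection $f^\sharp : U^\sharp \fun V^\sharp$.

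The hard part will be the Section Claim; once it is in hand, the rest of the argument is essentially formal. The technical subtlety is uniformising the pointwise definable $\RV$-lifts across a parameter set of positive $\RV$-dimension, and Lemma~\ref{RV:decom:RES:G} is precisely what splits this into an \omin-minimal selection problem in the residue field $\K$ and another in the value group $\Gamma$, in each of which the needed definable sections exist.
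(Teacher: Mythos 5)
Your ``Section Claim'' --- that there is a definable $h : U \fun \VF^m$ with $\rv \circ h = f$ --- is false whenever $U$ is infinite, and this sinks the whole reduction. If $h$ is definable, then for each $u \in U$ the value $h(u)$ is a $u$-definable point of $\VF^m$, and by Lemma~\ref{RV:no:point} any $\VF$-point that is definable over a tuple of $\RV$-parameters is already definable over $\mdl S$. Thus $h$ would be an injection (being a section of $\rv$, it is injective on each coordinate) from the large set $U$ into the small set $\VF(\mdl S)^m$, which is impossible by a cardinality count; more concretely, for a generic $u \in U$ the polydisc $f(u)^\sharp$ contains no definable point, so no value for $h(u)$ is available. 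The same objection applies to the sections $\sigma_U, \sigma_V$ in your bijective case. Lemmas~\ref{resg:decom} and~\ref{RV:decom:RES:G} do let you split the domain into $\K$-pieces and $\Gamma$-pieces, but ``definable Skolem functions'' in the residue field only give you $\K$-valued selectors, not $\VF$-valued ones; lifting from $\K$ or $\Gamma$ back to $\VF$ is exactly what $\RV$-parameters cannot do.

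The fix --- and the route the paper actually takes --- is to construct $f^\sharp$ directly as a function on $U^\sharp \sub \VF^n$, never factoring through $\rv$. For a $\VF$-tuple $a \in u^\sharp$, the substructure $\mdl S\la a\ra$ is $\VF$-generated and hence elementary, so the $a$-definable set $f(u)^\sharp$ does contain an $a$-definable point; compactness then yields a single definable $f^\sharp : U^\sharp \fun V^\sharp$ $\rv$-contracting to $f$, which is not constant on $\RV$-polydiscs (generically it cannot be, by the argument above). Differentiability is then arranged after the fact by discarding a lower-dimensional locus and invoking Lemma~\ref{RV:bou:dim} together with the induction on $\dim_{\RV}(U)$, and bijectivity by a similar surjection-plus-inverse bootstrap. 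Your fibrewise-linear idea for the bijective case is sound in spirit --- the paper itself uses such multiplicative translations elsewhere, e.g.\ in the proof of Lemma~\ref{blowup:same:RV:coa} --- but the coefficients must come from $\VF$-parameters, not from a nonexistent $\RV$-level section.
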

\begin{proof}
We do induction on $n = \dim_{\RV}(U)$. If $n=0$ then $U$ is finite and hence, for every $u \in U$, the $\RV$-polydisc $u^\sharp$ contains a definable point, similarly for $V$, in which case how to construct a definable differentiable function $f^{\sharp}$ as desired is obvious. Note that if $U$, $V$ do not sit inside the same ambient space then it is impossible for $f^{\sharp}$ to be a bijection.

For the inductive step, by an easy induction on $\dim_{\RV}(U) + \dim_{\RV}(V)$ and weak \omin-minimality, there are a definable finite partition $(U_i)_i$ of $U$ and injective coordinate projections
\[
\pi_i : U_i \fun \RV^{n_i}, \quad \pi'_i : V_i \fun \RV^{n'_i},
\]
where $V_i = f(U_i)$, $\dim_{\RV}(U_i) =  n_i$, and $\dim_{\RV}(V_i) = n'_i$; the obvious surjection $\pi_i(U_i) \fun \pi'_i(V_i)$ induced by $f$ is denoted by $f_i$. Moreover,  if $f$ is a bijection then $n_i = n'_i$ and  each $f_i$ is a bijection as well. Let $g_i$, $h_i$ be the obvious functions whose graphs are $U_i$, $V_i$, respectively. Observe that if all these functions $f_i$, $g_i$, $h_i$ can be lifted as desired then, by the construction in the base case above, $F$ can be lifted as desired as well. So, without loss of generality, we may assume that $U$ is a subset of $\RV^n$.

For all $u \in U$ and every $a \in u^\sharp$, the $\RV$-polydisc $f(u)^\sharp$ contains an $a$-definable point and hence, by compactness, there is a definable function $f^{\sharp} : U^\sharp \fun V^\sharp$ that $\rv$-contracts to $f$. There is a definable set $A \sub U^\sharp$ with $\dim_{\VF}(A) < n$ such that $f^{\sharp} \rest (U^\sharp \mi A)$ is differentiable.  By Lemma~\ref{RV:bou:dim}, $\dim_{\RV}(\rv(A)) < n$. So the first claim follows from the inductive hypothesis. For the second claim, since $V$  is also a subset of $\RV^n$, by Lemma~\ref{RV:bou:dim} again, $\dim_{\RV}(\partial_{\RV}f^{\sharp}(U^\sharp)) < n$ and hence, by the inductive hypothesis, we may assume that $f^{\sharp}$ is surjective. Then there is a definable function $g : V^\sharp \fun U^\sharp$ such that $f^{\sharp}(g(b)) = b$ for all $b \in V^\sharp$. It follows that we may further assume that $g$ is a surjection too, which just means that $f^{\sharp}$ is a bijection as desired.
\end{proof}

For the next few lemmas, let $F: (U, f) \fun (V, g)$ be an $\RV[k]$-morphism. For simplicity and without loss of generality, we assume that $f$, $g$ are both the identity functions and hence the finite-to-finite correspondence $F^\dag \sub U \times V$ is just $F$ itself (recall Remark~\ref{fin:dec}). If $F^{\sharp} : U^\sharp \fun V^\sharp$ is a definable bijection that $\rv$-contracts to $F$ then it is called a \emph{lift} of $F$. By Convention~\ref{conv:can}, we shall think of such a lift  as a definable bijection $\bb L U \fun \bb L V$ that $\rv$-contracts to $F$.

\begin{lem}\label{RV:iso:class:lifted:jcb}
Suppose that $F^{\sharp}$ is a lift of $F$. Then for all $u \in U$ away from a definable subset of $\RV$-dimension less than $k$ and all $a \in u^\sharp$,
\begin{equation}\label{jac:mat}
\rv(\jcb_{\VF} F^{\sharp}( a)) = \jcb_{\RV} F(u).
\end{equation}
\end{lem}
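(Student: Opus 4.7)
The plan is to reduce the identity, via a multiplicative translation of coordinates, to a comparison of Jacobians at the point $1$, and then invoke Lemma~\ref{rv:op:comm} to transfer that comparison to the residue field. To begin with, I would observe that we may assume $F$ is a bijection, since by Remark~\ref{fin:dec} the correspondence $F^\dag$ decomposes into local bijections outside a subset of $\RV$-dimension less than $k$, and that, for every $u$ in the complement of a further definable subset of $\RV$-dimension less than $k$, the lift $F^\sharp$ is differentiable at every $a \in u^\sharp$ (using Lemma~\ref{fun:suba:fun} and \omin-minimal differentiability fiberwise, combined with Lemma~\ref{RV:bou:dim} to control the $\rv$-image of the non-differentiability locus).

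Fix such a good $u$ with $v = F(u)$ and an arbitrary $a \in u^\sharp$, and set $b = F^\sharp(a)$. The next step is to introduce the multiplicatively translated lift
\[
H^\sharp \colon (U/u)^\sharp \fun (V/v)^\sharp, \qquad H^\sharp_i(x_1, \ldots, x_k) = F^\sharp_i(a_1 x_1, \ldots, a_k x_k)/b_i,
\]
which is a lift of the multiplicative translation $F^* \colon U/u \fun V/v$ used to define $\jcb_\K F(u)$ and which satisfies $H^\sharp(1) = 1$. A direct chain-rule computation at $x = 1$ yields $\pard{j} H^\sharp_i(1) = (a_j/b_i)\cdot \pard{j} F^\sharp_i(a)$, whence
\[
\rv(\det D H^\sharp(1)) = \frac{\prod_j u_j}{\prod_i v_i}\cdot \rv(\det D F^\sharp(a)).
\]
In view of the defining formula $\jcb_\RV F(u) = (\Pi u)^{-1}\cdot \Pi F(u)\cdot \jcb_\K F(u)$, this reduces the target identity to $\rv(\det D H^\sharp(1)) = \jcb_\K F(u)$.

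For this last equality, I would restrict $H^\sharp$ to the $\rv$-preimage of the $\K$-torsor slice $(U/u)\cap (\K^\times)^k$ near $1$; the restriction takes values in $(\OO^\times)^k$ and, being a lift, is $\res$-contractible with $\res$-contraction equal to the corresponding restriction of $F^*$. Applying Lemma~\ref{rv:op:comm} with the partial differential operators $\pard{j}$ then gives $\res(\pard{j} H^\sharp_i(1)) = \pard{j} F^*_i(1) = \npard{j} F_i(u)$ for all good $u$; since the entries in question are units with nonzero residue, $\res$ and $\rv$ agree there, and passing to determinants yields $\rv(\det D H^\sharp(1)) = \jcb_\K F(u)$, as desired.

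The main technical difficulty I anticipate is ensuring that the ``almost every point'' conclusion of Lemma~\ref{rv:op:comm} is maintained uniformly as $u$ varies, since the auxiliary function $H^\sharp$ is built from $u$ and $a$. I would handle this by making the construction of $H^\sharp$ uniform in $u$ via a definable choice of $a \in u^\sharp$ (available by Remark~\ref{rem:hyp}), and then using compactness together with Lemma~\ref{RV:bou:dim} to keep the aggregated exceptional set of $u$'s of $\RV$-dimension less than $k$; the independence of the final identity from the auxiliary choice of $a$ then recovers the conclusion for every $a \in u^\sharp$.
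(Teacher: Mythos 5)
Your proposal is, at its core, the same argument as the paper's, which the paper states very tersely: reduce to the case $U,V\subseteq\K^+$ (where the identity is immediate from Lemma~\ref{rv:op:comm}), and deduce the general case from ``the definitions of the Jacobians, Lemma~\ref{dim:cut:gam}, and compactness.'' Your chain-rule computation showing that the target identity is equivalent to $\rv(\det D H^\sharp(1))=\jcb_\K F(u)$ is exactly what is hidden in the phrase ``the definitions of the Jacobians,'' and your application of Lemma~\ref{rv:op:comm} to the translated lift on a $\K$-torsor slice is the reduction to the $\K^+$ case. Your preliminary reduction via Remark~\ref{fin:dec} is harmless but superfluous here, since the lemma is stated under the standing assumption that $f=g=\id$ and $F$ is already a bijection.

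The one real gap is in your final paragraph. Lemma~\ref{RV:bou:dim} only controls the $\rv$-image of the non-differentiability locus of $F^\sharp$ in $\VF$; that part is fine. But you also need to control the set of $u$ for which the slice $(U/u)\cap(\K^\times)^k$ fails to have $\RV$-dimension $k$ — for such $u$, the hypothesis of Lemma~\ref{rv:op:comm} (namely $\dim_\RV(\rv(\dom))=k$) is not met, and your application of it is illegitimate. That is precisely what Lemma~\ref{dim:cut:gam} is for: if the bad set $W=\{u\in U:\dim_\RV(U_{\vrv(u)})<k\}$ had $\RV$-dimension $k$, then Lemma~\ref{dim:cut:gam} would produce a $\vrv$-fiber of $W$ of dimension $k$, which sits inside a fiber $U_\gamma$ of dimension $<k$ by definition of $W$ — a contradiction. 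The same lemma is also what lets you propagate the almost-everywhere conclusion across $\vrv$-fibers: if the aggregated exceptional set had dimension $k$, cut out a full-dimensional fiber and contradict the $\K^+$ case there. You should cite Lemma~\ref{dim:cut:gam} alongside (not instead of) Lemma~\ref{RV:bou:dim}; compactness alone does not supply this dimension-theoretic fact.
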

\begin{proof}
This is  immediate by Lemma~\ref{rv:op:comm} if $U$, $V$ are sets in  $\K^+$. The general case follows from this special case, the definitions of the Jacobians, Lemma~\ref{dim:cut:gam}, and compactness.
\end{proof}

\begin{defn}\label{rela:unary}
We say that $F$ is \emph{relatively unary} or more precisely, \emph{relatively unary in the $i$th coordinate}, if $(\pr_{\tilde{i}} \circ F)(u) = \pr_{\tilde{i}}(u)$ for all $u \in U$, where $i \in [k]$.

Similarly, for objects $A \sub \VF^{n} \times \RV^{m}$, $B \sub \VF^{n} \times \RV^{m'}$ of $\VF_*$ and a  morphism $G : A \fun B$, we say that $G$ is \emph{relatively unary in the $i$th $\VF$-coordinate} if $(\pr_{\tilde{i}} \circ G)(x) = \pr_{\tilde{i}}(x)$ for all $x \in A$, where $i \in [n]$.  If $G \rest A_a$ is also a (proper) special covariant bijection for every $a \in \pr_{\tilde{i}} (A)$ then we say that $G$ is \emph{relatively (proper) special covariant} in the $i$th $\VF$-coordinate.
\end{defn}

Since identity functions are relatively unary in any coordinate, if a morphism is piecewise a composition of relatively unary morphisms then it is indeed a composition of relatively unary morphisms.

Clearly every (proper) special covariant bijection on $A$ is a composition of relatively (proper) special covariant bijections (but not vice versa).

\begin{lem}\label{bijection:partitioned:unary}
Every morphism in $\RV[k]$ can be written as a composition of relatively unary morphisms, similarly in $\mRV[k]$, $\RV^{\db}[k]$, $\mRV^{\db}[k]$, $\VF[k]$, and $\mVF[k]$.
\end{lem}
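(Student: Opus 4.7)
The plan is to prove the lemma by induction on $k$, with the argument for all six categories running in parallel. The base case $k = 1$ is immediate: any morphism has at most one coordinate available to modify and is therefore automatically relatively unary in that coordinate.

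For the inductive step in $\RV[k]$, consider a morphism $F : (U, f) \fun (V, g)$. First, use a finite partition (justified by Remark~\ref{fin:dec} together with the dimension theory of weak \omin-minimality from Theorem~\ref{theos:qe}) to reduce to the case in which the finite-to-one maps $f$ and $g$ are injective on each piece, so that, up to isomorphism within the category, we may take $U, V \sub \RV^k$ with $f = g = \id$. The strategy is then to factor $F$ as $F = F_{k-1} \circ F_1$, where $F_1$ is relatively unary in the $k$-th coordinate and $F_{k-1}$ fixes the $k$-th coordinate (so it modifies only the first $k-1$). The intermediate set is
\[
W = \{(u_1, \ldots, u_{k-1}, F(u)_k) : u \in U\},
\]
with $F_1 : U \fun W$ given by $(u_1, \ldots, u_k) \efun (u_1, \ldots, u_{k-1}, F(u)_k)$ and $F_{k-1} : W \fun V$ the bijection naturally induced by $F$.

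The main obstacle is ensuring $F_1$ is actually bijective, since collisions of the form $(u_1, \ldots, u_{k-1}, F(u)_k) = (u'_1, \ldots, u'_{k-1}, F(u')_k)$ with $u \neq u'$ could occur. To handle this, one performs a further finite partition of $U$ on whose pieces the assignment $u \efun (u_1, \ldots, u_{k-1}, F(u)_k)$ is injective; such a partition exists by weak \omin-minimality in the $\RV$-sort (Theorem~\ref{theos:qe}) together with the finiteness of generic fibers of definable maps whose image has the right $\RV$-dimension, possibly after permuting the distinguished coordinate (there must be some coordinate $i \in [k]$ for which the projection omitting $u_i$ is generically injective, since otherwise $\dim_{\RV}(F(U)) < k$). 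Once $F_1$ is bijective, $F_{k-1}$ fixes the last coordinate; applied fiberwise over each value of the $k$-th coordinate and reassembled uniformly via compactness, the inductive hypothesis decomposes $F_{k-1}$ into $k-1$ relatively unary morphisms.

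The same argument applies to $\mRV[k]$ and $\mRV^{\db}[k]$: the conditions defining morphisms compose multiplicatively in $\jcb_{\RV}$ and $\jcb_{\Gamma}$ (Lemma~\ref{RV:iso:class:lifted:jcb}), so the intermediate object $W$ is equipped with a definable volume form making both factors measure-preserving. Double boundedness is preserved since $W$ sits inside the Cartesian product of projections of $U$ and $V$, which are doubly bounded by Lemma~\ref{db:to:db}. For $\VF[k]$ and $\mVF[k]$, the parallel argument is carried out in the $\VF$-sort, using weak \omin-minimality of $\TCVF$ and the chain rule for $\jcb_{\VF}$ to handle the volume form compatibility; the $\RV$-coordinates are merely carried along as parameters throughout the construction.
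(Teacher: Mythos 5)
Your overall factorization strategy matches the paper's (peel off a relatively unary morphism in the last coordinate, then induct), but the justification of the crucial injectivity step has a genuine gap. The parenthetical ``there must be some coordinate $i \in [k]$ for which the projection omitting $u_i$ is generically injective, since otherwise $\dim_{\RV}(F(U)) < k$'' is wrong: when $\dim_{\RV}(F(U)) = k$, \emph{no} projection of $F(U)$ (or of $U$) onto $k-1$ coordinates is generically injective --- such a set has infinite fibers under every coordinate projection --- so the implication is backwards and does not rule out the problematic case; the lemma also does not assume $\dim_{\RV}(F(U)) = k$, so ``otherwise $\dim < k$'' isn't a contradiction. Moreover, even if that conclusion held, injectivity of $\pr_{\tilde i}$ on $F(U)$ does not yield injectivity of $u \efun (\pr_{<k}(u), F(u)_j)$ for any $j$, which is what you actually need. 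Finally, a single globally permuted coordinate $j$ does not suffice in general: for the transposition $F(u_1,u_2) = (u_2,u_1)$ on a two-dimensional product, $j=2$ gives the non-injective $u \efun (u_1,u_1)$ while $j=1$ works, and a morphism acting as the identity on one piece and as a transposition on another forces $j$ to vary over the pieces of the partition.

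The step you leave unjustified is exactly what the paper argues fiberwise. For each $t \in \pr_{<k}(U)$, the set $F(\{t\}\times U_t)$ is a $t$-definable subset of $\RV^k$ of $\RV$-dimension at most $1$; by weak \omin-minimality there is a $t$-definable finite partition $(U_{ti})_i$ of $U_t$ and, for each $i$, a coordinate index $j_{ti}$ such that $\pr_{j_{ti}}$ is injective on $F(\{t\}\times U_{ti})$. Compactness then produces a definable finite partition $(U_i)_i$ of $U$ with (piece-dependent) indices $j_i$ so that $u \efun (\pr_{<k}(u), \pr_{j_i}(F(u)))$ is injective on each $U_i$; this is the relatively unary morphism in the $k$th coordinate, and the induction on $k$ proceeds from there. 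The remaining bookkeeping in your write-up --- the multiplicativity of $\jcb_{\RV}$ and $\jcb_{\Gamma}$ for the volume-form categories, double boundedness via Lemma~\ref{db:to:db}, and the parallel argument in the $\VF$-sort --- is consistent with the paper's ``the other cases are rather similar.''
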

\begin{proof}
It is enough to show this piecewise, which is an easy consequence of weak \omin-minimality. In detail, for each $t \in \pr_{< k}(U)$ there are a $t$-definable finite partition $(U_{ti})_i$ of $U_t$ and  injective coordinate projections $\pi_i : F(U_{ti}) \fun \RV$. So, by compactness, there are a definable finite partition $(U_{i})_i$ of $U$, definable injections $F_i : U_i \fun \RV^k$, and $j_i \in [k]$ such that for all $u \in U_i$, $\pr_{< k}(u) = \pr_{< k}(F_i(u))$ and $\pr_{k}(F_i(u)) = \pr_{j_i}(F(u))$.
The claim now follows from a routine induction on $k$. If $F$ is indeed a $\mRV[k]$-morphism then it is easy to see how to equip each $F_i(U_i)$ with a volume form so that $F_i$  becomes a $\mRV[k]$-morphism, and so on. The other cases are rather similar.
\end{proof}

For an arbitrary proper covariant $\mVF[k]$-morphism,  Lemma~\ref{bijection:partitioned:unary} cannot guarantee that the relatively unary morphisms in question are also proper covariant.  However, this can be arranged for certain lifts, as follows.

\begin{rem}\label{rvdisc:stre}
Let $t, s \in \RV$ be definable. So the $\RV$-discs $t^\sharp$, $s^\sharp$ contain definable points. It follows that for any definable $c \in \VF^\times$ with $\vv(c) = \vrv(s/t)$, there is a definable bijection $f : t^\sharp \fun s^\sharp$ such that $\ddx f = c$.
\end{rem}

\begin{lem}\label{L:measure:class:lift}
For any $\mRV[k]$-morphism $F : (U, \omega) \fun (V, \sigma)$ there is a lift $F^\sharp: {\mu}\bb{L}(U, \omega) \fun {\mu}\bb{L}(V, \sigma)$ of $F$ that may be written  piecewise as a composition of differentiable relatively unary $\mVF[k]$-morphisms. In addition, if $F$ is a $\mRV^{\db}[k]$-morphism then (every component of) $F^\sharp$ is proper covariant.
\end{lem}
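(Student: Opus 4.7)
The plan is to reduce to the relatively unary case by Lemma~\ref{bijection:partitioned:unary} and then to construct the lift one coordinate at a time by Lemma~\ref{RVlift}, with the volume-form condition for the lift supplied by Lemma~\ref{RV:iso:class:lifted:jcb}. First, apply Lemma~\ref{bijection:partitioned:unary} to obtain a definable finite partition $(U_j)_j$ of $U$ together with, on each piece, a factorization of $F \rest U_j$ into relatively unary $\mRV[k]$-morphisms $F_j = F_{j,m_j} \circ \cdots \circ F_{j,1}$. Each intermediate object carries a well-defined volume form forced by the $\mRV[k]$-morphism condition (and in the doubly bounded case stays in $\mRV^{\db}[k]$, since changing a single $\RV$-coordinate by a bijection preserves double boundedness by Lemma~\ref{db:to:db}). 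Because the claim is about piecewise decompositions and since a composition of lifts is a lift, it suffices to handle one relatively unary $\mRV[k]$-morphism at a time. So assume $F : (U,\omega) \fun (V,\sigma)$ is relatively unary in the $i$th $\RV$-coordinate.

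Now construct a lift $F^\sharp$. Fix any $u_{\tilde i}$; then $F$ restricts to a definable bijection between the finite sets $U_{u_{\tilde i}}$ and $V_{u_{\tilde i}}$ in $\RV$. By Lemma~\ref{RVlift} (applied uniformly via compactness) there is a definable differentiable function
\[
F^\sharp : \bb L U \fun \bb L V, \qquad (a_{\tilde i}, a_i) \efun (a_{\tilde i},\, F^\sharp_i(a_{\tilde i}, a_i)),
\]
that $\rv$-contracts to $F$; thus $F^\sharp$ is relatively unary in the $i$th $\VF$-coordinate. It is bijective since $F$ is a bijection and since fiberwise we can adjust (by Remark~\ref{rvdisc:stre} if needed) so that each $\RV$-disc is mapped bijectively onto the target $\RV$-disc. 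By Lemma~\ref{RV:iso:class:lifted:jcb}, for every $u \in U$ outside a definable subset $U'$ of $\RV$-dimension less than $k$, and every $a \in u^\sharp$, we have $\rv(\jcb_{\VF} F^\sharp(a)) = \jcb_{\RV} F(u)$. Combining this with the $\mRV[k]$-morphism condition $\omega(u) = \sigma(F(u)) \jcb_{\RV} F^\dag(u)$ (which holds outside a definable set of $\RV$-dimension less than $k$), we obtain
\[
\bb L\omega(a) = \bb L\sigma(F^\sharp(a)) \cdot \rv(\jcb_{\VF} F^\sharp(a))
\]
for every $a \in \bb L U$ outside a set of the form $\bb L U'$, which by Lemma~\ref{RV:bou:dim} (or rather its analogue in $\VF$-dimension) has $\VF$-dimension less than $k$. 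Therefore $F^\sharp$, with domain trimmed by this negligible set, is an essential bijection satisfying Definition~\ref{defn:VF:mu} and hence is a relatively unary differentiable $\mVF[k]$-morphism.

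For the proper covariance assertion, assume $F$ is an $\mRV^{\db}[k]$-morphism. By the note after Definition~\ref{def:L}, $\bb L U$ and $\bb L V$ (as well as all intermediate $\bb L U_j$) are doubly bounded proper invariant sets. Each $F^\sharp$ in the construction above is a definable continuous bijection between such sets; since every $\VF$-fiber of the range of a relatively unary lift is by construction clopen (it is a single $\RV$-disc product after suitable regularization), Lemma~\ref{conti:covar} applies and yields that each $F^\sharp$ is proper covariant. Composition of proper covariant bijections is again proper covariant, completing the proof.

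The main obstacle is the bookkeeping in the middle step: one has to verify that the almost-everywhere equality of Jacobians coming from Lemma~\ref{RV:iso:class:lifted:jcb} matches the almost-everywhere equality of volume forms coming from the $\mRV[k]$-morphism condition on the \emph{same} small exceptional set in the $\RV$-sort, and then translate this via the pullback $\bb L$ into a $\VF$-dimension statement. Once this identification is made cleanly, the rest is an assembly of the lemmas already at our disposal.
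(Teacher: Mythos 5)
Your reduction to the relatively unary case via Lemma~\ref{bijection:partitioned:unary}, and the use of Lemma~\ref{RVlift} to produce fiberwise lifts, are both sound starting points, and the final use of Lemma~\ref{conti:covar} for the proper covariance claim matches the paper. However, there is a genuine gap in the middle step, and it is precisely the point the paper's proof is organized around.

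You claim that the exceptional set $\bb L U'$ --- where $U' \sub U$ has $\RV$-dimension less than $k$ --- has $\VF$-dimension less than $k$, citing an ``analogue in $\VF$-dimension'' of Lemma~\ref{RV:bou:dim}. No such analogue exists, and the claim is false: the lifting map $\bb L$ sends each nonempty set $U' \sub \RV^k$ to the $\RV$-pullback $\bigcup_{u \in U'} u^\sharp \times u$, which is a union of $k$-dimensional open polydiscs and hence has $\VF$-dimension exactly $k$ (Lemma~\ref{RV:bou:dim} goes in the opposite direction: it bounds the $\RV$-dimension of the $\RV$-boundary of a $\VF$-set). Consequently, trimming $F^\sharp$ by $\bb L U'$ does \emph{not} produce an essential bijection, and the Jacobian condition fails on a full-dimensional piece of the lift. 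The same problem bites harder when $\dim_{\RV}(U) < k$: then Lemma~\ref{RV:iso:class:lifted:jcb} and the first clause of the $\mRV[k]$-morphism condition (Definition~\ref{defn:RV:mu}) are both vacuous, so your argument gives no control at all on $\rv(\jcb_{\VF} F^\sharp)$, yet the lift must still satisfy the $\mVF[k]$-morphism condition on a $k$-dimensional $\VF$-set. (A minor additional slip: the fibers $U_{u_{\tilde i}}$ need not be finite; they are just subsets of $\RV$, though Lemma~\ref{RVlift} still applies.)

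The paper's proof avoids this trap by an induction on $n = \dim_{\RV}(U)$. After fixing a lift $F'^\sharp$ of the bijection induced on the first $n$ coordinates, it splits into two cases. If $n = k$, the Jacobian identity (\ref{jac:mat}) holds outside an $\RV$-dimension $< n$ subset, and the inductive hypothesis is invoked precisely to re-lift the morphism over that (full-$\VF$-dimension) exceptional pullback. If $n < k$, the $\RV$-Jacobian does not control the $\VF$-Jacobian at all; the paper instead passes to an intermediate object $(U', \omega')$ via $(F', \id)$, reduces to the case $\pr_{\leq n}(U) = \pr_{\leq n}(V)$, and then builds the lift on the remaining $k - n$ coordinates using Remark~\ref{rvdisc:stre} to prescribe the $\rv$-class of each partial derivative directly, with consistency supplied by the second (everywhere-holding, $\jcb_{\Gamma}$) clause of the $\mRV[k]$-morphism condition. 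This last step is the mechanism your argument is missing: you need to \emph{construct} a lift whose $\VF$-Jacobian has a prescribed $\rv$-class, rather than hope that Lemma~\ref{RVlift} automatically delivers one.
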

\begin{proof}
We do induction on $n = \dim_{\RV}(U)$. For the base case $n=0$, we may assume that $U$ is just a singleton and $F$ is relatively unary. Since  every $\RV$-disc involved contains a definable point, it is easy to lift $F$  as desired by applying Remark~\ref{rvdisc:stre} in the coordinate in question.

For the inductive step,  we may assume that both $\pr_{\leq n} \rest U$ and $\pr_{\leq n} \rest V$ are injective. Let $F' : \pr_{\leq n} (U) \fun \pr_{\leq n}(V)$ be the bijection induced by $F$. Observe that, by Lemma~\ref{bijection:partitioned:unary} and the inductive hypothesis, we may actually assume that $F'$ is  relative unary in, say, the $n$th coordinate. By the construction in the proof of Lemma~\ref{RVlift}, $F'$ can be lifted to a differentiable relatively unary bijection $F'^\sharp$ outside a definable subset of $\RV$-dimension less than $n$. It follows that if $n=k$ then, by Lemma~\ref{RV:iso:class:lifted:jcb}, the whole situation is reduced to the inductive hypothesis. So, without loss of generality, $n < k$ and $F'^\sharp$ is a lift of $F'$ such that the condition (\ref{jac:mat}) is satisfied everywhere. Let
\[
U' = \{(F'(u'), u'') : (u', u'') \in U\}
\]
and $\omega' : U' \fun \RV$ be the function given by
\[
(F'(u'), u'') \efun \omega(u) / \jcb_{\RV} F'(u'),
\]
where $u = (u', u'') \in U$. Then $(F', \id)$ is a $\mRV[k]$-morphism between $(U, \omega)$ and $(U', \omega')$, and $(F'^\sharp, \id)$ is a differentiable relatively unary $\mVF[k]$-morphism that lifts $(F', \id)$. So we are further reduced to the case $\pr_{\leq n} (U) = \pr_{\leq n}(V) = W$. By Lemma~\ref{RVlift}, there are definable differentiable functions that $\rv$-contract to the obvious surjections $W \fun \pr_{> n}(U)$, $W \fun \pr_{> n}(V)$. This means that $F$ can be lifted by applying Remark~\ref{rvdisc:stre} fiberwise as in the base case above.

The second claim is a consequence of Lemma~\ref{conti:covar}.
\end{proof}

\begin{rem}\label{invert:compose}
Let $\bm A_i= (A_i, \omega_i) \in \mVF[k]$, where $i = 1,2,3$, and suppose that $F : \bm A_1 \fun \bm A_2$ and $G : \bm A_2 \fun \bm A_3$ are continuous proper covariant $\mVF[k]$-morphisms. Observe that $\dom(F)$, $\ran(F)$ must be the topological interiors of $A_1$, $A_2$, respectively; similarly for $G$, and hence $\ran(F) = \dom(G)$. Let $H$ be the composition of $F$ and $G$, considered as a $\mVF[k]$-morphism $\bm A_1 \fun \bm A_3$. So every $\VF$-fiber of $H$ is a continuous bijection between two open (and closed) subsets of $\VF^k$. By Lemma~\ref{conti:covar}, $H$ is  proper covariant.

The same holds if we replace ``continuous'' by ``differentiable.''
\end{rem}

\begin{defn}\label{defn:diamond}
The subcategory $\mVF^{\diamond}[k]$ of $\mVF[k]$ consists of the proper invariant objects and the morphisms that are compositions of continuous proper covariant relatively unary morphisms; similarly for the subcategory $\mgVF^{\diamond}[k]$ of $\mgVF[k]$.
\end{defn}

\begin{rem}\label{dia:is:cat}
Obviously the composition law holds in $\mVF^{\diamond}[k]$, $\mgVF^{\diamond}[k]$ and hence they are indeed categories.  By Remark~\ref{invert:compose}, every morphism in them is a continuous proper covariant bijection --- as opposed to merely an essential bijection --- and, by Corollary~\ref{conti:homeo}, admits an inverse.  So these categories are already groupoids and there is no need to pass to a quotient category as in Remark~\ref{mor:equi}. On the other hand, Proposition~\ref{simplex:with:hole:rvproduct} shows that they do have nontrivial morphisms.
\end{rem}

Although the preceding lemmas speak of differentiable morphisms, we do not consider  categories with just such morphisms between proper invariant objects, primarily because we are unable to construct a ``differentiable'' version of Proposition~\ref{simplex:with:hole:rvproduct}; see Question~\ref{ques:prop}.

The main reason that we need lifts in Lemma~\ref{L:measure:class:lift} to be in that particular form is that we are forced to work with explicit compositions of proper covariant relatively unary morphisms (explicitness is not an issue if proper covariance is not demanded, see Lemma~\ref{bijection:partitioned:unary} and the remark thereafter), due to the failure of generalizing Lemma~\ref{simul:special:dim:1} below to higher dimensions; see the opening discussion of \cite[\S~5.2]{Yin:tcon:I} for further explanation.

\begin{cor}\label{L:sur:c}
The lifting maps ${\mu}\bb L_{k}$ induce surjective homomorphisms, which are often simply denoted by ${\mu}\bb L$, between the Grothendieck semigroups
\[
\gsk \mRV[k] \epi \gsk \mVF[k], \quad \gsk \mRV^{\db}[k] \epi \gsk \mVF^{\diamond}[k].
\]
\end{cor}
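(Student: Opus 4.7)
The plan is to derive this as essentially a formal consequence of Lemmas~\ref{L:measure:surjective} and~\ref{L:measure:class:lift}, together with Proposition~\ref{simplex:with:hole:rvproduct}. First I would check well-definedness of ${\mu}\bb L_k$ on isomorphism classes: for any $\mRV[k]$-morphism $F : (\bm U, \omega) \to (\bm V, \sigma)$, Lemma~\ref{L:measure:class:lift} produces a lift $F^{\sharp}$ that is piecewise a composition of differentiable relatively unary $\mVF[k]$-morphisms, so in particular a $\mVF[k]$-morphism. When $F$ is a $\mRV^{\db}[k]$-morphism, the additional clause of Lemma~\ref{L:measure:class:lift} guarantees that every component of $F^{\sharp}$ is continuous and proper covariant, so $F^{\sharp}$ is a morphism in $\mVF^{\diamond}[k]$ by Definition~\ref{defn:diamond}. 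Since ${\mu}\bb L_k$ visibly respects disjoint unions, the induced semigroup homomorphisms exist.

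For surjectivity of ${\mu}\bb L : \gsk \mRV[k] \epi \gsk \mVF[k]$, take any $(A, \omega) \in \mVF[k]$. By Remark~\ref{nota:form:down}, $(A, \omega)$ is isomorphic to $(A_{\omega}, \omega)$, so I may replace $A$ by $A_{\omega}$. Set $H = \RVH(A_{\omega})$ and extend $\omega$ to $H$ arbitrarily (say by $1$ on the complement); then $H$ is an $\RV$-pullback and $\{A_{\omega}, H \setminus A_{\omega}\}$ is a partition. Lemma~\ref{L:measure:surjective} yields $(\bm U, \pi) \in \mRV[k]$ with $(A_{\omega}, \omega) \cong {\mu}\bb L(\bm U, \pi)$, so $[A, \omega] = {\mu}\bb L[\bm U, \pi]$ in $\gsk \mVF[k]$.

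For surjectivity of the doubly bounded / proper invariant version, the same recipe works provided the proper invariance conditions survive. If $(A, \omega) \in \mVF^{\diamond}[k]$, then $A_{\omega}$ is proper invariant by Remark~\ref{nota:form:down}; its hull $H = \RVH(A_{\omega})$ inherits a bounded $\VF$-part and a doubly bounded $\RV$-part from $A_{\omega}$, and has clopen $\VF$-fibers (each being a union of open polydiscs), so $H$ is proper invariant by Lemma~\ref{inv:crit}, and then $H \setminus A_{\omega}$ is proper invariant by Lemma~\ref{int:inv}. The ``moreover'' clause of Lemma~\ref{L:measure:surjective} then provides $(\bm U, \pi) \in \mRV^{\db}[k]$ together with a special covariant bijection as in Proposition~\ref{simplex:with:hole:rvproduct}. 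Since each centripetal transformation modifies only one $\VF$-coordinate and is therefore relatively unary, and since its components are continuous and proper covariant by Proposition~\ref{simplex:with:hole:rvproduct}, this bijection represents a morphism of $\mVF^{\diamond}[k]$, as required.

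The main obstacle I anticipate is the bookkeeping around the $\mVF^{\diamond}$ case: one must verify that each of $\RV$-hulling, complementing, and invoking the special covariant bijection from Proposition~\ref{simplex:with:hole:rvproduct} preserves proper invariance and yields morphisms of the form demanded by Definition~\ref{defn:diamond}. Lemmas~\ref{inv:crit} and~\ref{int:inv} deliver the proper invariance, while the explicit structure of centripetal transformations combined with Proposition~\ref{simplex:with:hole:rvproduct} supplies the rest; once these checks are made, the corollary is immediate.
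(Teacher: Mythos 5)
Your proof is correct and takes essentially the same approach as the paper: well-definedness of the induced map on classes comes from Lemma~\ref{L:measure:class:lift}, and surjectivity comes from Lemma~\ref{L:measure:surjective} together with the remark following it. The only cosmetic difference is that the paper's remark invokes Lemma~\ref{int:inv} to show the complementary piece $\RVH(A)\mi A$ is proper invariant, whereas you verify that $\RVH(A_\omega)$ itself is proper invariant directly via Lemma~\ref{inv:crit}; the two routes are equivalent.
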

\begin{proof}
By Lemma~\ref{L:measure:class:lift}, every $\mRV[k]$-morphism $F$ can be lifted, and the lift is piecewise a composition of continuous proper covariant relatively unary morphisms between proper invariant objects if  $F$ is in $\mRV^{\db}[k]$. So ${\mu}\bb L_{k}$ induces a map on the isomorphism classes, which is easily seen to be a semigroup homomorphism. By Lemma~\ref{L:measure:surjective} (see the remark thereafter), this  homomorphism is surjective.
\end{proof}
%\[
%\bfig
% \Square(0,0)/->>`->>`->>`.>>/<400>[{\gsk \mRV^{\db}[k]}`{\gsk \mVF^{\diamond}[k]}`{\gsk \mgRV^{\db}[k]}`{\gsk \mgVF^{\diamond}[k]};
%  {\mu}\bb L```\mgL]
% \efig
%\]

\begin{rem}\label{gL:sur:c}
There are surjective semigroup homomorphisms
\[
\bfig
  \Square(0,0)/->>`->>`->>`.>>/<400>[{\gsk \mRV[k]}`{\gsk \mVF[k]}`{\gsk \mgRV[k]}`{\gsk \mgVF[k]};
  {\mu}\bb L```\mgL]
 \efig
\]
where the vertical ones are induced by the obvious forgetful functors $\bb F$  and are also denoted by $\bb F$; similarly for the doubly bounded $\RV$-categories and the proper invariant $\VF$-categories. It follows from the construction in the proof of Lemma~\ref{L:measure:class:lift}   that there are surjective semigroup homomorphisms $\mgL$ that make the squares commute.
\end{rem}

The forgetful functor $\bb F : \mVF[k] \fun \mgVF[k]$ is faithful, but obviously not full. Something weaker holds, though. Let $\bm A = (A, \omega)$, $\bm B = (B, \sigma)$ be objects of $\mgVF[k]$ and $f : \bm A   \fun \bm B$ a morphism. For every $\bm A^* = (A, \omega^*) \in \mVF[k]$ with $\bb F \bm A^* = \bm A$, it is easy to construct a  $\mVF[k]$-morphism $f^* : \bm A^* \fun \bm B^*$ with $\bb F \bm B^* = \bm B$ (but $\bb F f^*$ may or may not be $f$). This is also the case for the proper invariant $\VF$-categories.

\begin{lem}\label{forg:ful}
If $f$ is in $\mgVF^{\diamond}[k]$ then  $f^*$ can be found in $\mVF^{\diamond}[k]$.
\end{lem}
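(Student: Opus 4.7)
The plan is to exploit the explicit decomposition afforded by membership in $\mgVF^{\diamond}[k]$: by Definition~\ref{defn:diamond} we can write $f = f_n \circ \cdots \circ f_1$ as a composition of continuous proper covariant relatively unary $\mgVF[k]$-morphisms $f_i : (A_{i-1}, \omega_{i-1}) \fun (A_i, \omega_i)$ between proper invariant objects, with $(A_0, \omega_0) = \bm A$ and $(A_n, \omega_n) = \bm B$. By Remark~\ref{dia:is:cat}, each $f_i$ is an actual bijection with continuous inverse, which will allow me to invert freely below.

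Starting from $\omega_0^* \coloneqq \omega^*$, I will propagate $\RV$-volume forms through the chain by setting
\[
\omega_i^*(y) \coloneqq \omega_{i-1}^*(f_i^{-1}(y)) \cdot \rv(\jcb_{\VF} f_i(f_i^{-1}(y)))^{-1}
\]
wherever $\jcb_{\VF} f_i$ is defined (which holds outside a definable subset of $A_{i-1}$ of $\VF$-dimension less than $k$ by the differentiability discussion in \S~2.3) and extending definably elsewhere. This prescription is in fact forced: once $\omega_{i-1}^*$ is fixed on the source and we insist $f_i$ be a $\mVF[k]$-morphism, the Jacobian identity of Definition~\ref{defn:VF:mu} uniquely determines $\omega_i^*$. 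The resulting pair $(A_i, \omega_i^*)$ is a well-defined object of $\mVF[k]$, and $f_i$ is tautologically a $\mVF[k]$-morphism between consecutive objects.

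The critical consistency point is that $\vv \circ \omega_i^* = \omega_i$, so that $\bb F (A_i, \omega_i^*) = (A_i, \omega_i)$. I would verify this by induction: applying $\vv$ to the defining equation and using the $\mgVF[k]$-Jacobian identity $\omega_{i-1}(x) = \omega_i(f_i(x)) \, \vv(\jcb_{\VF} f_i(x))$ for $f_i$ together with the inductive hypothesis $\vv \circ \omega_{i-1}^* = \omega_{i-1}$, the claim reduces to the obvious compatibility $\vv \circ \rv = \vrv$ on $\VF^\times$. On the exceptional set where $\jcb_{\VF} f_i$ does not exist, I would just pick any definable $\RV$-lift of $\omega_i$ (which is available because $\omega_i$ is itself $\mdl S$-definable and $\RV \fun \Gamma$ is surjective with definable sections on the relevant fibers), so that $\vv \circ \omega_i^* = \omega_i$ holds globally.

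Setting $\bm B^* \coloneqq (A_n, \omega_n^*)$ and $f^* \coloneqq f_n \circ \cdots \circ f_1 : \bm A^* \fun \bm B^*$, we then have $\bb F \bm B^* = \bm B$. Since the underlying function of each $f_i$ is unchanged when we reinterpret it as a $\mVF[k]$-morphism, the properties of being continuous, proper covariant, and relatively unary transfer verbatim from $\mgVF[k]$ to $\mVF[k]$, and hence $f^*$ lies in $\mVF^{\diamond}[k]$. I do not anticipate any serious obstacle: the construction is essentially mechanical, guided by the fact that the $\Gamma$-Jacobian identity is exactly the image of the $\RV$-Jacobian identity under $\vv$, and the only bookkeeping is the definable extension across the measure-zero exceptional sets, which is harmless since $\mVF[k]$-morphisms are only required to be essential bijections.
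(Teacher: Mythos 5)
Your proposal has a genuine gap: you keep the underlying function fixed ($f^* = f_n \circ \cdots \circ f_1 = f$) and only try to lift the volume forms, but this is exactly where the difficulty lies. For $(A_i, \omega_i^*)$ to be an object of $\mVF^\diamond[k]$ it is not enough that $\omega_i^*$ be a definable function satisfying $\vrv \circ \omega_i^* = \omega_i$; the pair must be \emph{proper invariant}, i.e.\ $\omega_i^*$ must be \emph{proper covariant} (Remark~\ref{nota:form:down}, Definition~\ref{defn:diamond}). Your recipe $\omega_i^*(y) = \omega_{i-1}^*(f_i^{-1}(y))\cdot \rv(\jcb_{\VF} f_i(f_i^{-1}(y)))^{-1}$ makes $\omega_i^*$ proper covariant only if $\rv \circ \jcb_{\VF} f_i$ is locally constant (constant on sufficiently small open polydiscs), and this is not automatic. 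A morphism of $\mgVF^\diamond[k]$ is continuous and proper covariant but need not be $C^1$, and even where it is differentiable the partial derivative need not extend continuously across the exceptional set; the $\Gamma$-Jacobian identity only controls $\vv \circ \jcb_{\VF} f_i$, not $\rv \circ \jcb_{\VF} f_i$. Consequently there may be no definable extension of $\rv \circ \jcb_{\VF} f_i$ across the bad set that is locally constant, and ``extending definably elsewhere'' is not a licensed move.

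This is precisely why the paper does not keep $f$ fixed. The preamble to the lemma already flags that $\bb F f^*$ ``may or may not be $f$'', and the proof replaces $f$ by a modified continuous bijection $f^*$ (via Lemma~\ref{diff:dtdp}, Lemma~\ref{adjust:C1} for the $k=1$ case, then Lemma~\ref{fiber:conti} and FMT in higher dimensions) for which $\pard k f^*_k$ \emph{is} continuous; this is what makes $\rv \circ \pard k f^*_k$ locally constant, which in turn is what makes the resulting $\sigma^*$ proper covariant. To repair your argument you would have to (at minimum) insert the $C^1$-adjustment step at each stage of your chain rather than treat the volume-form lift as ``essentially mechanical'': the obstruction is analytic, not bookkeeping.
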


\begin{proof}
By the definition of $\mgVF^{\diamond}[k]$ and compactness, we may assume that $f$ is relatively unary in, say, the $k$th $\VF$-coordinate and $A$, $B$ are subsets of $\VF^k$.

By a routine application of Lemma~\ref{vol:par:bounded} as we have done above, there is a definable $\alpha \in \absG$ such that $A$, $B$ are $\alpha$-invariant and $\omega \rest \ga$, $(\sigma \circ f) \rest \ga$, $\sigma \rest \gb$ are  constant for open polydiscs $\ga \sub A$, $\gb \sub B$ of radius $\alpha$. We claim that there is a definable continuous bijection $f^* = (f^*_i)_i :  A \fun  B$, relatively unary in the $k$th $\VF$-coordinate, such that $\pard k f^*_k$ is continuous and for all $a \in A$,
\[
\vv(\pard k f^*_k(a)) = \omega(a) / \sigma(f^*(a)).
\]
To see this, observe that if $k = 1$ then, since $f$ is a $\mgVF^{\diamond}[1]$-morphism, it satisfies the assumption of Lemma~\ref{diff:dtdp} and hence the claim follows from Lemma~\ref{adjust:C1}. In general, by induction on $k$, for each $a \in \pr_1(A)$ there is an $a$-definable bijection $f_a : A_a \fun B_a$ as desired. Then the existence of $f^*$ follows from  Lemma~\ref{fiber:conti} and FMT (Terminology~\ref{FMT}), where the finitely many definable open discs in question are all of radius $\alpha$.

Now, by Corollary~\ref{conti:homeo} and Lemma~\ref{conti:covar}, $f^*$ is a proper covariant homeomorphism. Since $\pard k f^*_k$ is continuous,  there is a definable $\beta \in \absG$ such that for every open polydisc $\gb \sub B$ of radius $\beta$,
\[
(\omega^* \circ (f^*)^{-1}) \rest \gb \dand (\rv \circ \pard k f^*_k \circ (f^*)^{-1}) \rest \gb
\]
are both constant. Thus there is a proper covariant function $\sigma^* : B \fun \RV$ such that
\[
\omega^* = (\sigma^* \circ f^*) \cdot (\rv \circ \pard k f^*_k) \dand \vrv \circ \sigma^*  = \sigma.
\]
So $f^*$ and $(B, \sigma^*)$ are as desired.
\end{proof}

%Therefore, the forgetful functor $\mVF^{\diamond}[k] \fun \mgVF^{\diamond}[k]$ is full.

\begin{lem}\label{simul:special:dim:1}
Suppose that $f : A \fun B$ is a $\VF[1]$-morphism between two objects that have exactly one $\VF$-coordinate each.  Then there are two special bijections $T_A : A \fun A^{\flat}$, $T_B : B \fun B^{\flat}$ such that $A^{\flat}$, $B^{\flat}$ are $\RV$-pullbacks and the function $f^{\flat}_{\downarrow}$ is bijective in
the commutative diagram
\[
\bfig
  \square(0,0)/->`->`->`->/<600,400>[A`A^{\flat}`B`B^{\flat};
  T_A`f``T_B]
 \square(600,0)/->`->`->`->/<600,400>[A^{\flat}`\rv(A^{\flat})`B^{\flat} `\rv(B^{\flat});  \rv`f^{\flat}`f^{\flat}_{\downarrow}`\rv]
 \efig
\]
Thus $f^{\flat}$ is a lift of $f^{\flat}_{\downarrow}$, where the latter is regarded as an $\RV[{\leq}1]$-morphism between $\rv(A^{\flat})_{1}$ and $\rv(B^{\flat})_1$ (recall Notation~\ref{0coor}).

If, in addition, $f$ is a $\mVF^{\diamond}[1]$- or $\mgVF^{\diamond}[1]$-morphism (the volume forms are not displayed) then we may take $T_A$, $T_B$ to be (continuous) proper covariant and $A^{\flat}$, $B^{\flat}$ doubly bounded.
\end{lem}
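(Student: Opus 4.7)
My plan is to first align the disc structures of $A$ and $B$ under $f$ by exploiting dtdp, then apply Lemma~\ref{special:bi:term:constant} to $A$ in a way that encodes the $\rv$-behavior of $f$, and finally construct $T_B$ by mirroring the resulting foci of $T_A$ across $f$.

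First, by monotonicity (Corollary~\ref{mono}) and Lemma~\ref{open:pro} applied $\VF$-fiberwise together with compactness, I may partition $A$ into finitely many definable pieces on each of which the restriction of $f$ is continuous and has dtdp. Since all the conclusions of the lemma respect finite disjoint unions, I henceforth assume $f$ itself is continuous and has dtdp. Then I invoke the strengthened form of Lemma~\ref{special:bi:term:constant} noted in Remark~\ref{special:pseudo} with respect to the definable function
\[
g : A \fun \RV \times \RV^{m'}, \qquad g(x) = (\rv(f(x)_{\VF}), f(x)_{\RV}),
\]
which records the $\rv$-value of the $\VF$-coordinate of $f(x)$ together with its $\RV$-coordinates. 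This yields a special bijection $T_A : A \fun A^{\flat}$ making $A^{\flat}$ an $\RV$-pullback with $g \circ T_A^{-1}$ $\rv$-contractible. Consequently each $\RV$-polydisc $\gp \sub A^{\flat}$ has $T_A^{-1}(\gp)$ either an open disc of $A$ or a single point (the latter arising from a focus of $T_A$), and by dtdp the $f$-image $D_{\gp} := (f \circ T_A^{-1})(\gp)$ is correspondingly either an open disc of $B$ or a single point, both contained in a single $\RV$-polydisc of the ambient space of $B$.

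Next, I build $T_B$ by choosing, for each $\gp$ whose $D_{\gp}$ is a singleton or a proper open subdisc of an $\RV$-disc of $B$, a definable focus $c_{\gp} \in D_{\gp}$ and performing a centripetal transformation on $B$ with these foci. Because $f$ has dtdp, pulling these foci back via $f^{-1}$ yields definable focus data on $A^{\flat}$, which I append to $T_A$ through an additional centripetal transformation. After these matched refinements, the map $f^{\flat} := T_B \circ f \circ T_A^{-1}$ sends each $\RV$-polydisc of $A^{\flat}$ bijectively onto an $\RV$-polydisc of $B^{\flat}$, which supplies the required bijection $f^{\flat}_{\downarrow} : \rv(A^{\flat})_1 \fun \rv(B^{\flat})_1$ and makes the diagram commute.

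For the addendum, when $f$ is a $\mVF^{\diamond}[1]$- or $\mgVF^{\diamond}[1]$-morphism, both $A$ and $B$ are doubly bounded proper invariant and $f$ is already continuous and proper covariant. I replace Lemma~\ref{special:bi:term:constant} by Proposition~\ref{simplex:with:hole:rvproduct}, applied to a suitable doubly bounded $\RV$-pullback containing $A$ and partitioned using Lemma~\ref{int:inv}, so that $T_A$ becomes a continuous proper covariant special covariant bijection with $A^{\flat}$ doubly bounded; the mirrored $T_B$ then inherits continuity and proper covariance from $f$ by Lemma~\ref{cov:rest}, Lemma~\ref{conti:covar}, and Lemma~\ref{conti:homeo}, with $B^{\flat}$ doubly bounded by the $\go$-partition argument used throughout Terminology~\ref{prop:pseu}. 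The main obstacle throughout is keeping the iterative constructions of $T_A$ and $T_B$ synchronized at every centripetal stage with matching aperture data; this is handled by tracking foci definably across $f$ and invoking the FMT procedure of Proposition~\ref{simplex:with:hole:rvproduct} whenever foci need to be aligned on both sides.
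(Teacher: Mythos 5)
Your construction for the first half has a genuine gap, and it also departs from the paper, which simply cites \cite[Lemma~5.10]{Yin:tcon:I} for that part. The issue is in the ``mirroring'' step. After $T_A$ you have, within each $\RV$-disc $\gq$ of $B$, finitely many pairwise disjoint images $D_{\gp}$ (open discs or points). You then propose to ``choose a definable focus $c_{\gp}\in D_{\gp}$'' and run one centripetal transformation on $B$. This fails on two counts. First, a centripetal transformation admits only one focus per $\RV$-polydisc in its locus, and in general several $\gp$'s land in the same $\gq$; so a single round cannot simultaneously carry a focus inside each $D_{\gp}$. Second, a ($\rv(\gp)$-)definable point inside $D_{\gp}$ need not exist at all: take $A=\MM\mi 0$, $f(x)=x+1$, $B=1^{\sharp}\mi 1$, so that $D_{t}=t^{\sharp}+1$; Lemma~\ref{open:disc:def:point} yields a definable point in the ambient $\RV$-disc $1^{\sharp}$, not in $D_{t}$, and for generic $t$ the disc $D_t$ contains no definable point. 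What actually works is to put the focus in the ambient $\RV$-disc (where Lemma~\ref{open:disc:def:point} does apply), after which $D_{\gp}$ typically becomes a proper subdisc of a smaller $\RV$-disc, so the process has to iterate; both the iteration and its termination are nontrivial and are exactly what Lemma~5.10 of the prequel supplies. Your one-shot ``matched refinements'' therefore does not establish that $f^{\flat}$ sends each $\RV$-polydisc of $A^{\flat}$ onto a single $\RV$-polydisc of $B^{\flat}$.

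This gap then propagates into the addendum, where the paper's actual argument is quite different and cleaner: it does \emph{not} rebuild the contraction from scratch. Rather, it first reduces (via Proposition~\ref{simplex:with:hole:rvproduct}) to $A$, $B$ doubly bounded $\RV$-pullbacks; takes the $T'_A$, $T'_B$ already supplied by the first part; observes, using Lemma~\ref{RV:fiber:dim:same} and weak $o$-minimality, that their foci are finitely many points $a_i\in A$, $b_i\in B$; deduces from the bijectivity of $(f^{\flat})'_{\downarrow}$ together with dtdp (via Lemma~\ref{diff:dtdp}) that $f$ bijects $(a_i)_i$ with $(b_i)_i$; and finally replaces the two pseudo special bijections by proper covariant ones using those paired foci with suitable apertures, the needed invariance radii being provided by the dtdp discs $\ga_i$, $\gb_i$. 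Your approach instead re-derives $T_A$ with Proposition~\ref{simplex:with:hole:rvproduct} and mirrors anew, which both inherits the gap above and misses the simpler ``modify the apertures of the already-constructed $T'_A$, $T'_B$'' reduction.
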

\begin{proof}
The first part is just \cite[Lemma~5.10]{Yin:tcon:I}. The second part is established by modifying two special bijections $T'_A$, $T'_B$ given by the first part as follows. By Proposition~\ref{simplex:with:hole:rvproduct}, we may assume that $A$, $B$ are doubly bounded $\RV$-pullbacks. By Lemma~\ref{RV:fiber:dim:same} and weak \omin-minimality, each focus map in $T'_A$ consists of, without loss of generality, a single point $c_i$; similarly, the focus maps in $T'_B$ are enumerated as points $d_i$. Let $a_i \in A$ enumerate the points corresponding to the points $c_i$ and $b_i \in B$ the points $d_i$.

As we have just observed in the proof of Lemma~\ref{forg:ful}, every $\VF$-fiber of $f$ satisfies the assumption of Lemma~\ref{diff:dtdp}. So there are pairwise disjoint open polydiscs $\ga_i \sub A$, $\gb_i \sub B$ of radii $\alpha_i$, $\beta_i$ containing $a_i$, $b_i$, respectively, such that $f \rest \ga_i$, $f^{-1} \rest \gb_i$ have dtdp. This implies that $f$ must restrict to a bijection between the two sequences $(a_i)_i$, $(b_i)_i$ (alternatively, this is indeed given by the construction in the proof of \cite[Lemma~5.10]{Yin:tcon:I}). So we may assume $f(a_i) = b_i$ and $f(\ga_i)= \gb_i$. Moreover, $A$ is $\alpha_1$-invariant, $B$ is $\beta_1$-invariant, and for all $i$, $\alpha_i \leq \alpha_{i+1}$ and $\beta_i \leq \beta_{i+1}$. Then the sequences $(a_i)_i$, $(b_i)_i$ may serve as focus maps, and thereby determine two proper special covariant bijections $T_A$, $T_B$ (continuity is automatic). These are as required.
\end{proof}

\section{Integration}\label{section:int}

\subsection{Blowups}
Recall that we write $\RV$ to mean the $\RV$-sort without the middle element $0$, and $\RV_0$ otherwise, etc., although quite often the difference is immaterial and it does not matter which set $\RV$ stands for. In the definition below, this difference does matter, in particular, $\RV^{\circ \circ}$ denotes the set of those nonzero elements $t \in \RV$ with $\abvrv(t) > 0$ (see Notation~\ref{nota:RV:short}).

\begin{nota}\label{punc:RVd}
More generally, recall from Notation~\ref{pgamma} that the set $\rv(\MM_{\gamma} \mi 0)$ is denoted by $\RV^{\circ \circ}_\gamma$. For $\gamma \in \absG^+$ and $t \in \gamma^\sharp$, write
\[
\RV^{\circ \circ}(t)_\gamma = (\RV^{\circ \circ} \mi \RV^{\circ \circ}_\gamma) \uplus t;
\]
if $\gamma = \infty$ then $\RV^{\circ \circ}(t)_\gamma = \RV^{\circ \circ}$ and if $\gamma = 0$ then $\RV^{\circ \circ}(t)_\gamma = 1 \in \RV$.
\end{nota}

% be a definable function and $\tau_*: U \fun$ the function given by
%\[
%t \efun \abvrv(\tau(t)) - \abvrv(f_j(t)).
%\]

\begin{defn}[Blowups]\label{defn:blowup:coa}
Let $\bm U = (U, f, \omega)$ be an object of $\mRV[k]$, where $k > 0$, such that the function $\pr_{\tilde j} \rest f(U)$ is finite-to-one for some $j \in [k]$. Write $f = (f_1, \ldots, f_k)$. Let $t \in \RV_0$ be a definable element with $\tau = \abvrv(t)$ nonnegative.  An \emph{elementary blowup} of $\bm U$ in the $j$th coordinate of \emph{aperture} $(\tau, t)$ is the triple $\bm U^{\flat} = (U^{\flat}, f^{\flat}, \omega^\flat)$, where $U^{\flat} = U \times \RV^{\circ \circ}(t)_{\tau}$ and for every $(u, s) \in U^{\flat}$,
\[
f^{\flat}_{i}(u, s) = f_{i}(u) \text{ for } i \neq j, \quad f^{\flat}_{j}(u, s) = s f_{j}(u), \quad \omega^\flat(u, s) = \omega(u).
\]
We say that $\bm U^{\flat}$ is \emph{annular} if $t \neq 0$.

Let $\bm V = (V, g, \sigma)$ be another object of $\mRV[k]$ and $C_i \sub V$ finitely many pairwise disjoint definable sets. Each triple
\[
\bm C_i = (C_i, g \rest C_i, \sigma \rest C_i) \in \mRV[k]
\]
is referred to as a \emph{subobject} of $\bm V$. Suppose that $F_i : \bm U_i \fun \bm C_i$ is a $\mRV[k]$-morphism and $\bm U^{\flat}_i$ is an elementary blowup of $\bm U_i$. Let $C = \bigcup_i C_i$, $\bm C = \bigcup_i \bm C_i$,  and $F = \biguplus_i F_i$. Then the object
$(\bm V \mi \bm C) \uplus \biguplus_i \bm U_i^{\flat}$
is  a \emph{blowup of $\bm V$ via $F$}, denoted by $\bm V^{\flat}_F$. The subscript $F$ is usually dropped. The object $\bm C$ (or the set $C$) is referred to as the \emph{locus} of $\bm V^{\flat}_F$.

A \emph{blowup of length $n$} is a composition of $n$ blowups.
\end{defn}

\begin{rem}
If there is an elementary blowup of $\bm U$ then, \textit{a posteriori}, $\dim_{\RV}(U) < k$. For any coordinate of $f(U)$, there is at most one elementary blowup of $\bm U$ of aperture $(\infty, 0)$, whereas there could be many annular elementary blowups of $\bm U$. We should have included the coordinate that is blown up as a part of the data. However, in context, either this is
clear or it does not need to be spelled out, and we shall suppress mention of it for ease of notation.

Clearly every blowup $\bm U^{\flat}$ of $\bm U$ is an object of $\mRV[k]$.  In discussing blowups and related topics below, we shall work either in $\mRV[k]$, where all elementary blowups are of aperture $(\infty, 0)$, or in $\mRV^{\db}[k]$, where all elementary blowups are annular. So, if $\bm U \in \mRV^{\db}[k]$  then $\bm U^{\flat} \in \mRV^{\db}[k]$ too. No blowups in other scenarios will be considered. This is in parallel with the distinction between proper and pseudo special covariant bijections set forth in Definition~\ref{defn:special:bijection} (also see Terminology~\ref{prop:pseu}). As a matter of fact, to make this analogy precise is essentially what is left to do for the rest of our main construction.
\end{rem}

For the next few lemmas, let $\bm U = (U, f, \omega)$ and $\bm V = (V, g, \sigma)$ be objects of $\mRV[k]$.

%and the aperture $\alpha$ of $\bm U^{\flat}$ is no greater than the aperture $\beta$ of  $\bm V^{\flat}$

\begin{lem}\label{elementary:blowups:preserves:iso:vol}
Suppose that $[\bm U] = [\bm V]$. Let $\bm U^{\flat}$, $\bm V^{\flat}$ be elementary blowups of $\bm U$, $\bm V$. Then there are  blowups $\bm U^{\flat\flat}$, $\bm V^{\flat\flat}$ of $\bm U^{\flat}$, $\bm V^{\flat}$ of length $1$ such that $[\bm U^{\flat\flat}] = [\bm V^{\flat\flat}]$.
\end{lem}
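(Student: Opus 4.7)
My plan is a standard transport-and-confluence argument for the blowup relation.

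First, I would reduce to the case of a single object carrying two competing elementary blowups. Since $[\bm U] = [\bm V]$, fix an isomorphism $F \colon \bm U \fun \bm V$ in $\mRV[k]$. The elementary blowup $\bm V^{\flat}$ is determined by a choice of coordinate $j'$, an aperture $(\tau', t')$, and the structure map $g$; pulling this data back through $F$ defines an elementary blowup $\widehat{\bm U}$ of $\bm U$ in coordinate $j'$ with the same aperture, and $F$ extends (by the identity on the new blowup factor) to an $\mRV[k]$-isomorphism $\widehat F \colon \widehat{\bm U} \fun \bm V^{\flat}$. Thus it suffices to produce length-one blowups $\bm U^{\flat\flat}$ of $\bm U^{\flat}$ and $\widehat{\bm U}^{\flat}$ of $\widehat{\bm U}$ whose classes agree.

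Next, I would construct the common refinement via a single blowup on each side. Inside $\bm U^{\flat} = U \times \RV^{\circ \circ}(t)_{\tau}$, the distinguished $(k-1)$-dimensional slice $\bm C \coloneqq U \times \{t\}$ is the natural candidate for the locus. The image of $\bm C$ under the blowup structure map has $j$-th coordinate twisted by $t$; the requirement that the projection away from coordinate $j'$ be finite-to-one on this image, needed for an elementary blowup in coordinate $j'$, can be arranged by partitioning $U$ using weak \omin-minimality together with Lemma~\ref{RV:decom:RES:G}. I can then form an elementary blowup in coordinate $j'$ with aperture $(\tau', t')$, splicing the result back in place of $\bm C$ to obtain a length-one blowup $\bm U^{\flat\flat}$ of $\bm U^{\flat}$. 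The symmetric construction on $\widehat{\bm U}$, using the slice $U \times \{t'\}$, yields $\widehat{\bm U}^{\flat}$.

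The main obstacle is constructing the matching isomorphism $\bm U^{\flat\flat} \cong \widehat{\bm U}^{\flat}$ in $\mRV[k]$. Naively splicing along the identity fails because the $\RV$-Jacobian identity of Definition~\ref{defn:RV:mu} forces a mismatch: after the first elementary blowup twists one coordinate by $t$ (or $t'$), the volume forms on the "twisted" strata differ by those factors from the corresponding strata on the other side. My plan is to absorb the asymmetry by choosing, in each length-one blowup, the intermediate objects $\bm U_i$ (in the notation of Definition~\ref{defn:blowup:coa}) with appropriately rescaled volume forms, so that the image-level rescalings are compensated by the volume-form rescalings; the matching $\mRV[k]$-isomorphism can then be built piecewise by pairing the "old" stratum of one side with the "new" stratum of the other, with the $\RV$- and $\Gamma$-Jacobian identities verified directly from Definition~\ref{defn:RV:mu}. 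A final, more delicate case is $j = j'$, where the orthogonal common refinement is unavailable and one must instead compare the apertures $(\tau, t)$ and $(\tau', t')$ in $\absG_\infty$, partition the annular sets $\RV^{\circ \circ}(t)_{\tau}$ and $\RV^{\circ \circ}(t')_{\tau'}$ accordingly, and use a single elementary blowup of the "symmetric difference" together with bookkeeping for the special points $t$ and $t'$.
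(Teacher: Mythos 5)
Your plan is a common-refinement argument: blow up $\bm U^{\flat}$ in the coordinate $j'$ along the "center" slice $U \times \{t\}$, and symmetrically, then splice. This is genuinely different from what the paper does, and unfortunately it has a structural problem and leaves the hard step unresolved.

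The structural problem: in the $\mRV[k]$ case all apertures are $(\infty, 0)$, so $\RV^{\circ\circ}(t)_\tau = \RV^{\circ\circ}$ and there is no distinguished slice $U\times\{t\}$ at all — $t=0$ is not an element of $\RV^{\circ\circ}$. So your candidate locus $\bm C$ is empty, and the orthogonal-refinement move you build the whole argument around is vacuous in that case. The paper handles this by a much more direct observation, stated explicitly in the remark following the lemma: for $\mRV[k]$ one can take $\bm U^{\flat\flat}$, $\bm V^{\flat\flat}$ to be trivial, because $[\bm U^{\flat}] = [\bm V^{\flat}]$ already. The mechanism is not to rescale volume forms (your suggestion) but to rescale the \emph{new coordinate}: after the standard WLOG that $f,g=\id$ and $\pr_{<k}$ is injective on both $U$ and $V$ (so both elementary blowups are in the $k$-th coordinate), the map $(u,s)\efun(F(u), F_u(s))$ with $F_u(s) = \frac{\omega(u)\,u''}{\jcb_{\RV}F(u')\,\sigma(F(u))\,v''}\, s$ is an $\mRV[k]$-morphism — the scalar exactly cancels the Jacobian defect. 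This is the content of what you flag as "the main obstacle," and it is where the real work is; your proposal acknowledges the issue but does not produce the isomorphism.

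Two further gaps. First, pulling $\bm V^{\flat}$ back through $F$ to an elementary blowup $\widehat{\bm U}$ of $\bm U$ in coordinate $j'$ is not automatic, since the finite-to-one condition on $\pr_{\widetilde{j'}}\rest f(U)$ needed to even form the blowup does not follow from $[\bm U]=[\bm V]$; the paper sidesteps this by first passing to $f=g=\id$ with $\pr_{<k}$ injective on both (after which $j=j'=k$ is forced, so the "different coordinate" case never arises). Second, in the $\mRV^{\db}[k]$ case you identify that the apertures $(\rho,r)$, $(\tau,t)$ may differ, but you don't close this out: the paper's solution is to show $[\bm U^\flat]=[\bm V^\flat]$ by the same rescaling when $\rho=\tau$, and when $\tau-\rho =: \rho'>0$ to do a \emph{single} further blowup of $\bm U^\flat$ with locus the slice $U\times\{r\}$ and aperture $(\rho',r')$, which produces an object isomorphic to $\bm V^\flat$. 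This refines in the aperture direction rather than the coordinate direction and requires no confluence argument. You should look at the case split (pseudo vs.\ proper) first; once you see that the pseudo case already gives $[\bm U^{\flat}]=[\bm V^{\flat}]$ via coordinate rescaling, the orthogonal-refinement machinery is unnecessary.
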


Note that we have two cases here, one for $\mRV[k]$ and the other for $\mRV^{\db}[k]$. Also, for the former case, we will actually show $[\bm U^{\flat}] = [\bm V^{\flat}]$, which means that the blowups $\bm U^{\flat\flat}$, $\bm V^{\flat\flat}$ are trivial, that is, their loci are empty or their apertures are of the form $(0, t)$.

\begin{proof}
Let $F : \bm U \fun \bm V$ be a morphism. As before, without loss of generality, we may simply assume that $f$, $g$ are the identity functions and $\pr_{< k} \rest U$, $\pr_{< k} \rest V$ are both injections. Let $\pr_{< k}(U) = U'$ and $\pr_{< k}(V) = V'$. The bijection $U' \fun V'$ induced by $F$ is still denoted by $F$.

We first work in $\mRV[k]$. If $\dim_{\RV}(U') < k-1$ then the obvious bijection $U^\flat \fun V^\flat$ is clearly a morphism $\bm U^{\flat} \fun \bm V^{\flat}$ (the condition involving $\jcb_{\Gamma}$ is trivial to check, and the condition involving $\jcb_{\RV}$ is irrelevant). So we may further assume
$
\dim_{\RV}(U') = \dim_{\RV}(V') = k-1
$
and hence, for almost all $u' \in U'$, $\jcb_{\RV} F (u')$ is defined. Let $u = (u', u'') \in U$ and $F(u) = (v', v'') \in V$. Let $F_{u} : \RV^{\circ\circ} \fun \RV^{\circ\circ}$ be the $u$-definable  bijection given by
\[
t \efun \frac{\omega(u) \cdot u''}{\jcb_{\RV} F (u') \cdot \sigma(F(u)) \cdot v''} \cdot t
\]
if $\jcb_{\RV} F (u')$ is defined; otherwise set $F_{u} = \id$. Then a simple computation shows that the bijection $U^{\flat} \fun V^{\flat}$ given by $(u, t) \efun (F(u), F_{u}(t))$ is a morphism.

Now we work in $\mRV^{\db}[k]$. Let $(\rho, r)$, $(\tau, t)$ be the apertures of  $\bm U^{\flat}$, $\bm V^{\flat}$, respectively. First observe that if $\rho = \tau$ then  a similar construction as above yields $[\bm U^{\flat}] = [\bm V^{\flat}]$  directly. Thus, without loss of generality, we may assume $\rho' \coloneqq \tau - \rho > 0$. Choose a definable element $r' \in \rho'^\sharp$. Let $W = U \times r$ and $\bm W$ be the corresponding subobject of $\bm U^{\flat}$. Then it is easy to see that the blowup $\bm U^{\flat\flat}$  of $\bm U^{\flat}$ with locus $\bm W$ and aperture $(\rho', r')$ is isomorphic to $\bm V^{\flat}$.
\end{proof}

%function $\rho' : U_1 \fun \RV$ with $\rho'_* = (\tau_* \circ F) - \rho_*$. The function on $U'_1$  induced by $\rho'$ is still denoted by $\rho'$. Symmetrically, we construct a subobject $\bm V'_2$ of $\bm V^{\flat}$ and a definable function $\tau' : V'_2 \fun \RV$, where $V'_2 \sub F(U_2)$.

\begin{cor}\label{eblowup:same:loci:iso}
Let $F : \bm U \fun \bm V$ be a morphism and $\bm U^{\flat}$, $\bm V^{\flat}$ blowups of  $\bm U$, $\bm V$ of length $1$. Then there are blowups $\bm U^{\flat\flat}$, $\bm V^{\flat\flat}$ of $\bm U^{\flat}$, $\bm V^{\flat}$ of length $1$ such that $[\bm U^{\flat\flat}] = [\bm V^{\flat\flat}]$.
\end{cor}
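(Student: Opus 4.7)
The plan is to use $F$ to transport the blowup of $\bm V$ back to $\bm U$ and then reduce to the case of two length-$1$ blowups of a common object. Write $\bm V^\flat = (\bm V \mi \bm C) \uplus \biguplus_i \bm W_i^\flat$, where $\bm W_i^\flat$ is an elementary blowup of $\bm W_i$ attached via a morphism $F_i : \bm W_i \fun \bm C_i$ and $\bm C = \biguplus_i \bm C_i$. Set $\bm C'_i = F^{-1}(\bm C_i) \sub \bm U$ and let $F^* \bm V^\flat$ denote the length-$1$ blowup of $\bm U$ built from the same elementary blowups $\bm W_i^\flat$ but attached via $(F \rest \bm C'_i)^{-1} \circ F_i : \bm W_i \fun \bm C'_i$. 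Extending $F$ by the identity on each $\bm W_i^\flat$ yields an isomorphism $F^* \bm V^\flat \fun \bm V^\flat$, so any length-$1$ blowup on one side corresponds, under this isomorphism, to a length-$1$ blowup on the other. Hence it suffices to handle the case of two length-$1$ blowups of the same object, namely $\bm U^\flat_1 := \bm U^\flat$ and $\bm U^\flat_2 := F^* \bm V^\flat$.

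For this core step, let $\bm D_s \sub \bm U$ be the locus of $\bm U^\flat_s$ and $\bm D_s = \biguplus_i \bm D_{si}$ its partition into pieces on which a single elementary blowup is performed, attached via $F_{si} : \bm W_{si} \fun \bm D_{si}$. Pass to the common refinement of the two partitions of $\bm D_1 \cup \bm D_2$, whose pieces are the intersections $\bm D_{1i} \cap \bm D_{2j}$ together with the leftover pieces inside $\bm D_1 \mi \bm D_2$ and $\bm D_2 \mi \bm D_1$; this is a purely combinatorial rearrangement that does not alter either blowup's isomorphism class. On each piece $P$ of $\bm D_1 \cap \bm D_2$, the restrictions of the two elementary blowups are themselves elementary blowups attached to the isomorphic subobjects $F_{1i}^{-1}(P)$ and $F_{2j}^{-1}(P)$ of $\bm W_{1i}$ and $\bm W_{2j}$, so Lemma~\ref{elementary:blowups:preserves:iso:vol} supplies length-$1$ further blowups on each side whose results become isomorphic. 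On each piece $P$ of $\bm D_1 \mi \bm D_2$, the blowup $\bm U^\flat_2$ does nothing, so we add to $\bm U^\flat_2$ the same elementary blowup as in $\bm U^\flat_1$ over $P$ (using the identity on $P$ as attaching morphism) and then appeal to the lemma; the pieces of $\bm D_2 \mi \bm D_1$ are handled symmetrically.

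All the elementary blowups added to $\bm U^\flat_s$ lie over pairwise disjoint pieces of its new locus, so on each side they assemble into a single length-$1$ blowup $\bm U^{\flat\flat}_s$, and by construction $[\bm U^{\flat\flat}_1] = [\bm U^{\flat\flat}_2]$. Transporting $\bm U^{\flat\flat}_2$ through the isomorphism $F^* \bm V^\flat \cong \bm V^\flat$ yields the required $\bm V^{\flat\flat}$. The main obstacle lies in the $\mRV^{\db}[k]$ setting: on pieces of $\bm D_1 \cap \bm D_2$ the two given elementary blowups will typically have different annular apertures, so the aperture-matching trick from the proof of Lemma~\ref{elementary:blowups:preserves:iso:vol} (interposing a further annular blowup whose aperture equals the difference of the two) must be applied uniformly piece-by-piece, and the resulting annular blowups must be chosen coherently so that their disjoint union still qualifies as a single length-$1$ blowup on each side. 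In the $\mRV[k]$ setting the lemma gives equality of classes directly on the intersection pieces, and only the symmetric difference contributes genuinely new elementary blowups.
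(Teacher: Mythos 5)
Your proof is correct and follows essentially the same approach as the paper's, which also partitions the loci via $F$ into the intersection $C\cap F^{-1}(D) \leftrightarrow F(C)\cap D$ and the two symmetric-difference pieces, applies Lemma~\ref{elementary:blowups:preserves:iso:vol} on the intersection, and copies the other side's elementary blowup (with the induced aperture) on the symmetric-difference pieces, assembling everything into a single further length-$1$ blowup on each side. The ``transport via $F^*$'' preamble is a cosmetic rephrasing of the paper's terser formulation, and your closing worry about aperture coherence in the $\mRV^{\db}[k]$ setting is unfounded: by Definition~\ref{defn:blowup:coa} a length-$1$ blowup is a disjoint union of elementary blowups whose apertures are chosen independently over disjoint loci, so no compatibility across pieces needs to be arranged.
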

\begin{proof}
Let  $C$, $D$ be the loci of $\bm U^{\flat}$, $\bm V^{\flat}$, respectively. By Lemma~\ref{elementary:blowups:preserves:iso:vol}, we can make the claim hold by restricting the loci of $\bm U^{\flat}$, $\bm V^{\flat}$ to $C \cap F^{-1}(D)$, $F(C) \cap D$. But in the meantime we can also blow up $\bm U^{\flat}$, $\bm V^{\flat}$ at the loci $F^{-1}(D) \mi C$, $F(C) \mi D$ using the apertures induced by those on $D \mi F(C)$, $C \mi F^{-1}(D)$, respectively. Then the resulting compounded blowups of $\bm U^{\flat}$, $\bm V^{\flat}$ of length $1$ are as desired.
\end{proof}

This corollary also holds in $\RV[k]$, which is essentially the only thing that the otherwise formal proof of \cite[Lemma~5.29]{Yin:tcon:I} depends on. Thus, the same proof yields the following analogue in the present context:

\begin{lem}\label{blowup:equi:class:coa}
If $[\bm U] = [\bm V]$ and $\bm U_1$, $\bm V_1$ are blowups of $\bm U$, $\bm V$ of lengths $m$, $n$, respectively, then there are blowups $\bm U_2$, $\bm V_2$ of $\bm U_1$, $\bm V_1$ of lengths $n$, $m$, respectively, such that $[\bm U_2] = [\bm V_2]$.
\end{lem}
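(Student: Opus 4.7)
The strategy is induction on $m + n$, following the template of the corresponding result in \cite{Yin:tcon:I}. The two ingredients are Corollary~\ref{eblowup:same:loci:iso}, which provides a common refinement for two length-$1$ blowups across an isomorphism, and the explicit transport of blowup data through an isomorphism (effectively the content of Lemma~\ref{elementary:blowups:preserves:iso:vol} when both elementary blowups are given the same aperture). The argument is purely formal and works uniformly in $\mRV[k]$ and $\mRV^{\db}[k]$ because every statement invoked has been proved in both settings.

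For the base cases, $m + n = 0$ is vacuous. For $m + n = 1$, say $m = 1$ and $n = 0$, fix an isomorphism $F : \bm U \fun \bm V$ realizing $[\bm U] = [\bm V]$ and write the blowup as $\bm U_1 = (\bm U \mi \bm C) \uplus \biguplus_i \bm U_i^\flat$ built from morphisms $G_i : \bm U_i \fun \bm C_i \sub \bm U$ and elementary blowups $\bm U_i^\flat$ of prescribed apertures. Define $\bm V_2 = (\bm V \mi F(\bm C)) \uplus \biguplus_i \bm U_i^\flat$ using the morphisms $(F \rest \bm C_i) \circ G_i$ with the same apertures. The bijection assembled from $F \rest (\bm U \mi \bm C)$ and the identity on each $\bm U_i^\flat$ is a morphism: the Jacobian conditions are inherited from $F$ on the first piece and are trivial on the second. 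Taking $\bm U_2 = \bm U_1$ completes this base case. The first instance of $m + n = 2$ that is not yet handled is $(m,n) = (1,1)$; here Corollary~\ref{eblowup:same:loci:iso} applies verbatim.

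For the inductive step, suppose $m + n \geq 2$ and, by symmetry, $m \geq n$, so that either $n = 0$ with $m \geq 2$, or $m \geq 2$ and $n \geq 1$. In both subcases decompose $\bm U_1 = (\bm U_1^{a})^\flat$ as a length-$1$ blowup on top of a length-$(m-1)$ blowup $\bm U_1^{a}$ of $\bm U$. When $n = 0$, apply the inductive hypothesis to $[\bm U] = [\bm V]$ with blowups $\bm U_1^a, \bm V_1$ of lengths $m-1, 0$ to obtain a length-$(m-1)$ blowup $\bm V^b$ of $\bm V$ with $[\bm U_1^a] = [\bm V^b]$, then transport the remaining length-$1$ step across this isomorphism using the $m + n = 1$ base case to produce $\bm V_2$. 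When $n \geq 1$ (and $m \geq 2$), apply the inductive hypothesis three times: first, to $[\bm U] = [\bm V]$ with blowups of lengths $m-1, n$ (total $m + n - 1$) to get $\bm U^*, \bm V^*$ with $[\bm U^*] = [\bm V^*]$, where $\bm U^*$ is a length-$n$ blowup of $\bm U_1^a$ and $\bm V^*$ a length-$(m-1)$ blowup of $\bm V_1$; second, to the identity isomorphism $\bm U_1^a = \bm U_1^a$ with the blowups $\bm U_1, \bm U^*$ of lengths $1, n$ (total $1 + n < m + n$ because $m \geq 2$) to get a length-$n$ blowup $\bm U_2$ of $\bm U_1$ and a length-$1$ blowup $\bm U^{**}$ of $\bm U^*$ with $[\bm U_2] = [\bm U^{**}]$; third, to $[\bm U^*] = [\bm V^*]$ via the base case $m + n = 1$ to transport $\bm U^{**}$ to a length-$1$ blowup $\bm V_2$ of $\bm V^*$ with $[\bm U^{**}] = [\bm V_2]$. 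Then $\bm V_2$ is a length-$m$ blowup of $\bm V_1$, and chaining the three identifications yields $[\bm U_2] = [\bm V_2]$.

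The only real obstacle is bookkeeping of lengths to guarantee that each invocation of the inductive hypothesis strictly decreases $m + n$. This is why the delicate configuration $(m,n) = (1,1)$ is isolated and dispatched directly by Corollary~\ref{eblowup:same:loci:iso}, and why the $m + n = 1$ transport is used as the terminal step of the recursion rather than another appeal to the induction.
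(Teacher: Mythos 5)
Your proof is correct and follows essentially the same formal inductive strategy that the paper delegates to \cite[Lemma~5.29]{Yin:tcon:I}: induction on $m+n$, with the $(1,1)$ case handled by Corollary~\ref{eblowup:same:loci:iso} and the remaining bookkeeping being purely combinatorial. Your explicit $(1,0)$ transport across the isomorphism $F$ is a sound and slightly more hands-on version of the degenerate case of Corollary~\ref{eblowup:same:loci:iso} (taking the trivial blowup on the $\bm V$ side), and it correctly works uniformly in $\mRV[k]$ and $\mRV^{\db}[k]$ since every ingredient has been established in both settings.
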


\begin{cor}\label{blowup:equi:class}
Suppose that $[\bm U] = [\bm U']$ and $[\bm V] = [\bm V']$. If there are isomorphic blowups of $\bm U$, $\bm V$ then there are isomorphic blowups of $\bm U'$, $\bm V'$.
\end{cor}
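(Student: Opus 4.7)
The plan is to apply Lemma~\ref{blowup:equi:class:coa} twice, once to transport the given blowup of $\bm U$ across the isomorphism $[\bm U] = [\bm U']$, and once to do the same on the $\bm V$-side. The key conceptual point is to regard each starting object as a trivial blowup of itself of length $0$ (the empty composition in Definition~\ref{defn:blowup:coa}), so that Lemma~\ref{blowup:equi:class:coa} becomes applicable with one of the two lengths being zero.

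More precisely, let $\bm U_1$, $\bm V_1$ be blowups of $\bm U$, $\bm V$ of lengths $m$, $n$, respectively, with $[\bm U_1] = [\bm V_1]$. Viewing $\bm U'$ as a (trivial) blowup of itself of length $0$ and invoking Lemma~\ref{blowup:equi:class:coa} with the equality $[\bm U] = [\bm U']$ and the two blowups $\bm U_1$ (length $m$) and $\bm U'$ (length $0$), we obtain a blowup of $\bm U_1$ of length $0$ --- that is, $\bm U_1$ itself --- and a blowup $\bm U'_1$ of $\bm U'$ of length $m$ such that $[\bm U_1] = [\bm U'_1]$. Applying the same argument on the other side produces a blowup $\bm V'_1$ of $\bm V'$ of length $n$ with $[\bm V_1] = [\bm V'_1]$.

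Stringing the three equalities together yields
\[
[\bm U'_1] = [\bm U_1] = [\bm V_1] = [\bm V'_1],
\]
which gives the desired isomorphic blowups of $\bm U'$ and $\bm V'$.

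The only potential obstacle is the legitimacy of the ``length $0$'' move, i.e., interpreting $\bm U'$ and $\bm V'$ as blowups of themselves so that Lemma~\ref{blowup:equi:class:coa} applies with one of the two lengths being zero; this is immediate from the definition of a blowup of length $n$ as a composition of $n$ blowups, since the empty composition is the identity. Apart from this small bookkeeping point, the argument is purely formal and contains no genuine geometric difficulty, in line with the analogous passage in \cite{Yin:tcon:I}.
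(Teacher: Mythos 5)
Your proof is correct, and it is the natural (and almost certainly intended) argument: since the paper gives no explicit proof, the expected route is to apply Lemma~\ref{blowup:equi:class:coa} twice — once on each side — with a length-$0$ blowup as the partner, exactly as you do. Your observation that the empty composition of blowups yields the object itself is the only bookkeeping point that needs to be made explicit, and you address it correctly; one could equivalently use trivial blowups of length $1$ (empty locus or aperture $(0,t)$), but the length-$0$ reading is cleaner since it makes the resulting blowup of $\bm U_1$ literally $\bm U_1$, which is what allows the final chain of equalities to close.
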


\begin{defn}
Let $\misp[k]$ be the set of pairs $(\bm U, \bm V)$ of objects of $\mRV[k]$ such that there exist isomorphic blowups $\bm U^{\flat}$, $\bm V^{\flat}$. Let $\misp[*] = \bigoplus_{k} \misp[k]$. Similarly for the sets $\mispdb[k] \sub \mRV^{\db}[k] \times \mRV^{\db}[k]$ and $\mispdb[*]$.
\end{defn}

We will just write $\misp$, $\mispdb$ for all these sets when there is no danger of confusion. By Corollary~\ref{blowup:equi:class}, they may be regarded as binary relations on isomorphism classes.

\begin{lem}\label{isp:congruence:vol}
$\misp[k]$, $\mispdb[k]$ are semigroup congruence relations and $\misp[*]$, $\mispdb[*]$ are semiring congruence relations.
\end{lem}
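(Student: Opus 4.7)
The plan is to establish, for each of $\misp[k]$ and $\mispdb[k]$, the three ingredients: that it is an equivalence relation, that it respects disjoint unions, and (for the graded versions $\misp[*]$ and $\mispdb[*]$) that it respects Cartesian products. Since Corollary~\ref{blowup:equi:class} and Lemma~\ref{blowup:equi:class:coa} hold uniformly in both settings, I would describe the verification for $\misp$, with the $\mispdb$ argument identical line-by-line. Reflexivity is immediate since $\bm U$ is a (empty-locus) blowup of itself, and symmetry is built into the definition. For transitivity, given $(\bm U, \bm V), (\bm V, \bm W) \in \misp[k]$, there are isomorphisms $\bm U^\flat \cong \bm V_1^\flat$ and $\bm V_2^\flat \cong \bm W^\flat$ with $\bm V_1^\flat$, $\bm V_2^\flat$ blowups of $\bm V$; I would apply Lemma~\ref{blowup:equi:class:coa} to $(\bm V, \bm V)$ along these two blowups to obtain further blowups $\bm V_{12}$ of $\bm V_1^\flat$ and $\bm V_{21}$ of $\bm V_2^\flat$ with $[\bm V_{12}] = [\bm V_{21}]$. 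Transporting $\bm V_{12}$, $\bm V_{21}$ along the two given isomorphisms produces isomorphic blowups of $\bm U$ and $\bm W$, giving $(\bm U, \bm W) \in \misp[k]$.

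For additive compatibility, given $(\bm U_i, \bm V_i) \in \misp[k]$ ($i = 1, 2$) witnessed by isomorphic blowups $\bm U_i^\flat \cong \bm V_i^\flat$, the disjoint union $\bm U_1^\flat \uplus \bm U_2^\flat$ is visibly a blowup of $\bm U_1 \uplus \bm U_2$ (concatenate the elementary blowups performed separately on each summand), and likewise for the $\bm V_i$'s; the disjoint union of the given isomorphisms witnesses $(\bm U_1 \uplus \bm U_2, \bm V_1 \uplus \bm V_2) \in \misp[k]$. For multiplicative compatibility, suppose $(\bm U, \bm V) \in \misp[k]$ and $(\bm U', \bm V') \in \misp[l]$ with witnessing blowups $\bm U^\flat \cong \bm V^\flat$ and $\bm U'^\flat \cong \bm V'^\flat$. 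The key structural observation is that an elementary blowup of $\bm U_0 \in \mRV[k]$ in the $j$th coordinate, of aperture $(\tau, t)$, extends canonically to an elementary blowup of $\bm U_0 \times \bm W$ (for any $\bm W \in \mRV[l]$) in the same $j$th coordinate and aperture: the finite-to-one requirement on $\pr_{\tilde j}$ applied to $f(U_0)$ is preserved when we take the direct product with the image of $\bm W$. Via the subobject/replacement description, this extends to arbitrary blowups, so $\bm U^\flat \times \bm U'^\flat$ is a blowup of $\bm U \times \bm U'$ (perform the $\bm U$-blowups in the first $k$ coordinates, then the $\bm U'$-blowups in the remaining $l$) and similarly on the $\bm V$ side. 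Since the conditions of Definition~\ref{defn:RV:mu} involving $\jcb_{\RV}$ and $\jcb_{\Gamma}$ are block-diagonal under Cartesian products, the product of the two witnessing isomorphisms is itself a $\mRV[k+l]$-morphism, yielding $(\bm U \times \bm U', \bm V \times \bm V') \in \misp[k+l]$.

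The one non-formal step is transitivity, which hinges entirely on Lemma~\ref{blowup:equi:class:coa} (itself resting on Corollary~\ref{eblowup:same:loci:iso} and hence on Lemma~\ref{elementary:blowups:preserves:iso:vol}); the remaining verifications are structural and amount to inspecting how elementary blowups behave under the monoidal operations on $\mRV[*]$. I anticipate no further technical obstacle, although some care is warranted in the $\mispdb$ case to track that all apertures stay annular throughout, which is automatic because the transported/product blowups inherit their apertures from the original ones and Lemma~\ref{elementary:blowups:preserves:iso:vol} was proved in both settings.
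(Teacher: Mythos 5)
Your proof is correct and follows the same route as the paper, which simply declares the result an easy consequence of Lemma~\ref{blowup:equi:class:coa} and defers to the proof of \cite[Lemma~5.32]{Yin:tcon:I} for the details you have reconstructed here. The decisive step is indeed transitivity via the diamond-style application of Lemma~\ref{blowup:equi:class:coa} (applied once to splice the two blowups of $\bm V$, then again to transport along the witnessing isomorphisms to $\bm U$ and $\bm W$), with reflexivity, symmetry, and compatibility with disjoint unions and Cartesian products reducing to the structural observations you give, and the annularity bookkeeping for $\mispdb$ handled exactly as you note.
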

\begin{proof}
This is an easy consequence of Lemma~\ref{blowup:equi:class:coa} (the proof of \cite[Lemma~5.32]{Yin:tcon:I}, which uses \cite[Lemma~5.29]{Yin:tcon:I}, contains more details).
\end{proof}

\begin{rem}\label{db:isp:rel}
The canonical embedding $\gsk \mRV^{\db}[k] \fun \gsk \mRV[k]$ and $\misp[k]$ together induce a semigroup congruence relation ${\mu}{\text{I}_{\text{sp}}^{\db}}[k]$ on $\gsk \mRV^{\db}[k]$. Observe that if $\bm V$ is a blowup of $\bm U$ in $\mRV^{\db}[k]$ then there are isomorphic blowups $\bm V^\flat$, $\bm U^\flat$ of $\bm V$, $\bm U$ in $\mRV[k]$. It follows easily from this and Corollary~\ref{blowup:equi:class} that $\mispdb[k]$ is contained in ${\mu}{\text{I}_{\text{sp}}^{\db}}[k]$.
\end{rem}

Let  $T$ be a special covariant bijection on $\bb L (U,f)$, proper if and only if $\bm U \in \mRV^{\db}[k]$. Let $A = \mathbb{L} (U,f)^\flat$. The $k$th component of $(A_{\RV})_{\leq k} \in \RV[{\leq}k]$ is denoted by $(U_{T})_{\leq k}$ and  the object $(U_{T}, \omega_T)_{\leq k} \in \mRV[k]$ by $\bm U_{T}$, where $\omega_T : U_{T} \fun \RV$ is the function induced by $\omega$.

\begin{lem}\label{special:to:blowup:coa}
The object $\bm U_T$ is isomorphic to a blowup of $\bm U$ of the same length as $T$.
\end{lem}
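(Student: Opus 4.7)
The plan is to proceed by induction on $n = \lh(T)$, the length of the special covariant transformation. The base case $n=0$ is trivial: $T$ consists only of regularizations, so $A = \bb L \bm U$ via the canonical identification of Convention~\ref{conv:can}, and $\bm U_T = \bm U$ is the trivial blowup of length $0$. For the inductive step, decompose $T = T_n \circ T_0$, where $T_0$ is a special covariant transformation of length $n-1$ and $T_n$ is a single centripetal transformation (with any regularizations absorbed). By the inductive hypothesis applied to $T_0$ on $\bb L \bm U$, there is a blowup $\wt{\bm U}$ of $\bm U$ of length $n-1$ with $\bm U_{T_0}\cong \wt{\bm U}$. Because the image $T_0(\bb L \bm U)$ is, after regularization, canonically identified with $\bb L \bm U_{T_0}$, the transformation $T_n$ may be regarded as a single centripetal transformation on $\bb L \bm U_{T_0}$. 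So it suffices to treat the one-step case: a centripetal transformation $\eta$ on $\bb L \bm W$ produces $\bm W_\eta$ isomorphic to an elementary blowup of $\bm W$ with matching aperture.

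For this one-step case, let $\eta$ have focus $\lambda$, locus $C$, and aperture $(\gamma,t)$. Since $C \subseteq \bb L \bm W = \RVH(\bb L\bm W)$ is an $\RV$-pullback, it has the form $C = \bb L \bm V$ for a (unique) subobject $\bm V = (V, g\rest V, \omega\rest V)$ of $\bm W$. The graph condition on $\lambda$ gives $\lambda(v) \in g_1(v)^\sharp$ for each $v\in V$, so within the $\RV$-polydisc $\gp_v = g(v)^\sharp \times v$ the map $\eta$ is the translation $(a,v) \mapsto (a-\lambda(v),v)$ when $a - \lambda(v)\notin \MM_{\gamma_{\gp_v}}$, and it collapses $\{a : a-\lambda(v)\in \MM_{\gamma_{\gp_v}}\}\times v$ to the single point $(t_{\gp_v},v)$ with $t_{\gp_v}=t\cdot g_1(v)$. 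Regularizing, the new first $\RV$-coordinate in $\bm W_\eta$ ranges exactly over $\bigl(\RV^{\circ\circ}\setminus \RV^{\circ\circ}_\gamma\bigr)\uplus\{t\}$, multiplied componentwise by $g_1(v)$; i.e., it reproduces the factor $\RV^{\circ\circ}(t)_\gamma$ of Notation~\ref{punc:RVd} with the product $sg_1(v)$ giving the new value of the first component of $g^\flat$. Thus the $\RV$-structure of $\bm W_\eta$ matches that of the elementary blowup of $\bm W$ in the first coordinate, along the locus $\bm V$, with aperture $(\gamma, t)$.

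Finally, for the volume form: every $\VF$-fiber of $\eta$ is a translation, whose $\VF$-Jacobian equals $1$, so the form induced on $\bm W_\eta$ is simply the pullback of $\omega\rest V$ along the projection forgetting the new $\RV$-coordinate. This coincides with the rule $\omega^\flat(u,s)=\omega(u)$ in Definition~\ref{defn:blowup:coa}, and by the same computation the $\Gamma$-volume form version works in parallel. The pseudo/proper dichotomy is preserved automatically: in $\mRV[k]$, only apertures of the form $(\infty,0)$ occur on both sides; in $\mRV^{\db}[k]$, $\gamma\in \absG$ with $t\neq 0$ gives precisely an annular elementary blowup. The main obstacle is keeping track of the canonical identifications --- between $T_0(\bb L\bm U)$ and $\bb L\bm U_{T_0}$, and between the $\RV$-coordinates created by regularization and the $\RV^{\circ\circ}(t)_\gamma$ factor of the blowup --- but these are essentially bookkeeping modulo Convention~\ref{conv:can} and Notation~\ref{0coor}, and no new geometric input is needed beyond the local description of $\eta$ above.
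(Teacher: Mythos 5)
Your proof is correct and follows the same inductive strategy as the paper (reduce to $\lh(T)=1$ and analyze the one-step case), while supplying the detail that the paper dismisses as ``clear''. Two small notational slips, neither affecting correctness: you call $\bm W_\eta$ an elementary blowup of $\bm W$ when it is really a length-$1$ blowup with locus $\bm V$ (elementary blowups always have full locus), and you write $\lambda(v)$ although the focus is in general a function of all coordinates after the first, not just the $\RV$-part $v$ --- but since every fiber of $\lambda$ lands in the same $\RV$-disc $g_1(v)^\sharp$, the set of values of $\rv(a-\lambda(\cdot))$ is exactly as you describe, so the identification with $\RV^{\circ\circ}(t)_\gamma$ still goes through.
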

\begin{proof}
By induction on the length $\lh (T)$ of $T$ and Lemma~\ref{blowup:equi:class:coa}, this is immediately reduced to the case $\lh (T) = 1$. Then, without loss of generality, we may assume that the locus of $T$ is $\mathbb{L} (U,f)$. If $T$ is pseudo then it is clear how to construct an (elementary) blowup of $\bm U$ as desired. If $T$ is proper then the blowup needs to be annular and hence we need to first find a nontrivial aperture. But this is already provided by $T$.
\end{proof}

\begin{lem}\label{blowup:same:RV:coa}
Let $\bm U^{\flat}$ be a blowup of $\bm U$ of length $l$. Then ${\mL} \bm U^{\flat}$ is isomorphic to ${\mL} \bm U$; more precisely, the isomorphism is  in $\mVF[k]$ if $\bm U \in \mRV[k]$ and is in $\mVF^{\diamond}[k]$ if $\bm U \in \mRV^{\db}[k]$.
\end{lem}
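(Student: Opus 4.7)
The plan is to induct on the length $l$. The case $l=0$ is trivial. For $l \geq 1$, one factors $\bm U^\flat$ through a length-$(l-1)$ blowup and applies the inductive hypothesis, reducing to the case $l = 1$. Inspecting Definition~\ref{defn:blowup:coa}, a length-$1$ blowup replaces finitely many pairwise disjoint subobjects $\bm C_i \sub \bm U$, each identified with some $\bm U_i$ via a $\mRV[k]$-morphism $F_i$, by elementary blowups $\bm U_i^\flat$. On $\mL(\bm U \mi \bigcup_i \bm C_i)$ the blowup acts as the identity, and each $F_i$ lifts to a $\mVF[k]$-isomorphism (in $\mVF^\diamond[k]$ in the doubly bounded case) by Lemma~\ref{L:measure:class:lift}. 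Hence the whole problem reduces to showing $\mL \bm U \cong \mL \bm U^\flat$ when $\bm U^\flat$ is a single elementary blowup of $\bm U$.

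For the elementary case, say the blowup is in the $j$-th coordinate of aperture $(\tau, t)$, I would use Lemma~\ref{special:to:blowup:coa} in reverse: construct a length-$1$ centripetal transformation $T$ on $\mL \bm U$ whose $\RV$-shadow $\bm U_T$ matches $\bm U^\flat$. Explicitly, take the locus to be all of $\mL \bm U$, take the focus $\lambda$ to send each $\RV$-polydisc $\gp$ (with $\RV$-part $u \in U$) to $f_j(u)$ in the $j$-th $\VF$-coordinate, and set the aperture to be $(\tau, t)$. Then $T$ subtracts $f_j(u)$ from the $j$-th $\VF$-coordinate outside the subdisc of radius $\rad(\pr_j(\gp)) + \tau$ around $f_j(u)$, and collapses that subdisc to the single $\RV$-class $tf_j(u)$. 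A direct computation shows the new $\rv$-values of the $j$-th coordinate form exactly $\{sf_j(u) : s \in \RV^{\circ\circ}(t)_\tau\}$, matching $f_j^\flat$, so $\bm U_T \cong \bm U^\flat$ as objects of $\mRV[k]$.

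Since $T$ is a translation on almost every $\VF$-fiber, its lift $T^\flat$ has $\rv(\jcb_{\VF} T^\flat) = 1$ almost everywhere; equipping the image with the pulled-back volume form, $T^\flat$ becomes a $\mVF[k]$-morphism onto $T^\flat(\mL \bm U)$. Lemma~\ref{L:measure:surjective} then supplies an isomorphism $T^\flat(\mL \bm U) \cong \mL \bm U_T \cong \mL \bm U^\flat$, and composition yields the required isomorphism. In the proper case ($\bm U \in \mRV^{\db}[k]$, $\tau \in \absG^+$), the focus is fiberwise a single definable point, so $T$ is a proper special covariant transformation and $T^\flat$ is continuous, relatively unary in the $j$-th $\VF$-coordinate, and proper covariant (by Lemmas~\ref{conti:covar} and~\ref{inv:crit}); the ``moreover'' clause of Lemma~\ref{L:measure:surjective}, which invokes Proposition~\ref{simplex:with:hole:rvproduct}, places the subsequent isomorphism in $\mVF^\diamond[k]$.

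The main technical hurdle I anticipate is the bookkeeping match showing that $\bm U_T$ is literally isomorphic to the specified $\bm U^\flat$ rather than to some other elementary blowup: the ``far'' branch of the transformation formula~(\ref{centri}) must correspond to the piece $\RV^{\circ\circ} \mi \RV^{\circ\circ}_\tau$, the ``close'' branch to the singleton summand $t$ in $\RV^{\circ\circ}(t)_\tau = (\RV^{\circ\circ} \mi \RV^{\circ\circ}_\tau) \uplus t$, and the volume form $\omega^\flat(u, s) = \omega(u)$ must agree with the pushforward of $\omega$ under $T$. These are routine verifications but require care to get all signs and disjoint-union conventions right.
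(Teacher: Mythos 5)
Your strategy mirrors the paper's: induct on $l$, reduce to a single elementary blowup in one coordinate, and realize that elementary blowup on the $\VF$ side by a single centripetal transformation on $\mL\bm U$ with locus $\bb L(U,f)$ itself. Your reduction from a general length-$1$ blowup to the elementary case via Lemma~\ref{L:measure:class:lift} is sound and is spelled out a bit more explicitly than the paper's one-line dismissal.

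The substantive gap is the focus map. You write ``take the focus $\lambda$ to send each $\RV$-polydisc $\gp$ (with $\RV$-part $u$) to $f_j(u)$,'' and later ``the focus is fiberwise a single definable point.'' But $f_j(u) \in \RV$, whereas a focus map must be $\VF$-valued with graph inside the locus; one therefore has to \emph{select}, definably and uniformly in the fiber variable $a \in f(u)_{\tilde j}^\sharp$, a point $b_a \in f_j(u)^\sharp$. The existence of such a selection is exactly what Remark~\ref{rem:hyp} (the standing hypotheses on $\mdl S$ being $\VF$-generated with $\Gamma(\mdl S)$ nontrivial) provides, and in the proper case the paper additionally invokes Lemma~\ref{RVlift} so that the selection is given by a \emph{differentiable} function of $a$; this is what lets the resulting fiberwise additive translation (on the ``close'' branch, $a_j \efun a_j - b_a(1-c)$ for a fixed definable $c \in t^\sharp$) be fed into Lemma~\ref{conti:covar} and certified proper covariant. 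Your write-up flags the bookkeeping issues but glides over this selection problem, which is where the actual content of the proof lives.

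A smaller point: the appeal to ``Lemma~\ref{special:to:blowup:coa} in reverse'' and to Lemma~\ref{L:measure:surjective} is a detour. Once the centripetal transformation is constructed, the chain $T^\flat(\mL\bm U) \cong \mL\bm U_T \cong \mL\bm U^\flat$ is just Convention~\ref{conv:can} together with the bookkeeping check you already anticipate; Lemma~\ref{L:measure:surjective} is a surjectivity statement pointing in the other direction and does not by itself produce the intermediate isomorphism you attribute to it.
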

\begin{proof}
By induction on $l$, this is immediately reduced to the case $l=1$. We may assume that $\pr_{\tilde 1} \rest f(U)$ is injective and $\bm U^{\flat}$ is an elementary blowup in the first coordinate. If $\bm U \in \mRV[k]$ then it is enough to construct a focus map into the first coordinate with locus $f(U)^\sharp$; this can be easily done, using once again the fact that every definable set contains a definable point (see Remark~\ref{rem:hyp}). If $\bm U \in \mRV^{\db}[k]$ then the aperture $(\tau, t)$ of $\bm U^{\flat}$ needs to be factored into the construction. Fix a definable point $c \in t^\sharp$. For $u \in U$ and $a \in  f(u)_{\tilde 1}^\sharp$, the $\RV$-disc $f(u)_{1}^\sharp$ contains an $a$-definable point $b_a$; moreover, by Lemma~\ref{RVlift}, these points may be chosen uniformly via a differentiable function. Thus there is a continuous (fiberwise) additive translation, with respect to the points $b_a$ and $b_a - c b_a$, between the two sets in question, which is proper covariant by Lemma~\ref{conti:covar}.
\end{proof}

\subsection{Standard contractions}

Suppose that $A \sub \VF^{n} \times \RV^{m}$ is an invariant set. Let $i \in [n]$ and $T_i$ be a definable bijection on $A$,  relatively special covariant in the $i$th $\VF$-coordinate, such that for every $a \in \pr_{\tilde i}(A)$, the set $T_i(A_a)$ is an $\RV$-pullback. Moreover, if $A$ is proper invariant then $T_i$ is continuous and proper covariant. Note that $T_i$ is not necessarily a special covariant bijection since, to begin with,  the various special covariant bijections in the $i$th $\VF$-coordinate  may not even be of the same length. Let
\[
 A_i = \bigcup_{a \in \pr_{\tilde i}(A)} a \times (T_i(A_a))_{\RV} \sub \VF^{n-1} \times \RV^{m_i}.
\]
Let $\hat T_i : A \fun A_i$ be the function induced by $T_i$. If $A$ is proper invariant then $A_i$ is proper invariant too, which is not guaranteed if we do not demand $T_i$ to be proper covariant (continuity is needed for other reasons, see Definition~\ref{defn:standard:contraction} and the proof of Lemma~\ref{exit:stand} below).

For any $j \in [n{-}1]$, we repeat the above procedure on $A_i$ with respect to the $j$th $\VF$-coordinate and thereby obtain a set $A_{j} \sub \VF^{n-2} \times \RV^{m_j}$ and a function $\hat T_{j} : A_i \fun A_{j}$. The relatively special covariant bijection on $T_i(A)$ induced by $\hat T_{j}$ is denoted by $T_j$. Continuing thus, we obtain a sequence of  bijections $T_{\sigma(1)}, \ldots, T_{\sigma(n)}$ and a corresponding function $\hat T_{\sigma} : A \fun \RV^{l}$, where $\sigma$ is the permutation of $[n]$ in question. The composition $T_{\sigma(n)} \circ \ldots \circ T_{\sigma(1)}$, which is referred to as the \emph{lift} of $\hat T_{\sigma}$, is denoted by $T_{\sigma}$.

Suppose that there is a $k \in 0 \cup [m]$ such that  $(A_a)_{\leq k} \in \RV[k]$  for  every $a \in A_{\VF}$. In particular, if $k=0$ then $A \in \VF_*$. By Lemma~\ref{RV:fiber:dim:same}, $\hat T_{\sigma}(A)_{\leq n+k}$ is an object of $\RV[{\leq} l{+}k]$, where $\dim_{\VF}(A) = l$, or even an object of $\RV^{\db}[n{+}k]$ if $A$ is proper invariant.

%, where all the occurrences of $0 \in \RVV$ in the new $\RV$-coordinates of $\RV^{l}$ are deleted from the projection (this is not necessary if $A$ is proper invariant since there will be no such occurrences) and  the relevant $\RV$-coordinates are shuffled to the first $k$ positions.and definable functions of the form $x \efun (x, t_x)$, where $t_x \in \RV$ (this is more general than regularizations defined in Convention~\ref{conv:can})

\begin{defn}\label{defn:standard:contraction}
The function $\hat T_{\sigma}$, or the object $\hat T_{\sigma}(A)_{\leq n+k}$, is referred to as a \emph{standard contraction} of the invariant set $A$ with the \emph{head start} $k$.

Suppose that $\omega : A \fun \RV$ is a definable function, proper covariant if $A$ is proper invariant, and $\hat T_{\sigma}$ is a standard contraction of $A_\omega$ (recall Notation~\ref{nota:form:down}). Let $\hat T^\omega_{\sigma}(A)_{\leq n+k}$ be the $(n+k)$th component of $\hat T_{\sigma}(A_\omega)_{\leq n+k}$, which is not the empty object if and only if $\dim_{\VF}(A) = n$.  The definable function on $\hat T^\omega_{\sigma}(A)_{\leq n+k}$ induced by $\omega$ is denoted by $\omega_{\hat T_{\sigma}}$. The function $\hat T_{\sigma}$, or the object
\[
\hat T_{\sigma}(\bm A)_{\leq n+k} \coloneqq (\hat T^\omega_{\sigma}(A), \omega_{\hat T_{\sigma}})_{\leq n+k},
\]
is referred to as a \emph{standard contraction} of the pair $\bm A = (A, \omega)$.
\end{defn}

Let $A^\sharp = \bigcup_{a \in A_{\VF}} a \times \bb L (A_a)_{\leq k}$, which is an object of $\VF_*$. The volume form on $A^\sharp$ induced by $\omega$ is still denoted by $\omega$. The  bijection $A^\sharp_\omega \fun \mathbb{L} \hat T_{\sigma}(A_\omega)_{\leq n+k}$ induced by $T_{\sigma}$, which is still denoted by $T_{\sigma}$, is indeed a $\mVF[n{+}k]$-morphism or a $\mVF^{\diamond}[n{+}k]$-morphism if $(A, \omega)$ is proper invariant.

The head start of a standard contraction is usually implicit. In fact, it is always $0$ except in Lemma~\ref{isp:VF:fiberwise:contract}, and can be circumvented even there. This seemingly needless gadget only serves to make the above definition more streamlined: If $A \in \VF_*$ then the intermediate steps of a standard contraction of $A$ may or may not result in objects of $\VF_*$ and hence the definition cannot be formulated entirely within $\VF_*$.

By Remark~\ref{special:pseudo} and compactness, every pseudo invariant pair $(A, \omega)$ of the above form admits a standard contraction with respect to any permutation $\sigma$ of $[n]$ and any head start $k \in [m]$. This also holds for the other case:

\begin{lem}\label{exit:stand}
If $(A, \omega)$ is proper invariant then it also admits such a standard contraction.
\end{lem}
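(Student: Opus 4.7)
The plan is to induct on $n$, the number of $\VF$-coordinates of $A$, with $n=0$ trivial. For the inductive step, fix an arbitrary $i \in [n]$; it will suffice to construct a bijection $T_i$ on $A_{\omega}$ that is relatively special covariant in the $i$th $\VF$-coordinate, continuous, and proper covariant, and such that for every $a \in \pr_{\tilde i}(A_\omega)$ the image $T_i((A_{\omega})_a)$ is an $\RV$-pullback. Granted such a $T_i$, continuity plus Lemma~\ref{conti:covar} and Lemma~\ref{vol:par:bounded} force $A_i \sub \VF^{n-1} \times \RV^{m_i}$ to be proper invariant, and the inductive hypothesis applied to $A_i$ produces $T_{\sigma(2)}, \ldots, T_{\sigma(n)}$ in any order, realizing an arbitrary permutation $\sigma$ since $i$ was arbitrary; the head-start issue for any $k$ is handled by the same argument, since those $\RV$-coordinates simply ride along untouched.

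The construction of $T_i$ proceeds $a$-definably. Each slice $(A_{\omega})_a$ is an $a$-definable proper invariant set with exactly one $\VF$-coordinate, and $H_a \coloneqq \RVH((A_{\omega})_a)$ is a proper invariant $\RV$-pullback, which after preparation as in Remark~\ref{spec:addendum} may be taken doubly bounded. By Lemma~\ref{int:inv} the two-piece partition $\{(A_{\omega})_a,\, H_a \mi (A_{\omega})_a\}$ consists of proper invariant sets, so Proposition~\ref{simplex:with:hole:rvproduct} yields an $a$-definable continuous proper special covariant bijection on $H_a$ whose restriction $T_{i,a}$ to $(A_{\omega})_a$ is again proper special covariant and sends the slice to a doubly bounded $\RV$-pullback. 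By compactness the family $(T_{i,a})_a$ is piecewise uniformly definable, and hence glues into one definable function $T_i$ on $A_{\omega}$ whose $\VF$-fibers are precisely the $T_{i,a}$; by construction, $T_i$ is relatively special covariant in the $i$th $\VF$-coordinate.

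It remains to upgrade the fiberwise good behavior of $T_i$ to global good behavior. For global continuity, apply Lemma~\ref{fiber:conti} to the nontrivial $i$th $\VF$-component of $T_i$, with the other coordinates as parameters; continuity then holds outside a definable subset of smaller $\VF$-dimension, concentrated in finitely many exceptional parameter slices, all of which lie in finitely many definable open discs of the common invariance radius $\delta$ of $A$ and are removable by FMT (Terminology~\ref{FMT}). For proper covariance, let $p : A_{\omega} \fun \absG$ send each point to the aperture of the relevant $T_{i,a}$; this is an $\go$-partition of a set with closed bounded $\VF$-fibers and doubly bounded $\RV$-part, so Lemma~\ref{vol:par:bounded} gives $p(A_{\omega})$ doubly bounded, and Lemma~\ref{db:to:db} then provides a common definable aperture. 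With this in hand, Lemma~\ref{conti:covar} (its bijective version) upgrades continuous fiberwise proper covariance to global proper covariance of $T_i$, since the $\VF$-fibers of $T_i(A_{\omega})$ are clopen by the $\RV$-pullback property.

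The chief obstacle is this globalization step, namely fusing the outputs of fiberwise Proposition~\ref{simplex:with:hole:rvproduct} into a \emph{single} continuous proper covariant relatively special covariant bijection without sacrificing the $\RV$-pullback property on each fiber; all the requisite tools, Lemma~\ref{fiber:conti}, FMT, Lemma~\ref{vol:par:bounded}, Lemma~\ref{db:to:db}, and Lemma~\ref{conti:covar}, have been developed precisely for this situation, and the pattern closely parallels the inductive step inside the proof of Proposition~\ref{simplex:with:hole:rvproduct} itself.
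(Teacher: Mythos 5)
Your inductive scheme runs in the opposite direction from the paper's, and that difference is fatal. The paper's proof applies the inductive hypothesis fiberwise over a \emph{single} parameter $\VF$-coordinate: it first assumes $\sigma = \id$, obtains a standard contraction $\hat T_a$ of each slice $A_a$ over $a \in \pr_n(A) \sub \VF$ (so contracting coordinates $1, \ldots, n-1$), glues these, upgrades to global continuity and proper covariance, and only then handles the remaining coordinate $n$ via Remark~\ref{special:pseudo}. You instead construct $T_{\sigma(1)} = T_i$ first by applying Proposition~\ref{simplex:with:hole:rvproduct} to the one-dimensional slices $(A_\omega)_a$ over $a \in \pr_{\tilde i}(A_\omega)$, and then need to fuse over the $n-1$ parameter $\VF$-coordinates in $\tilde i$. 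You correctly identify this fusing as "the chief obstacle," but the tools you invoke do not cover it.

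Concretely, Lemma~\ref{fiber:conti} is stated and proved for a single parameter coordinate $a \in \pr_1(A) \sub \VF$ with $f_a : A_a \fun \VF$ \emph{jointly} continuous in the $(n-1)$-dimensional fiber; the conclusion places discontinuities in finitely many slices over that one $\VF$-coordinate, which the $\delta$-invariance of $A$ turns into finitely many open discs suitable for FMT. In your setup the roles are reversed: you have $n-1$ parameter $\VF$-coordinates and only one-variable continuity of $T_{i,a}$ in the $i$th coordinate for each fixed $a \in \pr_{\tilde i}(A_\omega)$. For $n \geq 3$ this does not meet the hypothesis of Lemma~\ref{fiber:conti} under any choice of a single parameter coordinate $j \ne i$, because you would need joint continuity of $(T_i)_i$ in all coordinates of $\tilde j$ for each fixed $a_j$, which you do not have. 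The same obstruction appears in controlling the $\RV$-data of the family $(T_{i,a})_a$: the paper's argument for this (inside the proof of Proposition~\ref{simplex:with:hole:rvproduct}, and again in the paper's proof of the present lemma) leans on Lemma~\ref{special:bi:term:constant} together with HNF and the finiteness of focus maps over a set of $\RV$-fiber dimension $0$ in $\VF^1$, and FMT is formulated precisely to excise finitely many exceptional \emph{discs} — all one-dimensional phenomena in the parameter. None of this is available when the parameter space is $\VF^{n-1} \times \RV^m$ with $n \geq 3$. The paper's ordering of the induction is not cosmetic: it arranges matters so that the fusing problem is always over exactly one $\VF$-coordinate, which is the only case the developed machinery handles.
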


Before we start the proof, first observe that $(A, \omega)$ may be viewed as the obvious intermediate result of a  standard contraction of $(A^\sharp, \omega) \in \mVF^{\diamond}[n{+}k]$. Thus we may and do assume $k=0$, that is, $A \in \VF_*$.

\begin{proof}
We do induction on $n$. The base case $n=1$ follows from Remark~\ref{special:pseudo}. For the inductive step, we may assume $\sigma = \id$ and, by compactness, $\omega = 1$. By the inductive hypothesis and compactness again,  there is a definable bijection $T$ on $A$ such that, for every $a \in A' \coloneqq \pr_n(A) \sub \VF$, the induced bijection $T_a$ on $A_a$ is the lift of a standard contraction $\hat T_a$ of $A_a$; in particular, $T$ may be written as a composition of relatively proper special covariant bijections $T_i : A_{i-1} \fun A_{i}$. Note that each $T^{-1}_i$ is $\rv$-contractible.

Suppose that $A$ is proper $\delta$-invariant. Let $\ga$ range over open discs of radius $\delta$ contained in $A'$. Applying Lemma~\ref{special:bi:term:constant} in the $n$th $\VF$-coordinate as in the proof of Proposition~\ref{simplex:with:hole:rvproduct}, we deduce that over all but finitely many definable open discs $\ga$, the $\RV$-data of $T_a$ is constant over $\ga$; it will become clear what $\RV$-data is actually needed below. By FMT (Terminology~\ref{FMT}), we may assume that there are no exceptional discs and in light of Lemma~\ref{fiber:conti},  every $T_i$ is continuous.

By the constancy of $\RV$-data over each $\ga$  and Lemma~\ref{invar:db}, the $\VF$-coordinates of $A_i$  are bounded and the $\RV$-coordinates of $A_i$ are doubly bounded. Pulling back open polydiscs of the form $\ga \times \gp$ along $T^{-1}$, where  $\gp$ is an $\RV$-polydisc, we see that every $\VF$-fiber of $A_i$ is a union  of open polydiscs of the form $\gb \times \gq$, where $\gb \sub \ga$ for some $\ga$ and the radius of $\gq$ only depends on $\ga$ and the $\VF$-fiber in question. These open discs $\gb$ may be taken to be maximal in the obvious sense and hence form an $\go$-partition. By Lemma~\ref{vol:par:bounded}, we conclude that each $A_i$ is indeed proper invariant. Then, by Lemma~\ref{conti:covar}, every $T_i$ is proper covariant. Finally, applying Remark~\ref{special:pseudo} to the set $\bigcup_{a \in A'} a \times \hat T_a(A_a)$, the lemma follows.
\end{proof}

We remark that  standard contractions bear marked similarities to special covariant bijections and may indeed be used to deduce, say, Lemma~\ref{L:measure:surjective}, etc.

%However, in the proof of Proposition~\ref{kernel:L} below, the case of $\mRV[k]$ needs Lemma~\ref{special:to:blowup:coa}, which in turn forces

\begin{lem}\label{kernel:dim:1:coa}
Suppose that $[(A, \omega)] = [(B, \sigma)]$ in $\gsk \mVF[1]$ (respectively,  $\gsk \mVF^{\diamond}[1]$). Let $(\bm U, \omega')$, $(\bm V, \sigma')$ be, respectively, standard contractions of $(A, \omega)$, $(B, \sigma)$ in $\mRV[1]$ (respectively,  $\mRV^{\db}[1]$). Then
\[
([(\bm U, \omega')], [(\bm V, \sigma')]) \in \misp \quad (\text{respectively, } \mispdb).
\]
\end{lem}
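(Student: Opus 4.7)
The plan is to leverage Lemma~\ref{simul:special:dim:1} to convert the $\mVF[1]$-isomorphism into an $\RV[{\leq}1]$-bijection via an auxiliary pair of special covariant bijections, and then to use Lemma~\ref{special:to:blowup:coa} together with Lemma~\ref{blowup:equi:class:coa} to compare back to the given standard contractions. First I would compose with the standard-contraction identifications. Since $(\bm U, \omega')$ is a standard contraction of $(A, \omega)$, the underlying bijection of that contraction furnishes a $\mVF[1]$-isomorphism (a $\mVF^{\diamond}[1]$-isomorphism in the $\diamond$-case, using Lemma~\ref{exit:stand}) $S_A : (A, \omega) \fun \mu\bb L(\bm U, \omega')$; symmetrically $S_B : (B, \sigma) \fun \mu\bb L(\bm V, \sigma')$. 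Picking a representative $f$ of the hypothesised class, set $\tilde f \coloneqq S_B \circ f \circ S_A^{-1} : \mu\bb L(\bm U, \omega') \fun \mu\bb L(\bm V, \sigma')$, which is itself a $\mVF[1]$- (respectively $\mVF^{\diamond}[1]$-) isomorphism.

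Second I would apply Lemma~\ref{simul:special:dim:1} to the underlying $\VF[1]$-morphism $\tilde f$. This produces special bijections $T_A : \bb L\bm U \fun (\bb L\bm U)^\flat$ and $T_B : \bb L\bm V \fun (\bb L\bm V)^\flat$ onto $\RV$-pullbacks such that $\tilde f^\flat \coloneqq T_B \circ \tilde f \circ T_A^{-1}$ is a lift of an $\RV[{\leq}1]$-bijection $\tilde f^\flat_\downarrow$; in the $\diamond$-case, the second clause of that lemma guarantees $T_A, T_B$ proper covariant and $(\bb L\bm U)^\flat, (\bb L\bm V)^\flat$ doubly bounded. From the top-dimensional part of $\rv((\bb L\bm U)^\flat)_{\leq 1}$ I read off an $\mRV[1]$- (respectively $\mRV^{\db}[1]$-) object $(\bm U^\flat, \omega^\flat)$, with volume form induced by $\bb L\omega'$, and analogously $(\bm V^\flat, \sigma^\flat)$. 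Lemma~\ref{special:to:blowup:coa}, applied to $T_A$ as a special covariant bijection on $\bb L\bm U$, then shows that $(\bm U^\flat, \omega^\flat)$ is isomorphic to a blowup of $(\bm U, \omega')$; symmetrically for $(\bm V^\flat, \sigma^\flat)$ and $(\bm V, \sigma')$.

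Third I would verify that $\tilde f^\flat_\downarrow$ is a morphism $(\bm U^\flat, \omega^\flat) \fun (\bm V^\flat, \sigma^\flat)$ in $\mRV[1]$ (respectively $\mRV^{\db}[1]$). The bare $\RV$-bijection comes from Lemma~\ref{simul:special:dim:1}. For the volume-form compatibility, I would push the identity $\omega(x) = \sigma(f(x)) \rv(\jcb_{\VF} f(x))$ across $S_A, S_B$ and $T_A, T_B$ (each being a composition of special covariant bijections and hence having $\VF$-Jacobian equal to $1$ almost everywhere), and then invoke Lemma~\ref{RV:iso:class:lifted:jcb} to pass from $\rv(\jcb_{\VF}\tilde f^\flat)$ to $\jcb_{\RV}\tilde f^\flat_\downarrow$ for almost all points; taking $\vrv$ of the resulting identity yields the $\jcb_{\Gamma}$-clause of Definition~\ref{defn:RV:mu} everywhere. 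Consequently $[(\bm U^\flat, \omega^\flat)] = [(\bm V^\flat, \sigma^\flat)]$, and Lemma~\ref{blowup:equi:class:coa} then produces isomorphic further blowups of $(\bm U, \omega')$ and $(\bm V, \sigma')$, which is precisely $([(\bm U, \omega')], [(\bm V, \sigma')]) \in \misp$ (respectively $\mispdb$).

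The hard part will be the volume-form bookkeeping in the third step: the $\jcb_{\RV}$-clause of Definition~\ref{defn:RV:mu} need only hold outside a set of $\RV$-dimension less than $1$, while the $\jcb_{\Gamma}$-clause must hold everywhere, and so the two must be established separately. In the $\diamond$-variant one must in addition ensure that every intermediate object remains proper invariant and every intermediate bijection remains proper covariant; this is supplied by Lemma~\ref{exit:stand} for $S_A, S_B$, by the second clause of Lemma~\ref{simul:special:dim:1} for $T_A, T_B$, and by Remark~\ref{invert:compose} for the compositions.
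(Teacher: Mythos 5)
Your proposal follows the same outline as the paper's proof and correctly identifies the key lemmas (Lemma~\ref{simul:special:dim:1}, Lemma~\ref{special:to:blowup:coa}, Lemma~\ref{RV:iso:class:lifted:jcb}), but the crucial third step is not actually carried out, and in one place it is stated incorrectly.

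You write that ``taking $\vrv$ of the resulting identity yields the $\jcb_{\Gamma}$-clause of Definition~\ref{defn:RV:mu} everywhere.'' This does not follow: Lemma~\ref{RV:iso:class:lifted:jcb} gives the identity $\rv(\jcb_{\VF}\tilde f^\flat(a)) = \jcb_{\RV}\tilde f^\flat_\downarrow(u)$ only for $u$ outside a definable set of $\RV$-dimension less than $1$, i.e.\ outside finitely many points, and applying $\vrv$ to an identity that holds almost everywhere still only gives an identity almost everywhere. Since Definition~\ref{defn:RV:mu} requires the $\jcb_{\Gamma}$-clause to hold at \emph{every} point, the exceptional points must be handled separately. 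You flag this as ``the hard part'' in your closing paragraph, but you never supply the argument, which is precisely the non-formal content of the lemma. The paper's resolution is to make a further reduction: retroactively shrink so that $A'$, $B'$ are single $\RV$-discs $t^\sharp$, $s^\sharp$, $T$, $R$ are single centripetal transformations, the contracted volume forms $\omega'_T$, $\sigma'_R$ are constant, and $F'$ has dtdp (via Lemma~\ref{open:pro}); then o-minimality in the $\K$-sort forces $\tilde f^\flat_\downarrow$ to be differentiable everywhere with $\rv(\jcb_{\VF}\tilde f^\flat(a)) = \jcb_{\RV}\tilde f^\flat_\downarrow(u) = \omega'_T/\sigma'_R$ at every $u$, which gives both clauses simultaneously.

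There is a second, related gap in the $\diamond$-case. The pseudo-case reduction deletes finitely many points (to enforce dtdp and to avoid the non-differentiable locus), but a proper invariant set with finitely many points removed is no longer clopen and hence no longer proper invariant, so this deletion is not available. Your proposal inherits this move without noticing the obstruction. The paper circumvents it by invoking Lemma~\ref{diff:dtdp} --- exactly as in the proof of the second clause of Lemma~\ref{simul:special:dim:1} --- to control the $\Gamma$-Jacobian at the finitely many exceptional points without removing them. Your reliance on Remark~\ref{invert:compose} and Lemma~\ref{exit:stand} does address the proper-covariance bookkeeping for $S_A$, $S_B$, $T_A$, $T_B$, but not this particular issue.
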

\begin{proof}
Let $F: (A, \omega) \fun (B, \sigma)$ be a $\mVF[1]$-morphism. The essential bijection $\bb L \bm U \fun \bb L \bm V$ induced by $F$ is denoted by $F'$. Let $A' = \dom(F')$ and $B' = \ran(F')$. By Lemma~\ref{simul:special:dim:1}, deleting finitely many points from $A'$, $B'$ if necessary, we may assume that there are special bijections $T$, $R$ on $\bb L \bm U$, $\bb L \bm V$ such that $A'^{\flat}$, $B'^{\flat}$ are $\RV$-pullbacks and $F'^{\flat}_{\downarrow}$ is an $\RV[1]$-morphism
\[
\bm A' \coloneqq (A'^{\flat}_{\RV})_1 \fun \bm B' \coloneqq (B'^{\flat}_{\RV})_1.
\]
By Lemma~\ref{special:to:blowup:coa}, $(\bm A', \omega'_T)$, $(\bm B', \sigma'_R)$ are isomorphic to blowups of $(\bm U, \omega')$, $(\bm V, \sigma')$, respectively. By Lemma~\ref{RV:iso:class:lifted:jcb}, the bijection $F'^{\flat}_{\downarrow}$ is indeed a $\mRV[1]$-morphism $(\bm A', \omega'_T) \fun (\bm B', \sigma'_R)$ except at finitely many points (the condition involving $\jcb_{\Gamma}$ might fail). At this stage, without loss of generality, we may retroactively assume that $A'$, $B'$ are of the forms $t^\sharp$, $s^\sharp$, where $t, s \in \RV$, and $T$, $R$ are both centripetal transformations, and $F'$ has dtdp (after deleting finitely many points again, see  Lemma~\ref{open:pro} and the proof of \cite[Lemma~5.10]{Yin:tcon:I} for more details). Since the volume forms $\omega'_T$, $\sigma'_R$ are constant, by \omin-minimality in the $\K$-sort and Lemma~\ref{RV:iso:class:lifted:jcb} again, we must have that $F'^{\flat}_{\downarrow}$ is differentiable everywhere and indeed,  for all $u \in \rv(A'^{\flat})$ and all $a \in u^\sharp$,
\[
\rv(\jcb_{\VF} F'^{\flat}( a)) = \jcb_{\RV} F'^{\flat}_{\downarrow}(u) = \omega'_T / \sigma'_R.
\]
It follows that $F'^{\flat}_{\downarrow}$ is a $\mRV[1]$-morphism.

The argument above also works for the other case. Note that  $F$ and hence $F'$ are then already  bijections. Also, we are not free to delete finitely many points to enforce dtdp. Instead, Lemma~\ref{diff:dtdp} is applied as in the proof of Lemma~\ref{simul:special:dim:1} to ensure that the condition involving $\jcb_{\Gamma}$ holds at these exceptional points.
\end{proof}

% $\bm A', \bm A'' \in \mVF[n]$ and If $\bm A', \bm A'' \in \mVF^{\diamond}[n]$ then t

\begin{lem}\label{isp:VF:fiberwise:contract}
Let $A'$, $A''$ be definable sets with $A'_{\VF} = A''_{\VF} \eqqcolon A \sub \VF^n$ and $\omega'$, $\omega''$ definable functions from $A'$, $A''$ into $\RV$, respectively. Write $\bm A' = (A', \omega')$ and $\bm A'' = (A'', \omega'')$. Suppose that there is a $k \in \N$ such that for  every $a \in A$,
\begin{equation}\label{condi:fiber}
([(A'_a, \omega')]_{\leq k}, [(A''_a, \omega'')]_{\leq k}) \in \misp.
\end{equation}
Let $\hat T_{\sigma}$, $\hat R_{\sigma}$ be standard contractions of $\bm A'$, $\bm A''$, respectively. Then
\[
([\hat T_{\sigma}(\bm A')]_{\leq n+k}, [\hat R_{\sigma}(\bm A'')]_{\leq n+k}) \in \misp.
\]
If $\bm A'$, $\bm A''$ are proper invariant then the same holds with respect to $\mispdb$.
\end{lem}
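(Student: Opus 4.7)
The plan is to uniformize the fiberwise blowup data by compactness, then observe that standard contractions intertwine naturally with blowups in the sense that a fiberwise blowup of $\bm A'$ becomes an honest blowup of $\hat T_\sigma(\bm A')_{\leq n+k}$ after contraction.

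First, I will upgrade condition (\ref{condi:fiber}) to a uniform statement. For each $a \in A$, choose $a$-definable blowups $\bm B'_a$, $\bm B''_a$ of $(A'_a, \omega')_{\leq k}$, $(A''_a, \omega'')_{\leq k}$ together with an $a$-definable $\mRV[\leq k]$-isomorphism $F_a \colon \bm B'_a \fun \bm B''_a$. By compactness, possibly after a definable finite partition of $A$ (handled piecewise, since by Lemma~\ref{isp:congruence:vol} the relation $\misp$ is additive), these data are given by a single definable family. This yields definable objects $\bm B' \sub \VF^n \times \RV^{m'+l}$, $\bm B'' \sub \VF^n \times \RV^{m''+l'}$ whose fibers over $A$ are the blowups $\bm B'_a$, $\bm B''_a$, together with a definable bijection $F \colon \bm B' \fun \bm B''$ over $A$ which is fiberwise an $\mRV[\leq k]$-morphism. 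In the proper invariant case, Lemma~\ref{db:to:db} and Terminology~\ref{prop:pseu} allow uniform bounding of the apertures, so $\bm B'$, $\bm B''$ remain proper invariant and $F$ is taken to be proper covariant.

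Second, I will extend the standard contractions. Since each relatively special covariant bijection $T_i$ (resp.\ $R_i$) composing $\hat T_\sigma$ (resp.\ $\hat R_\sigma$) acts only on the $i$-th $\VF$-coordinate, it lifts to a relatively special covariant bijection $T^+_i$ (resp.\ $R^+_i$) on $\bm B'$ (resp.\ $\bm B''$) by acting as identity on the additional $\RV$-coordinates. The resulting compositions give standard contractions $\hat T^+_\sigma(\bm B')_{\leq n+k}$, $\hat R^+_\sigma(\bm B'')_{\leq n+k}$ in $\mRV[\leq n+k]$; in the proper case, their existence and proper covariance are ensured by Lemma~\ref{exit:stand} together with Corollary~\ref{cov:rest}. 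I then claim, by direct inspection of Definition~\ref{defn:blowup:coa} and Definition~\ref{defn:standard:contraction}, that $\hat T^+_\sigma(\bm B')_{\leq n+k}$ is an $\mRV[\leq n+k]$-blowup of $\hat T_\sigma(\bm A')_{\leq n+k}$ of the same total length as the fiberwise blowups, and analogously for $\bm A''$: the extra $\RV$-coordinates added in passing from $\bm A'$ to $\bm B'$ carry the blowup loci and apertures, and these descend through the contraction (which is trivial on them) to exactly the blowup structure at the contracted level.

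Finally, the fiberwise isomorphism $F$ descends to a definable bijection between $\hat T^+_\sigma(\bm B')_{\leq n+k}$ and $\hat R^+_\sigma(\bm B'')_{\leq n+k}$; I will verify this is an $\mRV[\leq n+k]$-morphism. The $\VF$-Jacobian contributions of the two contractions cancel, because $F$ commutes with the projection to the common $\VF$-base $A$ and the bijections $T^+_i$, $R^+_i$ act identically on the $\VF$-coordinates (up to the identification supplied by $F$). The $\RV$-Jacobian and, in the proper case, the $\Gamma$-Jacobian conditions reduce fiberwise to those of $F_a$, using Lemma~\ref{RV:iso:class:lifted:jcb} and Lemma~\ref{rv:op:comm} to identify pointwise and contracted Jacobians. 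This produces isomorphic blowups of $\hat T_\sigma(\bm A')_{\leq n+k}$ and $\hat R_\sigma(\bm A'')_{\leq n+k}$, establishing membership in $\misp$ (resp.\ $\mispdb$). The hard part will be the uniformization of Step~1 together with the Jacobian bookkeeping in the proper case, where definability of uniform apertures across fibers and their compatibility with the apertures of the contractions must all be coordinated; everything else is essentially a formal translation between the blowup formalism and the contraction formalism.
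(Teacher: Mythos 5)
Your approach has a genuine gap, and it is the central one: you never address the fact that $\hat T_{\sigma}$ and $\hat R_{\sigma}$ are in general \emph{different} standard contractions. Your construction builds blowups $\hat T^+_{\sigma}(\bm B')$ and $\hat R^+_{\sigma}(\bm B'')$ and then asserts that the fiberwise $\mRV$-isomorphism $F$ ``descends to a definable bijection'' between them because ``$T^+_i$, $R^+_i$ act identically on the $\VF$-coordinates (up to the identification supplied by $F$).'' This is false: $F$ is a family of $\RV$-level maps parameterized by $a \in A$, it does not touch the $\VF$-coordinates at all, whereas the contractions $T^+_i$ and $R^+_i$ are two unrelated sequences of relatively special covariant bijections \emph{in} the $\VF$-coordinates. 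There is no identification of the fibers of $\hat T_{\sigma}$ over $A$ with the fibers of $\hat R_{\sigma}$ over $A$, so $F$ (which is constant in the $\VF$-variable) has nothing to descend along. To see this fail concretely, take $\bm A' = \bm A''$, $\omega' = \omega''$, and the fiberwise blowups trivial; your $\bm B' = \bm B'' = \bm A'$, your extensions are $\hat T_{\sigma}$, $\hat R_{\sigma}$ themselves, and your $F$ is the identity. You are then trying to conclude that two arbitrary standard contractions of the same object are $\misp$-congruent purely from ``$F = \id$ descends,'' which is precisely the nontrivial content of Lemma~\ref{kernel:dim:1:coa} (after the reduction to $n=1$) that you never invoke.

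There is a second, subsidiary gap in Step~3. Compactness gives you a definable family of fiberwise blowup data over $a \in A \sub \VF^n$, but for $\hat T^+_{\sigma}(\bm B')_{\leq n+k}$ to literally be a blowup of $\hat T_{\sigma}(\bm A')_{\leq n+k}$ you need the blowup loci, apertures, and morphisms to be constant on the fibers of the contraction $\hat T_{\sigma}$ — in the $n=1$ case, constant on the $\RV$-polydiscs of $A^{\flat}$. Nothing in your uniformization arranges this; arbitrary definable $a$-data can vary within a single $\RV$-disc. The paper's proof handles exactly this point by first reducing to $n=1$ by induction and then re-running the construction in the proof of \cite[Lemma~5.36]{Yin:tcon:I}: one constructs an auxiliary special (covariant) bijection on $A \sub \VF$ whose $\RV$-polydiscs have constant fiber data, so that condition~(\ref{condi:fiber}) becomes a statement over the contracted $\RV$-sort. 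Once both issues are fixed (constancy on $\RV$-polydiscs and the comparison of distinct contractions via Lemma~\ref{kernel:dim:1:coa}), the argument closes, with $\jcb_{\RV}$ and $\jcb_{\Gamma}$ bookkeeping as you say; but as written, your Steps~3 and~4 do not prove what they claim, and the lemma's hardest content is precisely what they skip.
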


Note that condition (\ref{condi:fiber}) makes sense only  over the substructure $\mdl S \la a \ra$.

\begin{proof}
By induction on $n$, this is immediately reduced to the case $n=1$. So assume $A \sub \VF$. After replacing $A'$, $A''$ with $A'_{\omega'}$, $A''_{\omega''}$, we see that the special bijection $F : A \fun A^\sharp$ constructed in the proof of \cite[Lemma~5.36]{Yin:tcon:I} also satisfies the condition that for all $\RV$-polydisc $\gp \sub A^{\sharp}$ and all $a_1, a_2 \in F^{-1}(\gp)$,
\[
\omega' \rest A'_{a_1} = \omega' \rest A'_{a_2} \dand \omega'' \rest A''_{a_1} = \omega'' \rest A''_{a_2}.
\]
Therefore, using Lemma~\ref{kernel:dim:1:coa} in place of \cite[Lemma~5.34]{Yin:tcon:I}, that proof goes through here with virtually no changes (the additional computations involving $\jcb_{\RV}$ and $\jcb_{\Gamma}$ are all straightforward).

The second claim is similar, except that $F$ must be a proper special covariant bijection with a sufficiently large aperture. As before, this may be taken care of by  FMT (also see the discussion in Remark~\ref{special:pseudo}) .
\end{proof}

\begin{cor}\label{contraction:same:perm:isp}
Let $\bm A'$, $\bm A''$ be as above with $k=0$, and suppose that there is a morphism $F : \bm A' \fun \bm A''$ that is relatively unary in the $i$th $\VF$-coordinate. Then for any permutation $\sigma$ of $[n]$ with $\sigma(1) = i$ and any standard contractions $\hat T_{\sigma}$, $\hat R_{\sigma}$ of $\bm A'$, $\bm A''$,
\[
([\hat T_{\sigma}(\bm A')]_{\leq n}, [\hat R_{\sigma}(\bm A'')]_{\leq n}) \in \misp \quad (\text{respectively, } \mispdb).
\]
\end{cor}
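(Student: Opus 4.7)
The strategy is to apply Lemma~\ref{isp:VF:fiberwise:contract} to the partially contracted objects $\bm A'_i \coloneqq \hat T_i(\bm A')$ and $\bm A''_i \coloneqq \hat R_i(\bm A'')$ obtained after the first step of each standard contraction; since $\sigma(1) = i$, these first steps act only on the $i$th $\VF$-coordinate, after which both objects have $\VF$-part $\pr_{\tilde i}(A) \sub \VF^{n-1}$.

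The key observation is that relative unarity of $F$ in the $i$th coordinate means that for each $a \in \pr_{\tilde i}(A)$, the restriction $F_a$ is an $\mVF[1]$-morphism (respectively, an $\mVF^{\diamond}[1]$-morphism) $\bm A'_a \fun \bm A''_a$ over the substructure $\mdl S \la a \ra$, witnessing $[\bm A'_a] = [\bm A''_a]$ in the appropriate Grothendieck semigroup. The fiberwise restrictions of $\hat T_i$, $\hat R_i$ are standard contractions of these one-dimensional fibers, so Lemma~\ref{kernel:dim:1:coa} (applied over $\mdl S \la a \ra$) gives
\[
([(\bm A'_i)_a]_{\leq 1}, [(\bm A''_i)_a]_{\leq 1}) \in \misp \quad (\text{respectively, } \mispdb)
\]
for every $a \in \pr_{\tilde i}(A)$, where $(\bm A'_i)_a = \hat T_i(\bm A'_a)$ and $(\bm A''_i)_a = \hat R_i(\bm A''_a)$. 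This verifies hypothesis~(\ref{condi:fiber}) of Lemma~\ref{isp:VF:fiberwise:contract} for the pair $(\bm A'_i, \bm A''_i)$ with head start $k=1$.

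The remaining factors $\hat T_{\sigma(2)}, \ldots, \hat T_{\sigma(n)}$ and $\hat R_{\sigma(2)}, \ldots, \hat R_{\sigma(n)}$ assemble into standard contractions $\hat T'_{\sigma'}$, $\hat R'_{\sigma'}$ of $\bm A'_i$, $\bm A''_i$ with respect to the permutation $\sigma'$ of $[n-1]$ induced by $\sigma$. Applying Lemma~\ref{isp:VF:fiberwise:contract} with $n-1$ $\VF$-coordinates and head start $1$ yields
\[
([\hat T'_{\sigma'}(\bm A'_i)]_{\leq (n-1)+1}, [\hat R'_{\sigma'}(\bm A''_i)]_{\leq (n-1)+1}) \in \misp \quad (\text{respectively, } \mispdb).
\]
Since $\hat T'_{\sigma'}(\bm A'_i) = \hat T_\sigma(\bm A')$, $\hat R'_{\sigma'}(\bm A''_i) = \hat R_\sigma(\bm A'')$, and $(n-1)+1 = n$, this is exactly the desired conclusion.

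The argument is essentially a clean combination of the two preceding results; there is no real obstacle, only some bookkeeping to check that the head start shifts from $0$ to $1$ after the first (one-dimensional) contraction, reflecting the fact that each fiber contributes at most one $\RV$-coordinate while the number of $\VF$-coordinates drops by one, keeping the total ambient dimension at $n$.
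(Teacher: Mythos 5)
Your overall strategy — apply Lemma~\ref{kernel:dim:1:coa} fiberwise to the first (one-dimensional) contraction step in the $i$th coordinate, then feed the resulting fiber condition into Lemma~\ref{isp:VF:fiberwise:contract} to assemble — is exactly the strategy the paper uses, including the correct head-start bookkeeping. However, there is a genuine gap at the step you call the ``key observation.''

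You assert that relative unarity of $F$ in the $i$th coordinate implies that for \emph{each} $a \in \pr_{\tilde i}(A)$, the restriction $F_a$ is an $\mVF[1]$-morphism $\bm A'_a \fun \bm A''_a$ over $\mdl S\la a\ra$. This is false, and the paper's proof is devoted almost entirely to dealing with its failure. A $\mVF[n]$-morphism is only an \emph{essential} bijection: $F$ is required to be partially differentiable (and to satisfy the Jacobian condition) only on $\dom(F) = A' \mi Z$ with $\dim_{\VF}(Z) < n$. Such a $Z$ can have $\VF$-dimension $n-1$ and contain entire fibers over the projection $\pr_{\tilde i}$; for those $a$, the trace $Z_a$ is infinite and $F_a$ cannot be an essential bijection in $\mVF[1]$ (which may only omit a finite set). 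So the fiber hypothesis~(\ref{condi:fiber}) of Lemma~\ref{isp:VF:fiberwise:contract} — which is required for \emph{every} $a$, not almost every $a$ — simply does not hold, and nothing in the proposal repairs it. The paper first invokes Lemma~\ref{L:measure:surjective} to produce an alternative standard contraction whose full-dimensional part sits inside the differentiability locus of $F$, then uses Lemma~\ref{isp:VF:fiberwise:contract} (applied to the comparison of two standard contractions of the same object) to justify replacing the given $\hat T_\sigma$ by this well-chosen one, and only after this reduction does it run the fiberwise argument you describe. In the $\mVF^\diamond$ case the same phenomenon is handled differently, via $\delta$-invariance of $A'$ and of $\hat T_{\sigma(1)}(A')$: for each $a$ one finds a $b$ in the same open polydisc of radius $\delta$ at which $F$ genuinely induces a fiber morphism and for which the first contraction step produces the same result, so the fiber condition can be certified indirectly. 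Your proposal skips both of these preparatory steps, so as written the appeal to Lemma~\ref{isp:VF:fiberwise:contract} is unjustified.
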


\begin{proof}
First suppose that $F$ is a $\mVF[n]$-morphism. Note that since $F$ is partially differentiable outside a definable subset of $\VF$-dimension less than $n$, it may not induce a morphism $\bm A'_a \fun \bm A''_a$ for every $a \in \pr_{\tilde i}(A)$. By Lemma~\ref{L:measure:surjective}, there is a standard contraction $\hat T'_{\sigma}$ of $\bm A'$ such that the subset of $A'$ that corresponds to $\hat T'_{\sigma}(\bm A')_{\leq n}$ via $T'_{\sigma}$ is contained in the partial differential locus of $F$. Thus, by Lemma~\ref{isp:VF:fiberwise:contract}, we may assume that $F$ is indeed differentiable. But then the claim follows immediately from Lemmas~\ref{kernel:dim:1:coa} and \ref{isp:VF:fiberwise:contract}.

If $F$ is a $\mVF^{\diamond}[n]$-morphism then there is a $\delta \in \absG^+$ such that $A'$ and $\hat T_{\sigma(1)}(A')$ are both $\delta$-invariant. This means that for all $a \in \pr_{\tilde i}(A)$ there is a $b \in \pr_{\tilde i}(A)$ such that they are contained in the same open polydisc of radius $\delta$, $F$ induces a morphism $\bm A'_b \fun \bm A''_b$, and $\hat T_{\sigma(1)}(A')_a = \hat T_{\sigma(1)}(A')_b$. Thus the claim follows immediately from Lemmas~\ref{kernel:dim:1:coa} and \ref{isp:VF:fiberwise:contract} again.
\end{proof}

\begin{defn}[$\vv$-affine and $\rv$-affine]\label{rvaffine}
Let $\ga$ be an open disc and $f : \ga \fun \VF$ an injection.
We say that $f$ is \emph{$\vv$-affine} if there is a (necessarily unique) $\gamma \in \Gamma$, called the \emph{shift} of $f$, such that, for all $a, a' \in \ga$,
\[
\abval(f(a) - f(a')) = \gamma + \abval(a - a').
\]
We say that $f$ is \emph{$\rv$-affine} if there is a (necessarily unique) $t \in \RV$, called the \emph{slope} of $f$, such that, for all $a, a' \in \ga$,
\[
\rv(f(a) - f(a')) = t \rv(a - a').
\]
\end{defn}

\begin{defn}\label{defn:balance}
Let $A \sub \VF^2$ be a definable set such that $\ga_1 \coloneqq \pr_1(A)$ and $\ga_2 \coloneqq \pr_2(A)$ are both open discs. Let $f : \ga_1 \fun \ga_2$ be a definable bijection that has dtdp.  We say that $f$ is \emph{balanced in $A$} if $f$ is actually $\rv$-affine and there are $t_1, t_2 \in \RVV$, called the \emph{paradigms} of $f$, such that, for every $a \in \ga_1$,
\[
A_a = t_2^\sharp + f(a) \dand f^{-1}(A_a) = a - t_1^\sharp.
\]
\end{defn}

\begin{defn}[$2$-cell]\label{def:units}
We say that a set $A$ is a \emph{$1$-cell} if it is either an open disc contained in a single $\RV$-disc or a point in $\VF$. We say
that $A$ is a \emph{$2$-cell} if
\begin{itemize}
 \item $A$ is a subset of $\VF^2$ contained in a single $\RV$-polydisc and $\pr_1(A)$ is a $1$-cell,
 \item there is a function $\epsilon : A_1 \coloneqq \pr_1 (A) \fun \VF$ and a $t \in \RV$ such that, for every $a \in A_1$, $A_a = t^\sharp + \epsilon(a)$,
 \item one of the following three possibilities occurs:
  \begin{itemize}
   \item $\epsilon$ is constant,
   \item $\epsilon$ is injective, has dtdp, and $\rad(\epsilon(A_1)) \geq \abs{\vrv(t)}$,
   \item $\epsilon$ is balanced in $A$.
  \end{itemize}
\end{itemize}
The function $\epsilon$ is called the \emph{positioning function} of $A$ and the element $t$ the \emph{paradigm} of $A$.

More generally, a set $A$ with exactly one $\VF$-coordinate is a \emph{$1$-cell} if, for each $t \in A_{\RV}$, $A_t$ is a $1$-cell in the above sense; the parameterized version of the notion of a $2$-cell is formulated in the same way.
\end{defn}

\begin{lem}\label{decom:into:2:units}
Let  $A \sub \VF^2$ be an invariant set. Then there is a standard contraction $\hat T_\sigma$ of $A$ such that $A_s$ is a $2$-cell for every $s \in \hat T_\sigma(A)$.
\end{lem}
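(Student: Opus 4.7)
The plan is to build the standard contraction in two stages matching the two $\VF$-coordinates, and then to refine the resulting partition until each fiber meets one of the three alternatives in Definition~\ref{def:units}.

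Stage one contracts the second $\VF$-coordinate. Applying Lemma~\ref{special:bi:term:constant} fiberwise over $\pr_1(A)$ (and compacting over the parameter $a$), one obtains a relatively unary special covariant bijection $T_2$ on $A$ whose restriction to each fiber $A_a$ turns $A_a$ into an $\RV$-pullback. By weak \omin-minimality and compactness we may partition the image into finitely many pieces $A^i$, each of the form
\[
A^i \;=\; \{(a,b)\;:\; a\in A^i_1,\ b-\epsilon_i(a)\in t_i^\sharp\}
\]
for some definable $\epsilon_i:A^i_1\fun\VF$ and $t_i\in\RV$. This already forces the positioning equation $A^i_a=t_i^\sharp+\epsilon_i(a)$ required in Definition~\ref{def:units}.

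Stage two contracts the first $\VF$-coordinate on each $A^i_1\sub\VF$. I would apply Lemma~\ref{special:bi:term:constant} to $A^i_1$ together with all the relevant auxiliary $\RV$-data of $\epsilon_i$ — in particular $\rv\circ\epsilon_i$, $\rv\circ\ddx\epsilon_i$, and the value of $\rv(b)$ for $b$ in the resulting translated discs — so that after a special bijection $T^i_1$ these become $\rv$-contractible on every open disc of the resulting $\RV$-pullback $A^{i,\flat}_1$. On each such open disc $\ga$, Corollary~\ref{mono} and Lemma~\ref{open:pro} give a finite partition on pieces of which $\epsilon_i$ is either constant — giving case (i) of Definition~\ref{def:units} — or continuous, strictly monotone, and has dtdp. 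All of these refinements are relatively unary operations in the first $\VF$-coordinate and can therefore be folded into the overall standard contraction $\hat T_\sigma$ with $\sigma$ the transposition that contracts the second $\VF$-coordinate first.

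In the dtdp case, with $\epsilon=\epsilon_i\rest\ga$ and $t=t_i$, there are two sub-cases. If $\rad(\epsilon(\ga))\geq\abs{\vrv(t)}$, then condition (ii) of Definition~\ref{def:units} holds directly, and the piece of $A$ lies in a single $\RV$-polydisc since the $\RV$-data of $b=\epsilon(a)+b_0$, with $b_0\in t^\sharp$, has already been forced to be constant by Stage two. If instead $\rad(\epsilon(\ga))<\abs{\vrv(t)}$, then $\epsilon(\ga)$ fits into a single $\RV$-disc, and after further refinement enforcing constancy of $\rv\circ\ddx\epsilon$ on $\ga$ we may assume that $\epsilon$ is $\rv$-affine with slope $s\in\RV$. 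Taking $t_2=t$ and $t_1$ satisfying $s\cdot t_1=-t_2$ in $\RV$, the identity $\epsilon^{-1}(A_a)=a-t_1^\sharp$ follows from $\rv$-affinity together with $A_a=t^\sharp+\epsilon(a)$, so $\epsilon$ is balanced in the relevant piece of $A$ and case (iii) is satisfied.

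The main obstacle is the bookkeeping in the second sub-case, namely producing the paradigms $t_1,t_2$ and checking that the two equalities in Definition~\ref{defn:balance} hold simultaneously; this is where the explicit form of $A^i$ obtained at the end of Stage one and the $\rv$-affine refinement at the end of Stage two are both used. The rest of the proof is a matter of assembling the partitions produced by the two stages into a single standard contraction, which is permitted because standard contractions allow arbitrary definable partitions between the successive applications of relatively unary special covariant bijections.
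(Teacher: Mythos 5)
Your stage-one/stage-two construction of the positioning function $\epsilon$ and the paradigm $t$ is in the right spirit, and your handling of the case division on $\rad(\epsilon(\ga))$ versus $\abs{\vrv(t)}$ does track the three alternatives of Definition~\ref{def:units}. But the argument, as written, only covers the pseudo invariant case. By Terminology~\ref{prop:pseu} an ``invariant'' set is either pseudo or proper, and by Definition~\ref{defn:standard:contraction} a standard contraction of a \emph{proper} invariant set must be assembled from \emph{continuous proper covariant} relatively unary special covariant bijections, with finite apertures and doubly bounded loci. Every tool you invoke to build the contraction --- Lemma~\ref{special:bi:term:constant}, Corollary~\ref{mono}, Lemma~\ref{open:pro} --- produces ordinary (pseudo) special bijections and finite partitions that do not respect proper invariance: the focus maps supplied by Lemma~\ref{special:bi:term:constant} are isolated points, and the cuts supplied by monotonicity and dtdp can sever a proper invariant set at finitely many exceptional open discs. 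This is exactly the issue the paper's own proof points at: for proper invariant $A$, one must apply the FMT procedure (Terminology~\ref{FMT}), as in the proofs of Proposition~\ref{simplex:with:hole:rvproduct} and Lemma~\ref{exit:stand}, to excise those finitely many bad discs in the first coordinate (each containing a definable point), replace the construction over them with a fixed canonical one, and choose a uniform finite aperture so that the resulting contractions are genuinely proper covariant. Without that step the lemma is not established for proper invariant $A$.

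Two smaller points. First, in the balanced sub-case the assertion that $\epsilon^{-1}(A_a)=a-t_1^\sharp$ follows from $\rv$-affinity and $A_a=t^\sharp+\epsilon(a)$ silently uses the radius inequality $\rad(\epsilon(\ga_1))<\abs{\vrv(t)}$: combined with $\rad(\epsilon(\ga_1))=\abs{\vrv(s)}+\rad(\ga_1)$ and $\abs{\vrv(t_1)}=\abs{\vrv(t)}-\abs{\vrv(s)}$ this gives $\rad(\ga_1)<\abs{\vrv(t_1)}$, which is what guarantees both $a-t_1^\sharp\sub\ga_1$ and $\ga_2=\epsilon(\ga_1)$, so that $\epsilon$ really is a bijection $\ga_1\fun\ga_2$ as Definition~\ref{defn:balance} requires. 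This is worth recording, since it is precisely where the dichotomy does its work. Second, your concluding remark that ``standard contractions allow arbitrary definable partitions between the successive applications of relatively unary special covariant bijections'' is misleading as stated: Definition~\ref{defn:standard:contraction} only allows a sequence of relatively unary special covariant bijections, so any partition you need must be encoded as the locus of a further centripetal transformation in the coordinate currently being contracted. Your construction can be phrased that way, but the justification should not rest on a freedom the definition does not grant.
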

\begin{proof}
This is just \cite[Lemma~5.20]{Yin:tcon:I} if $A$ is pseudo invariant, even though it is not exactly stated in this form. The proof of \cite[Lemma~5.20]{Yin:tcon:I} proceeds by constructing a positioning function $\epsilon_{(t,s)}$ in each $\VF$-fiber, which heavily relies on the use of standard contractions in the preceding auxiliary results, namely \cite[Lemmas~5.15, 5.16]{Yin:tcon:I}.  It is not hard to see that if $A$ is proper invariant  then that proof still goes through, provided that FMT is applied at suitable places to cut out finitely many exceptional open discs in the first coordinate at which the desired properties of $\epsilon_{(t,s)}$ do not hold, as we have done above, say, in the proof of Lemma~\ref{exit:stand}.
\end{proof}

We also remark that Lemma~\ref{decom:into:2:units} holds fiberwise for invariant sets $A \sub \VF^n$ with $n \geq 2$, that is, there is a standard contraction $\hat T_\sigma$ of $A$ such that for every $(a, s) \in \hat T_{\sigma(2)}(A)$, $\hat T_{\sigma(2)}^{-1}(a, s)$ is of the form $a \times C$, where $C$ is a $2$-cell. This is immediate by compactness if $A$ is pseudo invariant. If $A$ is proper invariant then it follows from Lemma~\ref{decom:into:2:units} and the construction in the proof of  Lemma~\ref{exit:stand} (Lemma~\ref{decom:into:2:units} serves as the base case and hence, in the inductive step, we can assume that $\hat T_{\sigma(2)}$ is already as desired fiberwise).

From now on, the pain of treating two cases (pseudo versus proper) in parallel is over, as the proofs will be formal and work for either one.

For the next two lemmas, let $12$, $21$ denote the permutations of $[2]$ and $\bm A = (A, \omega)$ be an object of either $\mVF[2]$ or $\mVF^\diamond[2]$.

\begin{lem}\label{2:unit:contracted}
Suppose that $A \sub \VF^2$ is a $2$-cell and $\omega$ is constant. Then there are standard contractions $\hat T_{12}$, $\hat R_{21}$ of $\bm A$ such that $[\hat T_{12}(\bm A)]_{\leq 2} = [\hat R_{21}(\bm A)]_{\leq 2}$.
\end{lem}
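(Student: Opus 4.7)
The plan is to argue by case analysis on the positioning function $\epsilon : A_1 \fun \VF$ of the $2$-cell $A$, following the three alternatives in Definition~\ref{def:units}. For each case I will construct explicit standard contractions $\hat T_{12}$ and $\hat R_{21}$ by centripetal transformations whose foci track the fibered structure of the $2$-cell, together with further centripetals that contract the $1$-cells $\ga_1 = A_1$ and $\ga_2 = \pr_2(A)$ to their $\RV$-data.

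If $\epsilon$ is constant, say $\epsilon \equiv c$, then $A = \ga_1 \times (t^\sharp + c)$ is a genuine product; with symmetric choices of focus maps and (in the proper case) apertures, the two contractions return literally the same $\mRV[2]$-object. If $\epsilon$ has dtdp with $\rad(\epsilon(A_1)) \geq \abvrv(t)$, then $\epsilon(A_1)$ lies inside a single open disc of radius $\abvrv(t)$, so $t^\sharp + \epsilon(a) = t^\sharp + \epsilon(a')$ for all $a, a' \in \ga_1$; hence $A$ again factors as a product and the argument reduces to the constant case.

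The substantive case is the balanced one, with paradigms $t_1, t_2 \in \RVV$ (so $t = t_2$). The $\rv$-affineness of $\epsilon$ combined with balance forces its slope $s$ to satisfy $\abvrv(s) = \abvrv(t_2) - \abvrv(t_1)$, giving, via dtdp, the compatibility identity
\[
\rad(\ga_1) + \abvrv(t_2) = \rad(\ga_2) + \abvrv(t_1).
\]
Moreover, balance yields a dual description of $A$: the fiber of $A$ over any $b \in \ga_2$ in the first coordinate is the open disc $\epsilon^{-1}(b) - t_1^\sharp$ of radius $\abvrv(t_1)$, centred at $\epsilon^{-1}(b)$. I take $\hat T_{12}$ to first apply a centripetal transformation in the first coordinate with focus $b \mapsto \epsilon^{-1}(b)$ (contracting each such fiber to the $\RV$-pullback $-t_1^\sharp$) and then contract $\ga_2$ in the second coordinate via a centripetal focussed at a chosen definable center. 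Dually, $\hat R_{21}$ first contracts each coord-$2$ fiber $t_2^\sharp + \epsilon(a)$ via focus $\epsilon(a)$ to the $\RV$-pullback $t_2^\sharp$ and then contracts $\ga_1$ via a symmetric centripetal.

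The resulting $\mRV[2]$-objects have $\RV$-dimension strictly less than $2$ (at most $1$ in the pseudo case, $0$ in the proper case with suitable apertures), so the $\jcb_{\RV}$ condition of Definition~\ref{defn:RV:mu} is automatically vacuous; only the $\jcb_{\Gamma}$ condition need be checked. An explicit bijection sending $(-t_1, s_2) \efun ((-t_1/t_2) \cdot s_2, t_2)$ identically satisfies $\Pi \vrv = \Pi \vrv \circ F$, and the compatibility identity above ensures that this map is well defined between the two resulting $\RV$-sets (it sends $\rv(\MM_{\rad(\ga_2)})$ bijectively onto $\rv(\MM_{\rad(\ga_1)})$). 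The main obstacle will be precisely this balanced case: the two contractions genuinely produce different $\RV$-labels along one coordinate axis, and their coincidence of class in $\gsk \mRV[2]$ (respectively $\gsk \mRV^{\db}[2]$) is forced only by the numerical compatibility coming from the slope and balance paradigms; some care will be needed in the proper invariant scenario to choose apertures consistent with the proper covariance requirements on the two centripetal stages of $\hat T_{12}$ and $\hat R_{21}$.
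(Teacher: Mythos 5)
Your proof takes essentially the same route as the paper's: reduce to the two cases (product, where the identity works, and balanced with nonzero paradigms, where the $\jcb_{\RV}$ condition is vacuous because the contracted object has $\RV$-dimension $1$ and the $\jcb_{\Gamma}$ condition is forced by the slope $-t_2/t_1$ of $\epsilon$ together with $\rad(\ga_2)=\rad(\ga_1)+\abvrv(t_2)-\abvrv(t_1)$), and your dtdp sub-case is correctly absorbed into the product case. The only blemishes are a harmless sign slip (the coordinate-$1$ fiber over $b\in\ga_2$ is $\epsilon^{-1}(b)+t_1^\sharp$, so it contracts to $t_1^\sharp$, not $-t_1^\sharp$) and that you defer the aperture-matching in the proper case, which is routine: the aperture shift between the two second-stage contractions must be $\abvrv(t_1)-\abvrv(t_2)$ so that multiplication by $t_1/t_2$ carries one resulting $\RV$-annulus onto the other.
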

\begin{proof}
We may of course assume $\dim_{\VF}(A) = 2$. All we need to do is to check that the maps constructed in the proof of  \cite[Lemma~5.25]{Yin:tcon:I} are indeed $\mRV[2]$- or $\mRV^{\db}[2]$-morphisms. By inspection of that proof, we see that there are two cases: $A$ is a product of two open discs or the positioning function $\epsilon$ in question is balanced in $A$ with nonzero paradigms $t_1$, $t_2$. The first case is obvious since we can simply use the identity map. In the second case, observe that there is no requirement on $\jcb_{\RV}$ since the resulting standard contractions are of $\RV$-dimension $1$. On the other hand, a morphism between the standard contractions can be easily constructed using  $\epsilon$  (we could also cite Lemma~\ref{simul:special:dim:1}, but the situation here is much simpler), and  the requirement on $\jcb_{\Gamma}$ is satisfied since the slope of the $\rv$-affine function $\epsilon$ is $-t_2/t_1$ (see the last paragraph of \cite[Remark~5.18]{Yin:tcon:I} for further explanation).
\end{proof}

\begin{lem}\label{subset:partitioned:2:unit:contracted}
There are a morphism $\bm A \fun \bm A^*$, relatively unary in both coordinates, and two standard contractions $\hat T_{12}$, $\hat R_{21}$ of $\bm A^*$ such that $[\hat T_{12}(\bm A^*)]_{\leq 2} = [\hat R_{21}(\bm A^*)]_{\leq 2}$.
\end{lem}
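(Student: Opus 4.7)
The plan is to reduce to Lemma~\ref{2:unit:contracted} by first passing through an auxiliary object $\bm A^*$ whose every $\VF^2$-fiber is a $2$-cell on which the volume form is constant, and then invoking Lemma~\ref{2:unit:contracted} fiberwise and uniformizing.

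First I would replace $\bm A$ by the graph pair $(A_\omega, \omega)$ in the sense of Notation~\ref{nota:form:down}; the natural isomorphism $\bm A \fun (A_\omega, \omega)$ is the identity on the $\VF$-part and is therefore trivially relatively unary in both coordinates. I would then apply the fiberwise extension of Lemma~\ref{decom:into:2:units} (as noted in the remark following that lemma) to $A_\omega$, treating all of its $\RV$-coordinates---including the newly adjoined $\omega$-coordinate---as parameters. This produces a standard contraction $\hat T_\sigma$ whose lift $T_\sigma$ is, by the very definition of a standard contraction, the composition $T_2 \circ T_1$ of two relatively special covariant (hence relatively unary) bijections, one in each $\VF$-coordinate. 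Setting $\bm A^* = (A^*, \omega^*)$ to be the image of $T_\sigma$, with $\omega^*$ induced from $\omega$, yields the desired morphism $\bm A \fun \bm A^*$. By construction every $\VF^2$-fiber $A^*_u$ with $u \in A^*_{\RV}$ is a $2$-cell, and $\omega^*$ is constant on it since $\omega$ has been absorbed into the $\RV$-coordinates.

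Next I would invoke Lemma~\ref{2:unit:contracted} fiberwise: for each $u \in A^*_{\RV}$ it supplies standard contractions $\hat T^u_{12}$ and $\hat R^u_{21}$ of $(A^*_u, \omega^*|_{A^*_u})$ whose $\mRV[2]$-classes coincide modulo $\misp$ (respectively $\mispdb$ in the $\mVF^\diamond[2]$ case). By compactness these uniformize into standard contractions $\hat T_{12}$ and $\hat R_{21}$ of the whole $\bm A^*$, and Lemma~\ref{isp:VF:fiberwise:contract} combines the fiberwise identities into the required $([\hat T_{12}(\bm A^*)]_{\leq 2}, [\hat R_{21}(\bm A^*)]_{\leq 2}) \in \misp$ (respectively $\mispdb$).

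Per the remark preceding Lemma~\ref{2:unit:contracted}, the argument will work uniformly in the two cases; in the proper invariant setting the existence of proper standard contractions is guaranteed by Lemma~\ref{exit:stand}. The main obstacle will be the compactness step uniformizing the fiberwise outputs of Lemma~\ref{2:unit:contracted}: the $2$-cell data (positioning function, paradigm) and the corresponding standard contractions vary with $u$, and one must verify that the parameterized data is sufficiently uniform to assemble into genuine (not merely fiberwise) standard contractions of $\bm A^*$. Absorbing $\omega$ as an $\RV$-coordinate at the outset is what makes this bookkeeping tractable, since $\omega^*$ and its fiberwise constants are then definable over the $\RV$-parameters of $\bm A^*$, exactly as required for the compactness and fiberwise-to-global passage to go through.
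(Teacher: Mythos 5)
Your overall plan is the same as the paper's (which is only sketched there): use Lemma~\ref{decom:into:2:units} to pass to $\bm A^*$ whose $\VF^2$-fibers are $2$-cells on which the volume form is constant, apply Lemma~\ref{2:unit:contracted} fiberwise, and uniformize. However, the final step as you have written it does not prove what is claimed.

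First, you misstate the conclusion of Lemma~\ref{2:unit:contracted}: it gives a genuine equality of isomorphism classes $[\hat T_{12}(\bm A)]_{\leq 2} = [\hat R_{21}(\bm A)]_{\leq 2}$ in $\gsk \mRV[{\leq}2]$, not merely a $\misp$-congruence, and the present statement likewise asserts equality of classes, not $\misp$-congruence. Second, Lemma~\ref{isp:VF:fiberwise:contract} is the wrong tool: it aggregates $\misp$-congruences over $\VF$-fibers (its hypothesis is stated for every $a \in A$ with $A \sub \VF^n$, and it passes to standard contractions), whereas here you are fibering over the $\RV$-coordinates $u$ of $\bm A^*$; even if it applied, it would only deliver a $\misp$-congruence, which is strictly weaker than the equality you need, and weaker than what the downstream use in Lemma~\ref{contraction:perm:pair:isp} requires. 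The correct mechanism is compactness alone: for each $u \in A^*_{\RV}$, Lemma~\ref{2:unit:contracted} provides $u$-definable standard contractions $\hat T^u_{12}$, $\hat R^u_{21}$ together with a $u$-definable $\mRV[{\leq}2]$-morphism witnessing the fiberwise equality; by compactness all of this data — the $2$-cell parameters, the standard contractions, and the witnessing morphisms — may be chosen uniformly definable in $u$, which assembles into global standard contractions $\hat T_{12}$, $\hat R_{21}$ of $\bm A^*$ and a single $\mRV[{\leq}2]$-morphism giving $[\hat T_{12}(\bm A^*)]_{\leq 2} = [\hat R_{21}(\bm A^*)]_{\leq 2}$. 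Your closing remark correctly singles out this uniformization as the crux, but the stated route through Lemma~\ref{isp:VF:fiberwise:contract} does not accomplish it.
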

\begin{proof}
This follows from  Lemmas~\ref{decom:into:2:units}, \ref{2:unit:contracted}, and compactness (see the proof of \cite[Lemma~5.26]{Yin:tcon:I} for a bit more details).
\end{proof}

\begin{lem}\label{contraction:perm:pair:isp}
Let $\bm A = (A, \omega)$ be an object of $\mVF[n]$ (respectively, $\mVF^\diamond[n]$). Suppose that $i, j \in [n]$ are distinct and $\sigma_1$, $\sigma_2$ are  permutations of $[n]$ such that
\[
\sigma_1(1) = \sigma_2(2) = i, \quad \sigma_1(2) = \sigma_2(1) = j, \quad \sigma_1
\rest \set{3, \ldots, n} = \sigma_2 \rest \set{3, \ldots, n}.
\]
Then, for any standard contractions $\hat T_{\sigma_1}$, $\hat T_{\sigma_2}$ of $\bm A$,
\[
([\hat T_{\sigma_1}(\bm A)]_{\leq n}, [\hat T_{\sigma_2}(\bm A)]_{\leq n}) \in \misp  \quad (\text{respectively, } \mispdb).
\]
\end{lem}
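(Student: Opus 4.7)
The proof follows the strategy of \cite[Lemma~5.37]{Yin:tcon:I}, adapted to track volume forms and, in the proper invariant case, to preserve continuity and proper covariance. It proceeds in two stages: first the case $n = 2$, then a reduction of the general case to it.

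For the case $n = 2$---so $\sigma_1 = (i,j)$ and $\sigma_2 = (j,i)$, and without loss of generality $i = 1$, $j = 2$---I would invoke Lemma~\ref{subset:partitioned:2:unit:contracted} to obtain a morphism $F : \bm A \fun \bm A^*$ together with standard contractions $\hat T^*_{12}$, $\hat R^*_{21}$ of $\bm A^*$ satisfying the equality $[\hat T^*_{12}(\bm A^*)]_{\leq 2} = [\hat R^*_{21}(\bm A^*)]_{\leq 2}$. By Lemma~\ref{bijection:partitioned:unary} we may decompose $F = F_2 \circ F_1$ with $F_{\ell}$ relatively unary in the $\ell$th $\VF$-coordinate; set $\bm A' = F_1(\bm A)$. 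Two applications of Corollary~\ref{contraction:same:perm:isp}, the first to $F_1$ with $\sigma = \sigma_1$ (matching since $\sigma_1(1) = 1$) and the second to $F_2$ with $\sigma = \sigma_2$ (matching since $\sigma_2(1) = 2$), yield
\[
([\hat T_{\sigma_1}(\bm A)], [\hat T_{\sigma_1}(\bm A')]) \in \misp \dand ([\hat T_{\sigma_2}(\bm A')], [\hat T_{\sigma_2}(\bm A^*)]) \in \misp.
\]
To bridge the remaining gap I would establish the two ``cross'' $\misp$-equivalences $[\hat T_{\sigma_1}(\bm A')] \sim [\hat T_{\sigma_1}(\bm A^*)]$ and $[\hat T_{\sigma_2}(\bm A)] \sim [\hat T_{\sigma_2}(\bm A')]$. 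In each such pair the morphism ($F_2$ in the first, $F_1$ in the second) preserves the coordinate contracted first by the standard contraction in question. Consequently, after the initial contraction the morphism descends to a fiberwise $\mVF[1]$- (respectively $\mVF^{\diamond}[1]$-) morphism, to which Lemma~\ref{kernel:dim:1:coa} applies to produce fiberwise $\misp$- (respectively $\mispdb$-) equivalence of the subsequent one-dimensional contraction. Lemma~\ref{isp:VF:fiberwise:contract}, with suitable head start, then lifts this fiberwise equivalence to the full contracted objects. Concatenating all of these with the equality $[\hat T^*_{12}(\bm A^*)] = [\hat R^*_{21}(\bm A^*)]$ completes the $n = 2$ case.

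For general $n \geq 3$, I would reduce to the case $n = 2$ via Lemma~\ref{isp:VF:fiberwise:contract}. Since $\sigma_1$ and $\sigma_2$ agree on $\{3, \ldots, n\}$, the two standard contractions differ only in the order of their first two steps, and these two steps act only on coordinates $i$ and $j$. Applying the $n = 2$ case fiberwise over the $\VF$-coordinates in $[n] \mi \{i, j\}$ produces a fiberwise $\misp$- (respectively $\mispdb$-) equivalence after the first two contractions, and Lemma~\ref{isp:VF:fiberwise:contract}, applied to the common tail $\hat T_{\sigma_1(3)}, \ldots, \hat T_{\sigma_1(n)} = \hat T_{\sigma_2(3)}, \ldots, \hat T_{\sigma_2(n)}$, propagates this equivalence to the full standard contractions. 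In the $\mVF^{\diamond}$-case, Lemma~\ref{exit:stand} ensures that all intermediate objects remain proper invariant throughout.

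The main technical obstacle will be verifying the two cross comparisons in the $n = 2$ case. A direct application of Corollary~\ref{contraction:same:perm:isp} is unavailable there because the leading coordinate of the standard contraction does not coincide with the coordinate in which the morphism is relatively unary. The fiberwise dimension-one argument is conceptually natural once one observes that the morphism preserves the first-contracted coordinate, but ensuring that the volume form transforms correctly under the descended one-dimensional morphism---and, in the $\mVF^{\diamond}$-case, that continuity and proper covariance survive both the descent and the subsequent reassembly via Lemma~\ref{isp:VF:fiberwise:contract}---will require careful bookkeeping.
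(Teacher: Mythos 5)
Your overall strategy mirrors the paper's: reduce to $n=2$ via Lemma~\ref{isp:VF:fiberwise:contract}, appeal to Lemma~\ref{subset:partitioned:2:unit:contracted} to obtain a rel.-unary-in-both-coordinates morphism $\bm A \fun \bm A^*$ with $[\hat T^*_{12}(\bm A^*)]=[\hat R^*_{21}(\bm A^*)]$, and then chain $\misp$-equivalences. The reduction from general $n$ to $n=2$ is correct as you describe it. The problem is in the $n=2$ case, in exactly the spot you flag as ``the main technical obstacle'': the two ``cross'' equivalences.

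Your argument for the cross equivalence $[\hat T_{\sigma_1}(\bm A')] \sim [\hat T_{\sigma_1}(\bm A^*)]$ via $F_2$ (and symmetrically for the other) does not work. You claim that, because $F_2$ fixes the coordinate contracted first by $\hat T_{\sigma_1}$ (namely coordinate $1$), the morphism ``descends to a fiberwise $\mVF[1]$-morphism'' after the initial contraction. But the initial contraction of coordinate $1$ is relatively special covariant in coordinate $1$, which means it is performed \emph{fiberwise over coordinate $2$} (and the $\RV$-coordinates). Since $F_2$ moves precisely coordinate $2$, the contracted sets $\hat T_1(\bm A')$ and $\hat R_1(\bm A^*)$ have different $\VF$-projections — $\pr_2(A') \neq \pr_2(A^*)$ in general — and $F_2$ induces no map between them. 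There is no descent, and Lemma~\ref{isp:VF:fiberwise:contract}, which requires identical $\VF$-parts, cannot be applied. If instead what you intend is to fiber $F_2$ over coordinate $1$ directly (before any contraction), apply Lemma~\ref{kernel:dim:1:coa} fiberwise in coordinate $2$, and then use Lemma~\ref{isp:VF:fiberwise:contract} to contract the remaining coordinate $1$ — that argument is sound, but it produces a contraction in the order ``coordinate $2$ first, then coordinate $1$,'' i.e., it yields $[\hat T_{\sigma_2}(\bm A')] \sim [\hat T_{\sigma_2}(\bm A^*)]$, which is just Corollary~\ref{contraction:same:perm:isp} for $F_2$ and not the cross equivalence you need. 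Either way, the step is unjustified; a genuinely different idea is required to close the chain, and the paper's own proof (which is very terse here, simply citing the corollary and the fiberwise lemma) does not spell it out in a way your reading recovers.
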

\begin{proof}
Let $ij$, $ji$ denote the permutations of $\{i, j\}$ and $E = [n] \mi \{i, j\}$. By compactness and
Lemma~\ref{isp:VF:fiberwise:contract}, it is enough to show
that for $a \in A_E$ and any standard
contractions $\hat T_{ij}$, $\hat T_{ji}$ of $\bm A_a \coloneqq (A_a, \omega \rest A_a)$,
\[
([\hat T_{ij}(\bm A_a)]_{\leq 2}, [\hat T_{ji}(\bm A_a)]_{\leq 2})
\in \misp \quad (\text{respectively, }\mispdb).
\]
By Corollary~\ref{contraction:same:perm:isp} and Lemma~\ref{isp:VF:fiberwise:contract}, it is enough to find a morphism $\bm A_a \fun \bm B$, relatively unary in both coordinates, and  standard contractions $\hat R_{ij}$, $\hat R_{ji}$ of $\bm B$ such that $[\hat R_{ij}(\bm B)]_{\leq 2} = [\hat R_{ji}(\bm B)]_{\leq 2}$. This is just Lemma~\ref{subset:partitioned:2:unit:contracted}.
\end{proof}

\subsection{The kernel of ${\mu}\mathbb{L}$ and the main theorems}

The following proposition is the culmination of the preceding technicalities, which identifies the
congruence relations $\misp$ and $\mispdb$ with those induced by
$\mu\bb L$.

%we can actually show a stronger claim: If $[\bm A] = [\bm B]$ in $\gsk \mVF[k]$ and $(\bm U, \omega), (\bm V, \sigma) \in \mRV[k]$ are two standard contractions of $\bm A$, $\bm B$ then $([(\bm U, \omega)], [(\bm V, \sigma)]) \in \misp$.there is a partition of $\bb L \bm U$ into definable sets $A_1, \ldots, A_n$ such that each restriction

\begin{prop}\label{kernel:L}
For $\bm U, \bm V \in \mRV[k]$,
\[
[\mu\bb L \bm U] = [\mu\bb L \bm V] \text{ in }\gsk \mVF[k] \quad \text{if and only if} \quad ([\bm U], [\bm V]) \in \misp.
\]
The same holds with respect to $\mRV^{\db}[k]$, $\gsk \mVF^\diamond[k]$, and $\mispdb$.
\end{prop}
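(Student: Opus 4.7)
My plan is as follows. The ``if'' direction follows directly from Lemma~\ref{blowup:same:RV:coa}: if isomorphic blowups $\bm U^\flat$ of $\bm U$ and $\bm V^\flat$ of $\bm V$ witness $([\bm U], [\bm V]) \in \misp$, then $[{\mu}\bb L \bm U] = [{\mu}\bb L \bm U^\flat] = [{\mu}\bb L \bm V^\flat] = [{\mu}\bb L \bm V]$ in $\gsk \mVF[k]$, and the same argument works with $\mVF^{\diamond}[k]$ and $\mispdb$ in the doubly bounded case.

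For the ``only if'' direction, suppose $F : {\mu}\bb L \bm U \fun {\mu}\bb L \bm V$ is a $\mVF[k]$-morphism. First apply Lemma~\ref{bijection:partitioned:unary} to factor $F = F_n \circ \cdots \circ F_1$ with each $F_l : \bm A_{l-1} \fun \bm A_l$ a relatively unary $\mVF[k]$-morphism in some $\VF$-coordinate $i_l$, so that $\bm A_0 = {\mu}\bb L \bm U$ and $\bm A_n = {\mu}\bb L \bm V$. For each $l$, choose a permutation $\sigma_l$ of $[k]$ with $\sigma_l(1) = i_l$, and select standard contractions $\hat T_{\sigma_l}$ of $\bm A_{l-1}$ and $\hat R_{\sigma_l}$ of $\bm A_l$ using the same $\sigma_l$; existence in the proper invariant case is Lemma~\ref{exit:stand}. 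Corollary~\ref{contraction:same:perm:isp} applied to $F_l$ gives $([\hat T_{\sigma_l}(\bm A_{l-1})]_{\leq k}, [\hat R_{\sigma_l}(\bm A_l)]_{\leq k}) \in \misp$, and Lemma~\ref{contraction:perm:pair:isp}, applied iteratively along an adjacent-transposition decomposition relating $\sigma_l$ to $\sigma_{l+1}$, yields $([\hat R_{\sigma_l}(\bm A_l)]_{\leq k}, [\hat T_{\sigma_{l+1}}(\bm A_l)]_{\leq k}) \in \misp$ for $l = 1, \ldots, n-1$. Chaining via the transitivity of $\misp$ (Lemma~\ref{isp:congruence:vol}) then produces $([\hat T_{\sigma_1}({\mu}\bb L \bm U)]_{\leq k}, [\hat R_{\sigma_n}({\mu}\bb L \bm V)]_{\leq k}) \in \misp$.

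Finally, the lift of $\hat T_{\sigma_1}$ is a special covariant bijection on the $\RV$-pullback ${\mu}\bb L \bm U$, so by Lemma~\ref{special:to:blowup:coa} the $\RV$-object $\hat T_{\sigma_1}({\mu}\bb L \bm U)_{\leq k}$ is isomorphic to a blowup of $\bm U$, and similarly $\hat R_{\sigma_n}({\mu}\bb L \bm V)_{\leq k}$ is a blowup of $\bm V$. Combining with Corollary~\ref{blowup:equi:class} gives $([\bm U], [\bm V]) \in \misp$. The doubly bounded case runs identically with $\mispdb$ throughout. The main obstacle is the book-keeping in the proper invariant case: one must ensure that standard contractions, their lifts as special covariant bijections, and the resulting blowups all remain continuous and proper covariant, which is handled by the proper-invariant clauses built into Proposition~\ref{simplex:with:hole:rvproduct}, Lemma~\ref{exit:stand}, Definition~\ref{defn:special:bijection}, and the definition of $\mVF^{\diamond}[k]$.
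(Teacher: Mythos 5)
The ``if'' direction and the initial factorization into relatively unary morphisms via Lemma~\ref{bijection:partitioned:unary} match the paper's proof. The genuine gap is in how you bridge the two standard contractions $\hat R_{\sigma_l}(\bm A_l)$ and $\hat T_{\sigma_{l+1}}(\bm A_l)$ of the \emph{same} object $\bm A_l$: you claim Lemma~\ref{contraction:perm:pair:isp}, ``applied iteratively along an adjacent-transposition decomposition relating $\sigma_l$ to $\sigma_{l+1}$,'' yields the $\misp$-congruence. But Lemma~\ref{contraction:perm:pair:isp} does not cover arbitrary adjacent transpositions: its hypothesis $\sigma_1(1) = \sigma_2(2) = i$, $\sigma_1(2) = \sigma_2(1) = j$, $\sigma_1 \rest \{3,\ldots,n\} = \sigma_2 \rest \{3,\ldots,n\}$ forces $\sigma_2 = \sigma_1 \circ (1\,2)$; iterating only ever toggles positions $1$ and $2$, so you can never realize a transposition $(j, j{+}1)$ for $j \geq 2$, and in general $\sigma_l$ and $\sigma_{l+1}$ differ by much more than a single swap of the first two slots. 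This is precisely the difficulty the paper's proof is designed around: it is an induction on $k$. The paper chooses the permutations so that the only nontrivial swap needed is of positions $1$ and $2$ (handled by Lemma~\ref{contraction:perm:pair:isp}), and separately arranges two auxiliary contractions $\bm U'_{j+1}$, $\bm U''_{j+1}$ of $\bm B_{j+1}$ whose permutations \emph{agree} on position $1$; that last pair is then compared fiberwise using the inductive hypothesis on $k-1$ coordinates (via Lemma~\ref{isp:VF:fiberwise:contract}). Your proposal sets up no such induction and no such careful coordination of the $\sigma_l$'s, so the chaining step does not go through.

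A secondary, smaller issue: at the end you appeal to Lemma~\ref{special:to:blowup:coa} by asserting that the lift $T_{\sigma_1}$ of a standard contraction is a special covariant bijection on ${\mu}\bb L \bm U$. But, as noted after Definition~\ref{rela:unary}, a composition of relatively special covariant bijections is not in general a special covariant bijection, and the lift of a standard contraction is only known to be the former. The cleaner way to connect a standard contraction of ${\mu}\bb L \bm U$ back to $[\bm U]$ is via Corollary~\ref{contraction:same:perm:isp} applied to the identity morphism, comparing the given contraction to the trivial one (which outputs $[\bm U]_{\leq k}$).
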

\begin{proof}
The ``if'' direction simply follows from Lemma~\ref{blowup:same:RV:coa} and
Proposition~\ref{L:sur:c}.

For the ``only if'' direction, we proceed by induction on $k$. The base case $k = 1$ is of course Lemma~\ref{kernel:dim:1:coa}. For the inductive step, let
\[
\mu\bb L \bm U = \bm B_1 \to^{G_1} \bm B_2 \cdots \bm B_l \to^{G_l} \bm B_{l+1} = \mu\bb L \bm V
\]
be relatively unary $\mVF[k]$-morphisms, which exist by Lemma~\ref{bijection:partitioned:unary}. For each $j \leq l - 2$, we can choose five standard contractions
\[
[\bm U_j]_{\leq k}, \quad [\bm U_{j+1}]_{\leq k}, \quad [\bm U'_{j+1}]_{\leq k}, \quad [\bm U''_{j+1}]_{\leq k}, \quad [\bm U_{j+2}]_{\leq k}
\]
of $\bm B_j$, $\bm B_{j+1}$, $\bm B_{j+1}$, $\bm B_{j+1}$, $\bm B_{j+2}$ with the permutations $\sigma_{j}$, $\sigma_{j+1}$, $\sigma'_{j+1}$, $\sigma''_{j+1}$, $\sigma_{j+2}$ of $[k]$, respectively, such that
\begin{itemize}
  \item $\sigma_{j+1}(1)$ and $\sigma_{j+1}(2)$ are the $\VF$-coordinates targeted by $G_{j}$ and $G_{j+1}$, respectively,
  \item $\sigma''_{j+1}(1)$ and $\sigma''_{j+1}(2)$ are the $\VF$-coordinates targeted by $G_{j+1}$ and $G_{j+2}$, respectively,
  \item $\sigma_{j} = \sigma_{j+1}$, $\sigma''_{j+1} =  \sigma_{j+2}$,  and $\sigma'_{j+1}(1) = \sigma''_{j+1}(1)$,
  \item the relation between $\sigma_{j+1}$ and $\sigma'_{j+1}$ is as described in Lemma~\ref{contraction:perm:pair:isp}.
\end{itemize}
By Corollary~\ref{contraction:same:perm:isp} and Lemma~\ref{contraction:perm:pair:isp}, all the adjacent pairs of these standard contractions are indeed $\misp$-congruent, except $([\bm U'_{j+1}]_{\leq k}, [\bm U''_{j+1}]_{\leq k})$.  Since we can choose $[\bm U'_{j+1}]_{\leq k}$, $[\bm U''_{j+1}]_{\leq k}$ so that they start with the same contraction in the first targeted $\VF$-coordinate of $\bm B_{j+1}$, the resulting sets from this step are the same. Therefore, applying  the inductive hypothesis in each fiber over the just contracted coordinate, we see that this last pair is also $\misp$-congruent. This completes the ``only if'' direction.

The same argument works for the other case. Notice that Lemma~\ref{bijection:partitioned:unary} is no longer needed since it is built into the definition of $\mVF^\diamond[k]$.
\end{proof}

\begin{rem}\label{kernel:gL}
Recall from Remark~\ref{gL:sur:c} the obvious surjective semiring homomorphism
$\bb F : \gsk \mRV[*] \epi \gsk \mgRV[*]$. The congruence relation $\misp$ induces a semiring congruence relation $\mgisp$ on $\gsk \mgRV[*]$ via $\bb F$, which may also be regarded as the coarser congruence relation on $\gsk \mRV[*]$ generated by $\misp$ and the kernel of $\bb F$; similarly for the congruence relation $\mgispdb$ on $\gsk \mRV^{\db}[*]$.

By Lemma~\ref{blowup:same:RV:coa} and Remark~\ref{gL:sur:c}, if $([\bm U], [\bm V]) \in \mgisp$ then  $[{\mgL}{\bb F\bm U}] = [{\mgL} \bb F\bm V]$. Conversely, suppose that $[{\mgL}{\bb F\bm U}] = [{\mgL} \bb F\bm V]$ in  $\gsk \mgVF[*]$. By (the remark before) Lemma~\ref{forg:ful}, there is an $\bm A \in \mVF[*]$ such that $[{\mL}{\bm U}] = [\bm A]$ in  $\gsk \mVF[*]$ and $\bb F \bm A = {\mgL} \bb F\bm V$. It is easy to see that, after attaching additional $\RV$-coordinates to account for each other's volume forms, there are standard contractions $\bm U'$, $\bm V'$ of $\bm A$, ${\mL} {\bm V}$ such that the identity map is a morphism $\bb F \bm U' \fun \bb F \bm V'$. So we may deduce $([\bm U], [\bm V]) \in \mgisp$ from Proposition~\ref{kernel:L}.

By the same reasoning, we have
\[
[{\mgL}{\bb F\bm U}] = [{\mgL} \bb F\bm V] \text{ in } \gsk \mgVF^\diamond[*] \quad \text{if and only if} \quad  ([\bm U], [\bm V]) \in \mgispdb.
\]
\end{rem}

%$[{\mgL}{\bm X}] = [{\mgL} \bm Y]$. We may choose objects $\bm U, \bm V\in \mRV[*]$ such that $\bb F \bm U = \bm X$ and $\bb F \bm V = \bm Y$. Then, by Lemmas~\ref{all:subsets:rvproduct} and \ref{special:to:blowup:coa}, there are a blowup $\bm V^\flat$ and an object $\bm W \in \mRV[*]$  such that and hence,, that $([\bm U], [\bm V]) \in \mgisp$ if and only if there are blowups $\bm U^{\flat}$, $\bm V^{\flat}$ with $[\bb F\bm U^{\flat}] = [\bb F\bm V^{\flat}]$,%
%
%Lemma~\ref{forg:ful}
%
%, $A$ the range of $F$, and $\bm A = (A, \omega) \in \mVF[*]$ the obvious object derived from ${\mL} \bm V$. Modifying $A$ in the obvious way if necessary, we assume that $F^{-1}$ is $\rv$-contractible. Then, by Lemma~\ref{L:measure:surjective}, there is an object $\bm W \in \mRV[*]$ such that $[{\mL} {\bm W}] = [\bm A] = [{\mL} {\bm V}]$, and hence, by, $([\bm W], [\bm V]) \in \misp$. Also, the equality $[{\mgL}{\bb F\bm U}] = [{\mgL} \bb F\bm W]$ yields an object $\bm W' \in \mRV[*]$ such that $\bb F \bm W = \bb F \bm W'$ and $[\mu\bb L \bm U] = [\mu\bb L \bm W']$. It follows that

\begin{rem}\label{thekernel}
Proposition~\ref{kernel:L} shows that the kernel of $\mL$ in $\gsk \mRV[*]$ is indeed generated by the pair $([1],  [\RV^{\circ \circ}])$ and hence the corresponding ideal of the graded ring $\ggk \mRV[*]$ is generated by the element $\bm P$ (Notation~\ref{nota:RV:short}). On the other hand, the kernel  of $\mL$ in $\gsk \mRV^{\db}[*]$ is generated by the pairs $([1], [\RV^{\circ \circ}(t)_\gamma])$, where $\gamma \in \absG$ and $t \in \gamma^\sharp$ are definable (Notation~\ref{punc:RVd}), and hence  the corresponding ideal of the graded ring $\ggk \mRV^{\db}[*]$  is generated by the set $\bm P_\Gamma$ of elements $\bm P_\gamma$, $\abs \gamma > 0$ (Notation~\ref{pgamma}); note that the class $\bm P_\gamma$ does not depend on the choice of $t_\gamma \in \gamma^\sharp$.

Remark~\ref{kernel:gL} shows the same thing for the ideals of $\ggk \mgRV[*]$, $\ggk \mgRV^{\db}[*]$ induced by $\mgL$.
\end{rem}

\begin{thm}\label{main:prop}
For each $k \geq 0$ there is a canonical isomorphism of Grothendieck semigroup
\[
\int_{+} : \gsk  \mVF[k] \fun \gsk  \mRV[k] /  \misp
\]
such that
\[
 \int_{+} [\bm A] = [\bm U]/  \misp \quad \text{if and only if} \quad  [\bm A] = [\mu\bb L\bm U].
\]
Putting these together, we obtain a canonical isomorphism of graded Grothendieck semirings
\[
\int_{+} : \gsk \mVF[*] \fun \gsk  \mRV[*] /  \misp.
\]
Similarly, there are three other such isomorphisms
\begin{gather*}
 \int_{+}: \gsk \mgVF[*] \fun \gsk  \mgRV[*] /  \mgisp, \\
 \int^{\diamond}_+ : \gsk \mVF^\diamond[*] \fun \gsk  \mRV^{\db}[*] /  \mispdb,\\
   \int^{\diamond}_+  : \gsk \mgVF^\diamond[*] \fun \gsk  \mgRV^{\db}[*] /  \mgispdb.
\end{gather*}
\end{thm}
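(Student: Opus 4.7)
The plan is to show that Theorem~\ref{main:prop} is essentially a formal consequence of the surjectivity in Corollary~\ref{L:sur:c} (together with its variants noted in Remark~\ref{gL:sur:c}) and the kernel computation in Proposition~\ref{kernel:L} (together with Remark~\ref{kernel:gL}). Since this is really a packaging statement, the real work has already been done; the task is to assemble the pieces and check that all the relevant structure is preserved. I would define $\int_+$ as the inverse of the map induced by $\mu\bb L$ on the quotient semiring, and similarly $\int^\diamond_+$ as the inverse of the map induced by $\mu\bb L$ on $\gsk \mRV^{\db}[*] / \mispdb$, and the $\Gamma$-analogues as the inverses of the maps induced by $\mgL$.

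Here is the order of steps. First I would verify that each of the four lifting maps induces a graded semiring homomorphism $\gsk \mRV[*] \fun \gsk \mVF[*]$ (and similarly for the three variants). For the semigroup structure, this is immediate from the fact that $\bb L$ commutes with disjoint union up to definable bijection in each graded piece. For the multiplicative structure, the key observation is that for $\bm U \in \mRV[k]$ and $\bm V \in \mRV[l]$, the sets $\bb L(\bm U \times \bm V)$ and $\bb L\bm U \times \bb L\bm V$ are canonically equal as subsets of $\VF^{k+l} \times \RV^*$, and the induced volume forms agree under multiplication. Grading is preserved by the very definition of $\mu\bb L_k$. Second, I would check that each of these homomorphisms factors through the quotient by the corresponding congruence relation. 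For $\misp$ and $\mispdb$ this is immediate from Lemma~\ref{blowup:same:RV:coa}: if $([\bm U],[\bm V]) \in \misp$, then after choosing isomorphic blowups $\bm U^\flat$, $\bm V^\flat$, we get $[\mu\bb L \bm U] = [\mu\bb L \bm U^\flat] = [\mu\bb L \bm V^\flat] = [\mu\bb L \bm V]$. The same argument handles the $\Gamma$-versions via $\mgL$, where the factorization through $\mgisp$ and $\mgispdb$ is the content of the first half of Remark~\ref{kernel:gL}.

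Third, the induced homomorphisms on the quotients
\[
\overline{\mu\bb L} : \gsk \mRV[*] / \misp \fun \gsk \mVF[*],
\]
and the three corresponding maps, are surjective in each graded piece by Corollary~\ref{L:sur:c} and Remark~\ref{gL:sur:c}. Fourth, injectivity of each of these induced homomorphisms is exactly the ``only if'' direction of Proposition~\ref{kernel:L}, applied graded-piece by graded-piece, together with the analogous statement for the $\Gamma$-versions contained in the second half of Remark~\ref{kernel:gL}. Combining surjectivity and injectivity in each graded piece, the induced maps on the quotients are isomorphisms of graded Grothendieck semirings; defining $\int_+$ and $\int^\diamond_+$ to be their inverses yields the four isomorphisms in the statement, and the characterization $\int_+ [\bm A] = [\bm U] / \misp \iff [\bm A] = [\mu\bb L \bm U]$ is built into the construction.

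The only point that requires any genuine thought beyond bookkeeping is confirming that $\mispdb$ and $\mgispdb$ really are semiring congruence relations on $\gsk \mRV^{\db}[*]$ and $\gsk \mgRV^{\db}[*]$, respectively, since Lemma~\ref{isp:congruence:vol} handles the unrestricted cases and Remark~\ref{db:isp:rel} only compares $\mispdb$ with the pullback of $\misp$. However, the proof of Lemma~\ref{isp:congruence:vol} goes through verbatim in the doubly bounded setting, because Lemma~\ref{blowup:equi:class:coa} and Corollary~\ref{blowup:equi:class} are proved in $\mRV^{\db}[*]$ as well (all blowups involved are annular, hence stay in $\mRV^{\db}[*]$). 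The $\Gamma$-versions then transport across the forgetful surjection $\bb F$ exactly as in the first paragraph of Remark~\ref{kernel:gL}. No single step is a serious obstacle given the foregoing machinery; the subtlest verification is merely confirming that the multiplicative structure is compatible with all four quotients, and this follows from the fact that blowups and products commute up to isomorphism (a routine check from Definition~\ref{defn:blowup:coa}).
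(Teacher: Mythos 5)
Your proposal is correct and takes essentially the same approach as the paper, which simply cites Corollary~\ref{L:sur:c}, Proposition~\ref{kernel:L}, and Remarks~\ref{gL:sur:c} and \ref{kernel:gL} and declares the result immediate; you have merely unpacked the standard verification (factorization through the congruence, surjectivity, injectivity, multiplicativity) that the paper leaves implicit. One small correction: you needn't worry about whether $\mispdb$ is a semiring congruence, since Lemma~\ref{isp:congruence:vol} already covers $\mispdb[k]$ and $\mispdb[*]$ explicitly alongside the unrestricted cases, not just the latter as you suggest; and for $\mgisp$, $\mgispdb$ the congruence property is automatic from their construction as pushforwards along the surjective semiring homomorphism $\bb F$ in Remark~\ref{kernel:gL}.
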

\begin{proof}
This is immediate by Corollary~\ref{L:sur:c}, Proposition~\ref{kernel:L}, Remarks~\ref{gL:sur:c} and \ref{kernel:gL}.
\end{proof}

\begin{thm}\label{thm:ring}
The  isomorphism $\int_+$ induces  two graded ring homomorphisms
\[
\Xint{\textup{e}}^g, \Xint{\textup{e}}^b: \ggk \mgVF[*] \to^{\int} \ggk \mgRV[*] / (\bm P) \two^{\mgE_{g}}_{\mgE_{b}} \F_3\abs\Gamma[X].
\]
In contrast, the  isomorphism $\int_{+}^\diamond$ induces only one graded ring homomorphism
\[
\Xint{\textup{e}}^\diamond : \ggk \mgVF^\diamond[*] \to^{\int^\diamond} \ggk \mgRV^{\db}[*] / (\bm P_\Gamma) \to^{\mgE^{\db}} \Z[X].
\]
\end{thm}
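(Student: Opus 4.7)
The plan is to assemble the theorem directly from the machinery already in place: Theorem~\ref{main:prop} provides the Grothendieck semiring isomorphisms, the kernel computation in Remark~\ref{thekernel} identifies the congruences $\mgisp$ and $\mgispdb$ (after groupification) with the principal ideals generated by $\bm P$ and by the set $\bm P_\Gamma$ respectively, and Propositions~\ref{prop:eu:retr:k} and \ref{prop:eu:retr:k:db} supply the Euler-characteristic retractions that annihilate these ideals.

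First I would groupify the semiring isomorphism $\int_+ : \gsk \mgVF[*] \fun \gsk \mgRV[*]/\mgisp$ from Theorem~\ref{main:prop} to obtain a graded ring isomorphism
\[
\int : \ggk \mgVF[*] \fun \ggk \mgRV[*]/(\bm P),
\]
where the identification of the denominator uses Remark~\ref{thekernel}, which tells us that the ideal of $\ggk \mgRV[*]$ generated by the congruence $\mgisp$ is precisely the principal ideal $(\bm P)$. Analogously, groupifying $\int_+^\diamond$ yields
\[
\int^\diamond : \ggk \mgVF^\diamond[*] \fun \ggk \mgRV^{\db}[*]/(\bm P_\Gamma),
\]
where Remark~\ref{thekernel} identifies the relevant ideal with the homogeneous (non-principal) ideal generated by the elements $\bm P_\gamma$ for $\abs\gamma > 0$.

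Next I would invoke Proposition~\ref{prop:eu:retr:k}, which furnishes the two graded ring homomorphisms $\mgE_g, \mgE_b : \ggk \mgRV[*] \fun \F_3 \abs\Gamma[X]$, both of which vanish on $\bm P$ and hence factor through the quotient $\ggk \mgRV[*]/(\bm P)$. Composing with $\int$ yields the two desired homomorphisms $\Xint{\textup{e}}^g$ and $\Xint{\textup{e}}^b$. For the doubly bounded counterpart, Proposition~\ref{prop:eu:retr:k:db} supplies a single graded ring homomorphism $\mgE^{\db} : \ggk \mgRV^{\db}[*] \fun \Z[X]$ for which every $\bm P_\gamma$ vanishes, so it factors through $\ggk \mgRV^{\db}[*]/(\bm P_\Gamma)$; composing with $\int^\diamond$ delivers $\Xint{\textup{e}}^\diamond$.

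There is essentially no obstacle beyond bookkeeping: each ingredient has been established earlier. The only subtle point to double-check is the contrast asserted in the statement, namely that in the doubly bounded setting there is just one homomorphism rather than two. This follows from the fact, already used in Notation~\ref{db:G:int}, that the two Euler characteristics $\chi_{\Gamma, g}$ and $\chi_{\Gamma, b}$ agree on doubly bounded sets, so the construction of Proposition~\ref{prop:eu:retr:k:db} does not branch. Thus the full statement reduces to writing down the two composite diagrams and citing the corresponding propositions.
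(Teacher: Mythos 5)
Your proposal is correct and follows exactly the route the paper takes: the proof in the paper is a one-line citation of Theorem~\ref{main:prop}, Remark~\ref{thekernel}, and Propositions~\ref{prop:eu:retr:k} and \ref{prop:eu:retr:k:db}, which is precisely the chain you lay out (groupify, identify the kernel with the ideal $(\bm P)$ or $(\bm P_\Gamma)$, compose with the Euler-characteristic retractions). Your extra observation about why only one homomorphism survives in the doubly bounded case matches the remark already made in Notation~\ref{db:G:int}.
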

\begin{proof}
This is of course just a combination of Theorem~\ref{main:prop}, Remark~\ref{thekernel}, and Propositions~\ref{prop:eu:retr:k} and \ref{prop:eu:retr:k:db}.
\end{proof}

Let $F$ be a definable set with $A \coloneqq F_{\VF} \sub \VF^n$ and $\omega : F \fun \RV$ a definable function on $F$. Note that we are not assuming $F \in \VF_*$.

\begin{defn}
The pair $(F, \omega)$ is viewed as a representative of a \emph{definable} function
\[
(\bm F, \bm \omega) : A \fun \gsk \mRV[*] / \misp
\]
given by $a \efun [(F_a, \omega_a)] / \misp$, where $\omega_a$ is the function on $F_a$ induced by $\omega$.

Note that $[(F_a, \omega_a)]$ depends on the parameter $a$ and, for distinct $a, a' \in A$, there is a priori no way to compare $[(F_a, \omega_a)]$ and $[(F_{a'}, \omega_{a'})]$ unless we work over the substructure $\mdl S \la a, a' \ra$. Similarly, given another definable pair $(G, \sigma)$ of this form with $A = G_{\VF}$, we say that the corresponding definable function $(\bm G, \bm \sigma)$ is \emph{equivalent} to $(\bm F, \bm \omega)$, written as $(\bm G, \bm \sigma) \sim (\bm F, \bm \omega)$, if $(\bm G, \bm \sigma)(a) = (\bm F, \bm \omega)(a)$ over $\mdl S \la a \ra$ for all $a \in A$ outside a definable subset of $\VF$-dimension less than $n$. Thus, if $\dim_{\VF}(A) < n$ then there is only one such function on $A$, namely $0$. The set of all such functions, or rather the equivalence classes of such functions, is denoted by $\mfn_+(A)$, which is a  $\gsk  \mRV[*] / \misp$-semimodule. If $E \sub [n]$ is a nonempty set then, for each $a \in \pr_{E}(A)$, the definable function in $\mfn_+(A_a)$ represented by $(F_a, \omega_a)$ is denoted by $(\bm F_a, \bm \omega_a)$.

We say that  $(\bm F, \bm \omega)$ is \emph{proper invariant} if $(F, \omega)$ is proper invariant (recall Remark~\ref{nota:form:down}). Notice that if $(\bm G, \bm \sigma)$ is also proper invariant and $(\bm G, \bm \sigma) \sim (\bm F, \bm \omega)$ then $(\bm G, \bm \sigma)(a) = (\bm F, \bm \omega)(a)$ over $\mdl S \la a \ra$ for all $a \in A$. The set of proper invariant functions is denoted by $\mfn^\diamond_+(A)$, which is a  $\gsk  \mRV^{\db}[*] / \mispdb$-semimodule.

%\[
%([(G_a, \sigma_a)], [(F_a, \omega_a)]) \in \misp \text{ over } \mdl S \la a \ra.
%\]

The semimodules $\mgfn_+(A)$, $\mgfn^\diamond_+(A)$ with $\Gamma$-volume forms are constructed likewise.
\end{defn}

\begin{defn}
Let $\bb L F = \bigcup_{a \in A} a \times F_a^\sharp$ and $\bb L \omega$ be the function on $\bb L F$ induced by $\omega$. Set $\int_{+A} (\bm F, \bm \omega) = \int_+ [(\bb L F, \bb L \omega)]$,
which, by Theorem~\ref{main:prop} and compactness, does not depend on the representative $(F, \omega)$ (alternatively, this also follows from Lemma~\ref{isp:VF:fiberwise:contract}). If $(\bm F, \bm \omega)$ is proper invariant then we replace $\int_+$ with $\int^\diamond_+$. Thus there are canonical homomorphisms of semimodules:
\begin{gather*}
 \int_{+A} : \mfn_+(A) \fun \gsk \mRV[*] /  \misp, \\
  \int_{+A}^\diamond : \mfn^\diamond_+(A) \fun \gsk \mRV^{\db}[*] /  \mispdb;
\end{gather*}
similarly for the semimodules  with $\Gamma$-volume forms.
\end{defn}
%\[
%\int_{+A} : \mgfn_+(A) \fun \gsk \mgRV[*] /  \mgisp,\\
%, \quad \int^\diamond_{+A} : \mgfn^\diamond_+(A) \fun \gsk \mgRV^{\db}[*] /  \mgispdb.
%\]

\begin{thm}\label{semi:fubini}
For all definable functions $(\bm F, \bm \omega)$ on $A$ and all nonempty sets $E, E' \sub [n]$,
\[
\int_{+ a \in \pr_{E}(A)} \int_{+ A_a}  (\bm F_a, \bm \omega_a) = \int_{+ a \in \pr_{E'}(A)} \int_{+ A_a} (\bm F_a, \bm \omega_a).
\]
\end{thm}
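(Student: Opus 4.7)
The plan is to reduce the symmetric assertion to showing, for every nonempty $E \subseteq [n]$, the identity
\[
\int_{+\,a \in \pr_E(A)}\int_{+\,A_a}(\bm F_a, \bm \omega_a) \;=\; \int_+[(\bb L F, \bb L \omega)],
\]
since both iterated integrals will then equal the common quantity on the right-hand side. The argument is uniform across the four flavors of the theorem (with $\mu$ or $\mu_\Gamma$, and with or without the proper invariance constraint $\diamond$); I describe it for $\int_+ : \gsk \mVF[*] \to \gsk \mRV[*]/\misp$, the remaining cases being formally parallel.

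To produce a definable representative of the inner integral, fix a permutation $\sigma$ of the $n+m$ coordinates of $(\bb L F, \bb L \omega) \in \mVF[n+m]$ that lists the $|\tilde E| + m$ non-$E$ coordinates first, and consider a standard contraction $\hat T_\sigma$ of $(\bb L F, \bb L \omega)$; its existence in the proper invariant case is guaranteed by Lemma~\ref{exit:stand}. By compactness, stopping $\hat T_\sigma$ after its first $|\tilde E| + m$ steps yields a definable pair $(G, \tau) \in \mfn_+(\pr_E(A))$ whose fiber over each $a \in \pr_E(A)$ is a standard contraction of $(\bb L F_a, \bb L \omega_a)$. Hence $[\mu\bb L(G_a, \tau_a)] = [(\bb L F_a, \bb L \omega_a)]$ in $\gsk \mVF$ by Lemma~\ref{L:measure:surjective}, so $[(G_a, \tau_a)]/\misp = \int_{+A_a}(\bm F_a, \bm \omega_a)$, which means $(G, \tau)$ represents the inner integral and the left-hand side of the display equals $\int_+[(\bb L G, \bb L \tau)]$ by the definition of the outer integral.

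It remains to verify $\int_+[(\bb L G, \bb L \tau)] = \int_+[(\bb L F, \bb L \omega)]$. Completing $\hat T_\sigma$ by contracting the $E$-coordinates produces a full standard contraction of $(\bb L F, \bb L \omega)$ whose class modulo $\misp$ is $\int_+[(\bb L F, \bb L \omega)]$ by construction. On the other hand, a standard contraction of $(\bb L G, \bb L \tau)$ that first contracts the $E$-coordinates of the base $\pr_E(A)$ in the same order used by $\sigma$ and then the coordinates coming from the lifts $G_a^\sharp$ yields an $\mRV$-object whose class modulo $\misp$ is $\int_+[(\bb L G, \bb L \tau)]$. After the initial $E$-contraction the two constructions are assembled fiberwise from the same data $(G_a, \tau_a)$, so Lemma~\ref{isp:VF:fiberwise:contract} supplies the required $\misp$-congruence. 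The main obstacle is arranging that the fiberwise standard contractions genuinely arise as the intermediate stage of a single global standard contraction---this is where one leans on compactness together with the FMT procedure (Terminology~\ref{FMT}); in the $\diamond$ cases the $\go$-partition machinery (Lemmas~\ref{vol:par:bounded},~\ref{invar:db}) must also be invoked to keep every intermediate pair proper invariant and every fusing map continuous and proper covariant.
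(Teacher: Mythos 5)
Your overall plan---showing both iterated integrals collapse to $\int_{+A}(\bm F, \bm\omega) = \int_+[(\bb L F, \bb L\omega)]$---is exactly what the paper's one-line proof is pointing at, and your Step~2 correctly identifies the natural representative of the inner integral function: the pair $(G,\tau)$ obtained by running a standard contraction $\hat T_\sigma$ of $(\bb L F,\bb L\omega)$ over the non-$E$ coordinates first and stopping partway, so that $[(G_a,\tau_a)]/\misp = \int_{+A_a}(\bm F_a,\bm\omega_a)$ for every $a\in\pr_E(A)$. That reduction is sound.

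Step~3 has a genuine gap. You want $\int_+[(\bb L G,\bb L\tau)]=\int_+[(\bb L F,\bb L\omega)]$, and you set up two standard contractions whose ``initial $E$-contraction'' you then try to match. But in the first construction (the completion of $\hat T_\sigma$) the $E$-coordinates are contracted \emph{last}, while in the second they are contracted \emph{first}, so there is no shared initial $E$-stage to compare. Moreover Lemma~\ref{isp:VF:fiberwise:contract} cannot be invoked in the way you describe: it requires the two definable sets to have the \emph{same} $\VF$-projection, and the $\VF$-projections of $\bb L F$ and $\bb L G$ are different. The fix is actually simpler than what you attempt and requires no appeal to Lemma~\ref{isp:VF:fiberwise:contract}. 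The partial lift of $\hat T_\sigma$---the composition of the first $\lvert\tilde E\rvert+m$ relatively special covariant bijections $T_{\sigma(i)}$---is a bijection $\bb L F\to\bb L G$ with generic Jacobian $1$ that propagates the volume form by the very definition of a standard contraction; by the discussion immediately following Definition~\ref{defn:standard:contraction} (applied to this partial composition rather than the full $T_\sigma$) it is a $\mVF[*]$-morphism $(\bb L F,\bb L\omega)\to(\bb L G,\bb L\tau)$, and a $\mVF^\diamond[*]$-morphism in the proper invariant case since each $T_{\sigma(i)}$ is then continuous, proper covariant, and relatively unary. Hence $[(\bb L F,\bb L\omega)]=[(\bb L G,\bb L\tau)]$ in $\gsk\mVF[*]$ (resp.\ $\gsk\mVF^\diamond[*]$), and applying $\int_+$ (resp.\ $\int_+^\diamond$) gives the required equality. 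Your closing worry about ``arranging that the fiberwise standard contractions genuinely arise as the intermediate stage of a single global standard contraction'' is moot: $(G,\tau)$ is by construction that intermediate stage, and the FMT and $\go$-partition machinery you mention is already absorbed into Lemma~\ref{exit:stand}, which you cite for the existence of $\hat T_\sigma$.
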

\begin{proof}
This is clear since both sides equal $\int_{+A} (\bm F, \bm \omega)$.
\end{proof}

Let $B \sub \VF^n$ be a definable set and $\phi : B \fun A$ a definable  bijection. Suppose that $\phi$ is a $\mVF^\diamond[n]$-morphism (volume forms are ignored) if $(F, \omega)$ is proper invariant.

Let $F_B = \bigcup_{b \in B} b \times F_{\phi(b)}$. For simplicity, the bijection $F_B \fun F$ induced by $\phi$ is still denoted by $\phi$. By Corollary~\ref{cov:rest}, if $(F, \omega)$ is proper invariant then $(F_B, \omega \circ \phi)$ is proper invariant as well. So $(F_B, \omega \circ \phi)$ represents a function in $\mfn_+(B)$ or $\mfn^\diamond_+(B)$, which does not depend on the representative $(F, \omega)$ and is denoted by $( \bm F \circ \phi, \bm \omega \circ \phi)$. The corresponding functions
\[
\phi_* : \mfn_+(A) \fun \mfn_+(B), \quad \phi_* : \mfn^\diamond_+(A) \fun \mfn^\diamond_+(B);
\]
are indeed isomorphisms of semimodules; similarly for the semimodules  with $\Gamma$-volume forms. However, in general, these do not preserve integrals.

%Since the Jacobian of the function $A_f \fun B_{f \circ \phi^{-1}}$ induced by $\phi$ agrees with $\jcb_{\VF} \phi$ almost everywhere, we shall just write it as $\jcb_{\VF} \phi$. By Lemma~\ref{fun:almost:loc:con}, $f$ is locally constant almost everywhere and hence the expression $\jcb_{\VF} \phi(\lbar a) \cdot \omega_{\lbar a}$ is a well-defined volume form on $f(\lbar a)$ for almost all $\lbar a \in A$.

Let $\sigma : B \fun \RV$ be a definable function such that $\sigma = \rv \circ (\jcb_{\VF} \phi)$ outside a definable subset of $\VF$-dimension less than $n$ and,  if $(F, \omega)$ is proper invariant, then $\sigma$ is proper covariant. Let  $\omega  \sigma$ be the function on $F_{B}$ given by $(b, t) \efun (\omega \circ \phi)( b, t) \cdot \sigma( b)$. Then  $(F_B, \omega  \sigma)$ represents another function in $\mfn_+(B)$ or $\mfn^\diamond_+(B)$, which is denoted by $( \bm F \circ \phi, \bm {\omega \sigma})$.

%Let $\sigma : B \fun \RV$ be a Functions satisfying the first condition of course always exist, and  proper covariance plus $C^1$ is  sufficient for the second condition.

\begin{defn}
The \emph{Jacobian transformation} of $(\bm F, \bm \omega)$ with respect to $\phi$ and $\sigma$ is given by
$\phi^{\sigma}_{*}(\bm F, \bm \omega) = ( \bm F \circ \phi, \bm {\omega \sigma})$.
\end{defn}

The Jacobian transformation itself does depend on the choice of $\sigma$. However, the point is that the integral does not:

\begin{thm}\label{semi:change:variables}
$\int_{+A} ( \bm F, \bm \omega) = \int_{+B} \phi^{\sigma}_{*}(\bm F, \bm \omega)$.
\end{thm}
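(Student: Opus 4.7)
The plan is to unfold both sides of the claimed equality through the lifting map and exhibit a direct $\mVF[*]$-morphism (respectively $\mVF^\diamond[*]$-morphism) between the two lifts. By the very definition of $\int_{+A}$ on $\mfn_+$, we have
\[
\int_{+A}(\bm F, \bm \omega) = \int_{+}[(\bb L F, \bb L \omega)], \qquad \int_{+B} \phi^\sigma_*(\bm F, \bm \omega) = \int_{+}[(\bb L F_B, \bb L(\omega \sigma))],
\]
so by Theorem~\ref{main:prop} it suffices to produce an isomorphism $[(\bb L F_B, \bb L(\omega\sigma))] = [(\bb L F, \bb L \omega)]$ in the appropriate Grothendieck semigroup.

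The candidate morphism is the natural one: define $\Phi : \bb L F_B \to \bb L F$ by $(b, y_{\VF}, y_{\RV}) \mapsto (\phi(b), y_{\VF}, y_{\RV})$, where $y_{\VF}$ comes from lifting the $\RV$-coordinates of $F$ along $(\cdot)^\sharp$ and $y_{\RV}$ lists the remaining $\RV$-coordinates of $F$; since $\phi$ is a definable bijection $B \to A$, so is $\Phi$. The Jacobian in the $\VF$-block decomposes as $\jcb_{\VF}\phi(b)$ times the identity on the $y_{\VF}$ coordinates, and hence $\jcb_{\VF}\Phi(b, y) = \jcb_{\VF}\phi(b)$ wherever the latter is defined. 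At such a point,
\[
\bb L(\omega \sigma)(b, y) = \omega(\phi(b), y_{\RV})\,\sigma(b), \qquad \bb L\omega(\Phi(b, y)) \cdot \rv(\jcb_{\VF}\Phi(b, y)) = \omega(\phi(b), y_{\RV}) \cdot \rv(\jcb_{\VF}\phi(b)),
\]
so the measure-preservation identity in Definition~\ref{defn:VF:mu} is precisely the hypothesis $\sigma = \rv \circ \jcb_{\VF}\phi$, which holds outside a definable subset of $B$ of $\VF$-dimension less than $n$. Pulled up to $\bb L F_B$ this exceptional set has $\VF$-dimension strictly less than the ambient dimension of $\bb L F_B$, so $\Phi$ is a $\mVF[*]$-morphism up to the equivalence $\sim$ of Remark~\ref{mor:equi}.

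For the proper invariant case, I would use that $\phi$ is, by hypothesis, a composition of continuous proper covariant relatively unary $\mVF^\diamond[n]$-morphisms; extending each piece by the identity on the $(y_{\VF}, y_{\RV})$-coordinates yields a composition of relatively unary morphisms for $\Phi$ whose restrictions to the relevant proper invariant domains (proper invariant by Lemma~\ref{int:inv} together with the proper invariance of $(F,\omega)$) remain continuous and proper covariant by Corollary~\ref{cov:rest} and Lemma~\ref{conti:covar}. The $\Gamma$-volume form variants are handled by exactly the same construction, using $\vv \circ \jcb_{\VF}\phi$ in place of $\rv \circ \jcb_{\VF}\phi$ and verifying only the weaker condition in the definition of $\mgVF[k]$; alternatively, one may descend from the $\RV$-case via the forgetful functor and Lemma~\ref{forg:ful}. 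The main obstacle is essentially bookkeeping: ensuring that the extension of a $\mVF^\diamond[n]$-morphism by the identity to the lifted coordinates genuinely inherits the relatively unary, continuous, proper covariant structure, and that the exceptional set on which $\sigma$ may fail to equal $\rv \circ \jcb_{\VF}\phi$ pulls back to a set of the correct codimension; both issues are resolved by the machinery already in place (Lemmas~\ref{int:inv}, \ref{conti:covar}, \ref{cov:rest}, and~\ref{forg:ful}).
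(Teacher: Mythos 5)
Your proof is correct and follows essentially the same route as the paper's: you lift $\phi$ to a bijection $\Phi = \bb L\phi : \bb L F_B \to \bb L F$ that is the identity on the lifted coordinates, observe that $\jcb_{\VF}\Phi = \jcb_{\VF}\phi$ on the lifted domain (outside a set of smaller $\VF$-dimension), verify that this makes $\Phi$ a $\mVF[*]$- (respectively $\mVF^{\diamond}[*]$-) morphism from $(\bb L F_B, \bb L(\omega\sigma))$ to $(\bb L F, \bb L\omega)$, and conclude via Theorem~\ref{main:prop}. Your write-up supplies the details for the proper covariant composition structure that the paper's one-line proof leaves implicit, but the underlying argument is the same.
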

\begin{proof}
Let $\bb L \phi$ be the bijection $\bb L F_B \fun \bb L F$ induced by $\phi$.
For all $(b, b') \in \bb L F_B$ outside a definable subset of $\VF$-dimension less than $n+m$, where $F_{\RV} \sub \RV^m$, we have $\jcb_{\VF} \bb L \phi (b, b') = \jcb_{\VF} \phi(b)$. So  $\bb L \phi$ is a $\mVF[*]$- or $\mVF^\diamond[*]$-morphism $(\bb L F_B, \bb L \omega\sigma) \fun (\bb L F, \bb L \omega)$. The equality then follows from Theorem~\ref{main:prop}.
\end{proof}

\providecommand{\bysame}{\leavevmode\hbox to3em{\hrulefill}\thinspace}
\providecommand{\MR}{\relax\ifhmode\unskip\space\fi MR }
% \MRhref is called by the amsart/book/proc definition of \MR.
\providecommand{\MRhref}[2]{%
  \href{http://www.ams.org/mathscinet-getitem?mr=#1}{#2}
}
\providecommand{\href}[2]{#2}

%
%%---------------------------------------------------------------------
%%Included for Gather Purpose only:
%%input "C:\localtexmf\bibtex\bib\mybib\MYbib.bib"
%\bibliographystyle{amsplain}
%\bibliography{../../mybib/MYbib}
%%---------------------------------------------------------------------

\enddocument